\newcommand{\msc}[2][2000]{%
  \let\@oldtitle\@title%
  \gdef\@title{\@oldtitle\footnotetext{#1 \emph{Mathematics subject
        classification.} #2}}%
}
\theoremstyle{plain}
\newtheorem{theorem}{Theorem} [section]
\newtheorem{definition}[theorem]{Definition}
\newtheorem{assumption}[theorem]{Assumption}
\newtheorem{lemma}[theorem]{Lemma}
\newtheorem{corollary}[theorem]{Corollary}
\newtheorem{proposition}[theorem]{Proposition}
\theoremstyle{remark}
\newtheorem{remark}[theorem]{Remark}
\newtheorem{example}[theorem]{Example}
\def\R{{\mathbb R}}
\def\N{{\mathbb N}}
\def\Z{{\mathbb Z}}
\def\Sch{{\mathcal S}}
\def\O{\mathcal O}
\def\Op{\operatorname{Op}^w}
\def\cH{\mathcal{H}}
\def\cT{\mathcal{T}}
\def\cZ{\mathcal{Z}}
\def\uC{\underline C}
\def\uK{\underline K}
\def\vp{W}
\def\pp{{\mathbf p}}
\def\qq{{\mathbf q}}
\def\({\left(}
\def\){\right)}
\def\<{\left\langle}
\def\>{\right\rangle}
\def\le{\leqslant}
\def\ge{\geqslant}
\def\Tend#1#2{\mathop{\longrightarrow}\limits_{#1\rightarrow#2}}
\def\eps{\varepsilon}
\def\si{{\sigma}}
\def\Id{\mathbf 1}
\numberwithin{equation}{section}
\begin{document}
\title[Time splitting for NLS with potential]{Time
  splitting and error estimates for nonlinear  Schr\"odinger equations
  with a potential}

\author[R. Carles]{R\'emi Carles}
\address{CNRS\\ IRMAR - UMR 6625\\ F-35000
  Rennes, France}
\email{Remi.Carles@math.cnrs.fr}

\begin{abstract}
 We consider the nonlinear Schr\"odinger equation with a potential,
 also known as  Gross-Pitaevskii equation. By introducing a suitable
 spectral localization, we prove low regularity error estimates for
 the time 
 discretization corresponding to an adapted Lie-Trotter splitting
 scheme. The
 proof is based on tools from spectral theory and pseudodifferential
 calculus in order to obtain various estimates on the spectral
 localization, including discrete Strichartz estimates
which support the nonlinear analysis. 
\end{abstract}

\subjclass[2010]{Primary: 35Q55, 65M12. Secondary: 35B45, 47G30, 65M15, 65M70}
\keywords{Nonlinear Schr\"odinger equation; potential;
  Gross-Pitaevskii equation;  split-step method; 
  pseudodifferential calculus; Strichartz estimates; error analysis}
\thanks{This work was supported by Centre Henri Lebesgue,
program ANR-11-LABX-0020-0. A CC-BY public copyright license has
  been applied by the author to the present document and will be
  applied to all subsequent versions up to the Author Accepted
  Manuscript arising from this submission.}

\maketitle

\section{Introduction}
\label{sec:intro}

\subsection{Setting}
\label{sec:setting}
We consider time discretization for the nonlinear Schr\"odinger equation with a potential,
\begin{equation}\label{eq:NLSP}
  i\partial_t u +\Delta u= V(x)u +\vp (x)u+ \eps |u|^{2\si}u\quad ;\quad
  u_{\mid t=0}=u_0,
\end{equation}
with $x\in \R^d$, $d\ge 1$, $\eps\in \{-1,+1\}$, and $\si>0$. More
precisely, the nonlinearity is $H^1$-subcritical, $0<\si<\frac{2}{(d-2)_+}$, that is,
  $\si<\frac{2}{d-2}$ if $d\ge 3$. We denote
\begin{equation*}
  H = -\Delta +V(x). 
\end{equation*}
A typical physically relevant case in dimension $d\le 3$, related to
the physics of superfluids or Bose-Einstein Condensation, is the cubic
nonlinearity ($\si=1$), with a quadratic confining potential (possibly
anisotropic),
 \begin{equation}\label{eq:pot-harmo}
      V_{\rm quad}(x) = \sum_{j=1}^d\omega_j^2x_j^2,\quad \omega_j>0.
    \end{equation}
Equation~\eqref{eq:NLSP} in that case may be referred to as
Gross-Pitaevskii equation; see e.g. \cite{BaoCai2013,JosserandPomeau} and
references therein.  The potential $\vp$ 
may be considered as a perturbation of $V$. A major difference between
the two potentials, see Assumption~\ref{hyp:V}, is that $V$ is smooth,
possibly unbounded,
while $\vp$ is in $W^{2,\infty}(\R^d)$. We emphasize that we make no
assumption on the spectrum of the Hamiltonian $H$: for instance, $V$
may correspond to a partial confinement, that is we may consider
\eqref{eq:pot-harmo} in the case were some (or all) $\omega_j$'s are zero. 
  The potential in \eqref{eq:pot-harmo}  is not bounded, and cannot be
  addressed by 
  perturbative arguments, typically when solving the Cauchy problem
  \eqref{eq:NLSP} and considering the associated dynamics. In this
  paper, and in agreement with the above example,
  we assume that $V$ is bounded 
  from below,
   \begin{equation*}
    \exists C_0>0,\quad V(x)\ge -C_0,\ \forall x\in \R^d.
  \end{equation*}
  The gauge transform $u\mapsto e^{-it(C_0+1)} u=\tilde u$ shows that
  $\tilde u$ solves \eqref{eq:NLSP} with $V$ replaced by $V+C_0+1$, so
  we may  assume that $V\ge 1$ without loss of generality. This will simplify some
  technical arguments, without discarding potentials such as in
  \eqref{eq:pot-harmo}:
\begin{assumption}\label{hyp:V}
  The potential $V$ is real-valued, smooth, $V\in
  C^\infty(\R^d;\R)$, with $V\ge 1$. Moreover, it is at
  most quadratic, in the sense that  
  \begin{equation*}
   \forall \alpha \in \N^d, \ |\alpha|\ge 2, \quad
    \partial^\alpha V\in L^\infty( \R^d).
  \end{equation*}
 The potential $\vp$ is real-valued, $\vp\in W^{2,\infty}(\R^d;\R)$, that is
 \begin{align*}
  \forall \alpha \in \N^d, \ |\alpha|\le 2, \quad \partial^\alpha \vp\in L^\infty(\R^d).
 \end{align*}
\end{assumption}
The operator $H$ is self-adjoint, and we denote by
\begin{equation}\label{eq:S(t)}
  S(t)=e^{-itH}
\end{equation}
the evolution group associated with the linear part in
\eqref{eq:NLSP} ($\eps=0$), in the case $\vp=0$, which is well defined under
Assumption~\ref{hyp:V} 
(see e.g. \cite{ReedSimon2}).

\subsection{Time discretization}

This paper is focused on error estimates for the Lie-Trotter time
splitting scheme associated to \eqref{eq:NLSP}, see
e.g. \cite{AntoineBaoBesse2013,BaoCai2013} and references therein. As
in \eqref{eq:NLSP}, 
$\partial_t u$ is the sum of four terms, several strategies appear
naturally if one wants to write
\begin{equation*}
  \partial_t u = A_1u +A_2u,
\end{equation*}
where the equation $\partial_t u= A_ju$ is convenient to solve numerically
both for $j=1$ and $j=2$. We do not consider more sophisticated
splitting methods here, to avoid extra technicalities. One may set $A_1=i\Delta$, and
\begin{equation*}
  A_2 u = -iV u -i\vp u -i\eps|u|^{2\si}u.
\end{equation*}
This choice was made in, e.g.,
\cite{BaoJakschMarkowich2003}, in the case where $V$ is
quadratic like in \eqref{eq:pot-harmo}: $e^{tA_1}$ is computed by
Fourier pseudospectral methods, and the nonlinear ordinary
differential equation
\begin{equation*}
  \partial_t u = A_2 u
\end{equation*}
is solved explicitly, as it turns out to be a linear equation, since
$\partial_t |u|^2=0$ ($\eps$ is real). Error estimates are proved in
\cite{Thalhammer2012}, assuming large regularity for the initial data,
but also considering a bounded spatial domain (hence the potential is
bounded). 
\smallbreak

In the case $\vp=0$, the other natural choice consists
in setting $A_1=-iH$ and $A_2 u = -i\eps|u|^{2\si}u$. This choice was
made in, e.g., \cite{BaoShen2005}, again for $V$ of the form
\eqref{eq:pot-harmo}, and using the Hermite eigenbasis to replace
Fourier pseudospectral methods with Hermite pseudospectral methods:
the operator $e^{tA_1}$ is computed on each Hermite mode, see
Example~\ref{ex:eigendecomp-harmo} below (with $\chi\equiv 1$). In
\cite{ThCaNe2009}, the two decompositions were addressed 
numerically (for a higher order splitting method), and no crucial
difference seemed to emerge regarding the size of the error. In the isotropic
case of \eqref{eq:pot-harmo} ($\omega_j=1$ for all $j$), an error
analysis in the case of Strang splitting scheme was provided in
\cite{Gaukler2011} for the cubic nonlinearity $\si=1$,
in the spirit of the proof in \cite{Lu08} for the case without
potential, requiring a high level of 
regularity for the initial data, and working in Sobolev spaces based
on powers of $H$ instead of powers of $-\Delta$, that is, accounting
for the presence of the potential. 
\smallbreak

It is possible to mix the above two approaches, as in
e.g. \cite{ThCaNe2009}, in the following sense. Suppose for instance
that $V(x) = V_{\rm quad}(x) + e^{-|x|^2} +1$. Then it is possible to
consider $A_1 =i\Delta-iV_{\rm quad}-i$ and $A_2u =  -i e^{-|x|^2}u
-i\eps|u|^{2\si}u$. The flow generated by $A_1$ can be analyzed by
spectral theory like in e.g. \cite{Gaukler2011}, while the potential in
$A_2$ is in the Schwartz class, and can analyzed by perturbative
arguments. 
\smallbreak

The framework that we consider here is in the spirit of the latter
option, and we set $A_1=-iH$, $A_2 = -i \vp u
-i\eps|u|^{2\si}u$.  This distinction can be of practical interest,
even for $\vp$ in the Schwartz class, as 
  the spectral cutoff $\Pi_\lambda$ defined in
\eqref{eq:Pi_lambda} is explicit for particular potentials only. 
We
emphasize that even in the case where $V$ is quadratic, the tools we
present in Section~\ref{sec:tools}  may be interesting in other
contexts. Our main contribution, compared to the
results evoked above, consists in
decreasing the regularity required on the initial data, thanks to
Strichartz estimates (but we stick to
the Lie-Trotter discretization and do not address Strang
splitting). The motivation is theoretical and practical: the nonlinearity need not be
smooth ($\si$ can be arbitrarily small), and we know, in some cases
(typically when $\si<2/d$ -- $L^2$-subcritical nonlinearity -- or
$\eps>0$ -- defocusing nonlinearity), that the solution to
\eqref{eq:NLSP} is defined globally in time (at the level of
regularity that we need), so the main error estimates are true on
arbitrary time intervals (see Remark~\ref{rem:T}). 
\smallbreak

We define $N(t)
\phi$ as the solution of the flow 
    \begin{equation*}
       i\partial_t u  =\vp u+\eps |u|^{2\si} u,\quad
    u_{\mid t=0}= \phi,
    \end{equation*}
that is, $N(t) \phi = \phi e^{-i t \vp -it\eps |\phi|^{2\si}}$. As
Equation~\eqref{eq:NLSP} is time
reversible, we shall only consider positive time, the case of negative
time being similar. The standard
Lie-Trotter approximation is 
defined, for $\tau\in (0,1)$,  as
\[Z(n \tau) \phi = \( S(\tau) N(\tau) \)^n \phi,\quad n\in \N,\]
where $S$ is defined in \eqref{eq:S(t)}. 
Note that in the case where $V$ is harmonic, \eqref{eq:pot-harmo}, $S(\cdot)$
can be computed by using Hermite functions, see e.g. \cite{BaoCai2013},
and Example~\ref{ex:eigendecomp-harmo} below. 
\begin{remark}
  The potential $\vp$ could include a singularity, in the sense that
  the assumption $\vp\in L^\infty(\R^d)$ could be replaced by $\vp\in
  L^p(\R^d)+L^\infty(\R^d)$, for some $p\ge 1$ such that $p>d/2$,
  typically (for suitable $\gamma$ and $p$)
  \begin{equation*}
    \vp(x) = \underbrace{\frac{1}{|x|^{\gamma} }\Id_{|x|<1}}_{\in
      L^p(\R^d)}+  \underbrace{ \frac{1}{|x|^{\gamma}
      }\Id_{|x|\ge1}}_{\in L^\infty(\R^d)},
  \end{equation*}
like in
  \cite{CazCourant}, with similar assumptions on $\partial^\alpha \vp$ for
  $|\alpha|\le 2$. However, if $\vp$ is singular,  the nonlinear flow
  $N(t)$ becomes more delicate to handle numerically (when space
  discretization is considered too), this is why we
  simply assume $\vp\in W^{2,\infty}(\R^d)$.
\end{remark}

\smallbreak

 In the case $V=\vp=0$, error
estimates related to $Z$ were first proven in \cite{BBD}, for globally
Lipschitz nonlinearities. The proof
was made more systematic, for other nonlinearities,  in the more
difficult case of Strang 
splitting, in \cite{Lu08}. In both papers, the regularity requested on
the initial data, in order to obtain an error estimate in $L^2$, is
rather high ($H^2$ in \cite{BBD} for Lie, $H^4$ in \cite{Lu08} for
Strang -- in both papers, the space dimension is sufficiently low so
these Sobolev spaces are Banach algebras embedded into
$L^\infty$). This regularity constraint was relaxed in \cite{Ignat2011} and
\cite{ChoiKoh2021}, by using discrete in time Strichartz estimates. We
give more details regarding this tool in Section~\ref{sec:strichartz},
which is crucial in our 
case too, where $V$ need not be zero. As pointed out  in
\cite{Ignat2011,IgnatZuazua2006,IgnatZuazua2009,StKe05}, in the case
$V\equiv 0$, the group  $S(\cdot)$ does not satisfy
discrete in time Strichartz estimates: a frequency cutoff is needed,
see also \cite{ORS21}. When $V$ is present, this frequency cutoff must
be replaced by a more general operator.

\smallbreak

To prove discrete in time Strichartz estimates, we need to remove the
singularity at $t=0$ of the dispersive rate in $e^{-itH}$. In the case without
potential, this is achieved by a frequency cutoff, see
e.g. \cite{Ignat2011,ORS21}. In the present framework,
the frequency cutoff is replaced  by a spectral cutoff, in order
typically to keep the commutation property between the cutoff $\Pi$ and the
exact flow  $S(t)$: for potentials like in Assumption~\ref{hyp:V}, the
lack of commutation between the Fourier multiplier considered in
\cite{Ignat2011} and $H$ may generate error terms which cannot be
controlled (typically in the case \eqref{eq:pot-harmo}). 
\smallbreak

Throughout this paper, $\chi\in C^\infty_c(\R;[0,1])$ denotes a smooth,
compactly supported
function which is one on $[-1,1]$, and zero outside $[-2,2]$. For
$\lambda>0$, we set 
\begin{equation}\label{eq:Pi_lambda}
  \Pi_\lambda = \chi^2\(\frac{H}{\lambda}\),
\end{equation}
the spectral localization defined by functional calculus, see
Section~\ref{sec:tools}.  In particular,
Proposition~\ref{prop:funct-calc} implies that $\Pi_\lambda$ and $H$
commute, therefore so do $\Pi_\lambda$ and $S(t)$.

We emphasize the fact that when $V=\vp=0$ (like in
\cite{Ignat2011,ChoiKoh2021,ORS21}), the Fourier multiplier
$\Pi_\lambda$ can be viewed as a convolution operator, which yields
various estimates thanks to Young inequality. When $V\not =0$,
$\Pi_\lambda$ is no longer  a 
Fourier multiplier: we shall use the fact that since $\chi\in
C^\infty_c$, $\Pi_\lambda$ is a pseudodifferential operator, thanks to
a result due to Helffer and Robert \cite{HelfferRobert1983}, and rely
on properties of pseudodifferential operators (see
Section~\ref{sec:tools}). 
The generalization of the truncated free group from \cite{Ignat2011,ORS21}
is then
  \begin{equation}\label{eq:S_lambda}
    S_\lambda(t) = \Pi_\lambda S(t) = S(t)\Pi_\lambda,
  \end{equation}
and the modified Lie-Trotter splitting scheme reads
   \begin{equation}\label{eq:modified-splitting}
   u^n =  {\mathcal Z}_{\lambda} (n\tau)u_0 := \( S_{\lambda} (\tau) N(\tau) \)^n \Pi_{\lambda}u_0.
    \end{equation}
At this stage, we have not related $\tau$ and $\lambda$. We will
eventually consider $\lambda=1/\tau$, which is the same choice as
in \cite{Ignat2011} and \cite{ChoiKoh2021}. Roughly speaking, this
makes it possible to show $L^2$ error estimates of order $\tau^{1/2}$ on
bounded time intervals, for $H^1$ solutions, and of order $\tau$ for
$H^2$ solutions. We point out however that
the setting we introduce makes it possible to adapt the strategy
developed in \cite{ORS21}, where other relations between $\lambda$
(denoted by $K^2$ there) and $\tau$ are proposed, for another
integrator, in order to break the natural order barrier of
$\tau^{1/2}$ error estimates for $H^1$ solutions. 

\begin{example}\label{ex:eigendecomp-harmo}
In the case of a harmonic potential as in \eqref{eq:pot-harmo}, a
well-known eigenbasis is given by Hermite functions, and the operator
$\Pi_\lambda$ can be written in terms of the
eigendecomposition. If $d=\omega_1=1$, with $V=V_{\rm
  quad}+1=x^2+1$ (the constant $1$ is added to be
consistent with Assumption~\ref{hyp:V}), Hermite functions 
$\varphi_j$, $j\in 
  \N$, solve $H\varphi_j = 
  \(3+2j\)\varphi_j$, and, if they
  are normalized so that $\|\varphi\|_{L^2}=1$, every $L^2$ function
  $\phi$ can be decomposed as
  \begin{equation*}
    \phi(x) = \sum_{j\ge 0}\<\phi,\varphi_j\>\varphi_j(x),\quad
    \<\phi,\varphi_j\>=\int_{\R}\phi(y)\bar\varphi_j(y)dy.
  \end{equation*}
Then we may also write
\begin{equation*}
  \Pi_\lambda \phi(x) = \sum_{j\ge 0}\chi\(
  \frac{\lambda_j}{\lambda}\)^2\<\phi,\varphi_j\>\varphi_j(x),
  \quad \lambda_j=3+2j,
\end{equation*}
and
\begin{equation*}
  S_\lambda(t) \phi(x) = \sum_{j\ge 0}e^{-it\lambda_j}
\chi\( \frac{\lambda_j}{\lambda}\)^2\<\phi,\varphi_j\>\varphi_j(x).
\end{equation*}
The expression of $S(t)$ is obtained by setting $\chi\equiv 1$.
The presence of the spectral cutoff $\chi$ can be understood as a
discretization in space, since we consider only finitely many Hermite
functions, like in \cite{Gaukler2011}. We will see below that $\lambda$ is
related to the time 
step $\tau$, this relation may be understood as a CFL condition.
\end{example}

\subsection{Main results}
\label{sec:main}

The first $L^2$-based function space we consider is 
\begin{equation*}
  \cH^1 =\{\phi\in H^1(\R^d);\ \phi\sqrt V \in L^2(\R^d)\},
\end{equation*}
equipped with the norm
\begin{align*}
    \|\phi\|_{\cH^1}^2& :=\<H\phi ,\phi\> =
     \|\nabla \phi\|_{L^2(\R^d)}^2+ \int_{\R^d}V(x)|\phi(x)|^2dx.
\end{align*}
Note that in view of Assumption~\ref{hyp:V}, since $V\ge 1$, 
$\|\phi\|_{\cH^1}^2\ge \|\phi\|_{L^2}^2$. Also, $\Sigma\subset \cH^1$, where
  \begin{equation*}
  \Sigma:=\left\{\phi\in H^1(\R^d);\
    \int_{\R^d}|x|^2|\phi(x)|^2dx<\infty\right\}. 
\end{equation*}
In the case of a quadratic potential \eqref{eq:pot-harmo}, the two
spaces coincide, and correspond to the sharp analogue of the Sobolev space
$H^1(\R^d)$ compared to the case 
$V=0$, see e.g. \cite{Ca15}. Under Assumption~\ref{hyp:V},
\eqref{eq:NLSP} is locally well-posed in $\Sigma$, see e.g. \cite{Ca11}. In 
Section~\ref{sec:cauchy}, we prove well-posedness results in the
possibly larger space $\cH^1$. 

Like in \cite{ChoiKoh2021}, the statements of our results contain a
restriction on the dimension, 
since when $\si\ge 2/d$, we
assume in addition $\si\ge 1/2$ for the nonlinearity in
\eqref{eq:NLSP} to be of class $C^2$ (or $\dot W^{2,\infty}$ in the
case $\si=1/2$), which implies $d\le 5$ since $\si<2/(d-2)_+$. This extra
condition is therefore only present when $d=5$. Note that our
results cover the physical case of a cubic nonlinearity in 
dimension $d\le 3$.

\begin{theorem}\label{theo:main}
  Let Assumption~\ref{hyp:V}  be verified.
Assume that either $0<\si<2/d$, or $d\le 5$ and $2/d\le
\si<2/(d-2)_+$, with in addition $\si\ge 1/2$ when $d=5$. Let  $u_0\in
\cH^1$, assume that \eqref{eq:NLSP} has a unique 
solution $u\in C([0,T];\cH^1)$, for some $T>0$, and denote by $u^n$ the sequence
defined by the scheme \eqref{eq:modified-splitting}, where we set
$\lambda=1/\tau$. There exist
$\tau_0>0$ and $C=C(T)>0$ such that for every $\tau\in (0,\tau_0]$, we
have the error estimate
\begin{equation*}
  \|u^n-u(n\tau)\|_{L^2(\R^d)} \le C\tau^{1/2},\quad 0\le n\tau\le T.
\end{equation*}
\end{theorem}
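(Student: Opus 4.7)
The plan is to follow the Duhamel--Strichartz framework of \cite{Ignat2011,ChoiKoh2021}, with the Fourier frequency cutoff replaced by the spectral cutoff $\Pi_\lambda$. The structural feature that $\Pi_\lambda$ commutes with $H$ (and hence with $S(t)$) is what allows the truncated propagator $S_\lambda$ to inherit discrete Strichartz estimates, developed in the tools section. The error is decomposed into a linear dispersive defect, controlled by the spectral smoothing of $\Pi_\lambda$ calibrated through $\lambda = 1/\tau$, and a nonlinear stability term, absorbed via a discrete Gronwall argument run in a Strichartz norm.

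More precisely, setting $F(w) = \vp w + \eps|w|^{2\si} w$, I would write both flows in mild form over one step. Using the gauge property $|N(s)\phi| = |\phi|$, one step of the scheme reads
\begin{equation*}
S_\lambda(\tau) N(\tau)\phi = S_\lambda(\tau)\phi - i S_\lambda(\tau)\int_0^\tau \bigl(\vp + \eps|\phi|^{2\si}\bigr) N(s)\phi\,ds,
\end{equation*}
while the exact flow satisfies the analogous formula $u((n+1)\tau) = S(\tau) u(n\tau) - i\int_0^\tau S(\tau-s) F(u(n\tau+s))\,ds$. Subtracting at $\phi = u(n\tau)$ produces a local error that splits into (i) the initial projection defect $(I-\Pi_\lambda)S(\tau) u(n\tau)$, (ii) a linear dispersive defect $\int_0^\tau [S(\tau-s) - S_\lambda(\tau)] F(u(n\tau))\,ds$, and (iii) a nonlinear remainder coming from $F(u(n\tau+s)) - (\vp + \eps|u(n\tau)|^{2\si}) N(s) u(n\tau)$.

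For the dispersive defect I would split $S(\tau-s) - S_\lambda(\tau) = (I-\Pi_\lambda) S(\tau-s) + \Pi_\lambda S(\tau)(S(-s)-I)$. The projection piece is bounded in $L^2$ by $\lambda^{-1/2}\|\cdot\|_{\cH^1}$ (spectral multiplier inequality, since $\cH^1$ is the form domain of $H$), while the commutator $S(-s)-I = i\int_0^s H S(-r)\,dr$ composed with $\Pi_\lambda$ uses the operator bound $\|H\Pi_\lambda\|_{L^2\to L^2} = O(\lambda)$, itself a consequence of the Helffer--Robert pseudodifferential representation of $\chi^2(H/\lambda)$. With $\lambda = 1/\tau$, both contributions scale as $\tau^{1/2}$ per step in the Strichartz-dual norm. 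The nonlinear remainder is treated by Lipschitz bounds on $F$ in $L^2$, relying on $\cH^1 \hookrightarrow L^{2\si+2}$ and on a uniform $\cH^1$-bound on the iterates $u^n$, which is propagated through $S_\lambda$ (isometry on $\cH^1$ by commutation with $H^{1/2}$) and through $N$ (continuity on $\cH^1$ under Assumption~\ref{hyp:V}). Assembling the local errors into a telescoping identity for $e^n = u^n - u(n\tau)$, applying the discrete Strichartz estimate for $S_\lambda$ to the sum $\sum_k (S_\lambda(\tau) N(\tau))^{n-k-1}\ell^k$, and closing with a discrete Gronwall argument in $\ell^\infty_n L^2_x \cap \ell^q_n L^r_x$ yields the $O(\tau^{1/2})$ global rate.

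The main obstacle, in my view, is the nonlinear Lipschitz term in the mass-supercritical range $2/d\le\si<2/(d-2)_+$, where the $L^{2\si+2}$-strength of the nonlinearity forces the use of a genuinely non-endpoint Strichartz pair $(q,r)$ and, when $\si$ is non-integer, a $C^2$-type expansion of $|w|^{2\si}w$, whence the technical hypothesis $\si\ge 1/2$ in dimension $d=5$. A secondary delicate point is that $\Pi_\lambda$ does not commute with the pointwise nonlinear map $N$, so the defect $(I-\Pi_\lambda) N(s)\phi$ must be estimated through the $\cH^1$-continuity of $\Pi_\lambda$ together with commutator bounds from the pseudodifferential toolbox of Section~\ref{sec:tools}.
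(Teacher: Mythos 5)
Your overall architecture (spectral cutoff $\Pi_\lambda$ commuting with $S(t)$, discrete Strichartz estimates for $S_\lambda$, decomposition of the error into a projection defect, a quadrature/consistency defect measured in dual Strichartz norms with a $\tau^{1/2}\|H^{1/2}\cdot\|$ loss, and a nonlinear difference term closed by induction) is indeed the strategy of the paper. The genuine gap is your treatment of stability: you assert ``a uniform $\cH^1$-bound on the iterates $u^n$, which is propagated through $S_\lambda$ \ldots and through $N$ (continuity on $\cH^1$)'', but this does not work at $\cH^1$ regularity. Per step, $\|\nabla N(\tau)\phi\|_{L^2}$ involves $\||\phi|^{2\si}\nabla\phi\|_{L^2}$, which is not controlled by $\|\phi\|_{\cH^1}$ alone for $d\ge 2$ (no $L^\infty$ embedding); and even granting a per-step bound with constant $1+C\tau\|\phi\|^{2\si}_{\ast}$ in some stronger norm, iterating it $T/\tau$ times is circular without a uniform space-time bound on the iterates. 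This is exactly the stability hypothesis \eqref{eq:stab-num}, and proving it is the content of Section~\ref{sec:stability}: a bootstrap argument in discrete Strichartz norms in the $L^2$-subcritical case (Lemma~\ref{lem:stab-sub}), and, for $2/d\le\si<2/(d-2)_+$, stability at the $\cH^1$ level first for $\cH^2$ data (Lemma~\ref{lem:local-stab-sup}, which itself uses the $\cH^1$-convergence estimate requiring $u\in X_T^2$) followed by a density/continuous-dependence argument (Propositions~\ref{prop:density1} and \ref{prop:density2}) to reach $u_0\in\cH^1$. None of this appears in your proposal, and without it the ``discrete Gronwall'' step cannot be closed.

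A related, smaller defect: the nonlinear difference term cannot be handled by ``Lipschitz bounds on $F$ in $L^2$'' with coefficients depending only on $\cH^1$ norms; the Lipschitz constant from $L^{q_0}L^{r_0}$ to the dual pair involves $\|u\|_{\ell^\theta L^{r_0}}^{2\si}$ and $\|u^n\|_{\ell^\theta L^{r_0}}^{2\si}$, and the paper must make these \emph{small} by partitioning $[0,T]$ into finitely many intervals $I_j$ (using both $u\in X_T^1$ and the $|I|^\gamma$ factor in \eqref{eq:stab-num}) so that the term $\mathcal{A}_3$ is absorbed by the left-hand side before inducting over $j$; a Gronwall inequality with merely bounded coefficients is not available here. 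Your consistency analysis also needs the substitution of the equation for $\partial_t u$ and the $\tau^{-1/2}$-loss estimate of Lemma~\ref{lem:perte} (plus Lemma~\ref{lem:source}) to stay at $\cH^1$ regularity, rather than a pointwise-in-time bound on $\|F(u(n\tau))\|_{\cH^1}$, which is not finite in general for $\cH^1$ solutions.
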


To prove a better error estimate in terms of the time step, we assume
higher regularity: let
  \begin{equation*}
  \cH^2:=\left\{\phi\in H^2(\R^d);\ V \phi \in L^2(\R^d) \right\}, 
\end{equation*}
equipped with the norm
\begin{equation*}
    \|\phi\|_{\cH^2}^2:=  \|\Delta\phi\|_{L^2(\R^d)}^2+ \| V
    \phi\|_{L^2(\R^d)}^2.
\end{equation*}

\begin{theorem}\label{theo:main2}
  Let Assumption~\ref{hyp:V}  be verified. Assume that  $1/2\le 
\si<2/(d-2)_+$. 
Let  $u_0\in \cH^2$, assume that \eqref{eq:NLSP} has a unique
solution $u\in C([0,T];\cH^2)$, for some $T>0$, and denote by $u^n$ the sequence
defined by the scheme \eqref{eq:modified-splitting}, where we set
$\lambda=1/\tau$. There exist
$\tau_0>0$ and $C=C(T)>0$ such that for every $\tau\in (0,\tau_0]$, we
have the error estimate
\begin{align*}
  &\|u^n-u(n\tau)\|_{L^2(\R^d)} \le C\tau,\quad 0\le n\tau\le T,\\
  &\|u^n-u(n\tau)\|_{\cH^1} \le C\tau^{1/2},\quad 0\le n\tau\le T.
\end{align*}
\end{theorem}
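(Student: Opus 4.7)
The overall approach is to derive the one-step error recursion
\[
  e_{n+1} = \Pi_\lambda S(\tau)\bigl[N(\tau)u^n - N(\tau)u(n\tau)\bigr] + L_n,\qquad L_n := \Pi_\lambda S(\tau)N(\tau)u(n\tau) - u((n+1)\tau),
\]
split the local error into a Lie--Trotter defect $L_n^{\mathrm{LT}} := \Pi_\lambda\bigl[S(\tau)N(\tau)-\Phi^\tau\bigr]u(n\tau)$ and a spectral-cutoff defect $L_n^{\mathrm{cut}} := (\Pi_\lambda-\Id)u((n+1)\tau)$ (where $\Phi^\tau$ is the exact flow of~\eqref{eq:NLSP}), bound each, and close by a discrete stability/Gronwall argument. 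For $L_n^{\mathrm{LT}}$ I would Duhamel-expand both $\Phi^\tau$ and $S(\tau)N(\tau)$: their zeroth- and first-order-in-$\tau$ terms agree, and the remainder is estimated by writing $S(\tau)-S(\tau-s) = S(\tau-s)(S(s)-\Id)$ with $\|(S(s)-\Id)\psi\|_{L^2}\lesssim s\,\|H\psi\|_{L^2}$, together with $L^2$-Lipschitz control of $f(\phi):=\vp\phi+\eps|\phi|^{2\sigma}\phi$ (which relies on $\sigma\ge 1/2$). This yields $\|L_n^{\mathrm{LT}}\|_{L^2}\lesssim\tau^2\|u(n\tau)\|_{\cH^2}$. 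For $L_n^{\mathrm{cut}}$, since $1-\chi^2$ vanishes on $[-1,1]$ one has $|1-\chi^2(x)|\lesssim |x|$, which by functional calculus gives $\|(\Id-\Pi_\lambda)\phi\|_{L^2}\lesssim\lambda^{-1}\|H\phi\|_{L^2}$, hence $\|L_n^{\mathrm{cut}}\|_{L^2}\lesssim\tau\,\|u((n+1)\tau)\|_{\cH^2}$.

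Accumulated, the $L_n^{\mathrm{LT}}$ contributions sum to $n\cdot O(\tau^2)=O(T\tau)=O(\tau)$, which is the desired rate. The cutoff terms are more delicate: added naively they give $O(1)$. To recover the correct order I would exploit $[\Pi_\lambda,S(\tau)]=0$: the propagated cutoff contribution
\[
  \sum_{k=0}^{n-1}\bigl(\Pi_\lambda S(\tau)\bigr)^{n-1-k}(\Pi_\lambda-\Id)u((k+1)\tau)
\]
can be rewritten via $\Pi_\lambda^{n-1-k}(\Pi_\lambda-\Id)=\Pi_\lambda^{n-k}-\Pi_\lambda^{n-k-1}$ as an Abel-type sum whose boundary terms carry only a single factor $(\Id-\Pi_\lambda^m)$. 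Because $1-\chi^{2m}$ still vanishes on $[-1,1]$ and $\|\Pi_\lambda\|_{L^2\to L^2}\le 1$, this factor is again $O(\lambda^{-1})=O(\tau)$ in $L^2$, uniformly in $m\le n$. Nonlinear stability of the iterated scheme and of $N(\tau)$ reduces to Lipschitz control of $\phi\mapsto|\phi|^{2\sigma}\phi$; for non-integer $\sigma$ this produces critical $L^p$ norms, which I would dominate using the discrete Strichartz estimates for $(S_\lambda(\tau))^n$ established earlier in the paper, thereby avoiding an $L^\infty$-embedding that would force more regularity than $\cH^2$. A discrete Gronwall then yields $\|e_n\|_{L^2}\lesssim\tau$ on $0\le n\tau\le T$.

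For the $\cH^1$ estimate I would not redo the analysis one derivative up, but interpolate. Under Assumption~\ref{hyp:V}, the at-most-quadratic growth of $V$ yields equivalence of $\|H\cdot\|_{L^2}$ with the $\cH^2$ norm, so $\cH^1=D(H^{1/2})=[L^2,\cH^2]_{1/2}$ and
\[
  \|e_n\|_{\cH^1}\lesssim \|e_n\|_{L^2}^{1/2}\|e_n\|_{\cH^2}^{1/2}.
\]
It then suffices to establish a uniform bound $\|e_n\|_{\cH^2}\lesssim 1$. This follows from $\cH^2$-stability of the scheme: $\Pi_\lambda$ is $\cH^2$-bounded by the pseudodifferential calculus of Section~\ref{sec:tools} (via Helffer--Robert), $S(\tau)$ is a $\cH^2$-isometry since $[S(\tau),H]=0$, and $N(\tau)$ is $\cH^2$-bounded for $\sigma\ge 1/2$ by a standard nonlinear Leibniz/chain-rule estimate. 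Combining with the $L^2$ bound yields $\|e_n\|_{\cH^1}\lesssim\tau^{1/2}$, completing the proof.

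The main obstacle is the accumulation of the cutoff errors: their per-step $L^2$ size is only $O(\tau)$, not $O(\tau^2)$, so a naive summation overshoots the target by a factor $n=O(T/\tau)$. Only the telescoping enabled by the commutation $[\Pi_\lambda,S(\tau)]=0$, together with the uniform-in-$m$ bound $\|(\Id-\Pi_\lambda^m)\phi\|_{L^2}\lesssim \lambda^{-1}\|H\phi\|_{L^2}$, restores the correct $O(\tau)$ rate; and coupling this telescoping with Strichartz-based nonlinear stability, rather than Sobolev-embedding-based pointwise control, is what lets the estimates close at the stated rates under the low regularity $\cH^2$ and the mild condition $\sigma\ge 1/2$.
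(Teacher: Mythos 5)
Your plan differs from the paper's (the paper compares $u^n$ with $\Pi_\lambda u(n\tau)$ on small subintervals and estimates the accumulated defect globally in discrete Strichartz norms, Theorem~\ref{theo:stab-cv} and Lemmas~\ref{lem-5-2}--\ref{lem:A4}, then proves stability separately in Section~\ref{sec:stability}), and as written it has genuine gaps precisely where the difficulties sit. First, your Abel summation only disposes of the boundary terms: after summation by parts the propagated cutoff contribution still contains $\sum_{m}\(\Pi_\lambda^m-\Id\)\(d_{m-1}-d_m\)$ with $d_m=S(m\tau)u\((n-m)\tau\)$, i.e.\ about $n\sim T/\tau$ terms each of size $\O(\tau)$ in $L^2$; to gain the missing factor $\tau$ you must pair $\Pi_\lambda^m-\Id$ with an $H$-derivative of the increments, which brings in $H\(\vp u+|u|^{2\si}u\)$. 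For $d=4,5$ such quantities (and likewise your claimed per-step bound $\|L_n^{\mathrm{LT}}\|_{L^2}\lesssim\tau^2\|u(n\tau)\|_{\cH^2}$, whose proof needs $\Delta(|u|^{2\si}u)\in L^2$, hence terms like $|u|^{2\si-1}|\nabla u|^2$) are \emph{not} controlled pointwise in time by the $\cH^2$ norm, since $H^2(\R^d)$ is neither an algebra nor embedded in $L^\infty$; one must work with the space--time norms $X^2_T$ (Proposition~\ref{prop:cauchy2}, and Corollary~\ref{cor:reg} to pass from $C([0,T];\cH^2)$ to $X^2_T$, a step you never invoke) and with dual Strichartz estimates as in Lemmas~\ref{lem-5-2} and \ref{lem:A4}. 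So the telescoping, which is the heart of your argument, is incomplete exactly where it matters.

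Second, your Gronwall step and your claimed $\cH^2$-boundedness of the scheme presuppose uniform-in-$\tau$ a priori bounds on the numerical iterates $u^n$ (in $\ell^\theta L^{r_0}$ or $H^1\cap L^\infty$-type norms): discrete Strichartz estimates are linear statements about $S_\lambda$ and do not by themselves bound the nonlinear iterates, so invoking them here is circular. In the paper this is the content of Section~\ref{sec:stability} (a bootstrap interlocked with Theorem~\ref{theo:stab-cv}, plus the density argument of Propositions~\ref{prop:density1}--\ref{prop:density2}), and it cannot be reduced to ``Lipschitz control of $\phi\mapsto|\phi|^{2\si}\phi$''. Finally, your $\cH^1$ rate via $\|e_n\|_{\cH^1}\lesssim\|e_n\|_{L^2}^{1/2}\|e_n\|_{\cH^2}^{1/2}$ requires a uniform $\cH^2$ bound on the numerical solution; per step $\|N(\tau)\phi\|_{\cH^2}\le(1+C\tau B(\phi))\|\phi\|_{\cH^2}$ with $B$ involving lower-order norms of $\phi$ ($L^\infty$, $W^{1,4}$, \dots), so iterating again needs the unproven stability and faces the same $d=4,5$ product estimates. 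The paper avoids this entirely: it only establishes $H^1$-level stability (Lemma~\ref{lem:local-stab-sup}, condition \eqref{eq:stab-num2}) and runs the error analysis directly at the $H^{1/2}$ level (third bullet of Theorem~\ref{theo:stab-cv}, Lemma~\ref{lem:5.2}). In short, the decomposition into Lie--Trotter and cutoff defects is a reasonable starting point, but the three key steps — closing the telescoped cutoff sum, bounding the local error at $\cH^2$ regularity in $d\ge4$, and establishing stability of the iterates — are missing or would fail as stated.
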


\begin{remark}[On the time $T$]\label{rem:T}
  As we will see in Section~\ref{sec:cauchy}, the time $T$ involved in
  Theorems~\ref{theo:main} and \ref{theo:main2} can be taken
  arbitrarily large (global solution) in two cases: $0<\si<2/d$
  ($L^2$-subcritical case) or $\eps=+1$ (defocusing case). When
  $\si\ge 2/d$ and $\eps=-1$ (focusing case), finite time blow up may
  occur (see e.g. \cite{Ca11,CazCourant}), so having a
  solution $u$  well-defined up to time $T$ becomes a nontrivial
  assumption, even though we know that a unique local in time solution
  always exists (for \emph{some} time $T>0$). In the case $\si\ge 2/d$
  and $\eps=-1$, we can prove that the solution is global provided
  that $\|u_0\|_{\cH^1}$ is sufficiently small. However, this
  theoretical statement does not yield an explicit smallness
  condition (unless $\si=2/d$, see e.g. \cite{CazCourant}), so it is
  not very convenient in practice. 
\end{remark}

\begin{remark}[Growth of the constants in the error estimates]
 Suppose that the solution to \eqref{eq:NLSP} is global in time (which
 is always granted if $\si<2/d$ or $\eps=+1$). In the case without
 potential, $V=0$, the dependence of the 
  constant $C(T)$ upon $T$ in (the analogue of) Theorems~\ref{theo:main} and
  \ref{theo:main2} has been tracked very carefully in
  \cite{ChoiKoh2021}. The proof yields a possible exponential
  growth, and our argument does not yield a slower growth. In some
  specific situation (extra properties related to 
  scattering theory when $V=0$), $C(T)$ can be taken uniform in $T$,
  \cite{CaSu24}. It is unlikely that a similar improvement can be
  expected in the case where $V$ is, e.g., a harmonic potential, since
  no strong dispersion is expected in this confining case.  
\end{remark}

\begin{remark}
  As pointed out in Example~\ref{ex:eigendecomp-harmo} in the case of
  the harmonic potential, the spectral cutoff $\chi$ may be understood
  as a spatial discretization, and the relation $\lambda=1/\tau$
  corresponds to a CFL condition. 
\end{remark}

\begin{remark}[Optimality of the estimates]
  In the case where $V$ is a harmonic potential, the numerical
  simulations from \cite{CaMa-p} show that the $\O(\tau)$ error
  estimate in Theorem~\ref{theo:main2} is sharp. Examining the
  sharpness of the estimate in Theorem~\ref{theo:main} and the second
  inequality in Theorem~\ref{theo:main2} would require careful
  simulations. 
\end{remark}

\begin{remark}[Nonautonomous equation]
In the case of a nonautonomous nonlinearity of the form
\begin{equation*}
  i\partial_t u +\Delta u= V(x)u + \vp(x)u+h(t) |u|^{2\si}u,
\end{equation*}
where the function $h$ is bounded on $[0,T]$, Theorems~\ref{theo:main}
and \ref{theo:main2} remain valid, as it is easy to check that the
introduction of $h$ does not change the error analysis. Moreover,
 the local Cauchy problem can be handled like in
Section~\ref{sec:cauchy}; global existence is not straightforward
though, as the conservation of energy is lost when $h$ is not
constant (see e.g. \cite[Section~4.11]{CazCourant}). 
\end{remark}

\subsection{Outline of the proof}
 Handling low regularity solutions $u$  follows the strategy introduced in
 \cite{Ignat2011} for $V=0$, and refined in \cite{ChoiKoh2021,ORS21}, based on the
 use of Strichartz estimates. Continuous in time Strichartz estimates
 are now classical (see e.g. \cite{CazCourant,KeelTao}), even in the
 case with a potential satisfying Assumption~\ref{hyp:V}, as recalled
 in Section~\ref{sec:strichartz}. In the case $V=0$, the discrete in
 time version requires a frequency cutoff, as shown in
 \cite{Ignat2011} and examined more thoroughly in \cite{ORS21}. For
 $V\not =0$, the notion to consider in order to generalize the
 frequency cutoff is at the heart of this paper. This cutoff
 $\Pi_\lambda$ must satisfy the commutation property
 \eqref{eq:S_lambda}, and various estimates involving $\Pi_\lambda$
 are needed in the course of the analysis.
 \smallbreak

 In Section~\ref{sec:tools},
 we introduce the technical tools whose use in this context
 (discretization in time) appears to be new. At the heart of the
 presentation lies the introduction of the spectral cutoff
 $\Pi_\lambda$, defined by functional calculus in
 Section~\ref{sec:funct}. This object turns out 
 to be a pseudodifferential operator, from \cite{HelfferRobert1983}:
 we list the properties related to Weyl-H\"ormander calculus which are
 used here, in Section~\ref{sec:pseudo}. In particular,
 Lemma~\ref{lem:sobolev-H} may have applications in other contexts,
 as it shows how to account for the presence of a potential satisfying
 Assumption~\ref{hyp:V} when studying e.g. Schr\"odinger
 equations. The main analytical properties of $\Pi_\lambda$ are
 established in Section~\ref{sec:PiPDO}.
 \smallbreak

 Strichartz estimates are
 stated in Section~\ref{sec:strichartz}. They are new only in the case
 of discretized time, which is described in more details, and relies
 on the properties of the spectral cutoff $\Pi_\lambda$. Analytical 
 properties of the exact (continuous) solution $u$ are given in
 Section~\ref{sec:cauchy}: 
 Lemma~\ref{lem:sobolev-H} is invoked several times, in order to adapt
 some results already 
 available in the case $V=0$, which can be found in
 e.g. \cite{CazCourant}.
 \smallbreak

 Sections~\ref{sec:tools}, \ref{sec:strichartz} and \ref{sec:cauchy}
 can be understood as central preparatory steps to adapt the tools from
 \cite{Ignat2011,ChoiKoh2021,ORS21} to the presence of the potential
 $V$. Their content is likely to be useful for other questions than
 the proof of Theorems~\ref{theo:main} and \ref{theo:main2}, and
 constitutes the main novelty of this article. In
 Section~\ref{sec:cv}, we show how stability implies 
 convergence: the scheme of the proof is the same as in
 \cite{ChoiKoh2021}, but the presence of the potential requires more
 involved estimates. Finally, Section~\ref{sec:stability} contains the
 proof of stability, by adapting the strategy of \cite{Ignat2011}
 based mostly on a bootstrap argument, whose details are a bit
 different here.

\subsection{Notations}
\label{sec:notations}

For $Y$ a Banach space, we shall denote
$\|f\|_{L^q_T Y}=\|f\|_{L^q(0,T;Y)}$.
The Japanese bracket is classically defined as 
$\<z\>=\(1+|z|^2\)^{1/2}$

\section{Technical tools} 
\label{sec:tools}

In this section, we gather technical results which are useful in the
rest of the paper, related to functional and pseudodifferential
calculus.  Most of the results that we shall use in the proof of
Theorems~\ref{theo:main} and \ref{theo:main2} are presented in
Subsection~\ref{sec:spectral}, and concern properties of the spectral
localization $\Pi_\lambda$. We will also rely on the equivalence of
norms stated in Lemma~\ref{lem:sobolev-H}. The perturbative potential
$\vp$ does not appear in this section.

\subsection{Preliminaries}
\label{sec:prelim}

We first state a 
consequence of Assumption~\ref{hyp:V}, which is 
obvious in the case of a quadratic potential:
\begin{lemma}\label{lem:4.1ZAMP}
  If $V$ satisfies
  Assumption~\ref{hyp:V}, then 
  \begin{equation*}
    |\nabla V(x)|^2\le  2d\|\nabla^2 V\|_{L^\infty}V(x),\quad \forall 
    x\in \R^d.
  \end{equation*}
\end{lemma}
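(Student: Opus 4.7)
The plan is to prove the pointwise inequality by a standard one-dimensional Taylor argument, using the fact that $V\ge 1\ge 0$ everywhere and that the Hessian is globally bounded.

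First, I would fix $x\in\R^d$, and discard the trivial case $\nabla V(x)=0$. Otherwise I set $e=-\nabla V(x)/|\nabla V(x)|$, a unit vector pointing against the gradient, and introduce the scalar function
\begin{equation*}
    f(t) := V(x+te), \qquad t\in\R.
\end{equation*}
Then $f(0)=V(x)$ and $f'(0)=e\cdot\nabla V(x)=-|\nabla V(x)|$. For the second derivative, I compute $f''(s)=e^{T}\nabla^2 V(x+se)e$ and estimate it using the bound $\sup_{i,j,x}|\partial_i\partial_j V(x)|=\|\nabla^2 V\|_{L^\infty}$ from Assumption~\ref{hyp:V}: since $e$ is a unit vector, Cauchy--Schwarz gives $\bigl(\sum_i|e_i|\bigr)^2\le d$, so
\begin{equation*}
   |f''(s)|\le \sum_{i,j}|e_i|\,|e_j|\,|\partial_i\partial_j V(x+se)|\le d\,\|\nabla^2 V\|_{L^\infty}.
\end{equation*}

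Next, I apply Taylor's formula with integral remainder to $f$ at $0$, which together with the preceding bound yields, for every $t\in\R$,
\begin{equation*}
    f(t)\le V(x)-|\nabla V(x)|\,t+\tfrac{d}{2}\|\nabla^2 V\|_{L^\infty}t^2.
\end{equation*}
Since $V\ge 1>0$ everywhere, we also have $f(t)\ge 0$ for every $t$, hence the quadratic polynomial on the right-hand side is nonnegative for every $t\in\R$. Imposing that its discriminant is non-positive gives
\begin{equation*}
    |\nabla V(x)|^2-4\cdot V(x)\cdot\tfrac{d}{2}\|\nabla^2 V\|_{L^\infty}\le 0,
\end{equation*}
which is exactly the claimed bound $|\nabla V(x)|^2\le 2d\,\|\nabla^2 V\|_{L^\infty}V(x)$.

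The proof is essentially routine; there is no genuine obstacle, only two small bookkeeping points to verify: the dimensional factor $d$ that arises when passing from the componentwise $L^\infty$ norm of $\nabla^2 V$ to the bound on the directional second derivative $e^T\nabla^2 V\,e$, and the use of the sign convention $e=-\nabla V/|\nabla V|$ so that $f'(0)=-|\nabla V(x)|$ produces the correct sign in the quadratic whose non-negativity forces the discriminant condition.
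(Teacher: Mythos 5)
Your proof is correct and follows essentially the same argument as the paper: a second-order Taylor expansion, nonnegativity of $V$, and a discriminant condition on the resulting quadratic. The only difference is bookkeeping — you expand along the single direction $e=-\nabla V(x)/|\nabla V(x)|$ and absorb the factor $d$ into the bound $|e^{T}\nabla^2 V\,e|\le d\,\|\nabla^2 V\|_{L^\infty}$, whereas the paper expands coordinatewise, obtains $|\partial_j V(x)|^2\le 2\|\nabla^2 V\|_{L^\infty}V(x)$ for each $j$, and sums over $j$ to get the same constant $2d$.
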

\begin{proof}
  The property $V\ge 1$ is not needed for this lemma, as $V\ge 0$ suffices.
  Taylor's formula yields, for $x,y\in \R^d$,
  \begin{align*}
    V(x+y) &= V(x) +y\cdot \nabla V(x) + \int_0^1
             (1-\theta)\<y,\nabla^2 V(x+\theta y)y\>d\theta\\
    &\le V(x) + y\cdot
    \nabla V(x) + \frac{|y|^2}{2}\|\nabla^2 V\|_{L^\infty}.  
  \end{align*}
  Since $V(x+y)\ge 0$, the polynomial in $y$ on the right hand side is
  everywhere nonnegative. Taking for instance $y=y_je_j$, where
  $(e_k)_{1\le k\le d}$ is the canonical basis of $\R^d$, the
  discriminant of $V(x) +y_j \partial_j V(x) + \frac{y_j^2}{2}\|\nabla^2
  V\|_{L^\infty}$ is nonpositive, hence
 \begin{equation*}
    |\partial_j V(x)|^2\le  2\|\nabla^2 V\|_{L^\infty}V(x),\quad \forall
    x\in \R^d.
  \end{equation*}
  The result follows by summing over $j$.
\end{proof}

We will also invoke the following result, known either as Schur test,
or Young inequality for integral operators, as stated in
\cite[Theorem~0.3.1]{Sogge2017}: 
\begin{proposition}\label{prop:schur}
  For $d\ge 1$, consider an operator $\cT$ with Schwartz kernel $K$,
  \begin{equation*}
    \cT\phi(x) = \int_{\R^d}K(x,y)\phi(y)dy.
  \end{equation*}
Let $1\le p\le q\le \infty$ and $r\ge 1$ be related by
\begin{equation*}
  \frac{1}{r} = 1-\frac{1}{p}+\frac{1}{q}.
\end{equation*}
If $K$ satisfies
\begin{equation*}
  \sup_{x\in \R^d} \|K(x,\cdot)\|_{L^r(\R^d)},\quad 
\sup_{y\in \R^d} \|K(\cdot,y)\|_{L^r(\R^d)}\le C,
\end{equation*}
then $\cT$ is continuous from $L^p(\R^d)$ to $L^q(\R^d)$, 
\begin{equation*}
  \|\cT\phi\|_{L^q(\R^d)}\le C \|\phi\|_{L^p(\R^d)},\quad \forall
  \phi\in \Sch(\R^d). 
\end{equation*}
\end{proposition}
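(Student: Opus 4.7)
The plan is to establish this classical Young--Schur inequality by a direct application of the three-factor generalized H\"older inequality in the $y$ variable, followed by Fubini's theorem in $x$; this avoids invoking complex interpolation and keeps the constant $C$ explicit.

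First I note that the relation $1/r = 1-1/p+1/q$ combined with $p\le q$ forces $r\le q$ (since $1/r - 1/q = 1/p'\ge 0$, writing $p'$ for the H\"older conjugate of $p$), so the exponents $a:=r/q$ and $b:=p/q$ both lie in $[0,1]$. The key step is to decompose the integrand as
\begin{equation*}
|K(x,y)\phi(y)| = \bigl[|K(x,y)|^{a}|\phi(y)|^{b}\bigr]\cdot |K(x,y)|^{1-a}\cdot |\phi(y)|^{1-b},
\end{equation*}
and apply H\"older in $y$ with exponents $(s_1,s_2,s_3)=(q,\,rq/(q-r),\,pq/(q-p))$. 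A direct check using the constraint on $r$ yields $1/s_1+1/s_2+1/s_3 = 1/q + (1/r-1/q) + (1/p-1/q) = 1$, and the exponents are calibrated so that $as_1 = r$, $bs_1=p$, $(1-a)s_2=r$, and $(1-b)s_3=p$. This gives the pointwise bound
\begin{equation*}
|\cT\phi(x)| \le \|K(x,\cdot)\|_{L^r}^{1-a}\,\|\phi\|_{L^p}^{1-b}\(\int_{\R^d}|K(x,y)|^r|\phi(y)|^p\,dy\)^{1/q}.
\end{equation*}

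Next, using the hypothesis $\sup_x\|K(x,\cdot)\|_{L^r}\le C$, I raise to the $q$-th power, integrate in $x$, and swap the order of integration via Fubini to obtain
\begin{equation*}
\|\cT\phi\|_{L^q}^{q} \le C^{q(1-a)}\|\phi\|_{L^p}^{q(1-b)}\int_{\R^d}|\phi(y)|^p\,\|K(\cdot,y)\|_{L^r}^{r}\,dy \le C^{q(1-a)+r}\|\phi\|_{L^p}^{q(1-b)+p},
\end{equation*}
where the second inequality uses the hypothesis $\sup_y\|K(\cdot,y)\|_{L^r}\le C$. Since $qa=r$ and $qb=p$ by definition of $a,b$, both exponents on the right hand side reduce to $q$, yielding the desired bound $\|\cT\phi\|_{L^q}\le C\|\phi\|_{L^p}$.

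The only real obstacle is bookkeeping at the degenerate endpoints $p=1$, $q=\infty$, or $p=q$ (which forces $r=1$), where one of $s_2,s_3$ becomes infinite and the three-factor decomposition formally breaks down. In those cases I would argue directly: Minkowski's integral inequality handles $(p,q)=(1,r)$, a single H\"older estimate in $y$ handles $(p,q)=(r',\infty)$, and the classical Schur lemma (one H\"older step combined with Fubini) handles $p=q$ with $r=1$. Equivalently, one could prove only the two off-diagonal endpoints and invoke Riesz--Thorin interpolation on the fixed operator $\cT$: since the line $1/r=1-1/p+1/q$ in the $(1/p,1/q)$-plane is exactly the segment joining $(1,1/r)$ to $(1/r',0)$, interpolation with the common constant $C$ yields the claim on the entire admissible range.
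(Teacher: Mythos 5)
Your proof is correct, and it follows a slightly different route than the paper's. You use the classical three-factor H\"older decomposition $|K\phi| = \bigl(|K|^{r}|\phi|^{p}\bigr)^{1/q}\cdot |K|^{r/p'}\cdot |\phi|^{p(1/p-1/q)}$ in the $y$ variable (your exponents $a=r/q$, $b=p/q$ give exactly this), and then conclude with Tonelli/Fubini and the hypothesis on $\|K(\cdot,y)\|_{L^r}$; the paper instead splits only the kernel, $|K|=|K|^{\alpha}|K|^{\beta}$ with $\alpha=r/p'$, $\beta=r/q$, $\alpha+\beta=1$, pulls out $\|\phi\|_{L^p}$ by a single H\"older step, and never swaps the order of integration. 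Your bookkeeping is the textbook Young-inequality argument and is airtight as written (the intermediate quantity $\int\!\!\int |K|^r|\phi|^p\,dy\,dx$ is exactly where both kernel hypotheses enter symmetrically), whereas the paper's displayed intermediate bound contains a factor $\sup_{y}|K(x,y)|^{\beta q}$ that is not literally controlled by the $L^r$ assumptions, so your Fubini-based grouping is arguably cleaner. Your treatment of the degenerate exponents ($p=1$, $q=\infty$, $p=q$, and the case $r=\infty$ forced by $p=1$, $q=\infty$) is adequate, and matches the paper's separate handling of $r=\infty$; the alternative you sketch — proving only the endpoints $L^1\to L^r$ and $L^{r'}\to L^\infty$ and invoking Riesz--Thorin along the segment $1/r=1-1/p+1/q$ — is also valid and gives the constant $C$ on the whole range, at the cost of importing interpolation, which the paper deliberately avoids in order to keep the proof self-contained.
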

\begin{proof}
  We give a direct proof, as it is simple. If $r$ is finite, setting
  \begin{equation*}
    \alpha=r\(1-\frac{1}{p}\),\quad \beta = \frac{r}{q},
  \end{equation*}
  we have $\alpha+\beta=1$, $\alpha,\beta\ge 0$, and
  H\"older inequality yields, for $\phi\in \Sch(\R^d)$,
  \begin{align*}
  \|\cT \phi\|_{L^q}^q&\le \int\(\int |K(x,y)|
  |\phi(y)|dy\)^qdx  = \int\(\int |K(x,y)|^{\alpha+\beta}
   |\phi(y)|dy\)^qdx\\
    & \le \|\phi\|_{L^p}^q\int \sup_{y\in \R^d}|K(x,y)|^{\beta q} \(
      \int |K(x,y)|^{\alpha p'}dy\)^{q/p'}dx.
  \end{align*}
  As $\beta q = \alpha p'=r$, the assumption yields
  \begin{equation*}
   \|\cT\phi\|_{L^q}^q\lesssim C^{q}\|\phi\|_{L^p}^q . 
 \end{equation*}
 and we note that
 \begin{equation*}
   (r-1)\(1+\frac{q}{p'}\) = (r-1)\( 1+q\(\frac{1}{r}-\frac{1}{q}\)\)
   = q\(1-\frac{1}{r}\) = q\(\frac{1}{p}-\frac{1}{q}\). 
 \end{equation*}
 The case $r=\infty$ corresponds to the situation where $p=1$ and
 $q=\infty$, and the proposition is straightforward.
\end{proof}
\subsection{Functional calculus}
\label{sec:funct}

We recall properties presented in e.g. \cite[Chapter~VIII]{ReedSimon1},
or \cite[Chapter~8]{CheverryRaymond2021}.
The standard way to define functions of a self-adjoint operator (typically,
$H$) consists in using the spectral measure (see
e.g. \cite{ReedSimon1}), or, equivalently Helffer-Sj\"ostrand formula (see
e.g. \cite{DimassiSjostrand}). We note that $H$ is self-adjoint on
$L^2(\R^d)$, it is elliptic, and its spectrum $\si(H)$ is included in
$[1,+\infty)$, 
since $V\ge 1$ by Assumption~\ref{hyp:V}. 
The square root of $H$ is defined in terms of quadratic forms, by
\begin{equation*}
  \<H\phi,\phi\> =\|H^{1/2}\phi\|_{L^2}^2,\quad \forall \phi\in \Sch(\R^d),
\end{equation*}
and we have
\begin{equation*}
  [H^{1/2},H]=0,
\end{equation*}
where $[A,B]$ denotes  the Lie commutator $[A,B]=AB-BA$.
We can also infer the
following result from \cite[Theorem~VIII.5]{ReedSimon1} and
\cite[Propositions~8.3 and 8.20]{CheverryRaymond2021}: 

\begin{proposition}\label{prop:funct-calc}
  Let $V$ satisfying Assumption~\ref{hyp:V}, and $H=-\Delta
  +V$. 
  \begin{itemize}
  \item For any $f\in \Sch(\R)$, $[f(H),H]=0$.
  \item For any $f,g\in \Sch(\R)$, $ f(H)g(H)=(fg)(H) =g(H)f(H)$, on
    \begin{equation*}
      \{u\in \operatorname{Dom}(g(H))\ ;\ g(H) u \in
      \operatorname{Dom}(f(H)) \}.
    \end{equation*}
  \item   For any nonnegative bounded Borelian function $f$, $\|f(H)\|_{L^2\to L^2}\le
    \|f\|_{L^\infty}$.
  \item If $H\psi=\lambda\psi$, then for any $f\in \Sch(\R)$,
    $f(H)\psi=f(\lambda)\psi$.   
  \end{itemize}
\end{proposition}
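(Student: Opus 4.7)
The plan is to invoke the spectral theorem for unbounded self-adjoint operators applied to $H$. Since $H$ is self-adjoint on its natural domain in $L^2(\R^d)$ with $\sigma(H)\subset[1,+\infty)$ by Assumption~\ref{hyp:V}, there exists a unique projection-valued spectral measure $E$ supported on $\sigma(H)$ such that
\begin{equation*}
  H = \int_\R \mu\,dE(\mu),
\end{equation*}
and, for every bounded Borel function $f$, one defines $f(H) = \int_\R f(\mu)\,dE(\mu)$ as a bounded operator on $L^2(\R^d)$. All four conclusions then follow from standard functional-calculus properties, and the proof amounts to checking each against the definition.

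For item~(i), both $Hf(H)$ and $f(H)H$ act, in the spectral picture, as multiplication by $\mu f(\mu)$, which is bounded for $f\in\Sch(\R)$; the only genuine point is the domain check ensuring that the range of $f(H)$ lies in $\operatorname{Dom}(H)$, which is immediate since $\mu f(\mu)\in L^\infty(\R)$. Item~(ii) is the multiplicative homomorphism property of the Borel functional calculus: the operator product $f(H)g(H)$ corresponds to pointwise multiplication of symbols, which is commutative, yielding both equalities once the domain qualification stated in the proposition is granted (a restriction that is void here because $f,g\in\Sch$ are bounded). Item~(iii) follows from the classical computation
\begin{equation*}
  \|f(H)\phi\|_{L^2}^2 = \int_\R |f(\mu)|^2\, d\<E(\mu)\phi,\phi\> \le \|f\|_{L^\infty}^2 \|\phi\|_{L^2}^2,
\end{equation*}
since the scalar measure $\mu\mapsto \<E(\mu)\phi,\phi\>$ has total mass $\|\phi\|_{L^2}^2$. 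Item~(iv) is immediate: if $H\psi=\lambda\psi$ with $\psi\neq 0$, then $\psi$ lies in the range of the spectral projector $E(\{\lambda\})$, and the spectral integral defining $f(H)\psi$ collapses to multiplication by $f(\lambda)$.

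No step here presents a real obstacle, as each is a textbook consequence of the spectral theorem; the only substantive inputs are the self-adjointness of $H$ and the lower bound $\sigma(H)\subset[1,+\infty)$ extracted from Assumption~\ref{hyp:V}, both recalled in the preceding paragraphs. For a more explicit alternative, one could replace the spectral measure by the Helffer--Sj\"ostrand representation $f(H)=-\frac{1}{\pi}\int_\C \bar\partial\tilde f(z)(H-z)^{-1}\,dx\,dy$ with $\tilde f$ an almost-analytic extension of $f$: commutation with $H$ in item~(i) then follows from the same property for the resolvent, and items~(ii)--(iv) are handled analogously. In either presentation the result is a direct transcription of \cite[Theorem~VIII.5]{ReedSimon1} and \cite[Propositions~8.3 and 8.20]{CheverryRaymond2021}.
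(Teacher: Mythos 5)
Your proposal is correct and follows the same route as the paper, which simply invokes the spectral theorem and Borel functional calculus via \cite[Theorem~VIII.5]{ReedSimon1} and \cite[Propositions~8.3 and 8.20]{CheverryRaymond2021} (with the Helffer--Sj\"ostrand formula mentioned as an equivalent alternative, exactly as you note). You have merely written out the standard verifications that the paper leaves to the references.
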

Recall that $\Pi_\lambda$ is defined by \eqref{eq:Pi_lambda}, with
$\chi\in C_c^\infty(\R^d;[0,1])$.
As a consequence of the first point of the above proposition,
$[\Pi_\lambda,H]=0$, and as 
$[\Pi_\lambda,\partial_t]=0$ since $\Pi_\lambda$ does not depend on time,
$[\Pi_\lambda,e^{-itH}]=0$, hence \eqref{eq:S_lambda}. We insist on
this aspect, since if we had defined initially $\Pi_\lambda$ as a
pseudodifferential operator of symbol
$\chi^2\((|\xi|^2+V(x))/\lambda\)$ (the definition is recalled
below, see \eqref{eq:PDO}), the commutation relation
$[\Pi_\lambda,H]=0$ would have become false, this bracket being only,
in general, a (nontrivial) smoothing operator. It seems crucial,
however, to know that $\Pi_\lambda$ is a pseudodifferential operator,
but  whose symbol is not fully explicit, see Section~\ref{sec:PiPDO}.

\subsection{Weyl-H\"ormander pseudodifferential calculus}
\label{sec:pseudo}
We present aspects of pseudodifferential calculus which can
be found in \cite[Chapter~XVIII]{Hormander3} and
\cite[Chapter~4]{HelfferNier2005}. This  will 
be crucial to establish properties of the spectral localization
$\Pi_\lambda$. 
Define the function $m$ by
 \begin{equation*}
    m(x,\xi) = \sqrt{|\xi|^2+V(x)}. 
  \end{equation*}
We introduce two metrics,
\begin{align*}
&  g_0(x,\xi) = dx^2+d\xi^2,\quad \text{ or, equivalently,}\quad
                 g_{0,(x,\xi)}(y,\eta) = |y|^2+|\eta|^2,\\
  & g_1(x,\xi) =dx^2+\frac{d\xi^2}{|\xi|^2+V(x)}=
    dx^2+\frac{d\xi^2}{m^2(x,\xi)},\quad \text{ or, equivalently,}\\
  &\quad  g_{1,(x,\xi)}(y,\eta) = |y|^2+\frac{|\eta|^2}{m^2(x,\xi)}.                                \end{align*}
We recall \cite[Definitions~18.4.1 and 18.5.1]{Hormander3} (the standard notation
$\si$ in 
the context of pseudodifferential calculus being used only in this
subsection, no confusion with the exponent of the nonlinearity in
\eqref{eq:NLSP} should be possible): 
\begin{definition}
  The metric $g$ is called $\si$ temperate if it is slowly varying,
  \begin{equation*}
    \exists c,C>0,\quad g_{(x,\xi)}(y,\eta)\le c\Longrightarrow
    g_{(x+y,\xi+\eta)}(z,\zeta)\le C g_{(x,\xi)}(z,\zeta),
  \end{equation*}
  and there exist $C,N>0$ such that for all $x_1,x,y,\xi_1,\xi,\eta\in \R^d$,
  \begin{equation*}
    g_{(x_1,\xi_1)}(y,\eta)\le C
    g_{(x,\xi)}(y,\eta)\(1+g^\si_{(x_1,\xi_1)}(x_1-x,\xi_1-\xi)^N\),
  \end{equation*}
  where
  \begin{equation*}
    g^\si_{(x,\xi)}(z\,\zeta) = \sup_{(y,\eta)\not =0}
    \frac{\si(y,\eta;z,\zeta)}{g_{(x,\xi)}(y,\eta)},
    \quad \text{where}\quad  \si(y,\eta;z,\zeta)=y\cdot 
      \zeta-z\cdot \eta.
  \end{equation*}
  A positive function $w$ is $\si$, $g$  temperate
  if it is $g$ continuous,
  \begin{equation*}
    \exists c,C>0,\quad g_{(x,\xi)}(y,\eta)<c\Longrightarrow
    \frac{w(x,\xi)}{C}\le w(x+y,\xi+\eta)\le C w(x,\xi),
  \end{equation*}
  and 
 there exist $C,N>0$ such that
\begin{equation}\label{eq:temperate}
  w(y,\eta)\le C w(x,\xi) \(1+|x-y|^2+|\xi-\eta|^2\)^N,\quad \forall
  x,y,\xi,\eta\in \R^d.
\end{equation}
\end{definition}
We note that, classically (see e.g. \cite{BCM08,HelfferNier2005}),
$g^\si_0=g_0$, and 
\begin{equation*}
  g_1^\si (x,\xi) = m^2(x,\xi)dx^2+d\xi^2. 
\end{equation*}
\begin{lemma}\label{lem:m-temperate}
  Let $m^2=|\xi|^2+V(x)$, where $V$ satisfies
  Assumption~\ref{hyp:V}. Then $m^2$ is $\si$, $g$ temperate, for
  $g=g_0$ and $g=g_1$. 
\end{lemma}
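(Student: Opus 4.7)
The plan is to verify both defining conditions of a $\sigma,g$ temperate weight for $m^2=|\xi|^2+V(x)$: the polynomial bound \eqref{eq:temperate} and the $g$-continuity (slow variation). I would start by establishing the polynomial bound, since it is the more substantial step and its proof recycles easily for slow variation.

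The core estimate is the comparison $V(y)\le CV(x)\bigl(1+|x-y|^2\bigr)$. I would obtain this from Taylor's formula at $x$:
\begin{equation*}
V(y)\le V(x)+|\nabla V(x)|\,|y-x|+\tfrac{1}{2}\|\nabla^2V\|_{L^\infty}|y-x|^2,
\end{equation*}
then substitute the bound $|\nabla V(x)|\le \sqrt{2d\|\nabla^2 V\|_{L^\infty}}\,\sqrt{V(x)}$ supplied by Lemma~\ref{lem:4.1ZAMP}, and absorb the cross term via Young's inequality $\sqrt{V(x)}|y-x|\le V(x)+\tfrac{1}{4}|y-x|^2$. Since $V\ge 1$, the resulting additive bound $V(y)\lesssim V(x)+|y-x|^2$ upgrades to the multiplicative form $V(y)\lesssim V(x)(1+|y-x|^2)$. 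Combined with the elementary $|\eta|^2\le 2|\xi|^2+2|\xi-\eta|^2\le 2m^2(x,\xi)(1+|\xi-\eta|^2)$, summing gives
\begin{equation*}
m^2(y,\eta)\le C\,m^2(x,\xi)\bigl(1+|x-y|^2+|\xi-\eta|^2\bigr),
\end{equation*}
which is precisely \eqref{eq:temperate} with $N=1$.

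For slow variation with respect to $g_0$: the condition $|y|^2+|\eta|^2<c$ with $c\le 1$ makes the factor $\bigl(1+|y|^2+|\eta|^2\bigr)^N$ uniformly bounded, so the polynomial bound applied with $(x+y,\xi+\eta)$ in both possible positions yields the two-sided inequality $C^{-1}m^2(x,\xi)\le m^2(x+y,\xi+\eta)\le C\,m^2(x,\xi)$.

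For slow variation with respect to $g_1$: if $|y|^2+|\eta|^2/m^2(x,\xi)<c$, then $|y|^2<c$ and $|\eta|^2<c\,m^2(x,\xi)$. The Taylor estimate gives $V(x+y)\le C(1+c)V(x)$, and $|\xi+\eta|^2\le 2|\xi|^2+2c\,m^2(x,\xi)\le(2+2c)m^2(x,\xi)$, whence $m^2(x+y,\xi+\eta)\le C'\,m^2(x,\xi)$. For the reverse inequality I would run the same argument with the roles of $(x,\xi)$ and $(x+y,\xi+\eta)$ swapped, using $|\xi|^2\le 2|\xi+\eta|^2+2|\eta|^2\le 2|\xi+\eta|^2+2c\,m^2(x,\xi)$ to isolate $|\xi|^2$, which is legitimate once $c$ is chosen small enough (e.g. $c<1/4$). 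The main subtlety, and essentially the only one in the proof, is this last step: since the metric $g_1$ itself depends on the base point, one must check that the chain of inequalities closes up symmetrically, which is why a smallness threshold on $c$ is imposed.
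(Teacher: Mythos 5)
Your proof is correct and follows essentially the same route as the paper: the key bound \eqref{eq:temperate} with $N=1$ is obtained exactly as there, via Taylor's formula, Lemma~\ref{lem:4.1ZAMP} and Young's inequality for the $V$ part (your triangle-inequality estimate for $|\eta|^2$, using $V\ge 1$, is an equivalent substitute for the paper's use of Peetre's inequality). The only difference is that you also spell out the slow-variation ($g$-continuity) checks for $g_0$ and $g_1$, including the absorption argument for the reverse inequality in the $g_1$ case, which the paper treats as standard and leaves implicit.
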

\begin{proof}
The term $|\xi|^2$ is standard, and Peetre inequality (see
e.g. \cite{AlGe07}) yields
\begin{equation*}
  |\eta|^2\le (1+|\xi|^2)(1+|\xi-\eta|^2)\le m^2(x,\xi))(1+|\xi-\eta|^2),
\end{equation*}
where we have used $V\ge 1$ for the last inequality. Regarding the
term $V$, we argue like in the proof of Lemma~\ref{lem:4.1ZAMP}, and
write that from Taylor formula and Assumption~\ref{hyp:V},
\begin{align*}
  V(y)\le V(x) + |y-x||\nabla V(x)| + C|y-x|^2&\lesssim V(x)+
  |y-x|\sqrt{V(x)}+ |x-y|^2 \\
&\lesssim V(x)+ |x-y|^2, 
\end{align*}
where we have used Lemma~\ref{lem:4.1ZAMP} for the second inequality,
and Young inequality for the last one. Then \eqref{eq:temperate}
follows, with $N=1$, since $V\ge 1$. 
\end{proof}

The class of symbols $S(w,g)$ is defined as follows:
\begin{definition}
  [Symbols associated with a weight and a metric]\label{def:symbols-g}
 Let $w$ be $\si$ temperate.
 The set $S(w,g)$ consists of functions $a\in C^\infty(\R^d\times
  \R^d)$ such that for every $x,\xi\in \R^d$, every $\alpha,\beta\in
  \N^d$,
  \begin{equation*}
    |\partial_x^\alpha\partial_\xi^\beta a(x,\xi)|\le C(\alpha,\beta)
    \left\{
      \begin{aligned}
        &w(x,\xi)\quad \text{if }g=g_0,\\
        &w(x,\xi)m(x,\xi)^{-|\beta|}\quad \text{if }g=g_1.
      \end{aligned}
    \right.
  \end{equation*}
The best constants $C(\alpha,\beta)$ define the seminorms of $a$,
\begin{equation*}
  |a|_{g_j,\alpha,\beta} = \sup_{(x,\xi)\in \R^d\times\R^d}
w(x,\xi)^{-1} m(x,\xi)^{j|\beta|}  |\partial_x^\alpha\partial_\xi^\beta a(x,\xi)|
,\quad j=0,1.
\end{equation*}
\end{definition}
We consider the Weyl quantization for pseudodifferential
operators. For a symbol $a\in S(w,g)$ and $\phi\in \Sch(\R^d)$, we
set, with the usual convention $D=-i\nabla$,
\begin{equation}\label{eq:PDO}
  a^w(x,D)\phi = (\Op a)\phi(x) = (2\pi)^{-d}\iint_{\R^{2d}}
  e^{i(x-y)\cdot\xi}a\(\frac{x+y}{2},\xi\)\phi(y)dyd\xi.
\end{equation}
We can also express $\Op a$ thanks to its Schwartz kernel,
\begin{equation*}
    \(\Op a \)\phi(x) =\int_{\R^d}K(x,y)\phi(y)dy,\quad K(x,y) :=
    (2\pi)^{-d}\int_{\R^d} 
  e^{i(x-y)\cdot \xi }    a\(\frac{x+y}{2},\xi\)d\xi.
\end{equation*}
We denote
\begin{equation*}
  OPS(w,g) =\{ a^w,\quad a\in S(w,g)\}. 
\end{equation*}
The composition of pseudodifferential operators is described in
 \cite[Theorem~18.5.4]{Hormander3}, and implies the following
result, where we consider only the metric $g_1$:
\begin{proposition}\label{prop:compositionPDO}
  If $a_1\in S(w_1,g_1)$ and $a_2\in S(w_2,g_1)$, then $a_1^w\circ
  a_2^w\in OPS(w_1w_2,g_1)$. The symbol of this pseudodifferential
  operator is denoted by $a_1\sharp a_2$, and we have, for all $N\in \N$,
  \begin{align*}
    a_1\sharp a_2 (x,\xi)-&
    \sum_{j=0}^{N-1}\frac{1}{j!}\(\frac{i\si(D_x,D_\xi;D_y,D_\eta)}{2}\)^j
    a_1(x,\xi)a_2(y,\eta)\Big|_{(y,\eta)=(x,\xi)} =r_N(x,\xi),\\
    &\text{where} \quad r_N\in
    S(m^{-N}w_1w_2,g_1). 
  \end{align*}
\end{proposition}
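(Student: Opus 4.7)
The plan is to reduce the statement to Hörmander's general composition theorem for the Weyl calculus, \cite[Theorem~18.5.4]{Hormander3}, of which the proposition becomes a direct consequence once one verifies that the metric $g_1$ is admissible and that the weights involved are $\sigma, g_1$-temperate.

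First I would check the admissibility of $g_1$: slow variation, $\sigma$-temperateness, and the uncertainty principle $g_1 \leq g_1^\sigma$. The slow variation and $\sigma$-temperateness of $g_1$ reduce to the corresponding properties of the weight $m^2$, which are exactly the content of Lemma~\ref{lem:m-temperate}, combined with the explicit formula $g_1^\sigma = m^2\,dx^2 + d\xi^2$ already recalled in the text. For the uncertainty principle, a direct computation gives
\[
\frac{g_{1,(x,\xi)}(y,\eta)}{g^\sigma_{1,(x,\xi)}(y,\eta)} = \frac{|y|^2 + |\eta|^2/m^2(x,\xi)}{m^2(x,\xi)\,|y|^2 + |\eta|^2} = \frac{1}{m^2(x,\xi)} \leq 1,
\]
where the final inequality uses $V \geq 1$, hence $m \geq 1$. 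In particular, the Planck function associated with $g_1$ is $h_1(x,\xi) = m(x,\xi)^{-1}$. The weights $w_1$ and $w_2$ are $\sigma, g_1$-temperate by assumption, and $w_1 w_2$ inherits the same property since this class is stable under pointwise products.

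With these preliminaries in place, \cite[Theorem~18.5.4]{Hormander3} applies verbatim and yields simultaneously all three assertions: the identity $a_1^w \circ a_2^w = (a_1 \sharp a_2)^w$ with $a_1 \sharp a_2 \in S(w_1 w_2, g_1)$; the stated asymptotic expansion through order $N-1$; and the fact that the remainder $r_N$ belongs to $S(w_1 w_2\, h_1^N, g_1) = S(m^{-N} w_1 w_2, g_1)$, which is precisely the class claimed. The main (and really only nontrivial) point is the verification of admissibility of $g_1$; once that has been recorded, no further calculation is required, the result being a textbook consequence of Weyl-Hörmander calculus.
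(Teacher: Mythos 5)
Your proposal is correct and follows the same route as the paper, which simply derives the proposition from \cite[Theorem~18.5.4]{Hormander3} together with Lemma~\ref{lem:m-temperate}; you merely make explicit the routine verifications (uncertainty principle $g_1\le g_1^\si$ via $m\ge 1$, identification of the Planck function $h_1=m^{-1}$, hence the gain $m^{-N}$ in the remainder) that the paper leaves implicit. Your computation of the ratio $g_1/g_1^\si=m^{-2}$ is accurate, so no gap remains.
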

We note that $H=\pp^w$ for
\begin{equation*}
  \pp(x,\xi) = |\xi|^2+V(x)\in S(m^2,g_1). 
\end{equation*}
As $V\ge 1$, $H$ is elliptic (since $\pp(x,\xi)=
m^2(x,\xi)$), and we
emphasize two consequences:
\begin{proposition}[From Theorem~4.8 in
  \cite{HelfferNier2005}]\label{prop:sqrtH} 
  The operator $\sqrt H$ is a pseudodifferential operator, belonging
  to $OPS(m,g_1)$, and 
  \begin{equation*}
    \sqrt H - (\sqrt \pp)^w\in OPS(1,g_1).
  \end{equation*}
\end{proposition}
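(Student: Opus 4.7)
The plan is to invoke the Weyl--H\"ormander functional calculus cited in the statement, Theorem~4.8 of \cite{HelfferNier2005}, after verifying its hypotheses in our setting. The framework is: $H=\pp^w$ with $\pp(x,\xi)=|\xi|^2+V(x)=m^2(x,\xi)\in S(m^2,g_1)$; the metric $g_1$ is slowly varying and $\sigma$-temperate, and $m^2$ is $(g_1,\sigma)$-temperate by Lemma~\ref{lem:m-temperate}; the symbol $\pp$ is globally elliptic since $\pp=m^2$, and $V\ge 1$ ensures $\sigma(H)\subset[1,+\infty)$, so that $\sqrt{H}$ is unambiguously defined by spectral theory.

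The first technical point is to check $\sqrt{\pp}\in S(m,g_1)$. Derivatives in $\xi$ of $m=\sqrt{|\xi|^2+V}$ produce factors $\xi_j/m$ (uniformly bounded) times powers of $m^{-1}$, yielding $|\partial_\xi^\beta\sqrt{\pp}|\le C\,m^{1-|\beta|}$. Derivatives in $x$ bring in $\partial^\alpha V$: these are uniformly bounded for $|\alpha|\ge 2$ by Assumption~\ref{hyp:V}, while $|\nabla V|\le C\sqrt{V}\le C\,m$ by Lemma~\ref{lem:4.1ZAMP}, so in particular $|\partial_{x_j}\sqrt{\pp}|=|\partial_{x_j}V|/(2m)\le C$. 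Fa\`a di Bruno combined with these bounds yields the full symbol estimates.

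Next comes the symbolic construction. Applying Proposition~\ref{prop:compositionPDO} with $a_1=a_2=\sqrt{\pp}\in S(m,g_1)$ and $N=2$, the $j=0$ term reproduces $\pp$, the $j=1$ term $\tfrac{i}{2}\{\sqrt{\pp},\sqrt{\pp}\}$ vanishes identically, and the remainder lies in $S(m^{-2}\cdot m^2,g_1)=S(1,g_1)$. Hence
\begin{equation*}
(\sqrt{\pp})^w\circ(\sqrt{\pp})^w=H+R_1,\qquad R_1\in OPS(1,g_1).
\end{equation*}
Iterating this scheme, I would build successive corrections $a_j\in S(m^{1-2j},g_1)$ and an asymptotic symbol $a\sim \sqrt{\pp}+a_1+a_2+\cdots$ with $a\in S(m,g_1)$, $a-\sqrt{\pp}\in S(m^{-1},g_1)$, and $a\sharp a-\pp\in S(m^{-\infty},g_1)$.

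The final step is to identify this parametrix with the true operator $\sqrt{H}$. Since both $a^w$ and $\sqrt{H}$ are positive self-adjoint operators whose squares differ by a smoothing operator, uniqueness of the positive square root of a positive self-adjoint operator together with a resolvent / Helffer--Sj\"ostrand comparison argument yields $\sqrt{H}-a^w\in OPS(m^{-\infty},g_1)$. Combined with $a-\sqrt{\pp}\in S(m^{-1},g_1)\subset S(1,g_1)$, this gives simultaneously $\sqrt{H}\in OPS(m,g_1)$ and $\sqrt{H}-(\sqrt{\pp})^w\in OPS(1,g_1)$. The main obstacle is precisely this last identification: passing from a formal parametrix that squares to $H$ modulo smoothing to the genuine spectral square root is where the full strength of the functional calculus of \cite{HelfferNier2005} is used, the rest being bookkeeping on symbol classes controlled via Lemma~\ref{lem:4.1ZAMP} and Assumption~\ref{hyp:V}.
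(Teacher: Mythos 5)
Your proposal is correct and lands in essentially the same place as the paper: the paper offers no independent proof, quoting the result directly from Theorem~4.8 of \cite{HelfferNier2005}, and your argument amounts to verifying that its hypotheses hold in this setting (temperance of $g_1$ and of the weight $m^2$ via Lemma~\ref{lem:m-temperate}, ellipticity $\pp=m^2$ with $V\ge 1$, and $\sqrt\pp\in S(m,g_1)$ via Lemma~\ref{lem:4.1ZAMP} and Assumption~\ref{hyp:V}) before invoking that same functional calculus for the decisive identification of the symbolic parametrix with the spectral square root. One caution: your intermediate heuristic — uniqueness of the positive square root when the squares differ by a smoothing operator — would not by itself close that step (self-adjointness of $a^w$ does not give positivity, and a smoothing difference of squares does not imply a smoothing difference of the roots without resolvent-level pseudodifferential analysis), but since you explicitly defer this to the functional calculus of \cite{HelfferNier2005}, exactly as the paper does, nothing essential is missing.
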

Together with Proposition~\ref{prop:compositionPDO}, and the proof of
\cite[Theorem~18.1.9]{Hormander3}, this implies: 
\begin{proposition}\label{prop:elliptic}
There exists $\qq\in S(m^{-2},g_1)$, with $\sqrt \qq\in S(m^{-1},g_1)$,
such that for all $N\in \N$,  
\begin{align*}
  &H\circ (\qq)^w-{\rm I},  \  (\qq)^w\circ H-{\rm I}\in
    OPS(m^{-N},g_1),\\
  &\sqrt H\circ (\sqrt \qq)^w-{\rm I},  \  (\sqrt \qq)^w\circ \sqrt
    H-{\rm I}\in 
    OPS(m^{-N},g_1).
\end{align*}
More precisely,
\begin{equation*}
  \qq(x,\xi) -\frac{1}{\pp(x,\xi)} \in  S(m^{-3},g_1),\quad
  \sqrt{\qq(x,\xi) }-\frac{1}{\sqrt{ \pp(x,\xi)}} \in  S(m^{-2},g_1).
\end{equation*}
 \end{proposition}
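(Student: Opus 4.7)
The plan is to carry out the classical elliptic parametrix construction for $H$ inside the Weyl--H\"ormander calculus associated with $g_1$, and then deduce the statements on $\sqrt{H}$ by the same scheme. The first step is to take the natural candidate $\qq_0(x,\xi) := 1/\pp(x,\xi) = 1/(|\xi|^2+V(x))$, which is smooth because $\pp \ge 1$ (thanks to $V \ge 1$). I would check $\qq_0 \in S(m^{-2},g_1)$ by induction on $|\alpha|+|\beta|$: each $\partial_{\xi_j}$ applied to $1/\pp$ produces a factor $2\xi_j/\pp^2$, gaining a power of $m^{-1}$ since $|\xi|\le m$; each $\partial_{x_j}$ produces $\partial_{x_j}V/\pp^2$, with $|\nabla V|\lesssim \sqrt{V}\le m$ by Lemma~\ref{lem:4.1ZAMP}, while higher $x$-derivatives of $V$ are bounded by Assumption~\ref{hyp:V}. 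Assembling these via the Fa\`a di Bruno formula gives $|\partial_x^\alpha\partial_\xi^\beta \qq_0|\lesssim m^{-2-|\beta|}$, as required. The estimate $\sqrt{\qq_0}=1/\sqrt{\pp}\in S(m^{-1},g_1)$ is obtained in the same way.

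Next I would invoke Proposition~\ref{prop:compositionPDO} with $a_1=\pp\in S(m^2,g_1)$ and $a_2=\qq_0\in S(m^{-2},g_1)$, for which $w_1w_2=1$. The $j$-th term of the expansion involves $j$ factors of $\partial_\xi$ distributed between $\pp$ and $\qq_0$ (each gaining a power of $m^{-1}$), hence it lies in $S(m^{-j},g_1)$; the $j=0$ term is $\pp\cdot\qq_0=1$. Therefore $\pp\sharp\qq_0=1+s_1$ with $s_1\in S(m^{-1},g_1)$. Setting $\qq_1:=-\qq_0 s_1\in S(m^{-3},g_1)$, the same expansion gives $\pp\sharp(\qq_0+\qq_1)=1+s_2$ with $s_2\in S(m^{-2},g_1)$. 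Iterating produces correctors $\qq_j\in S(m^{-2-j},g_1)$ killing the principal remainder at each stage, and Borel summation in the Weyl--H\"ormander class yields $\qq\sim\sum_j\qq_j$ with $\qq\in S(m^{-2},g_1)$ and $\qq-\qq_0\in S(m^{-3},g_1)$. This gives $H\circ\qq^w-I\in OPS(m^{-N},g_1)$ for all $N$; the left identity $\qq^w\circ H-I\in OPS(m^{-N},g_1)$ follows by a symmetric iteration (or by taking formal adjoints, $\pp$ being real).

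For the square root, $\qq_0>0$ is comparable to $m^{-2}$, and since $\qq-\qq_0\in S(m^{-3},g_1)$ is dominated by $\qq_0$ at infinity, I can modify $\qq$ by a compactly supported correction in $S(m^{-\infty},g_1)$ so that $\qq\ge\qq_0/2$ globally; this preserves all parametrix properties. Then $\sqrt{\qq}$ is smooth, and the Taylor expansion $\sqrt{\qq}=\sqrt{\qq_0}+(\qq-\qq_0)/(2\sqrt{\qq_0})+O((\qq-\qq_0)^2/\qq_0^{3/2})$, combined with $\sqrt{\qq_0}\in S(m^{-1},g_1)$ and $\qq-\qq_0\in S(m^{-3},g_1)$, shows $\sqrt{\qq}\in S(m^{-1},g_1)$ and $\sqrt{\qq}-1/\sqrt{\pp}\in S(m^{-2},g_1)$. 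To identify $(\sqrt{\qq})^w$ as a parametrix of $\sqrt{H}$, I would run the analogous iteration for $\sqrt{H}$, which by Proposition~\ref{prop:sqrtH} has principal symbol $\sqrt{\pp}$, producing some $r\in S(m^{-1},g_1)$ with $\sqrt{H}\circ r^w-I$ smoothing. The functional-calculus identity $\sqrt{H}\circ\sqrt{H}=H$ from Proposition~\ref{prop:funct-calc} forces $r\sharp r\equiv\qq\pmod{S(m^{-\infty},g_1)}$, and comparing principal symbols while iterating yields $r\equiv\sqrt{\qq}\pmod{S(m^{-\infty},g_1)}$, giving the remaining parametrix estimates.

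The main obstacle is the transfer from the parametrix of $H$ to the parametrix of $\sqrt{H}$: the pointwise square root $\sqrt{\qq}$ is not tied to the operator composition $\sharp$ in any direct way, and the identification with a genuine parametrix of $\sqrt{H}$ rests on uniqueness modulo $S(m^{-\infty},g_1)$ combined with $\sqrt{H}^2=H$. Some care is also needed in the positivity modification to be sure it does not disturb the symbol seminorm bounds. The iterative construction itself is routine once Lemma~\ref{lem:4.1ZAMP} and Lemma~\ref{lem:m-temperate} are in hand.
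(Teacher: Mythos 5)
Your construction of the parametrix of $H$ is correct and is exactly what the paper intends: the paper gives no written proof and simply invokes Proposition~\ref{prop:compositionPDO} together with the elliptic iteration from the proof of H\"ormander's Theorem~18.1.9, applied to $H$ (symbol $\pp\in S(m^2,g_1)$, elliptic since $\pp=m^2$) and, via Proposition~\ref{prop:sqrtH}, to $\sqrt H$ (symbol $\sqrt\pp$ modulo $S(1,g_1)$). Your verification that $1/\pp\in S(m^{-2},g_1)$ using Lemma~\ref{lem:4.1ZAMP}, the Neumann-type iteration, and the passage from right to left parametrix are all fine, as is the direct iteration for $\sqrt H$ producing a symbol $r\in S(m^{-1},g_1)$ with $r-1/\sqrt\pp\in S(m^{-2},g_1)$ and $\sqrt H\circ r^w-\mathrm{I}$, $r^w\circ\sqrt H-\mathrm{I}$ of infinite order.

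The gap is the final identification $r\equiv\sqrt\qq \pmod{S(m^{-\infty},g_1)}$, where $\sqrt\qq$ is the pointwise square root of your $H$-parametrix. Uniqueness of parametrices modulo smoothing combined with $\sqrt H\circ\sqrt H=H$ only gives $r\sharp r\equiv\qq \pmod{S(m^{-\infty},g_1)}$, and $\sharp$-square is not pointwise square: in the Weyl expansion the $j=1$ term of $r\sharp r$ vanishes (it is a Poisson bracket of $r$ with itself), but the $j=2$ term is in general a nonzero element of $S(m^{-4},g_1)$, so one can only conclude $r-\sqrt\qq\in S(m^{-3},g_1)$, not $S(m^{-\infty},g_1)$. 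Consequently the pointwise square root of the $H$-parametrix is not, in general, an infinite-order parametrix of $\sqrt H$, and no amount of "comparing principal symbols while iterating" will close this; read literally to all orders with one and the same $\qq$, this compatibility is precisely what cannot be arranged. The intended (and harmless) reading is the one your own construction already provides: take $\sqrt\qq$ to denote the symbol $r$ obtained from the elliptic construction applied directly to $\sqrt H$; its principal part $1/\sqrt\pp$ is the square root of the principal part of $\qq$, which is exactly the content of the "more precisely" clause, and this is all that is used later (e.g. $N=2$ in the proof of Lemma~\ref{lem:PiPDO}); at that finite precision your pointwise root is also admissible, since $r-\sqrt\qq\in S(m^{-3},g_1)$. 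A minor further point: your positivity modification need not be compactly supported — the region $\{m\le C\}$ where the corrected symbol could degenerate is unbounded in $x$ when $V$ is bounded — but any smooth bounded modification supported where $m$ is bounded lies in every $S(m^{-N},g_1)$ (recall $m\ge 1$), so that step survives with the wording adjusted.
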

The metric $g_1$ satisfies the assumption of \cite{Beals1979}, and we
have, from \cite[Theorem~3.1]{Beals1979}:
\begin{proposition}\label{prop:Beals}
 Let $1<p<\infty$.  If $a\in S(1,g_1)$, then $a^w$ is bounded in
 $L^p$, and its operator norm is controlled by finitely many seminorms
 of $a$. 
\end{proposition}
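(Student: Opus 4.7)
The plan is to deduce the proposition directly from Theorem~3.1 of \cite{Beals1979}, which asserts $L^p$ boundedness on $\R^d$ (for $1<p<\infty$) of pseudodifferential operators with symbol in $S(1,g)$, under the hypothesis that $g$ is an admissible Beals--H\"ormander metric: $\si$-temperate and satisfying the uncertainty principle. The proof therefore reduces to checking these two conditions for $g_1$, the rest being an invocation of the theorem. Control of the operator norm by finitely many seminorms of $a$ is built into Beals' statement, and no further work is required once admissibility is established.

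For the uncertainty principle, I would compute directly from the definition of $g_1$ and of the dual $g^\si$ recalled above that $g^\si_{1,(x,\xi)}(y,\eta)=m^2(x,\xi)|y|^2+|\eta|^2$, so that, pointwise in $(y,\eta)\neq 0$, the quotient factors exactly as
\[
\frac{g_{1,(x,\xi)}(y,\eta)}{g^\si_{1,(x,\xi)}(y,\eta)}=\frac{|y|^2+|\eta|^2/m^2(x,\xi)}{m^2(x,\xi)|y|^2+|\eta|^2}=\frac{1}{m^2(x,\xi)}\le 1,
\]
the last inequality using $m\ge 1$, which follows from $V\ge 1$. Hence the Planck function $h_{g_1}=1/m$ is uniformly bounded by $1$, and Beals' uncertainty hypothesis is met.

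For the $\si$-temperance of $g_1$, I observe that, because of the structural form $g_1=dx^2+d\xi^2/m^2$, slow variation and the temperate bound of type \eqref{eq:temperate} for the metric reduce to the corresponding properties of the weight $m^2$, which are exactly the content of Lemma~\ref{lem:m-temperate}. In particular, the only nontrivial estimate, $V(y)\lesssim V(x)+|x-y|^2$, is already handled there through Lemma~\ref{lem:4.1ZAMP} and Taylor's formula. The step I expect to require the most care is this verification of temperance for the $x$-component, since one must combine the bound on $V$ with the Peetre-type estimate for $|\xi|^2$ in a way compatible with Definition of $\si$-temperance; but Lemma~\ref{lem:m-temperate} makes this essentially bookkeeping. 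With both hypotheses of Beals checked, \cite[Theorem~3.1]{Beals1979} applies verbatim to symbols $a\in S(1,g_1)$, yielding the $L^p$ bound and the seminorm control simultaneously.
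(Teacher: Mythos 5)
Your overall route is the same as the paper's, which likewise just checks that $g_1$ is an admissible metric (via Lemma~\ref{lem:m-temperate} and the computation of $g_1^\si$) and then cites \cite[Theorem~3.1]{Beals1979}. However, the hypotheses you attribute to Beals are not sufficient, and the verification you carry out would not distinguish $g_1$ from the flat metric. ``$\si$-temperate plus the uncertainty principle $h_g\le 1$'' is the hypothesis for \emph{$L^2$} boundedness (Calder\'on--Vaillancourt/Weyl--H\"ormander); it cannot imply $L^p$ boundedness for $p\neq 2$, since the flat metric $g_0=dx^2+d\xi^2$ satisfies both conditions while $S(1,g_0)=S^0_{0,0}$ contains symbols whose quantizations are unbounded on $L^p$ for $p\neq 2$ (wave-type multipliers such as $e^{i|\xi|}$ suitably cut off near the origin, in dimension $d\ge 2$). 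Your check of the uncertainty principle uses only $m\ge 1$, i.e.\ precisely the information that does not separate $g_1$ from $g_0$, so as written the argument would ``prove'' a false statement. Beals' $L^p$ theorem carries an extra hypothesis, a lower bound on the $\xi$-weight in terms of $1+|\xi|$, which is what yields off-diagonal decay of the Schwartz kernel and hence Calder\'on--Zygmund/Schur-type bounds; this is exactly the mechanism stressed in the remark following the proposition in the paper (integrations by parts using $m(x,\xi)\ge |\xi|$, then Proposition~\ref{prop:schur} with $r=1$).

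The repair is one line away from your own computation: since $m^2=|\xi|^2+V\ge \max\(|\xi|^2,1\)$, the Planck function is $h_{g_1}=1/m\lesssim \<\xi\>^{-1}$, not merely $\le 1$, so $\xi$-derivatives of a symbol in $S(1,g_1)$ gain $\<\xi\>^{-1}$ as for $S^0_{1,0}$ symbols; once this additional hypothesis of \cite{Beals1979} is verified, the citation (or the direct kernel-decay argument sketched in the paper's remark) goes through, with the operator norm controlled by finitely many seminorms. Your reduction of the temperance of the metric to Lemma~\ref{lem:m-temperate} is fine: since $m\ge 1$ gives $g_1^\si\ge g_0$, the Euclidean bound \eqref{eq:temperate} established there is indeed enough.
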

\begin{remark}
  In the case $V=0$, the set $S(1,g_1)$ coincides with the class of
  symbols $S^0$ from \cite[Definition~18.1.1]{Hormander3}, and the
  above result meets e.g. \cite[Chapter~VI, Section~5.2]{Stein93}, 
\cite[Theorem~5.2]{Taylor3}, or
\cite[Theorem~3.1.6]{Sogge2017}. The proof is actually similar in our
case, based on decay estimates for the Schwartz kernel $K$ off the
diagonal (obtained by integrations by parts, using the property
$m(x,\xi)\ge |\xi|$), which makes it possible to eventually invoke
Proposition~\ref{prop:schur} (with $r=1$).
\end{remark}

As a first application of these general results, we compare Sobolev norms:
\begin{lemma}[Equivalence of norms]\label{lem:sobolev-H}
  Let $V$ satisfying Assumption~\ref{hyp:V}. For any $1< p <\infty$,
  there exist $C_1,C_2>0$ such that for any 
$\phi\in\Sch(\R^d)$,
\begin{align*}
  &\|\nabla \phi\|_{L^p}+ \|\phi\sqrt V\|_{L^p}\le C_1
  \|H^{1/2} \phi\|_{L^p}\le C_2 
  \(\|\nabla \phi\|_{L^p}+ \|\phi\sqrt V\|_{L^p}\),\\
&\|\Delta \phi\|_{L^p}+ \|V \phi \|_{L^p}\le C_1
  \|H \phi\|_{L^p}\le C_2 
  \(\|\Delta \phi\|_{L^p}+ \|V \phi\|_{L^p}\).
\end{align*}
\end{lemma}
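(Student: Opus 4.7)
The plan is to use the pseudodifferential machinery of Section~\ref{sec:pseudo} to realize $\nabla$, multiplication by $\sqrt V$, $\Delta$, and multiplication by $V$ as $L^p$-bounded pseudodifferential operators applied to $\sqrt H\phi$ or $H\phi$, and conversely. Since $V\ge 1$, we have $\sqrt{V(x)}\le m(x,\xi)$ and $V(x)\le m^2(x,\xi)$, so multiplication by $\sqrt V$ (resp.\ $V$) is a Weyl operator with symbol in $S(m,g_1)$ (resp.\ $S(m^2,g_1)$); likewise, $D_j$ has symbol $\xi_j\in S(m,g_1)$. The two central inputs will be Proposition~\ref{prop:elliptic} (parametrix for $H$ and $\sqrt H$) and Proposition~\ref{prop:Beals} ($L^p$-boundedness of operators in $OPS(1,g_1)$).

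For the first equivalence, I would obtain the upper bound $\|\nabla\phi\|_{L^p}+\|\sqrt V\phi\|_{L^p}\le C\|\sqrt H\phi\|_{L^p}$ by applying the parametrix identity $(\sqrt{\qq})^w\sqrt H=\mathrm I+R$ to write
\[ \nabla\phi=\bigl(\nabla(\sqrt{\qq})^w\bigr)\sqrt H\phi-(\nabla R)\phi,\qquad \sqrt V\phi=\bigl(\sqrt V\,(\sqrt{\qq})^w\bigr)\sqrt H\phi-(\sqrt V\,R)\phi.\]
By Proposition~\ref{prop:compositionPDO}, both $\nabla(\sqrt{\qq})^w$ and $\sqrt V\,(\sqrt{\qq})^w$ lie in $OPS(m\cdot m^{-1},g_1)=OPS(1,g_1)$, while $\nabla R$ and $\sqrt V\,R$ lie in $OPS(m^{-N+1},g_1)\subset OPS(1,g_1)$ for $N$ large; Proposition~\ref{prop:Beals} makes them all $L^p$-bounded, producing $\|\nabla\phi\|_{L^p}+\|\sqrt V\phi\|_{L^p}\le C\|\sqrt H\phi\|_{L^p}+C\|\phi\|_{L^p}$. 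For the reverse inequality, I would exploit the elementary symbolic factorization
\[ \sqrt{|\xi|^2+V(x)}=\sum_{j=1}^d\frac{\xi_j}{m(x,\xi)}\,\xi_j+\frac{\sqrt{V(x)}}{m(x,\xi)}\,\sqrt{V(x)},\]
whose coefficients $\xi_j/m$ and $\sqrt V/m$ lie in $S(1,g_1)$. Combining this with Proposition~\ref{prop:sqrtH} (which gives $\sqrt H-(\sqrt{\pp})^w\in OPS(1,g_1)$) and Proposition~\ref{prop:compositionPDO} produces a decomposition $\sqrt H=\sum_{j=1}^d B_j^w D_j+C^w\,\sqrt V+E'$, with $B_j,C\in S(1,g_1)$ and $E'\in OPS(1,g_1)$, the subprincipal terms of the $\sharp$-product being of order zero and hence absorbed into $E'$. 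Proposition~\ref{prop:Beals} then yields $\|\sqrt H\phi\|_{L^p}\le C(\|\nabla\phi\|_{L^p}+\|\sqrt V\phi\|_{L^p}+\|\phi\|_{L^p})$, and the last term is absorbed into $\|\sqrt V\phi\|_{L^p}$ via $V\ge 1$.

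The second equivalence is treated in the same spirit. The bound $\|H\phi\|_{L^p}\le\|\Delta\phi\|_{L^p}+\|V\phi\|_{L^p}$ is just the triangle inequality. Conversely, $\Delta\phi=(V-H)\phi$ reduces matters to estimating $\|V\phi\|_{L^p}$, and the parametrix $\qq^w H=\mathrm I+\tilde R$ gives $V\phi=(V\qq^w)H\phi-(V\tilde R)\phi$; both $V\qq^w\in OPS(m^2\cdot m^{-2},g_1)=OPS(1,g_1)$ and $V\tilde R\in OPS(m^{2-N},g_1)\subset OPS(1,g_1)$ for $N\ge 2$ are $L^p$-bounded.

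In each of these estimates the residual $\|\phi\|_{L^p}$ must be absorbed by $\|\sqrt H\phi\|_{L^p}$ (resp.\ $\|H\phi\|_{L^p}$), i.e.\ one must prove the $L^p$-boundedness of $\sqrt H^{-1}$ (and hence of $H^{-1}$). I expect this to be the main obstacle. For $p=2$ it is immediate from $H\ge 1$ via the spectral theorem. For general $p\in(1,\infty)$, my plan is to promote the left-parametrix $(\sqrt{\qq})^w\sqrt H=\mathrm I+R$ to a genuine left inverse: the remainder $R$, with symbol in $\bigcap_N S(m^{-N},g_1)$, has rapidly decaying smooth Schwartz kernel, so is compact on $L^p$; combined with the $L^2$-invertibility of $\sqrt H$, a standard Fredholm/compact-perturbation argument produces a true left inverse of $\sqrt H$ in $OPS(m^{-1},g_1)$, which is $L^p$-bounded by Proposition~\ref{prop:Beals}. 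Applying this twice yields the analogous bound for $H^{-1}$, completing the proof.
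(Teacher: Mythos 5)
Your route is not the one the paper's proof actually follows: the paper proves the lemma by bounding the imaginary powers $H^{is}$ (Hebisch), $(-\Delta)^{is}$ (H\"ormander--Mikhlin) and $V^{is}$ on $L^p$, checking via Proposition~\ref{prop:Beals} that $\Delta H^{-1}$ and $VH^{-1}$ are $L^p$-bounded, and then using Stein's analytic interpolation for $(-\Delta)^zH^{-z}$ and $V^zH^{-z}$; your parametrix strategy is instead essentially the alternative argument the paper sketches in the remark after the lemma. The symbolic parts of your proposal are sound: $\sqrt V\in S(m,g_1)$ and $V\in S(m^2,g_1)$ (using Lemma~\ref{lem:4.1ZAMP} and $V\ge1$), the compositions with the parametrices of Proposition~\ref{prop:elliptic} land in $OPS(1,g_1)$, and the factorization $\sqrt\pp=\sum_j(\xi_j/m)\xi_j+(\sqrt V/m)\sqrt V$ together with Propositions~\ref{prop:sqrtH} and \ref{prop:compositionPDO} does give the reverse inequality up to an $OPS(1,g_1)$ error.

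The genuine gap is your treatment of the residual $\|\phi\|_{L^p}$, i.e.\ the $L^p$-boundedness of $H^{-1/2}$ and $H^{-1}$. You claim the remainder $R$, with symbol in $\bigcap_N S(m^{-N},g_1)$, has a rapidly decaying kernel and is therefore compact on $L^p$; this is false in general. Rapid decay in $m(x,\xi)=\sqrt{|\xi|^2+V(x)}$ forces decay in $\xi$ only: Assumption~\ref{hyp:V} allows bounded potentials (e.g.\ $V\equiv1$), in which case $m\simeq\<\xi\>$ and the kernel of $R$ decays in $x-y$ but has no decay in $x$ along the diagonal. Concretely, with $V\equiv1$ the remainder can be (essentially) a Fourier multiplier $\rho(D)$ with $\rho\in\Sch(\R^d)$, and a nonzero translation-invariant operator is never compact on $L^p(\R^d)$. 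So $\mathrm{I}+R$ is not a compact perturbation of the identity, and the Fredholm argument collapses; moreover, even granting compactness, Fredholmness plus $L^2$-invertibility of $\sqrt H$ would not by itself give an $L^p$-bounded inverse lying in $OPS(m^{-1},g_1)$. The fact you need is true, but it should be imported rather than derived this way: as in the paper's remark, $H^{-1}\in OPS(m^{-2},g_1)$ (and likewise $H^{-1/2}\in OPS(m^{-1},g_1)$), a consequence of the same Helffer--Robert/Helffer--Nier functional calculus behind Proposition~\ref{prop:sqrtH}, after which Proposition~\ref{prop:Beals} gives $\|\phi\|_{L^p}\lesssim\|H^{1/2}\phi\|_{L^p}\lesssim\|H\phi\|_{L^p}$; alternatively, the issue disappears altogether if one follows the paper's interpolation proof.
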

\begin{proof}
We follow the same strategy as in the proof of \cite[Lemma~2.7]{KiViZh09}, and
invoke analytic interpolation (from \cite[Section~V.4]{SteinWeiss},
see also \cite[Theorem~2.7]{LiPo09}). 
  First, considering the function on $\R_+\setminus \{0\}$, $x\mapsto
  x^{is}$ for $s\in \R$, the main result from \cite{Hebisch1990}
  implies the bound
  \begin{equation*}
    \|H^{is}\phi\|_{L^p} \lesssim \|\phi\|_{L^p}.
  \end{equation*}
The estimate $\|V^{is}\phi\|_{L^p}\lesssim \|\phi\|_{L^p}$ is
straightforward, and $\| (-\Delta)^{is}\phi\|_{L^P}\lesssim
\|\phi\|_{L^p}$ follows from the H\"ormander-Mikhlin multiplier theorem
(see e.g. \cite[Theorem~7.9.5]{Hormander1}, or \cite[Theorem~2.8]{LiPo09}), so
 \begin{equation*}
 \|V^{is}\phi\|_{L^p}+  \| (-\Delta)^{is}\phi\|_{L^p}
 +\|H^{is}\phi\|_{L^p} \lesssim \|\phi\|_{L^p},\quad \forall s\in \R.
  \end{equation*}
On the other hand, triangle inequality yields
\begin{equation*}
  \|H \phi\|_{L^p}\le
  \|\Delta \phi\|_{L^p}+ \|V \phi\|_{L^p}.
\end{equation*}
The maps
\begin{equation*}
  (x,\xi)\mapsto \frac{|\xi|^2}{|\xi|^2+V(x) }\quad\text{and}\quad
  (x,\xi)\mapsto \frac{V(x)}{|\xi|^2+V(x) } 
\end{equation*}
define symbols, as can be checked
directly, keeping Lemma~\ref{lem:4.1ZAMP} in mind for the second
map. Proposition~\ref{prop:Beals} implies   
\begin{equation*}
\|\Delta H^{-1}\phi\|_{L^p}+ \|V H^{-1}\phi\|_{L^p}\lesssim \|\phi\|_{L^p},
\end{equation*}
hence the second inequality of the lemma by replacing $\phi$ with
$H\phi$. 
Analytic
interpolation for the operator-valued functions $z\mapsto
(-\Delta)^zH^{-z}$ and $V^zH^{-z}$ yields
\begin{equation*}
  \|(-\Delta)^s \phi\|_{L^p} + \|V^s\phi\|_{L^p}\lesssim
  \|H^s \phi\|_{L^p}\lesssim \|(-\Delta)^s \phi\|_{L^p} +
  \|V^s\phi\|_{L^p} ,\quad \text{for all }0\le s\le 1,
\end{equation*}
as the case $s=0$ is trivial. The first inequality of the lemma
corresponds to the case $s=1/2$.
\end{proof}

\begin{remark}
We  give an alternative proof of the main estimate in the proof of
Lemma~\ref{lem:sobolev-H}, in the 
spirit of \cite[Lemma~2.4]{YajZha04}:
\begin{equation}\label{eq:ineq-sob-H}
  \|\Delta \phi\|_{L^p}+ \|V \phi \|_{L^p}\lesssim
  \|H \phi\|_{L^p} .
\end{equation}
Setting $Q=\qq^w$, Proposition~\ref{prop:elliptic} yields
\begin{equation*}
  \Delta \phi = \Delta QH\phi  + \Delta R \phi,\quad V \phi = V QH\phi
  + V R\phi,\quad R \in OPS (m^{-2},g_1) .
\end{equation*}
Since $VQ$
and $\Delta \circ Q$ belong to $OPS (1,g_1)$,  Proposition~\ref{prop:Beals}
implies
\begin{equation*}
  \|\Delta  QH\phi\|_{L^p}+ \|V QH\phi \|_{L^p}\lesssim
  \|H \phi\|_{L^p} .
\end{equation*}
Also, $\Delta \circ R,VR\in  OPS (1,g_1)$, hence
\begin{equation*}
 \|\Delta R\phi\|_{L^p}+ \|V R\phi \|_{L^p}\lesssim
  \|\phi\|_{L^p}\lesssim \|H\phi\|_{L^p},
\end{equation*}
since $H^{-1}\in OPS (m^{-2},g_1)$, and using Proposition~\ref{prop:Beals}
again, hence
 \eqref{eq:ineq-sob-H}. Note that by considering
 Proposition~\ref{prop:elliptic} again (now for the parametrix of $\sqrt
 H$), we also have, by 
following the same lines as above,
\begin{equation*}
  \|\nabla \phi\|_{L^p}+ \| \phi \sqrt V\|_{L^p}\lesssim
  \|\sqrt H \phi\|_{L^p} ,
\end{equation*}
without invoking interpolation.
\end{remark}

The following consequences of Lemma~\ref{lem:sobolev-H} will be
convenient: if $1<p<\infty$, $f,g\in \Sch(\R^d)$, and $\gamma>0$,
\begin{equation}\label{eq:H12der}
\begin{aligned}
 & \|H^{1/2}(fg)\|_{L^p}\lesssim \|f\|_{L^{a_1}}\|H^{1/2}g\|_{L^{a_2}}+
 \|\nabla f\|_{L^{a_3}}\|g\|_{L^{a_4}}\\
 &\phantom{\|H^{1/2}(fg)\|_{L^p}} \lesssim
  \|f\|_{L^{a_1}}\|H^{1/2}g\|_{L^{a_2}}+ 
  \|H^{1/2} f\|_{L^{a_3}}\|g\|_{L^{a_4}},\\
&\qquad  \frac{1}{p}=\frac{1}{a_1}+\frac{1}{a_2}=
  \frac{1}{a_3}+\frac{1}{a_4},\\
&  \|H^{1/2}\(|f|^{\gamma}f\)\|_{L^p}\lesssim \|f\|_{L^{a_1}}^{\gamma}
  \|H^{1/2}f\|_{L^{a_2}},\quad  \frac{1}{p}=\frac{\gamma}{a_1}+\frac{1}{a_2}.
\end{aligned}
\end{equation}

\subsection{Semiclassical pseudodifferential calculus}

The spectral localization $\Pi_\lambda$ is defined by functional
calculus, in order to have the exact commutation relation
$[\Pi_\lambda,H]=0$.  In the case $V=0$, $\Pi_\lambda$ is a Fourier
multiplier, which makes it possible to establish various estimates,
needed for the analysis of error estimates for the discretization in
time. When $V\not =0$, the generalization of the notion of Fourier
multiplier is the notion of pseudodifferential operator. It is
therefore crucial to know that $\Pi_\lambda$ is indeed a
pseudodifferential operator: this follows from the initial result of
Helffer and Robert \cite{HelfferRobert1983}, which was revisited by
Dimassi and Sj\"ostrand \cite{DimassiSjostrand}.
For $\hbar\in (0,1]$, the semiclassical version of \eqref{eq:PDO}
is 
\begin{equation}\label{eq:pseudo-h}
 a^w(x,\hbar D) \phi=\(\Op_\hbar a \)\phi(x) =
  (2\pi\hbar)^{-d}\iint_{\R^{2d}} 
  e^{i(x-y)\cdot \xi /\hbar}    a\(\frac{x+y}{2},\xi\)\phi(y)dyd\xi.
\end{equation}
In view of Lemma~\ref{lem:m-temperate}, we have, from
\cite[Theorem~4.1]{HelfferRobert1983} or
  \cite[Theorem~8.7]{DimassiSjostrand}:

\begin{proposition}\label{prop:HelfferRobert}
 Let $P(\hbar)= \Op_\hbar \pp$,
 where $\pp\in S(m^2,g_0)$ is real-valued.
 If $f\in C_c^\infty(\R)$, then $f(P)\in \Op_\hbar
 (S(m^{-k},g_0))$ for every $k\in \N$.  In addition,
 \begin{equation*}
   f(P) = \Op_\hbar \(a_0 + \hbar a_1+\hbar^2 \rho^\hbar\),
 \end{equation*}
 where
 \begin{equation*}
   a_0(x,\xi)=  f\(\pp(x,\xi)\),\quad a_1(x,\xi) = \pp_1(x,\xi)f'\(\pp(x,\xi)\),
 \end{equation*}
 for $\pp_1\in S(m^2,g_0)$ and $\rho^\hbar\in S(m^{-k},g_0)$ uniformly in
 $\hbar\in (0,1]$   (the corresponding constants $C(\alpha,\beta)$ from
 Definition~\ref{def:symbols-g}  do not depend on $\hbar\in (0,1]$).
\end{proposition}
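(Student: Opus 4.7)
The plan is to give a self-contained sketch via the Helffer--Sj\"ostrand functional calculus, since the statement coincides with \cite[Theorem~8.7]{DimassiSjostrand}. The argument splits naturally into three steps: an almost analytic extension of $f$, a semiclassical parametrix of the resolvent $(z-P(\hbar))^{-1}$ with quantitative control in $\IM z$, and a Cauchy-type integration that recovers $f(P(\hbar))$ as a pseudodifferential operator with the desired symbolic expansion.

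First, I would construct an almost analytic extension $\tilde f\in C_c^\infty(\C)$ with $\tilde f_{|\R}=f$ and $|\bar\partial\tilde f(z)|\le C_N|\IM z|^N$ for every $N\in\N$. The self-adjointness of $P(\hbar)$ on $L^2(\R^d)$ (which follows from the ellipticity of $\pp$ and $V\ge 1$) makes the Helffer--Sj\"ostrand formula
\[ f(P(\hbar)) = -\frac{1}{\pi}\int_{\C}\bar\partial\tilde f(z)\,(z-P(\hbar))^{-1}\,dL(z) \]
meaningful, and reduces the problem to a symbolic description of $(z-P(\hbar))^{-1}$ for $\IM z\not=0$, with seminorms allowed to grow as negative powers of $|\IM z|$.

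Second, I would build a parametrix iteratively. The symbol $(z-\pp(x,\xi))^{-1}$ belongs to $S(m^{-2},g_0)$, with seminorms controlled by powers of $|\IM z|^{-1}$. Using the semiclassical Moyal expansion (the analogue of Proposition~\ref{prop:compositionPDO} adapted to \eqref{eq:pseudo-h}), I would set
\[ b^\hbar(z) = \sum_{j=0}^{M-1}\hbar^j b_j(z),\quad b_0 = (z-\pp)^{-1},\quad b_1 = \pp_1(z-\pp)^{-2}, \]
where $\pp_1\in S(m^2,g_0)$ is computed from second derivatives of $\pp$ via the subprincipal term in the Moyal product; the higher $b_j$ are obtained by matching orders so that
\[ \Op_\hbar(b^\hbar(z))\,(z-P(\hbar)) = I + \hbar^M R_M(z), \]
with $R_M(z)\in \Op_\hbar S(m^{-k},g_0)$ for any prescribed $k$, seminorms bounded by $|\IM z|^{-N_k}$. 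A Neumann series argument then gives $(z-P(\hbar))^{-1}=\Op_\hbar(b^\hbar(z))+\hbar^M\widetilde R_M(z)$ with a similar bound.

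Third, I would plug this expansion into the Helffer--Sj\"ostrand integral and use the identity
\[ -\frac{1}{\pi}\int_{\C}\bar\partial\tilde f(z)\,\frac{dL(z)}{(z-t)^{j+1}} = \frac{f^{(j)}(t)}{j!},\quad t\in\R, \]
which produces $a_0=f(\pp)$ from the $b_0$ term, $a_1=\pp_1 f'(\pp)$ from the $b_1$ term, and, for $j\ge 2$, a symbol in $S(m^{-2-k},g_0)$ for any $k$ (after reorganizing $f^{(j)}(\pp)$ into $S(m^{-k},g_0)$ using that $f$ is compactly supported and $\pp\sim m^2$). The remainder $\hbar^M\widetilde R_M$ is integrated against $\bar\partial\tilde f$, and its vanishing of infinite order on $\R$ absorbs the negative powers of $|\IM z|$ from the seminorms. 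The main obstacle is this quantitative bookkeeping: each composition in the parametrix loses a power of $|\IM z|$, and the balance between these losses and the prescribed vanishing order $N$ of $\bar\partial\tilde f$ is what yields a remainder in $\hbar^2 S(m^{-k},g_0)$ uniformly in $\hbar\in(0,1]$. This is the technical core of \cite{HelfferRobert1983,DimassiSjostrand}, whose treatment I would reproduce.
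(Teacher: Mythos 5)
Your sketch is correct and follows exactly the route the paper relies on: the paper gives no independent proof of this proposition, but deduces it directly from \cite[Theorem~4.1]{HelfferRobert1983} and \cite[Theorem~8.7]{DimassiSjostrand}, whose argument is precisely the Helffer--Sj\"ostrand formula with an almost analytic extension, a semiclassical resolvent parametrix with $|\IM z|^{-N}$ losses, and the Cauchy-type integration you describe. The only cosmetic remark is that for the Weyl quantization of an $\hbar$-independent real symbol the $\hbar^1$ term actually vanishes (the statement's $a_1=\pp_1 f'(\pp)$ simply accommodates the general $\hbar$-dependent case), which does not affect the validity of your outline.
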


\subsection{The spectral cutoff as a pseudodifferential operator}
\label{sec:PiPDO}
We can now prove that $\Pi_\lambda$, defined by functional calculus,
is indeed a pseudodifferential operator, whose kernel is estimated
like in the case $V=0$:

\begin{proposition}\label{prop:Pi-pseudo}
  Let $\chi\in C_c^\infty(\R;[0,1])$, equal to one on $[-1,1]$, and
  zero outside $[-2,2]$, and $\Pi_\lambda$ defined by
  \eqref{eq:Pi_lambda}, where $V$ satisfies Assumption~\ref{hyp:V}.
  \begin{itemize}
  \item  For every $\lambda>0$, $\Pi_\lambda $ is a pseudodifferential
    operator, $\Pi_\lambda\in OPS(1,g_1)$.
  \item Its Schwartz kernel $K_\lambda$ is such that
    \begin{equation*}
      \Pi_\lambda \phi(x)  = \int_{\R^d}K_\lambda(x,y)\phi(y)dy,\quad
      \forall \phi\in \Sch(\R^d),
    \end{equation*}
    and for all $N>0$, there exists $C_N>0$ independent of $\lambda\ge
    1$ such that 
    \begin{equation}\label{eq:est-K-lambda}
      |K_\lambda(x,y)|\le C_N\frac{\lambda^{d/2}}{\(1+ \sqrt\lambda|x-y|\)^N}.
    \end{equation}
    \item We have the following decomposition:
      \begin{equation*}
        \Pi_\lambda  = \pi_\lambda^w,\quad \pi_\lambda = a_\lambda
        +\frac{1}{\sqrt\lambda} b_\lambda + \frac{1}{\lambda} r_\lambda,
      \end{equation*}
      where
      \begin{equation*}
        a_\lambda(x,\xi) = \chi^2 \(\frac{|\xi|^2+V(x)}{\lambda}\),\quad
        b_\lambda(x,\xi) = \qq_1(x,\xi)\chi \chi
        '\(\frac{|\xi|^2+V(x)}{\lambda}\), 
      \end{equation*}
      for some $\qq_1\in C^\infty(\R^d\times \R^d)$ independent of
      $\lambda$, and $r_\lambda\in \cap_k S(m^{-k},g_0)$ whose
      seminorms are bounded uniformly in $\lambda\ge 1$. In particular,
      \begin{equation*}
        \forall \alpha,\beta\in \N^d,\quad \exists
        C(\alpha,\beta),\quad |\partial_x^\alpha\partial_\xi^\beta
        r_\lambda(x,\xi)|\le C(\alpha,\beta)m(x,\xi)^{-|\beta|},\quad
        \forall \lambda \ge 1.
      \end{equation*}
  \end{itemize}
\end{proposition}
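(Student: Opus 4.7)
The plan is to apply Proposition~\ref{prop:HelfferRobert} after a semiclassical rescaling that turns $\Pi_\lambda = \chi^2(H/\lambda)$ into the functional calculus of a genuine semiclassical operator. Setting $\hbar := 1/\sqrt\lambda \in (0,1]$, I identify
\[
H/\lambda = \hbar^2 H = -\hbar^2\Delta + \hbar^2 V(x) = \Op_\hbar(\pp^\hbar),\qquad \pp^\hbar(x,\xi) := |\xi|^2 + \hbar^2 V(x).
\]
A direct check using Assumption~\ref{hyp:V} (and the same bounds on $V$ used in Lemma~\ref{lem:m-temperate}) shows $\pp^\hbar \in S(m^2,g_0)$ with seminorms uniform in $\hbar\in(0,1]$: the $\hbar^2$ prefactor on $V$ only strengthens the required estimates relative to the $\hbar=1$ case. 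Moreover, $\hbar^2 H$ being the Weyl quantization of a real symmetric second-order differential operator, its semiclassical symbol carries no odd-power $\hbar$-correction.

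Next, applying Proposition~\ref{prop:HelfferRobert} (in the form allowing such tame $\hbar$-dependence of the symbol) with $f = \chi^2$, I obtain
\[
\Pi_\lambda = \Op_\hbar\bigl(\alpha_0 + \hbar \alpha_1 + \hbar^2 \rho^\hbar\bigr),\quad \alpha_0 = \chi^2(\pp^\hbar),\ \alpha_1 = \pp^\hbar_1 (\chi^2)'(\pp^\hbar),
\]
with $\pp^\hbar_1 \in S(m^2,g_0)$ (which the evenness remark above lets us take $\hbar$-independent, in fact identically zero) and $\rho^\hbar \in \cap_k S(m^{-k},g_0)$, all seminorms uniform in $\hbar$. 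Converting back to the standard Weyl quantization via $\Op_\hbar(a) = \Op(a(\cdot,\hbar\cdot))$, i.e.\ the substitution $\xi\mapsto \hbar\xi$, and using $\hbar^2(|\xi|^2+V(x)) = (|\xi|^2+V(x))/\lambda$, the leading term becomes exactly $a_\lambda(x,\xi)$, the $\hbar$-order contribution matches $(1/\sqrt\lambda)\,b_\lambda$ with $\qq_1 := 2\pp^\hbar_1$ (which is $\lambda$-independent by the previous remark), and the $\hbar^2$-remainder is $(1/\lambda)\,r_\lambda$ with $r_\lambda(x,\xi) = \rho^\hbar(x,\xi/\sqrt\lambda)$, giving the third bullet.

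I then verify $\pi_\lambda \in S(1,g_1)$ uniformly in $\lambda\ge 1$ directly from the decomposition: on the support of $a_\lambda$ one has $|\xi|^2+V \lesssim \lambda$, hence $m(x,\xi)\lesssim \sqrt\lambda$, so each $\xi$-derivative of $\chi^2((|\xi|^2+V)/\lambda)$ brings a factor $|\xi|/\lambda\lesssim m/\lambda \lesssim m^{-1}$, yielding $|\partial_x^\alpha\partial_\xi^\beta a_\lambda|\lesssim m^{-|\beta|}$. The same accounting, combined with the $\cap_k S(m^{-k},g_0)$ control on $\rho^\hbar$, yields the bound $|\partial_x^\alpha\partial_\xi^\beta r_\lambda|\le C(\alpha,\beta) m(x,\xi)^{-|\beta|}$, proving both the first and the ``In particular'' part of the third bullet. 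For the kernel estimate, I start from
\[
K_\lambda(x,y) = (2\pi)^{-d}\int_{\R^d} e^{i(x-y)\cdot\xi}\,\pi_\lambda\Bigl(\frac{x+y}{2},\xi\Bigr)\,d\xi,
\]
and integrate by parts $N$ times in $\xi$ using $(x-y)\cdot\nabla_\xi e^{i(x-y)\cdot\xi} = i|x-y|^2 e^{i(x-y)\cdot\xi}$. The effective $\xi$-support of $\pi_\lambda$ has volume $\lesssim \lambda^{d/2}$ (so trivially $|K_\lambda|\lesssim \lambda^{d/2}$), and each $\xi$-derivative of $\pi_\lambda$ on this support gains a factor $\lambda^{-1/2}$; after $N$ integrations by parts one gets $|K_\lambda(x,y)|\lesssim |x-y|^{-N}\lambda^{(d-N)/2} = \lambda^{d/2}(\sqrt\lambda|x-y|)^{-N}$, which combined with the trivial bound yields \eqref{eq:est-K-lambda}.

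The main obstacle is the second step: invoking Proposition~\ref{prop:HelfferRobert} with an $\hbar$-dependent symbol $\pp^\hbar$, tracking that every seminorm bound and the shape of the subprincipal term remain uniform in $\hbar$ (equivalently in $\lambda$), and checking that the $\Op_\hbar\to\Op$ rescaling really produces the claimed three-term decomposition with $\qq_1$ free of $\lambda$. The underlying calculus of \cite{HelfferRobert1983} accommodates this, but the bookkeeping must be carried out carefully; once the structure of $\pi_\lambda$ is in hand, the symbol-class verification and the kernel integration by parts are routine.
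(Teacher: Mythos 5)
Your proposal is correct and follows essentially the same route as the paper: the rescaling $\hbar=1/\sqrt\lambda$ so that $H/\lambda=\Op_\hbar\(|\xi|^2+\hbar^2V\)$, the application of Proposition~\ref{prop:HelfferRobert} with $f=\chi^2$ (with the same uniform-in-$\hbar$ bookkeeping), the conversion back to the standard Weyl quantization via $\xi\mapsto\hbar\xi$, and the kernel bound \eqref{eq:est-K-lambda} by combining the trivial $\lambda^{d/2}$ estimate with $N$ integrations by parts. The only divergence is your side claim that the $\hbar$-order term vanishes by parity of the Weyl calculus, hence $\qq_1$ is $\lambda$-independent (indeed zero) — a standard fact but asserted rather than proved, and not actually needed, since the paper keeps $b_\lambda$ and absorbs the $\lambda$-dependent leftovers of the rescaled subprincipal term into the $\lambda^{-1}$ remainder $r_\lambda$.
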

\begin{proof}
  Set $\hbar=1/\sqrt\lambda$, and $p_\hbar=|\xi|^2+ \hbar^2 V$. Then
  \begin{equation*}
    \frac{H}{\lambda} = -\hbar^2\Delta +\hbar^2V(x) = \Op_\hbar
   p_\hbar= \hbar^2 \pp^w,
  \end{equation*}
  and we have $p_\hbar \in S(m^2,g_0)$. We can therefore invoke
  Proposition~\ref{prop:HelfferRobert}, with $f=\chi^2$:
  \begin{equation*}
    \Pi_\lambda  = \Op_\hbar \alpha^\hbar,\quad \alpha^\hbar =
    \alpha_0 + \frac{1}{\sqrt\lambda} \alpha_1 + 
    \frac{1}{\lambda}\rho^\hbar,
  \end{equation*}
  with
  \begin{equation*}
    \alpha_0(x,\xi) = \chi^2\(|\xi|^2+ \hbar^2 V(x)\),\quad
    \alpha_1(x,\xi) = \pp_1(x,\xi) \chi\chi' \(|\xi|^2+ \hbar^2 V(x)\),
  \end{equation*}
  and $\pp_1\in S(m^2,g_0)$, $\rho^\hbar\in S(m^{-k},g_0)$ uniformly in $\hbar \in (0,1]$ for
  all $k\in \N$. The first claim of the proposition follows
  readily. In addition, we have
  \begin{align*}
    K_\lambda(x,y)& = (2\pi\hbar)^{-d}\int_{\R^d} e^{i(x-y)\cdot
      \xi/\hbar}\alpha^\hbar\(\frac{x+y}{2},\xi\)d\xi\\
&=\(\frac{\sqrt\lambda}{2\pi}\)^d
      \int_{\R^d}  e^{i\sqrt\lambda (x-y)\cdot 
      \xi}\alpha^\hbar\(\frac{x+y}{2},\xi\)d\xi.
  \end{align*}
  As $\alpha^\hbar$ is integrable in $\xi$, uniformly in $x\in \R^d$
  and $\hbar\in (0,1]$, $|K_\lambda(x,y)|\lesssim
  \hbar^{-d}=\lambda^{d/2}$. Since for all $\beta\in \N^d$, $\partial_\xi^\beta
  \alpha^\hbar$ is also integrable in $\xi$, uniformly in $x\in \R^d$
  and $\hbar\in (0,1]$, $N$ integrations by parts yield
  \begin{equation*}
    |K_\lambda(x,y)|\lesssim \frac{\lambda^{d/2}}{(\sqrt\lambda|x-y|)^N},
  \end{equation*}
  hence \eqref{eq:est-K-lambda} by combining these two estimates.

  Going back to the definitions \eqref{eq:PDO} and
  \eqref{eq:pseudo-h}, and changing variables in $\xi$, we have
  \begin{equation*}
    \Op_\hbar \alpha_0 =  a_\lambda^w,\ 
\Op_\hbar \alpha_1=
    b_\lambda^w + 
    \hbar \(r_1^\hbar\)^w, \  \Op_\hbar \rho^\hbar = \(r_2^\hbar\)^w,
  \end{equation*}
  where $r_1^\hbar,r_2^\hbar$ satisfy
  \begin{equation*}
    \forall \alpha,\beta\in \N^d, \ \forall k\in \N,
    \quad |\partial_x^\alpha\partial_\xi^\beta r_j^\hbar (x,\xi)|\le C
    m(x,\xi)^{-k},\quad \forall x,\xi\in \R^d,\ \forall \hbar \in (0,1].
  \end{equation*}
  The proposition follows by setting $r =r_1+r_2$.
 \end{proof}
\subsection{Estimates on the spectral localization}
\label{sec:spectral}

We can now establish some results which are the analogue of the tools
used in \cite{ChoiKoh2021,Ignat2011} for the case $V=0$, where
$\Pi_\lambda$ is a Fourier multiplier. 
\begin{lemma}\label{lem:PiPDO}
For any $1< p <\infty$, there exists $C$ such that for any
$\phi\in\Sch(\R^d)$ and $\lambda\ge 1$, 
   \begin{equation}\label{eq:Pi-lambda-Lp}
    \| \Pi_\lambda \phi\|_{L^p }
    \le C \|\phi\|_{L^p} ,
    \end{equation}
    \begin{equation}\label{eq:Pi-lambda-1}
    \left\| \Pi_\lambda\phi-\phi \right\|_{L^{p} }
    \le \frac{C}{\sqrt\lambda}\|H^{1/2} \phi \|_{L^{p} }.
    \end{equation}
 \begin{equation}\label{eq:Pi-lambda-1bis}
    \left\| \Pi_\lambda\phi-\phi \right\|_{L^{p} }
    \le \frac{C}{\lambda}\|H \phi \|_{L^{p} }.
    \end{equation}
 \end{lemma}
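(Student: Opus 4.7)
The plan is to prove \eqref{eq:Pi-lambda-Lp} by a direct appeal to the kernel estimate \eqref{eq:est-K-lambda}, and to reduce \eqref{eq:Pi-lambda-1}--\eqref{eq:Pi-lambda-1bis} to a uniform $L^p$ bound on $(I-\Pi_\lambda)H^{-\alpha}$ with $\alpha\in\{1/2,1\}$, via a Littlewood--Paley type decomposition of $1-\chi$.

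For \eqref{eq:Pi-lambda-Lp}, I would apply Schur's test (Proposition~\ref{prop:schur}) with $p=q$, hence $r=1$, to the kernel $K_\lambda$ furnished by Proposition~\ref{prop:Pi-pseudo}. Choosing any $N>d$ and performing the change of variables $z=\sqrt\lambda(x-y)$ gives
\[
  \sup_{x\in\R^d}\int_{\R^d}|K_\lambda(x,y)|\,dy\le C_N\int_{\R^d}(1+|z|)^{-N}\,dz<\infty,
\]
uniformly in $\lambda\ge 1$, with the symmetric bound in $y$ obtained in the same way.

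For \eqref{eq:Pi-lambda-1} and \eqref{eq:Pi-lambda-1bis}, I would first factor $1-\chi^2=(1-\chi)(1+\chi)$. The operator $(1+\chi)(H/\lambda)=I+\chi(H/\lambda)$ is uniformly $L^p$-bounded by the same Schur argument applied to $\chi$ in place of $\chi^2$, since the proof of Proposition~\ref{prop:Pi-pseudo} depends on the test function only through a finite number of its seminorms and hence carries over to any $f\in C_c^\infty(\R)$. It then suffices to establish
\[
  \|(1-\chi)(H/\lambda)H^{-\alpha}\|_{L^p\to L^p}\lesssim\lambda^{-\alpha},\quad \alpha\in\{1/2,1\},
\]
after which substituting $\phi$ by $H^\alpha\phi$ (valid for $\phi\in\Sch(\R^d)$) yields \eqref{eq:Pi-lambda-1} and \eqref{eq:Pi-lambda-1bis}. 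To prove the displayed bound I would use the telescoping identity
\[
  1-\chi(t)=\sum_{k\ge 1}\tilde\psi(t/2^{k-1}),\qquad \tilde\psi(s):=\chi(s/2)-\chi(s),
\]
where $\tilde\psi\in C_c^\infty(\R)$ is supported in the annulus $\{1/2\le|s|\le 2\}$. Setting $\Lambda_k:=2^{k-1}\lambda\ge 1$, functional calculus (Proposition~\ref{prop:funct-calc}) gives
\[
  (1-\chi)(H/\lambda)H^{-\alpha}=\sum_{k\ge 1}\Lambda_k^{-\alpha}\,G_\alpha(H/\Lambda_k),\qquad G_\alpha(s):=\tilde\psi(s)/s^\alpha\in C_c^\infty(\R),
\]
and each $G_\alpha(H/\Lambda_k)$ is $L^p$-bounded with constant independent of $k$ by the kernel-plus-Schur argument of the first step now applied with $G_\alpha$ in place of $\chi^2$. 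Summing the geometric series $\sum_{k\ge 1}2^{-\alpha(k-1)}$ delivers the required estimate.

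The main obstacle I anticipate is the uniformity of the $L^p$-boundedness of $G_\alpha(H/\Lambda_k)$ across all dyadic scales $\Lambda_k\ge 1$. This reduces to the observation that Proposition~\ref{prop:HelfferRobert} produces a pseudodifferential symbol whose seminorms are controlled by those of the test function independently of the semiclassical parameter; since the $G_\alpha$ are fixed functions (not depending on $k$), the constants in the kernel bound \eqref{eq:est-K-lambda} transfer uniformly in $k$, and the convergent geometric series in $\alpha>0$ extracts the decisive factor $\lambda^{-\alpha}$.
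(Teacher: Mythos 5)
Your proof of \eqref{eq:Pi-lambda-Lp} coincides with the paper's (kernel bound \eqref{eq:est-K-lambda} plus Proposition~\ref{prop:schur} with $r=1$), but for \eqref{eq:Pi-lambda-1}--\eqref{eq:Pi-lambda-1bis} you take a genuinely different, and correct, route. The paper writes $\Pi_\lambda-1=\lambda^{-\beta/2}\bigl[\lambda^{\beta/2}(\Pi_\lambda-1)(\qq^{\beta/2})^w H^{\beta/2}+\lambda^{\beta/2}(\Pi_\lambda-1)R_\beta\bigr]$ using the parametrix of Proposition~\ref{prop:elliptic}, checks the uniform symbol bound $\lambda^{\beta/2}(\pi_\lambda-1)\in S(m^\beta,g_1)$ for $\lambda\ge 1$, and concludes via the composition theorem (Proposition~\ref{prop:compositionPDO}) and Beals's $L^p$ theorem (Proposition~\ref{prop:Beals}); you instead stay within functional calculus, factoring $1-\chi^2=(1-\chi)(1+\chi)$, decomposing $1-\chi$ dyadically, extracting the exact factor $\Lambda_k^{-\alpha}$ by the spectral theorem, and bounding each $G_\alpha(H/\Lambda_k)$ uniformly on $L^p$ by rerunning the Helffer--Robert/kernel/Schur argument (which indeed applies to any fixed $f\in C_c^\infty(\R)$ with constants uniform in the semiclassical parameter), before summing the geometric series. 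The only points worth tightening are cosmetic: with the paper's normalization of $\chi$, $\tilde\psi=\chi(\cdot/2)-\chi$ is supported in $\{1\le|s|\le 4\}$ rather than $\{1/2\le|s|\le 2\}$; the identities $f(H)g(H)=(fg)(H)$ and $\tilde\psi(H/\Lambda_k)H^{-\alpha}=\Lambda_k^{-\alpha}G_\alpha(H/\Lambda_k)$ are used for bounded Borel functions, which goes slightly beyond the literal statement of Proposition~\ref{prop:funct-calc} but is standard spectral calculus; and since $\si(H)\subset[1,\infty)$ one should take $G_\alpha(s)=\tilde\psi(s)|s|^{-\alpha}$ to avoid $s^{1/2}$ for $s<0$, and identify the $L^p$-limit of the partial sums with the $L^2$-limit (e.g. by an a.e.\ subsequence). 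As for what each approach buys: the paper's argument is a two-line application once the Weyl--H\"ormander machinery (parametrix, composition, Beals) is in place, machinery that is reused for Lemmas~\ref{lem:sobolev-H} and \ref{lem:perte}; your argument avoids Beals's theorem and the composition calculus altogether, needing only the kernel estimate and Schur's test, and as a by-product it gives all three estimates for the full range $1\le p\le\infty$, whereas the route through Proposition~\ref{prop:Beals} is confined to $1<p<\infty$.
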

 \begin{remark}
   If we consider only the case $p=2$, the above lemma is a direct
   consequence of Proposition~\ref{prop:funct-calc}, since $\chi$ is
   bounded, as well as 
   \begin{equation*}
     g_j(z) = \frac{\chi^2(z/\lambda)-1}{z^{j/2}} =
     \frac{1}{\lambda^{j/2}}\frac{\chi^2(\zeta)-1}{\zeta^{j/2}}
     \Big|_{\zeta=z/\lambda} ,\quad j=1,2,
   \end{equation*}
is bounded, with $\|g_j\|_{L^\infty}\lesssim \lambda^{-j/2}$. However,
we will need the above inequalities in the case $p\not =2$, for which
pseudodifferential calculus turns out to be very helpful. 
 \end{remark}
 \begin{proof}
   The first inequality is a direct consequence of
   \eqref{eq:est-K-lambda} and Proposition~\ref{prop:schur}. 
\smallbreak

To prove \eqref{eq:Pi-lambda-1} and \eqref{eq:Pi-lambda-1bis}, we invoke
Proposition~\ref{prop:elliptic} (with $N=2$): if $\beta=1$ or $2$,
\begin{equation*}
  \Pi_\lambda -1 = \frac{1}{\lambda^{\beta/2}} \lambda^{\beta/2}\(
  \(\Pi_\lambda-1\) \(\qq^{\beta/2}\)^w  H^{\beta/2} +
  \(\Pi_\lambda-1\) R_\beta\),  
\end{equation*}
with $R_\beta\in OPS(m^{-2},g_1)$. 
In view of our assumption our $\chi$, the symbol $\pi_\lambda$, given in
Proposition~\ref{prop:Pi-pseudo}, satisfies:
\begin{equation*}
  \lambda^{\beta/2}\(\pi_\lambda-1\)=
  \(\frac{\sqrt\lambda}{m}\)^\beta\(\pi_\lambda-1\) m^\beta  
\in S(m^{\beta},g_1), \quad
  \text{uniformly in }\lambda\ge 1. 
\end{equation*}
Therefore, Proposition~\ref{prop:compositionPDO} implies that 
\begin{equation*}
  \lambda^{\beta/2}
  \(\Pi_\lambda-1\) \(\qq^{\beta/2}\)^w \text{ and }\lambda^{\beta/2} 
  \(\Pi_\lambda-1\) R_\beta
\end{equation*}
are pseudodifferential in $OPS(1,g_1)$, whose symbols have seminorms
uniformly bounded for $\lambda\ge 1$. Proposition~\ref{prop:Beals}
then concludes the proof. 
\end{proof}

\begin{lemma}\label{lem:perte}
   For any $1< p<\infty$, there exists $C$ such that for any
   $\phi\in\Sch(\R^d)$ and any $\lambda\ge 1$,
  \begin{equation*}
    \left\|  \Pi_\lambda  H^{1/2}\phi\right\|_{L^p} \le C \sqrt\lambda
    \|\phi\|_{L^p}. 
  \end{equation*}
\end{lemma}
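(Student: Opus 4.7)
The plan is to show that $\lambda^{-1/2}\Pi_\lambda H^{1/2}\in OPS(1,g_1)$ with seminorms uniform in $\lambda\ge 1$, so that the desired estimate follows from Proposition~\ref{prop:Beals}. Since $[\Pi_\lambda,H]=0$ by Proposition~\ref{prop:funct-calc}, we may write $\Pi_\lambda H^{1/2}=H^{1/2}\Pi_\lambda$, and using Proposition~\ref{prop:sqrtH} to decompose $H^{1/2}=(\sqrt{\pp})^w+R$ with $R\in OPS(1,g_1)$, we obtain
\[
\Pi_\lambda H^{1/2}=(\sqrt{\pp})^w\Pi_\lambda+R\Pi_\lambda.
\]
The remainder piece is harmless: by Proposition~\ref{prop:Pi-pseudo} the symbol $\pi_\lambda$ lies in $S(1,g_1)$ with seminorms uniform in $\lambda$, and Proposition~\ref{prop:compositionPDO} then places $R\Pi_\lambda$ in $OPS(1,g_1)$ uniformly in $\lambda\ge 1$, which persists after multiplying by $\lambda^{-1/2}\le 1$.

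For the principal piece, Proposition~\ref{prop:compositionPDO} applied with $\sqrt{\pp}\in S(m,g_1)$ and $\pi_\lambda\in S(1,g_1)$ yields $(\sqrt{\pp})^w\Pi_\lambda=(\sqrt{\pp}\cdot\pi_\lambda+r_1)^w$ with $r_1\in S(1,g_1)$ uniformly in $\lambda$. The whole question thus reduces to showing
\[
\lambda^{-1/2}\sqrt{\pp}\cdot\pi_\lambda\in S(1,g_1)\quad\text{uniformly in }\lambda\ge 1.
\]
Using the expansion $\pi_\lambda=a_\lambda+\lambda^{-1/2}b_\lambda+\lambda^{-1}r_\lambda$ of Proposition~\ref{prop:Pi-pseudo}, the $b_\lambda$ and $r_\lambda$ contributions come with additional powers of $\lambda^{-1/2}$ (and, for $r_\lambda$, the fast $m^{-k}$ decay absorbs the $\sqrt{\pp}\le m$ factor), and are easily seen to lie in $S(1,g_1)$ uniformly. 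The delicate term is
\[
\lambda^{-1/2}\sqrt{\pp}\cdot a_\lambda=\chi^2(\pp/\lambda)\sqrt{\pp/\lambda}=\Psi(\pp/\lambda),\qquad\Psi(u):=\chi^2(u)\sqrt{u}.
\]

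The main obstacle is that $\Psi$ is only H\"older at $u=0$, with $|\Psi^{(j)}(u)|\lesssim u^{1/2-j}$, while on the support of $\Psi(\pp/\lambda)$ one only has the $\lambda$-dependent lower bound $\pp/\lambda\ge 1/\lambda$. The blow-up is nevertheless compensated by the constraint $\pp\le 2\lambda$ on the same support. By the Fa\`a di Bruno formula, $\partial_x^\alpha\partial_\xi^\beta\Psi(\pp/\lambda)$ is a finite sum of terms
\[
\Psi^{(j)}(\pp/\lambda)\,\lambda^{-j}\,\prod_{i=1}^{j}\partial_x^{\alpha_i}\partial_\xi^{\beta_i}\pp,\qquad \sum_i(\alpha_i,\beta_i)=(\alpha,\beta),\quad |\alpha_i|+|\beta_i|\ge 1.
\]
Since $\pp(x,\xi)=|\xi|^2+V(x)$, the only non-vanishing factors are of ``pure'' type, bounded by $|\partial_x\pp|\lesssim\sqrt{\pp}$ (Lemma~\ref{lem:4.1ZAMP}) and $|\partial_\xi\pp|\le 2\sqrt{\pp}$ in order one, and by constants in higher order (with $\partial_\xi^k\pp=0$ for $k\ge 3$). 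Combining these bounds with $|\Psi^{(j)}(u)|\lesssim u^{1/2-j}$ and $\pp\le 2\lambda$ on the support, a direct tally of powers of $\pp$ and $\lambda$ shows that each such term is bounded by $C(\alpha,\beta)\,m^{-|\beta|}$ uniformly in $\lambda\ge 1$. Proposition~\ref{prop:Beals} then closes the argument.
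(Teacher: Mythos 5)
Your proof is correct and follows essentially the same route as the paper's: commute $\Pi_\lambda$ with $H^{1/2}$, use the composition calculus together with the symbol expansion of Proposition~\ref{prop:Pi-pseudo}, reduce to the principal term $\sqrt{\pp}\,\chi^2(\pp/\lambda)=\sqrt\lambda\,\Psi(\pp/\lambda)$, and conclude with Proposition~\ref{prop:Beals}. The only difference is that you carry out the Fa\`a di Bruno verification that $\Psi(\pp/\lambda)\in S(1,g_1)$ uniformly in $\lambda$ (using Lemma~\ref{lem:4.1ZAMP} and $\pp\le 2\lambda$ on the support), a step the paper leaves implicit.
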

\begin{proof}
  In view of Proposition~\ref{prop:sqrtH}, $H^{1/2}\in OPS(m,g_1)$,
  and from Proposition~\ref{prop:Pi-pseudo}, $\Pi_\lambda\in
  OPS(1,g_1)$, so Proposition~\ref{prop:compositionPDO} implies that
  the operator $\Pi_\lambda  H^{1/2}= H^{1/2}\Pi_\lambda$ is a
 pseudodifferential operator, $\Pi_\lambda  H^{1/2}\in
 OPS(m,g_1)$. Propositions~\ref{prop:compositionPDO}, \ref{prop:sqrtH}
 and \ref{prop:Pi-pseudo} also entail that the symbol of $\Pi_\lambda
 H^{1/2}$ is 
 \begin{equation*}
\sqrt \pp \chi^2\(\frac{\pp}{\lambda}\) +\qq_1 \sqrt{\frac{\pp}{\lambda}}
\chi\chi'\(\frac{\pp}{\lambda}\) + \frac{1}{\lambda}\rho_\lambda,
\end{equation*}
where $\rho_\lambda \in S(1,g_1)$ has its seminorms bounded uniformly
in $\lambda\ge 1$. Writing
\begin{equation*}
\sqrt \pp \chi^2\(\frac{\pp}{\lambda}\)=
 \sqrt\lambda \sqrt{\frac{\pp}{\lambda}} \chi^2\(\frac{\pp}{\lambda}\) ,
\end{equation*}
Proposition~\ref{prop:Beals} yields the result. 
\end{proof}

We conclude this section with an easy generalization of Bernstein inequality:
\begin{lemma}\label{lem:bernstein}
   For any $1\le  p\le q\le \infty$, there exists $C$ such that for any
   $\phi\in\Sch(\R^d)$ and any $\lambda\ge 1$,
 \begin{equation*}
 \|\Pi_\lambda \phi\|_{L^q(\R^d)}\le C
 \lambda^{\frac{d}{2}\(\frac{1}{p}-\frac{1}{q}\)} \|\phi\|_{L^p(\R^d)}.
 \end{equation*}
\end{lemma}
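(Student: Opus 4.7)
The plan is to deduce this inequality directly from the pointwise Schwartz kernel estimate \eqref{eq:est-K-lambda} established in Proposition~\ref{prop:Pi-pseudo}, combined with the Schur/Young integral operator bound in Proposition~\ref{prop:schur}. The key observation is that the kernel of $\Pi_\lambda$ satisfies a scaling at the semiclassical parameter $\hbar=1/\sqrt\lambda$, which is exactly what produces the factor $\lambda^{\frac{d}{2}(1/p-1/q)}$ when we integrate against a fixed $L^r$ weight.

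First I would choose $r\in[1,\infty]$ defined by the Young relation $1/r=1-1/p+1/q$, which is admissible since $p\le q$. I would then estimate $\|K_\lambda(x,\cdot)\|_{L^r}$ (and the symmetric quantity in $y$) by taking $N$ in \eqref{eq:est-K-lambda} large enough so that $Nr>d$, and performing the change of variables $z=\sqrt\lambda(y-x)$, which yields
\begin{equation*}
\|K_\lambda(x,\cdot)\|_{L^r}^r \le C_N^r\int_{\R^d}\frac{\lambda^{rd/2}}{\(1+\sqrt\lambda|x-y|\)^{Nr}}dy = C_N^r\,\lambda^{\frac{d}{2}(r-1)}\int_{\R^d}\frac{dz}{(1+|z|)^{Nr}}.
\end{equation*}
The remaining integral is a finite constant, so
\begin{equation*}
\|K_\lambda(x,\cdot)\|_{L^r}\le C\,\lambda^{\frac{d}{2}(1-1/r)}= C\,\lambda^{\frac{d}{2}(1/p-1/q)},
\end{equation*}
uniformly in $x\in\R^d$ and $\lambda\ge 1$, and likewise for $\sup_y\|K_\lambda(\cdot,y)\|_{L^r}$. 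Proposition~\ref{prop:schur} then delivers the claimed inequality on the range $1\le p\le q<\infty$ with $r$ finite.

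The only two boundary cases requiring a separate remark are $r=\infty$ (that is, $p=1$ and $q=\infty$) and the trivial case $p=q$. In the former, Proposition~\ref{prop:schur} reduces to the sup norm of the kernel, and \eqref{eq:est-K-lambda} applied with $N=0$ gives $\sup_{x,y}|K_\lambda(x,y)|\le C\lambda^{d/2}$, which is precisely the required bound. In the latter, $r=1$, and the computation above still applies provided $N>d$, reproducing the $L^p$-boundedness already shown in \eqref{eq:Pi-lambda-Lp}. There is no substantive obstacle: the entire argument is essentially an application of Young's inequality to a kernel with a built-in scaling, the real work having been done once Proposition~\ref{prop:Pi-pseudo} supplied the uniform decay of $K_\lambda$ off the diagonal at scale $1/\sqrt\lambda$.
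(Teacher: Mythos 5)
Your proof is correct and follows essentially the same route as the paper: the paper's own argument is exactly the combination of the kernel bound \eqref{eq:est-K-lambda} with Proposition~\ref{prop:schur}, using the scaling $z=\sqrt\lambda(x-y)$ to get $\|K_\lambda(x,\cdot)\|_{L^r}\lesssim \lambda^{\frac{d}{2}\left(1-\frac{1}{r}\right)}$ and treating $r=\infty$ via $\|K_\lambda\|_{L^\infty_{x,y}}\lesssim\lambda^{d/2}$. The only cosmetic difference is your choice of a general $N$ with $Nr>d$ versus the paper's fixed $N=d+1$ (and note \eqref{eq:est-K-lambda} is stated for $N>0$, so quote it with some $N>0$ rather than $N=0$ in the $r=\infty$ case).
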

\begin{proof}
This follows directly from Proposition~\ref{prop:schur}
and \eqref{eq:est-K-lambda}: if $r$ is finite,
  \begin{equation*}
    \|K_\lambda(x,\cdot)\|_{L^r}^r +
    \|K_\lambda(\cdot,y)\|_{L^r}^r\lesssim
    \int_{\R^d}\frac{\lambda^{rd/2}}{\(1+(\sqrt\lambda|z|)^{d+1}\)^r}dz\lesssim
    \lambda^{(r-1)d/2},
  \end{equation*}
and, if $r=\infty$, $\|K_\lambda\|_{L^\infty_{x,y}}\lesssim
  \lambda^{d/2}$. 
\end{proof}

  \section{Strichartz estimates}
\label{sec:strichartz}

We recall Strichartz inequalities in the case of continuous time, and
prove their analogue in the discrete case, thanks to the operator
$\Pi_\lambda$. 
\subsection{Continuous time}

\begin{definition}\label{def:adm}
 A pair $(q,r)$ is admissible if $2\le r
  <\frac{2d}{d-2}$ ($2\le r\le\infty$ if $d=1$, $2\le r<
  \infty$ if $d=2$)
  and
\[\frac{2}{q}=\delta(r):= d\left( \frac{1}{2}-\frac{1}{r}\right).\]
\end{definition}
\begin{remark}
  We note that the range for $q$ is equivalent to: $q\in (2,\infty]$ if
$d\ge 2$  (we do not consider the
endpoint case), and $q\in [4,\infty]$ if $d=1$. 
\end{remark}

It is a direct consequence of the main result in \cite{Fujiwara} that
the group $S(t)$ satisfies local in time dispersive estimates, in the
sense  that
there exists $\delta>0$ such for all $\phi\in \Sch(\R^d)$,
\begin{equation}\label{eq:disp-cont}
  \|S(t) \phi\|_{L^\infty(\R^d)}\le
  \frac{C}{|t|^{d/2}}\|\phi\|_{L^1(\R^d)},\quad |t|\le \delta. 
\end{equation}
The fact that such estimates are local in time may be sharp under
Assumption~\ref{hyp:V}, even though the restriction $|t|\le \delta$
can be removed when $V\equiv 0$. Typically in the harmonic case
$V(x)=|x|^2$, Hermite functions $\varphi_n$, $n\ge 0$,
satisfy $H\varphi_n= \lambda_n \varphi_n$ for $\lambda_n\in
d+2\N$, and $S(t)\varphi_n(x) = e^{-it\lambda_n}\varphi_n(x)$
has a constant $L^\infty$-norm in space.

Together with the fact that $S(t)$ is unitary on $L^2(\R^d)$, we infer
from \cite{KeelTao} the classical result (see
e.g. \cite[Section~9.2]{CazCourant}):
\begin{proposition}[Strichartz estimates]\label{prop:strichartz}
  Let $d\ge 1$ and $S(t)=e^{-itH}$. \\
$(1)$ \emph{Homogeneous estimates.} For any admissible pair $(q,r)$,
there exists $C_{q}$  such that for any $T>0$,
\begin{equation*}
  \|S(t)\phi\|_{L^q([0,T];L^r)} \le C_q T^{1/q}
\|\phi \|_{L^2},\quad \forall \phi\in L^2.
\end{equation*}
$(2)$ \emph{Inhomogeneous estimates.}
Denote
\begin{equation*}
  D(F)(t,x) = \int_0^t S(t-s)F(s,x)ds.
\end{equation*}
For all admissible pairs $(a_1,b_1)$ and~$
    (a_2,b_2)$, there exists $C=C_{a_1,a_2}$ such that for any bounded
    interval $I\ni 0$ such that $|I|\le 1$,
\begin{equation}\label{eq:strichnl}
      \left\lVert D(F)
      \right\rVert_{L^{a_1}(I;L^{b_1})}\le C \left\lVert
      F\right\rVert_{L^{a'_2}\(I;L^{b'_2}\)},\quad \forall F\in L^{a'_2}(I;L^{b'_2}).
\end{equation}
\end{proposition}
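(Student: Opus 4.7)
The plan is to reduce the proof to the abstract Keel--Tao framework applied on a short time interval where the local dispersive estimate \eqref{eq:disp-cont} is valid, and then extend to arbitrary bounded intervals by a partitioning argument that exploits the group law together with the $L^2$-unitarity of $S(t)$. Since $H$ is self-adjoint on $L^2(\R^d)$, the group $S(t)=e^{-itH}$ is unitary on $L^2(\R^d)$ for every $t\in \R$, so the two hypotheses needed by Keel--Tao (mass conservation and a $L^1\to L^\infty$ dispersive bound with rate $|t|^{-d/2}$) hold on the interval $[0,\delta]$.

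For the homogeneous estimate, Keel--Tao applied on $[0,\delta]$ gives, for every admissible pair $(q,r)$, a constant $\tilde C_q$ with $\|S(\cdot)\phi\|_{L^q([0,\delta];L^r)}\le \tilde C_q\|\phi\|_{L^2}$. For arbitrary $T>0$, I would partition $[0,T]$ into $N=\lceil T/\delta\rceil$ sub-intervals $I_k=[k\delta,(k+1)\delta]\cap[0,T]$ and write, using the group property $S(t)\phi=S(t-k\delta)S(k\delta)\phi$ and the $L^2$-isometry of $S(k\delta)$,
\begin{equation*}
\|S(\cdot)\phi\|_{L^q(I_k;L^r)}^q=\int_{I_k}\|S(t-k\delta)S(k\delta)\phi\|_{L^r}^q\,dt\le \tilde C_q^q\|S(k\delta)\phi\|_{L^2}^q=\tilde C_q^q\|\phi\|_{L^2}^q.
\end{equation*}
Summing over $k=0,\dots,N-1$ yields $\|S(\cdot)\phi\|_{L^q([0,T];L^r)}\le \tilde C_q N^{1/q}\|\phi\|_{L^2}\le C_q T^{1/q}\|\phi\|_{L^2}$, which is exactly the announced bound, the factor $T^{1/q}$ arising precisely from this concatenation.

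For the inhomogeneous estimate, the Keel--Tao theorem produces, again on $[0,\delta]$, the bound $\|D(F)\|_{L^{a_1}([0,\delta];L^{b_1})}\le C\|F\|_{L^{a_2'}([0,\delta];L^{b_2'})}$ for every pair of admissible couples $(a_1,b_1)$ and $(a_2,b_2)$. Since the assumption of the proposition restricts to $|I|\le 1$, it is no loss of generality to choose $\delta\le 1$ at the outset; if $|I|\le \delta$ the estimate is immediate, otherwise I would split $I$ into a bounded number (depending only on $\delta$) of sub-intervals of length at most $\delta$ and reassemble by writing $S(t-s)=S(t-t_k)S(t_k-s)$ and invoking $L^2$-unitarity and the homogeneous estimate already proved to transfer the pieces; the resulting constant depends only on $\delta$ and on the admissible exponents, not on $|I|\le 1$.

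The step I expect to require the most care is simply the fact that \eqref{eq:disp-cont} is only local in time, which rules out the textbook proof used for $V\equiv 0$. However, the restriction is purely local, and the partitioning argument above, combined with the fact that $L^2$-unitarity holds globally (even when strong dispersion fails, e.g. in the harmonic case where $S(t)\varphi_n$ has constant $L^\infty$-norm), suffices to absorb the difficulty. No further information about the spectral structure of $H$ is needed, which is consistent with the paper's aim of imposing no hypothesis on $\sigma(H)$.
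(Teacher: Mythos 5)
Your proposal is correct and follows essentially the same route as the paper, which proves the proposition simply by invoking the Keel--Tao abstract theorem together with Fujiwara's local-in-time dispersive estimate \eqref{eq:disp-cont} and the $L^2$-unitarity of $S(t)$ (referring to Cazenave, Section~9.2); the interval-chaining you spell out is exactly the standard way of making that citation precise. One small caveat at the end of your homogeneous argument: $N^{1/q}\le C T^{1/q}$ only when $T\gtrsim\delta$, and for $T<\delta$ you only get a constant bound --- which is in fact the correct conclusion, since by scaling (already for $V\equiv 1$) the factor $T^{1/q}$ cannot hold uniformly as $T\to 0$, so this is a looseness in the proposition's wording rather than a flaw in your proof.
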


\subsection{Discrete time}

As pointed out in \cite{Ignat2011} in the case $V=0$, discrete in time Strichartz
estimates cannot be true without a frequency cutoff. One way to
understand  this
consists in recalling that the proof of Strichartz estimates relies on
Hardy-Littlewood-Sobolev inequality (fractional integration), and the
discrete version of this result requires some care regarding the
behavior of the kernel at the origin, as integration is replaced by
summation. We give more details below to explain this phenomenon, by
revisiting a result from \cite{SteinWainger2000}: 

\begin{lemma}\label{lem:discHLS}
  Let $0<\alpha<1$, and consider a kernel satisfying, for some $c_0,c_1>0$,
  \begin{equation*}
    |k(n)|\le \frac{1}{c_0+c_1|n|^\alpha},\quad n\in \Z.
  \end{equation*}
  Then the operator $I$ defined by $If=f\ast k$ is continuous from
  $\ell^p(\Z)$ to $\ell^q(\Z)$ provided that $1<p<q<\infty$ and
  \begin{equation*}
    \frac{1}{q}= \frac{1}{p}-1+\alpha.
  \end{equation*}
  There exists $C$ independent of $c_0$ and $c_1$ such that for all
  $f\in \ell^p(\Z)$,
  \begin{equation*}
    \|If\|_{\ell^q(\Z)}\le
      C\max\(\frac{1}{c_0},\frac{1}{c_1}\)\|f\|_{\ell^p(\Z)}. 
  \end{equation*}
\end{lemma}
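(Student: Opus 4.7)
The plan is to identify the convolution kernel $k$ as lying in the weak Lorentz space $\ell^{1/\alpha,\infty}(\Z)$ and then invoke the weak Young (Lorentz) convolution inequality, whose exponents exactly match the scaling relation $1/q = 1/p - 1 + \alpha$ upon taking $r = 1/\alpha$. All the work is in step 1 (pinning down the weak norm of $k$ in terms of $c_0,c_1$); step 2 is a black-box application of a standard interpolation result.

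First I would estimate the distribution function of $k$. Since $|k(n)| \le 1/c_0$ for every $n$, the level set $\{|k|>\lambda\}$ is empty for $\lambda \ge 1/c_0$. For $0<\lambda < 1/c_0$, the inequality $|k(n)|>\lambda$ forces $c_0 + c_1 |n|^\alpha < 1/\lambda$, hence $|n| \le (c_1\lambda)^{-1/\alpha}$, and counting integers in a symmetric interval around $0$ gives
\begin{equation*}
\#\{n\in\Z : |k(n)|>\lambda\} \le 1 + 2 (c_1\lambda)^{-1/\alpha}.
\end{equation*}
Multiplying by $\lambda^{1/\alpha}$ and using the cutoff $\lambda < 1/c_0$,
\begin{equation*}
\lambda^{1/\alpha}\,\#\{|k|>\lambda\} \le c_0^{-1/\alpha} + 2 c_1^{-1/\alpha} \le 3\,\max\!\left(\tfrac{1}{c_0},\tfrac{1}{c_1}\right)^{1/\alpha},
\end{equation*}
so $\|k\|_{\ell^{1/\alpha,\infty}} \le 3^{\alpha}\max(1/c_0,1/c_1)$.

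Next I would invoke the weak-type Young inequality for convolutions on $\Z$: if $1<p<\infty$, $1<r<\infty$, and $1<q<\infty$ with $1/q + 1 = 1/p + 1/r$, then
\begin{equation*}
\|f\ast k\|_{\ell^q(\Z)} \le C_{p,r} \,\|k\|_{\ell^{r,\infty}(\Z)} \|f\|_{\ell^p(\Z)}.
\end{equation*}
This is classical, and follows e.g. from real interpolation between the trivial estimate and Marcinkiewicz interpolation, using that convolution with a weak-$\ell^r$ kernel is of weak type $(p_0,q_0)$ for neighbouring exponents. Applying it with $r = 1/\alpha \in (1,\infty)$ (since $0<\alpha<1$) and the assumption $1/q = 1/p - 1 + \alpha$, and substituting the weak-norm bound from the previous step, yields
\begin{equation*}
\|If\|_{\ell^q(\Z)} \le C \max\!\left(\tfrac{1}{c_0},\tfrac{1}{c_1}\right) \|f\|_{\ell^p(\Z)},
\end{equation*}
with $C$ depending only on $p$ and $\alpha$ (via $C_{p,1/\alpha}$ and the constant $3^\alpha$).

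The only delicate point is ensuring that the weak-norm estimate carries the right dependence on both parameters simultaneously, which is what motivates splitting the bound $1 + 2(c_1\lambda)^{-1/\alpha}$ according to whether $(c_1\lambda)^{-1/\alpha}$ is large or small compared to $1$; the two regimes contribute respectively $c_1^{-1/\alpha}$ and $c_0^{-1/\alpha}$, giving the stated $\max$. Apart from this bookkeeping, no genuinely hard step arises: the result is a direct translation of the Hardy--Littlewood--Sobolev mechanism to the lattice, made quantitative in $c_0,c_1$.
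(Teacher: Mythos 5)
Your proof is correct, but it takes a different route from the paper. You compute the distribution function of the kernel, conclude $\|k\|_{\ell^{1/\alpha,\infty}(\Z)}\le 3^{\alpha}\max(1/c_0,1/c_1)$, and then quote the weak Young (Lorentz/O'Neil) convolution inequality with $r=1/\alpha$, whose exponent relation $1/q+1=1/p+1/r$ is exactly $1/q=1/p-1+\alpha$; the hypotheses $1<p$, $q<\infty$, $r>1$ needed to avoid the failing endpoints are all available, and the weak Young inequality does hold on $\Z$ (it is valid on any locally compact group with Haar measure), so the black box is legitimate. The paper instead splits the convolution into the diagonal term $m=0$ and the off-diagonal sum: the diagonal piece is bounded by $c_0^{-1}\|f\|_{\ell^q}\le c_0^{-1}\|f\|_{\ell^p}$ using the nesting $\ell^p\subset\ell^q$ (this is where $p<q$ enters), while the off-diagonal piece, where $|k(m)|\le (c_1|m|^{\alpha})^{-1}$, is handled by the discrete fractional integration result of Stein--Wainger. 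The two arguments are of comparable length and each reduces to one cited result; yours has the advantage of treating $c_0$ and $c_1$ simultaneously through a single level-set estimate (which is precisely where the $\max(1/c_0,1/c_1)$ dependence emerges), whereas the paper's decomposition makes the separate roles of $c_0$ (near the diagonal) and $c_1$ (in the tail) explicit and stays within elementary $\ell^p$ estimates plus a result tailored to the lattice setting.
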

\begin{proof}
  We single out the diagonal term as follows:
  \begin{align*}
    \|If\|_{\ell^q(\Z)}^q & = \sum_{n\in \Z} |If(n)|^q= \sum_{n\in
                            \Z}\left|\sum_{m\in \Z}    f(n-m)k(m)\right|^q\\
    &\lesssim \sum_{n\in  \Z}\(\left|\sum_{m\in \Z\setminus\{0\}}
      f(n-m)k(m)\right|^q + \left|
      f(n)k(0)\right|^q\) \\
    &\lesssim \sum_{n\in  \Z}\(\left|\sum_{m\in \Z\setminus\{0\}}
      f(n-m)\frac{1}{c_1|m|^\alpha}\right|^q + \frac{1}{c_0^q}\left|
      f(n)\right|^q\) 
  \end{align*}
  The first (double) sum is estimated thanks to
  \cite[Proposition~(a)]{SteinWainger2000}, by
  \begin{equation*}
     \sum_{n\in  \Z}\left|\sum_{m\in \Z\setminus\{0\}}
      f(n-m)\frac{1}{c_1|m|^\alpha}\right|^q\lesssim
    \frac{1}{c_1^q}\|f\|_{\ell^p(\Z)}^q. 
  \end{equation*}
  The second sum is equal to
  \begin{equation*}
    \frac{1}{c_1^q}\|f\|_{\ell^q(\Z)}^q\le \frac{1}{c_1^q}\|f\|_{\ell^p(\Z)}^q,
  \end{equation*}
  since $\ell^p(\Z)\subset \ell^q(\Z)$ from the assumption $p<q$. The
  lemma easily follows. 
\end{proof}
\begin{lemma}\label{lem:disp-disc}
  There exist $C>0$ and $\delta>0$ such that for all $\lambda>0$, $S_\lambda$,
  defined by \eqref{eq:S_lambda}, satisfies:
  \begin{equation*}
    \|S_\lambda(t)\phi\|_{L^\infty(\R^d)}\le
      \frac{C}{\lambda^{-d/2}+|t|^{d/2}}\|\phi\|_{L^1(\R^d)},\quad
      \forall \phi\in \Sch(\R^d),\ |t|\le \delta. 
  \end{equation*}
\end{lemma}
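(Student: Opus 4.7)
The plan is to dichotomize according to whether $|t|$ exceeds $1/\lambda$ or not. Since $\sigma(H)\subset [1,+\infty)$, $\Pi_\lambda = 0$ for $\lambda < 1/2$, and for $\lambda$ bounded one can absorb everything in the constant; I therefore assume $\lambda\ge 1$. Because $\lambda^{-d/2}+|t|^{d/2}\sim \max(\lambda^{-d/2},|t|^{d/2})$, it suffices to show
\[
\|S_\lambda(t)\phi\|_{L^\infty}\lesssim |t|^{-d/2}\|\phi\|_{L^1}\ \text{for}\ 1/\lambda\le|t|\le\delta, \quad \|S_\lambda(t)\phi\|_{L^\infty}\lesssim \lambda^{d/2}\|\phi\|_{L^1}\ \text{for}\ |t|\le 1/\lambda.
\]

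In the long-time regime I would combine Fujiwara's continuous dispersive estimate \eqref{eq:disp-cont} with the uniform $L^\infty$-boundedness of $\Pi_\lambda$. The latter follows from the Schwartz kernel bound \eqref{eq:est-K-lambda}: choosing $N>d$ and substituting $z=\sqrt\lambda(y-x)$ shows that $\sup_x\|K_\lambda(x,\cdot)\|_{L^1}$ is bounded independently of $\lambda\ge 1$, so Proposition~\ref{prop:schur} with $r=1$, $p=q=\infty$ yields $\|\Pi_\lambda\phi\|_{L^\infty}\lesssim \|\phi\|_{L^\infty}$ uniformly. Writing $S_\lambda(t)\phi=\Pi_\lambda S(t)\phi$ and applying \eqref{eq:disp-cont} then gives $\|S_\lambda(t)\phi\|_{L^\infty}\lesssim |t|^{-d/2}\|\phi\|_{L^1}$.

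In the short-time regime the idea is to use Bernstein's inequality (Lemma~\ref{lem:bernstein}) on both sides of the $L^2$-isometry $S(t)$. Pick $\tilde\chi\in C_c^\infty(\R;[0,1])$ with $\tilde\chi\equiv 1$ on $\operatorname{supp}\chi$, set $\tilde\Pi_\lambda=\tilde\chi^2(H/\lambda)$, and observe that the composition rule in Proposition~\ref{prop:funct-calc} yields $\Pi_\lambda\tilde\Pi_\lambda=\Pi_\lambda$, while $\tilde\Pi_\lambda$ commutes with $S(t)$; hence $S_\lambda(t)=\Pi_\lambda S(t)\tilde\Pi_\lambda$. Since Proposition~\ref{prop:HelfferRobert} applies to $\tilde\chi^2$ exactly as to $\chi^2$, the kernel bound of Proposition~\ref{prop:Pi-pseudo} and therefore Lemma~\ref{lem:bernstein} carry over to $\tilde\Pi_\lambda$, providing $\|\tilde\Pi_\lambda\phi\|_{L^2}\lesssim \lambda^{d/4}\|\phi\|_{L^1}$ and $\|\Pi_\lambda\psi\|_{L^\infty}\lesssim \lambda^{d/4}\|\psi\|_{L^2}$. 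Composing with the unitarity of $S(t)$ on $L^2$ then delivers $\|S_\lambda(t)\phi\|_{L^\infty}\lesssim \lambda^{d/2}\|\phi\|_{L^1}$.

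The only genuinely new ingredient compared with the case $V=0$ is the symmetric factorization $\Pi_\lambda=\Pi_\lambda\tilde\Pi_\lambda$, which allows Bernstein to be applied upstream and downstream of $S(t)$; once functional calculus provides it, matching the two regimes at $|t|\sim 1/\lambda$ produces the advertised bound.
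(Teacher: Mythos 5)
Your proof is correct and follows essentially the same two-regime strategy as the paper: Fujiwara's dispersive estimate combined with the uniform $L^p$-boundedness of $\Pi_\lambda$ (via the kernel bound \eqref{eq:est-K-lambda} and Proposition~\ref{prop:schur}) when $|t|\ge 1/\lambda$, and a factorization through the $L^2$-unitarity of $S(t)$ when $|t|\le 1/\lambda$. The only cosmetic difference is in the short-time regime, where instead of your fattened cutoff $\tilde\Pi_\lambda$ with Bernstein applied on both sides of $S(t)$, the paper writes $S_\lambda(t)=\chi\(\frac{H}{\lambda}\)\circ e^{-itH}\circ \chi\(\frac{H}{\lambda}\)$ and uses self-adjointness (a $TT^*$ identity) to reduce both outer factors to the single bound $\|\Pi_\lambda\|_{L^1\to L^\infty}\le \|K_\lambda\|_{L^\infty_{x,y}}\lesssim \lambda^{d/2}$, thereby avoiding the introduction of a second cutoff.
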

\begin{proof}
  First, we use the dispersive estimate \eqref{eq:disp-cont} from
  \cite{Fujiwara}, and write, for $\phi\in \Sch(\R^d)$ and $|t|\le \delta$,
  \begin{equation*}
    \|S_\lambda(t)\phi\|_{L^\infty(\R^d)} =
    \|S(t)\Pi_\lambda\phi\|_{L^\infty(\R^d)}\lesssim
    \frac{1}{|t|^{d/2}}\|\Pi_\lambda\phi\|_{L^1(\R^d)}. 
  \end{equation*}
  Choosing for instance $N=d+1$ in \eqref{eq:est-K-lambda} yields
 \begin{equation*}
   \|K_\lambda\|_{L^\infty_yL^1_x}\lesssim 1.
 \end{equation*}
 Fubini Theorem implies
 \begin{equation*}
   \|\Pi_\lambda \phi\|_{L^1(\R^d)}\lesssim  \|\phi\|_{L^1(\R^d)}.
 \end{equation*}
 This yields the lemma for $|t|>1/\lambda$. For small values of
 $t$, we write
 \begin{equation*}
S_\lambda(t)=   \underbrace{\chi\(\frac{H}{\lambda}\)}_{L^2\to
    L^\infty}\circ\underbrace{e^{-itH}}_{L^2\to L^2} \circ
\underbrace{\chi\(\frac{H}{\lambda}\)}_{L^1\to L^2},
\end{equation*}
and invoke the classical $TT^*$ argument, which entails, since
$\chi\(\frac{H}{\lambda}\)$ is self-adjoint,
\begin{equation*}
  \left\|\chi\(\frac{H}{\lambda}\)\right\|_{L^2\to L^\infty} =
  \left\|\chi\(\frac{H}{\lambda}\)\right\|_{L^1\to L^2} =   
\left\|\chi^2\(\frac{H}{\lambda}\)\right\|_{L^1\to L^\infty}^{1/2}
= \|\Pi_\lambda\|_{L^1\to L^\infty}^{1/2}, 
\end{equation*}
so we can write, since $e^{-itH}$ is unitary on $L^2$,
\begin{equation*}
  \|S_\lambda(t)\phi\|_{L^\infty(\R^d)}\le \|\Pi_\lambda\|_{L^1\to
    L^\infty}\|\phi\|_{L^1(\R^d)}. 
\end{equation*}
We readily have $\|\Pi_\lambda\|_{L^1\to
    L^\infty}\le \|K_\lambda\|_{L^\infty_{x,y}}\lesssim \lambda^{d/2}$, where the
  last inequality stems from \eqref{eq:est-K-lambda}. This
  completes the proof of the lemma. 
\end{proof}

For any interval $I \subset [0,\infty)$, we define the space $\ell^q
(n \tau \in I;\, L^r (\R^d))$, or simply $\ell^q
( I;\, L^r )$,  as consisting of functions defined on $\tau \mathbb{Z}
\cap I$ with values in $L^r (\R^d)$, the norm of which is given by 
\begin{equation}\label{eq:ellqLr}
  \|u\|_{\ell^q (I;\, L^r )} = \left\{
    \begin{aligned}
      \Big( \tau \sum_{n\tau \in I} \|u(n \tau)\|_{L^r (\R^d)}^q \Big)^{1/q}
      & \qquad\text{ if }1\le q<\infty,\\
      \sup_{n\tau\in I} \|u(n \tau)\|_{L^r (\R^d)}& \qquad\text{ if } q=\infty.
    \end{aligned}
    \right.
\end{equation}
As $\lambda\ge 1$ (we will choose $\lambda\to \infty$ as $\tau\to
0$), Lemma~\ref{lem:discHLS} shows that in Lemma~\ref{lem:disp-disc},
the factor involving $\lambda$ is dominant in  the discrete Strichartz
estimate: 
proceeding like in \cite{ORS21} (a sketch of the argument is given
below), we infer 

\begin{proposition}\label{prop:strichartz-discret}
Let $(q,r)$,  $(a_1,b_1)$ and $(a_2, b_2)$ be  admissible
pairs. Then, there exist $C_{d,q}, C_{d,a_1,a_2}>0$ such that for any
bounded interval $I$ such that $|I|\le 1$,  provided that
$\lambda\tau\ge 1$,
    \begin{equation}\label{eq:discr-stri-homo}
    \|S_{\lambda}(\cdot) \phi \|_{\ell^q (I; L^r )}
    \le C_{d,q}  \(\lambda\tau\)^{1/q}\|\phi \|_{L^2 },
    \end{equation}
and
    \begin{equation}\label{eq:disc-stri-inhom}
    \left\| \tau \sum_{{k=-\infty}\atop{k\tau\in I}}^{n-1}S_{\lambda}\((n-k) \tau\) f
      (k\tau)\right\|_{\ell^{a_1} (I; L^{b_1} )} 
    \le C_{d,a_1,a_2}
    \(\lambda\tau\)^{\frac{1}{a_1}+\frac{1}{a_2}}\|f\|_{\ell^{a_2'}(I;
      L^{b_2'} )} 
    \end{equation}
hold for all $\phi\in L^2 $ and $f\in \ell^{a_2'}( I; L^{b_2'})$.
\end{proposition}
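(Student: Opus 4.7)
The plan is to follow the Keel-Tao $TT^*$ strategy for Strichartz estimates, adapted to the discrete-time setting by replacing the continuous Hardy-Littlewood-Sobolev inequality with its discrete analogue Lemma~\ref{lem:discHLS}, mirroring the approach of \cite{ORS21}.

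\emph{Step 1 (kernel decay).} Interpolating (Riesz-Thorin) between the discrete dispersive bound of Lemma~\ref{lem:disp-disc} and the trivial $L^2\to L^2$ boundedness of $S_\lambda(t)$ (a consequence of Proposition~\ref{prop:funct-calc}) yields, for any admissible pair $(q,r)$ with $r<\infty$ and $|t|\le \delta$,
\[
\|S_\lambda(t)\|_{L^{r'}(\R^d)\to L^r(\R^d)} \le C\bigl(\lambda^{-d/2}+|t|^{d/2}\bigr)^{-(1-2/r)}.
\]
Since $d(1-2/r)/2 = \delta(r) = 2/q$, this reads at discrete times $t=(m-n)\tau$ as
\[
\|S_\lambda((m-n)\tau)\|_{L^{r'}\to L^r} \le C\tau^{-2/q}\min\bigl((\lambda\tau)^{2/q},|m-n|^{-2/q}\bigr).
\]
The endpoint $r=\infty$ for $d=1$ is handled directly by Lemma~\ref{lem:disp-disc}, without interpolation.

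\emph{Step 2 (homogeneous estimate).} Define $T\phi = (S_\lambda(n\tau)\phi)_{n\tau\in I}$. Using $S_\lambda(t)^*=S_\lambda(-t)$, the $TT^*$ identity gives
\[
TT^*f(m) = \tau\sum_n \Pi_\lambda\, S_\lambda((m-n)\tau) f_n.
\]
By Minkowski's inequality, Step 1, and the $L^r$-boundedness of $\Pi_\lambda$ from Lemma~\ref{lem:PiPDO}, the desired bound $\|TT^*f\|_{\ell^q L^r} \le C(\lambda\tau)^{2/q}\|f\|_{\ell^{q'}L^{r'}}$ reduces to a scalar discrete convolution estimate with kernel $\min((\lambda\tau)^{2/q},|m-n|^{-2/q})$. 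After rescaling to absorb the $\tau$-weights of \eqref{eq:ellqLr}, this is precisely Lemma~\ref{lem:discHLS} with $\alpha = 2/q\in(0,1)$ (the endpoint $q=2$ for $d\ge 2$ is excluded by Definition~\ref{def:adm}), $c_0 = (\lambda\tau)^{-2/q}$, $c_1 = 1$; the hypothesis $\lambda\tau\ge 1$ guarantees $\max(1/c_0,1/c_1)=(\lambda\tau)^{2/q}$. A square root then yields \eqref{eq:discr-stri-homo}.

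\emph{Step 3 (inhomogeneous estimate).} For arbitrary admissible pairs $(a_1,b_1)$ and $(a_2,b_2)$, first consider the unrestricted sum over all $k\tau\in I$. Applying Cauchy-Schwarz to the composition of the homogeneous estimate from Step 2 for $(a_1,b_1)$ with its dual for $(a_2,b_2)$ gives the bound with constant $(\lambda\tau)^{1/a_1+1/a_2}$, modulo an extra factor $\Pi_\lambda$ arising from $S_\lambda(m\tau)S_\lambda(-n\tau) = \Pi_\lambda S_\lambda((m-n)\tau)$. This extra factor is removed by noting that the discrepancy $\Pi_\lambda^2-\Pi_\lambda = -\Pi_\lambda(I-\Pi_\lambda)$ corresponds to the smooth compactly supported multiplier $\chi^2(1-\chi^2)(H/\lambda)$, to which the same pseudodifferential analysis of Section~\ref{sec:PiPDO} applies, yielding an estimate of the same form. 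The retarded sum $k<n$ in \eqref{eq:disc-stri-inhom} is then recovered from the discrete Christ-Kiselev lemma, whose hypothesis $a_2'<a_1$ is satisfied by admissibility (since $a_i>2$).

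\emph{Main obstacle.} The tight point is the bookkeeping in Step 2: the factor $\tau^{-2/q}$ from Step 1, the $\tau^{2/q}$ produced by the $\ell^q,\ell^{q'}$ normalizations in \eqref{eq:ellqLr}, and the $(\lambda\tau)^{2/q}$ from Lemma~\ref{lem:discHLS} under the assumption $\lambda\tau\ge 1$, must combine precisely as in the statement. The assumption $\lambda\tau\ge 1$ is exactly what forces the diagonal contribution of the kernel to dominate. In Step 3, the subtlety is the mismatch between $\Pi_\lambda$ and $\Pi_\lambda^2$ (since $\chi^2$ is not idempotent), which is handled by treating the correction as another smooth spectral cutoff.
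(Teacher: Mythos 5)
Your proposal is correct and follows essentially the same route as the paper: interpolate the discrete dispersive bound of Lemma~\ref{lem:disp-disc} against $L^2$-boundedness, run the Keel--Tao $TT^*$/duality scheme at discrete times, and conclude with the discrete fractional-integration Lemma~\ref{lem:discHLS} applied to the kernel $\bigl((\lambda\tau)^{-2/q}+|m-n|^{2/q}\bigr)^{-1}$, where $\lambda\tau\ge 1$ makes the diagonal term harmless. The only differences are presentational: you make explicit two points the paper leaves implicit, namely the $\Pi_\lambda$ versus $\Pi_\lambda^2$ mismatch in the $TT^*$ composition (correctly dispatched since $\chi^2(1-\chi^2)\in C_c^\infty(\R)$ gives a spectral cutoff of the same type, amenable to the analysis of Section~\ref{sec:PiPDO}) and the passage from the untruncated to the retarded sum via a discrete Christ--Kiselev argument, valid here because $a_2'<2<a_1$ for non-endpoint admissible pairs.
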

The proof essentially follows the same strategy as in \cite{KeelTao},
as adapted to the discrete in time case in \cite{Ignat2011}: since
$\|S_\lambda\|_{L^2\to L^2}\le 1$, Lemma~\ref{lem:disp-disc} and
interpolation yield, for $p\in [2,\infty]$,
 \begin{equation*}
    \|S_\lambda(t)\phi\|_{L^p(\R^d)}\le
      \frac{C}{\lambda^{-d(1/2-1/p)}+|t|^{d(1/2-1/p)}}\|\phi\|_{L^{p'}(\R^d)},\quad
      \forall \phi\in \Sch(\R^d),\ |t|\le \delta. 
\end{equation*}
By a $TT^*$ argument,  \eqref{eq:discr-stri-homo} follows from
\eqref{eq:disc-stri-inhom}. By duality, \eqref{eq:disc-stri-inhom} is
equivalent to 
\begin{equation*}
  \left| \tau^2\<\sum S_{\lambda}(k\tau )^* f
   (k\tau), \sum S_{\lambda}(n\tau )^* g
   (n\tau)\>\right|
  \le C \(\lambda \tau\)^{2/a_2}
   \|f\|_{\ell^{a_2'}
    L^{b_2'} } \|f\|_{\ell^{a_2'}
    L^{b_2'} } ,
\end{equation*}
where we omit the summation set to lighten notations. Using H\"older
inequality, 
\begin{align*}
  \left|\<S_{\lambda}(k\tau )^* f (k\tau), S_{\lambda}(n\tau )^* g
  (n\tau)\>\right|
  & = \left|\< f (k\tau), S_{\lambda}((k-n)\tau ) g
    (n\tau)\>\right|\\
  \le \|f(k\tau)\|_{L^{b_2'}} &\left\|  S_{\lambda}((k-n)\tau ) g
    (n\tau)\right\|_{L^{b_2}}\\
   \le \|f(k\tau)\|_{L^{b_2'}}&
    \frac{C}{\lambda^{-d(1/2-1/b_2)}+|((k-n)\tau|^{d(1/2-1/b_2)}}
    \|g(n\tau)\|_{L^{b_2'}}. 
\end{align*}
Since $(a_2,b_2)$ is admissible, $d(1/2-1/b_2) = 2/a_2$, so H\"older
inequality in time now yields
\begin{align*}
  \Big| \tau^2\Big\langle \sum S_{\lambda}(k\tau )^* f
  (k\tau), \sum S_{\lambda}(n\tau )^*
  &g
   (n\tau)\Big\rangle\Big|\\
 \le C  \|f\|_{\ell^{a_2'} L^{b_2'}}&\left\|  \sum_n
   \frac{\|g(n\tau)\|_{L^{b_2'}}}{(\lambda\tau)^{-2/a_2}+|k-n|^{2/a_2}}
 \right\|_{\ell^{a_2}}. 
\end{align*}
We  estimate the last term by invoking
Lemma~\ref{lem:discHLS} with $\alpha=2/a_2$, $q= a_2$ and $p=a_2'$.
Similarly, the analogue of \cite[Lemma~4.5]{Ignat2011} (see also
\cite[Corollary~2.4]{ChoiKoh2021}) is:
\begin{corollary}\label{cor:Stri-cont-disc}
  Let $(a_1,b_1)$ and $(a_2,b_2)$ be  admissible pairs, and
  $\lambda=1/\tau$. There exists $C=C(d,a_1,a_2)$ such that  for any
bounded interval $I$ such that $ |I|\le 1$,
\begin{equation*}
  \left\| \int_{{s<n\tau}\atop{s\in I}}S_\lambda
    (n\tau-s)f(s)ds\right\|_{\ell^{a_1}(I;L^{b_1})} \le C \|f\|_{L^{a_2'}(I;
      L^{b_2'} )} ,\quad \forall f\in L^{a_2'}( I; L^{b_2'}).
\end{equation*}
\end{corollary}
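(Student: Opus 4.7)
The plan is to deduce this mixed continuous-to-discrete estimate from a non-retarded version (where the source variable $s$ ranges over all of $I$) and then to reinstate the restriction $s<n\tau$ by a Christ-Kiselev argument.

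The non-retarded step hinges on the commutation $[\Pi_\lambda,S(t)]=0$ from \eqref{eq:S_lambda}. Writing $S(n\tau-s)=S(n\tau)S(-s)$, the integral can be factorized as
\begin{equation*}
\int_I S_\lambda(n\tau-s)f(s)\,ds \;=\; S(n\tau)\,\Pi_\lambda\int_I S(-s)f(s)\,ds \;=\; S_\lambda(n\tau)\,\psi,\qquad \psi:=\int_I S(-s)f(s)\,ds,
\end{equation*}
so that $\psi$ does not depend on $n$. Duality applied to the continuous homogeneous Strichartz estimate of Proposition~\ref{prop:strichartz}, together with $|I|\le 1$, yields $\|\psi\|_{L^2}\le C\,\|f\|_{L^{a_2'}(I;L^{b_2'})}$. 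Invoking the discrete homogeneous estimate \eqref{eq:discr-stri-homo} with $\lambda\tau=1$ then gives
\begin{equation*}
\big\|S_\lambda(\cdot)\psi\big\|_{\ell^{a_1}(I;L^{b_1})} \;\le\; C_{d,a_1}(\lambda\tau)^{1/a_1}\|\psi\|_{L^2} \;=\; C\,\|\psi\|_{L^2}\;\lesssim\;\|f\|_{L^{a_2'}(I;L^{b_2'})},
\end{equation*}
which is the desired non-retarded bound.

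To recover the retarded form of the corollary I would apply the Christ-Kiselev lemma to the bilinear form associated with the above estimate. Its only hypothesis here is the strict ordering $a_2'<a_1$ of the temporal exponents, which is automatic under Definition~\ref{def:adm}: the endpoint being excluded, admissible pairs satisfy $a_j>2$ in $d\ge 2$ and $a_j\ge 4$ in $d=1$, so $a_2'\le 2<a_1$ (resp.\ $a_2'\le 4/3<4\le a_1$).

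The main obstacle is conceptual rather than computational. It is to ensure that the spectral cutoff $\Pi_\lambda$, which is no longer a Fourier multiplier and thus falls outside the convolution framework of \cite{Ignat2011,ORS21}, does not disrupt the continuous-to-discrete transition. The commutation \eqref{eq:S_lambda} resolves this by letting $\Pi_\lambda$ be absorbed into the $n$-independent factor $\psi$; the minor additional point to verify is that Christ-Kiselev applies in the hybrid setting with a continuous-time source space $L^{a_2'}(I;L^{b_2'})$ and a discrete-time target space $\ell^{a_1}(I;L^{b_1})$, but this follows from the standard proof, which is purely measure-theoretic and only sees the temporal exponents through the strict inequality $a_2'<a_1$.
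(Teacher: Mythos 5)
Your proof is correct, but it takes a genuinely different route from the one the paper intends. The paper gives no self-contained argument for this corollary: it is presented as the analogue of Lemma~4.5 in Ignat's work and of Corollary~2.4 of Choi--Koh, to be obtained ``similarly'' to Proposition~\ref{prop:strichartz-discret}, i.e.\ by the duality/bilinear-form argument combining the interpolated dispersive bound of Lemma~\ref{lem:disp-disc} with a (here, mixed continuous--discrete) Hardy--Littlewood--Sobolev inequality in the spirit of Lemma~\ref{lem:discHLS}. You instead use the group property and the commutation \eqref{eq:S_lambda} to collapse the untruncated operator into $S_\lambda(n\tau)\psi$ with $\psi$ independent of $n$, so the estimate factors through the dual of the continuous homogeneous estimate of Proposition~\ref{prop:strichartz} composed with the discrete homogeneous estimate \eqref{eq:discr-stri-homo}, and you restore the time-ordering $s<n\tau$ by Christ--Kiselev. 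This is cleaner (no mixed fractional-integration lemma is needed, and the role of $\Pi_\lambda$ is reduced to the single commutation property), at the price of invoking Christ--Kiselev with a continuous source and a discrete target. That invocation is legitimate for the reason you indicate: the standard proof needs only the non-atomicity of the \emph{source} measure (Lebesgue on $I$) to run the dyadic decomposition of the distribution function of $\|f(\cdot)\|_{L^{b_2'}}^{a_2'}$, the disjointness of the resulting output sets at each dyadic generation (valid for the measure $\tau\sum_n\delta_{n\tau}$), and the strict inequality $a_2'<a_1$, which you correctly verify is automatic since the endpoint is excluded in Definition~\ref{def:adm}. Two small points to record in a write-up: the homogeneous bound of Proposition~\ref{prop:strichartz} carries a factor $T^{1/q}\le 1$ and must be translated to a general interval $I$ with $|I|\le 1$ (harmless, since $S$ is a unitary group), and the Christ--Kiselev constant depends only on $(a_1,a_2)$, so the final constant is uniform in $\tau$, as the statement requires.
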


\section{Cauchy problem}
\label{sec:cauchy}

\subsection{Well-posedness in $\cH^1$}
We adapt  well-posedness results known in $\Sigma$ for
\eqref{eq:NLSP}, under Assumptions~\ref{hyp:V}.
For a time interval $I$, denote
\begin{equation}\label{eq:X1}
  \|u\|_{X^1(I)} = \max_{A\in \{\Id,\nabla, \sqrt V \}}\sup_{(q,r)\text{
      admissible}}\|Au \|_{L^q(I;L^r)},
\end{equation}
and let $X^1(I)$ be the corresponding (Banach) space. Note that
$X^1(I) \subset  C(I;\cH^1)$. When $I=[0,T]$, we
simply denote $X^1([0,T])$ by $X^1_T$. 
\begin{proposition}\label{prop:cauchy1}
  Let Assumption~\ref{hyp:V} be verified and
  $0<\si<\frac{2}{(d-2)_+}$. For any 
  $u_0\in \cH^1$, there exist $T>0$  a unique solution $u\in X^1_T$ to
  \eqref{eq:NLSP}.
 In addition, its mass and energy are independent
of time,
\begin{align*}
  &\frac{d}{dt}\int_{\R^d} |u(t,x)|^2dx=0,\\
& \frac{d}{dt}\int_{\R^d}\(|\nabla u(t,x)|^2 
   +V(x)|u(t,x)|^2 +\vp(x)|u(t,x)|^2 + \frac{\eps}{\si+1}|u(t,x)|^{2\si+2}\)dx=0. 
\end{align*}
Finally, the above statement is true for all $T>0$ (global solution)
if $0<\si<2/d$ ($L^2$-subcritical case), or $\eps=+1$ (defocusing
case). 
\end{proposition}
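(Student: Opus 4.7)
The plan is to implement a Cazenave--Weissler fixed point argument on the Duhamel formulation
\begin{equation*}
u(t) = S(t)u_0 - i\int_0^t S(t-s)\bigl(\vp u(s) + \eps|u(s)|^{2\si}u(s)\bigr)\dd s
\end{equation*}
in a closed ball of $X^1_T$ for small $T$. The structural observation that renders the presence of $V$ harmless is that $H^{1/2}$ commutes with $S(t)$ by functional calculus (Proposition~\ref{prop:funct-calc}); applying the Strichartz estimates of Proposition~\ref{prop:strichartz} to $H^{1/2}u$ controls $\|H^{1/2}u\|_{L^q_T L^r}$ for every admissible pair $(q,r)$, and Lemma~\ref{lem:sobolev-H} converts this into simultaneous control of $\|\nabla u\|_{L^q_T L^r}$ and $\|\sqrt V u\|_{L^q_T L^r}$, so that, together with the plain Strichartz bound on $u$, the full $X^1_T$-norm is recovered. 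The perturbative term $\vp u$ is essentially trivial: since $\vp\in W^{1,\infty}$ and $\sqrt V(\vp u)=\vp\sqrt V u$, the estimate of $\vp u$ in $L^1_T$-based Strichartz norms gains a factor $T$. The power nonlinearity is treated through the fractional chain rule \eqref{eq:H12der} on a standard admissible pair $(q,r)=(\tfrac{4\si+4}{d\si},2\si+2)$ (or an interpolated couple in the $L^2$-critical-and-above regime), exactly as in the proof of local well-posedness in $H^1$ for $V=0$; the $H^1$-subcriticality $\si<2/(d-2)_+$ guarantees a contraction on a ball of size $O(\|u_0\|_{\cH^1})$ over a time $T=T(\|u_0\|_{\cH^1})>0$. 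Uniqueness in $C([0,T];\cH^1)$ is then obtained from a Strichartz estimate at the $L^2$ level for the difference of two solutions.

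Conservation of mass and energy are first verified on smoother data, say $u_0\in \cH^2$, where the computations obtained by pairing \eqref{eq:NLSP} against $\bar u$, resp.\ against $\partial_t\bar u$, and using self-adjointness of $H$, are rigorously justified. The identities then extend to all of $\cH^1$ by density and the continuous dependence on initial data that the fixed point argument supplies. Continuity of the energy functional on $\cH^1$ is ensured by the embedding $\cH^1\hookrightarrow H^1(\R^d)\hookrightarrow L^{2\si+2}(\R^d)$, valid throughout the $H^1$-subcritical range, which controls the nonlinear contribution $\tfrac{\eps}{\si+1}\int|u|^{2\si+2}$ under limits.

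Global existence is obtained by iterating the local result, once an a priori $\cH^1$ bound is available. In the defocusing case $\eps=+1$, energy conservation directly controls $\|\nabla u\|_{L^2}^2+\|\sqrt V u\|_{L^2}^2$, modulo the harmless $\int \vp |u|^2$ term which is bounded via mass conservation and $\vp\in L^\infty$. In the $L^2$-subcritical case $\si<2/d$, the conserved (possibly signed) energy is combined with Gagliardo--Nirenberg,
\begin{equation*}
\|u\|_{L^{2\si+2}}^{2\si+2}\lesssim \|\nabla u\|_{L^2}^{d\si}\,\|u\|_{L^2}^{2\si+2-d\si},
\end{equation*}
and, since $d\si<2$, Young's inequality absorbs the focusing term into the kinetic one, again producing a uniform $\cH^1$ bound; iteration of the local existence theorem then yields a global solution. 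The main technical obstacle throughout is consistently tracking the $\sqrt V$ factor alongside the gradient, and this is precisely what Lemma~\ref{lem:sobolev-H} resolves, by reducing both quantities to the single norm $\|H^{1/2}u\|_{L^r}$ that behaves naturally under the Schr\"odinger group $S(t)$.
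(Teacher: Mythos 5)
Your proposal follows essentially the same route as the paper: a fixed point on Duhamel's formula, with Strichartz estimates applied to $H^{1/2}u$ (which commutes with $S(t)$), Lemma~\ref{lem:sobolev-H} to recover the $\nabla$ and $\sqrt V$ components of the $X^1_T$ norm, \eqref{eq:H12der} for the power nonlinearity, and the conservation laws plus Gagliardo--Nirenberg for globalization when $\eps=-1$ and $\si<2/d$. The only notable difference is cosmetic: the paper performs the contraction in the weaker metric $\|\cdot\|_{L^{q_0}_TL^{r_0}}+\|\cdot\|_{L^\infty_TL^2}$ on a ball of $Z_T$ rather than in the full $X^1_T$ norm, the standard device (consistent with your $L^2$-level uniqueness argument) to accommodate the nonlinearity's lack of Lipschitz regularity at the derivative level when $\si<1/2$.
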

\begin{proof}[Main steps of the proof]
Denote by 
\begin{equation*}
  (q_0,r_0)= \( \frac{4\si+4}{d\si},2\si+2\),
\end{equation*}
the pair present in the above statement, which is admissible (see
Definition~\ref{def:adm}). 

Local existence follows from a fixed point argument on Duhamel's
formula associated to \eqref{eq:NLSP},
\begin{equation}\label{eq:duhamel}
   u(t) =S(t)u_0 -i\int_0^t
   S(t-s)\(\vp u\)(s)ds-i\eps\int_0^t
   S(t-s)\(|u|^{2\si}u\)(s)ds, 
\end{equation}
in a ball of the space
\begin{equation*}
  Z_T=\{u\in C([0,T];\cH^1);\ u,\nabla u,u\sqrt V \in
  L^{q_0}(0,T;L^{r_0}(\R^d)) \}, 
\end{equation*}
which, equipped with the distance
\begin{equation*}
d(u,v)=\|u-v\|_{L^{q_0}_TL^{r_0}}+\|u-v\|_{L^{\infty}_TL^{2}}, 
\end{equation*}
is a complete Banach space (see e.g. \cite[Section~4.4]{CazCourant}).  Let
\begin{equation}\label{eq:theta}
   \theta=\frac{2\si(2\si+2)}{2-(d-2)\si}. 
\end{equation}
Then we have
\begin{equation*}
   \frac{1}{q_0'}=\frac{2\si}{\theta} +\frac{1}{q_0} \quad ;\quad
\frac{1}{r_0'}=\frac{2\si}{r_0}+\frac{1}{r_0}.
\end{equation*}
Denoting by $\Phi(u)(t)$ the right hand side of \eqref{eq:duhamel},  Strichartz estimates (Proposition~\ref{prop:strichartz}) yield,
for $(q,r)$ admissible,
\begin{equation}\label{eq:holder}
\begin{aligned}
  \|\Phi(u)\|_{L^q_TL^r}&\lesssim \|u_0\|_{L^2}+ \|\vp u\|_{L^1_TL^2}+\left\|
    |u|^{2\si}u\right\|_{L^{q_0'}_TL^{r_0'}}\\
& \lesssim
  \|u_0\|_{L^2}+ T\|\vp\|_{L^\infty}\|u\|_{L^\infty_TL^2}
+\|u\|_{L^\theta_TL^{r_0}}^{2\si}  \|u\|_{L^{q_0}_TL^{r_0}},  
\end{aligned}
\end{equation}
where we have used H\"older inequality. In view of Sobolev
embedding, 
\begin{equation}\label{eq:Phi1}
  \|\Phi(u)\|_{L^q_TL^r} \le C \|u_0\|_{L^2}
  + T\|\vp\|_{L^\infty}\|u\|_{L^\infty_TL^2}
+C T^{2\si/\theta}\|u\|_{L^\infty_TH^1}^{2\si}  \|u\|_{L^{q_0}_TL^{r_0}}. 
\end{equation}

Unlike in the case without potential, $V=0$, the group $S(t)$ and the
operator $\nabla$ do not commute: we may either consider the
commutators $[\nabla,H]= \nabla V$ and $[\sqrt V,H] = -\nabla \sqrt
V\cdot \nabla - \frac{1}{2}\Delta\sqrt V$, and get a closed system of
estimates (see e.g. \cite{Ca15}), or use directly the operator
$\sqrt H$, and rely on Lemma~\ref{lem:sobolev-H}, which implies 
\begin{equation*}
  Z_T=\{u\in C([0,T];\cH^1);\ \sqrt H u \in
  L^{q_0}(0,T;L^{r_0}(\R^d)) \}.
\end{equation*}
We choose the
latter option, as it paves the way to the analysis of error estimates
in the discrete in time setting. We have
\begin{align*}
  \sqrt H \Phi(u)(t) &= \sqrt HS(t) u_0 -i\sqrt H\int_0^t
  S(t-s) \(\vp u\)(s)ds\\
&\quad-i\eps \sqrt H\int_0^t
  S(t-s) \(|u|^{2\si}u\)(s)ds\\
&=S(t)  \sqrt Hu_0-i\int_0^t
  S(t-s) \sqrt H\(\vp u\)(s)ds\\
&\quad -i\eps\int_0^t S(t-s ) \sqrt H\(|u|^{2\si}u\)(s)ds.
\end{align*}
We resume the same Lebesgue exponents as above when applying
Strichartz estimates:
\begin{equation*}
  \|\sqrt H\Phi(u)\|_{L^q_TL^r} \lesssim \|\sqrt Hu_0\|_{L^2} 
+\|\sqrt H(\vp u)\|_{L^1_TL^2}
+  \left\| \sqrt H\(|u|^{2\si}u\)\right\|_{L^{q_0'}_TL^{r_0'}}.
\end{equation*}
In view of \eqref{eq:H12der} and H\"older inequality like in
\eqref{eq:holder}, 
\begin{equation*}
  \left\| \sqrt H\(|u|^{2\si}u\)\right\|_{L^{q_0'}_TL^{r_0'}}\lesssim
  \|u\|_{L^\theta_TL^{r_0}}^{2\si }\|\sqrt H   u\|_{L^{q_0}_TL^{r_0} }.
\end{equation*}
Using Sobolev embedding, Lemma~\ref{lem:sobolev-H}, and \eqref{eq:H12der}  again,
\begin{align*}
   \|\sqrt H\Phi(u)\|_{L^q_TL^r} &\lesssim \|\sqrt Hu_0\|_{L^2}
                                   +T\|\sqrt H u\|_{L^\infty_T L^2}+ 
   T^{1/\theta}\|u\|_{L^\infty_T H^1}^{2\si }\|\sqrt H   u\|_{L^{q_0}_TL^{r_0} }\\
&\lesssim \|\sqrt Hu_0\|_{L^2} +T\|\sqrt H u\|_{L^\infty_T L^2}+
   T^{1/\theta}\|\sqrt H u\|_{L^\infty_T L^2}^{2\si }\|\sqrt H   u\|_{L^{q_0}_TL^{r_0} }.
\end{align*}
Choosing successively $(q,r)=(\infty,2)$ and $(q_0,r_0)$, we have a
closed system of inequalities, and so picking $T=T(\|u_0\|_{\cH^1})$
sufficiently small, we can prove that $\Phi$ maps 
a suitable ball in $Z_T$ into itself. Contraction for the norm
$\|\cdot\|_{L^{q_0}_TL^{r_0}}+\|\cdot\|_{L^\infty_TL^2} $ is proved similarly, and local
existence follows.
\smallbreak

Uniqueness stems from the same estimates as
above. We 
refer to \cite{CazCourant} for the rigorous justification of the
conservation of mass and energy. Globalization is a
consequence of these
conservations, and Gagliardo--Nirenberg inequality when
$\eps=-1$ and $\si<2/d$. 
\end{proof}

\subsection{Well-posedness in $\cH^2$}
\label{sec:cauchyH2}
For $T>0$, denote
\begin{equation}\label{eq:X2}
  \|u\|_{X^2(I)} = \max_{A\in \{\Id,\Delta,  V \}}\sup_{(q,r)\text{
      admissible}}\|Au \|_{L^q(I;L^r)},
\end{equation}
and let $X^2(I)$ be the corresponding (Banach) space. Note that
$X^2(I) \subset  C(I;\cH^2)$. When $I=[0,T]$, we
simply denote $X^2([0,T])$ by $X^2_T$.
\begin{proposition}\label{prop:cauchy2}
  Let Assumption~\ref{hyp:V} be verified and $0<\si<\frac{2}{(d-2)_+}$. For any
  $u_0\in \cH^2$, there exist $T>0$  a unique solution $u\in X^1_T\cap
  C([0,T];\cH^2)$ to
  \eqref{eq:NLSP}. If in addition $\si\ge 1/2$, $u\in X^2_T$. The time
  $T>0$ can be chosen arbitrarily large (global solution) 
if $0<\si<2/d$ ($L^2$-subcritical case), or $\eps=+1$ (defocusing
case). 
\end{proposition}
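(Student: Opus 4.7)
The strategy is to reproduce the Strichartz-based fixed point argument of Proposition~\ref{prop:cauchy1} one derivative order higher, systematically replacing $\sqrt H$ by $H$ and invoking the second inequality in Lemma~\ref{lem:sobolev-H} to translate between $\|H\phi\|_{L^p}$ and $\|\Delta\phi\|_{L^p}+\|V\phi\|_{L^p}$. Applying $H$ to Duhamel's formula and using the commutation $[H,S(t)]=0$, one gets
\[
H u(t)=S(t)Hu_0-i\int_0^t S(t-s)\,H\bigl(\vp u+\eps|u|^{2\si}u\bigr)(s)\,ds,
\]
so everything reduces to estimating the right-hand side in Strichartz norms. For the perturbation potential, the identity $H(\vp u)=\vp\,Hu-(\Delta\vp)u-2\nabla\vp\cdot\nabla u$ combined with $\vp\in W^{2,\infty}$ and Lemma~\ref{lem:sobolev-H} gives $\|H(\vp u)\|_{L^1_TL^2}\lesssim T\|u\|_{L^\infty_T\cH^2}$.

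The subtlety, and main obstacle, lies in handling $H(|u|^{2\si}u)=-\Delta(|u|^{2\si}u)+V|u|^{2\si}u$: computing two derivatives of $z\mapsto|z|^{2\si}z$ produces quadratic gradient terms of the form $|u|^{2\si-1}|\nabla u|^2$, which are only pointwise controlled when $2\si-1\ge 0$---exactly the extra hypothesis $\si\ge 1/2$ needed for the nonlinearity to be of class $C^2$ (or $\dot W^{2,\infty}$ when $\si=1/2$). Under this condition, the same H\"older--Sobolev chain as in~\eqref{eq:holder}, with the admissible pair $(q_0,r_0)$ and exponent $\theta$ from~\eqref{eq:theta}, yields
\[
\bigl\|H(|u|^{2\si}u)\bigr\|_{L^{q_0'}_TL^{r_0'}}\lesssim T^{2\si/\theta}\|u\|_{L^\infty_T\cH^2}^{2\si}\|Hu\|_{L^{q_0}_TL^{r_0}},
\]
and the contraction argument closes in a ball of $X^2_T$ for $T=T(\|u_0\|_{\cH^2})$ small enough, which proves the statement when $\si\ge 1/2$.

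For the weaker regularity case $0<\si<1/2$, where the nonlinearity is not $C^2$ and the Strichartz fixed point no longer closes in $X^2_T$, the plan is to first invoke Proposition~\ref{prop:cauchy1} to obtain $u\in X^1_T$, then upgrade to $C([0,T];\cH^2)$ by a persistence argument on $v:=\partial_t u$. Formally $v$ solves a linear Schr\"odinger equation
\[
i\partial_t v = H v + \vp\,v + \eps(\si+1)|u|^{2\si}v + \eps\si|u|^{2\si-2}u^2\,\overline{v},
\]
whose last two coefficients are pointwise dominated by $|u|^{2\si}$ (with the convention that they vanish at $u=0$), bounded in $L^\infty_TL^\infty$ for $d\le 3$ via Sobolev embedding, or in a Kato-type class in higher dimensions. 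Rigorously, one regularises the nonlinearity, applies the $\si\ge 1/2$ case to the approximated problem, and passes to the limit in the resulting uniform $L^2$ a priori bound on $v$; since $v(0)=-iHu_0-i(\vp+\eps|u_0|^{2\si})u_0\in L^2$ whenever $u_0\in\cH^2$, this produces $v\in C([0,T];L^2)$, and the equation then recovers $Hu=iv-\vp u-\eps|u|^{2\si}u\in L^2$, so $u\in C([0,T];\cH^2)$ by Lemma~\ref{lem:sobolev-H}. Uniqueness is inherited from the $\cH^1$ theory, and globalization follows exactly as in Proposition~\ref{prop:cauchy1}: when $\si<2/d$ or $\eps=+1$, conservation of mass and $\cH^1$-energy provides a uniform $\cH^1$ bound, and iteration of the local $\cH^2$ theory yields global solutions.
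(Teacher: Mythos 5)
There is a genuine gap in your $\si\ge 1/2$ branch, and it sits exactly where the real difficulty of this proposition lies. You correctly identify that two spatial derivatives of $|u|^{2\si}u$ produce the term $|u|^{2\si-1}|\nabla u|^2$, but the displayed estimate
$\|H(|u|^{2\si}u)\|_{L^{q_0'}_TL^{r_0'}}\lesssim T^{2\si/\theta}\|u\|_{L^\infty_T\cH^2}^{2\si}\|Hu\|_{L^{q_0}_TL^{r_0}}$
does not follow from ``the same H\"older--Sobolev chain'': the quadratic gradient term is not pointwise dominated by $|u|^{2\si}|\Delta u|$, and H\"older gives a product of the type $\|u\|_{L^\theta_TL^{r_0}}^{2\si-1}\|\nabla u\|_{L^\theta_TL^{r_0}}\|\nabla u\|_{L^{q_0}_TL^{r_0}}$. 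When $2/d\le\si<2/(d-2)_+$ one has $\theta\ge q_0$, so $\|\nabla u\|_{L^\theta_TL^{r_0}}$ is \emph{not} controlled by $T^{\text{pos}}$ times an admissible Strichartz norm of $\nabla u$; the paper handles it via the embedding $W^{s,b_1}\hookrightarrow L^{r_0}$ with $(\theta,b_1)$ admissible, i.e.\ by an $X^2$-level quantity, together with a splitting of $[0,T]$ into intervals where $\|u\|_{L^{q_0}(I;L^{r_0})}$ is small so that the term linear in $Hu$ is absorbed. (The paper explicitly warns that this point is missing even in the $V=0$ references.) As written, your contraction in $X^2_T$ does not close, and with it falls the $C([0,T];\cH^2)$ statement for $\si\ge 1/2$, since in your architecture that case rests entirely on the fixed point. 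Relatedly, your globalization by ``iteration of the local $\cH^2$ theory'' needs an a priori bound on $\|u(t)\|_{\cH^2}$, which conservation laws do not give; what saves the day in the paper is that the $\cH^2$ bound is obtained by a persistence argument (linear in $Hu$ once the $X^1_T$ norms are fixed), valid on the whole lifespan of the $\cH^1$ solution.

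For the record, the paper's route is different and avoids your obstacle for the $C([0,T];\cH^2)$ part altogether: for \emph{every} $\si\in(0,2/(d-2)_+)$ it differentiates the equation in time, estimates $\partial_t u$ by rerunning the Strichartz/H\"older scheme of Proposition~\ref{prop:cauchy1} (this needs only $|\partial_t(|u|^{2\si}u)|\lesssim|u|^{2\si}|\partial_t u|$, hence no $\si\ge1/2$), and then recovers $Hu\in C([0,T];L^2)$ from the equation -- note that this last step is itself not free: since $H^1\not\subset L^{4\si+2}$ in general, the paper uses $\|u\|_{L^{4\si+2}}^{2\si+1}\lesssim\|u\|_{H^1}^{2\si+1-\eta}\|u\|_{H^2}^{\eta}$ and Young's inequality to absorb the nonlinear term into $\|Hu\|_{L^2}$, a point your ``the equation then recovers $Hu\in L^2$'' glosses over in higher dimensions. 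The hypothesis $\si\ge1/2$ is used only afterwards, to place $Hu$ in all Strichartz spaces ($u\in X^2_T$), via the interval-splitting and $W^{2,b_1}$ device described above. Your $\si<1/2$ plan (regularize, use $L^\infty$ or Kato-class bounds on $|u|^{2\si}$) is in the spirit of the paper's time-differentiation argument but is both vaguer (for $d\ge4$, $\cH^2\not\subset L^\infty$ and the ``Kato-type class'' is not substantiated) and unnecessary once the Strichartz estimates with the exponents $(\theta,q_0,r_0)$ are used as in Proposition~\ref{prop:cauchy1}.
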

\begin{proof}
  Since $\cH^2\subset \cH^1$, Proposition~\ref{prop:cauchy1} provides
  a local solution in $X^1_T$ for some $T>0$, which is arbitrarily
  large in the cases addressed at the end of the statement of the
  proposition. Arguing as in \cite[Section~5.3]{CazCourant}, we note that applying
  the operator $\partial_t$ in \eqref{eq:NLSP}, we find that $\partial_t u$ solves
  \begin{equation*}
    \(i\partial_t -H\)\partial_t u =\vp \partial_t u+ \eps \partial_t \(|u|^{2\si}u\), \quad \partial_t u_{\mid
      t=0} = -iHu_0-i\vp u_0 -i\eps|u_0|^{2\si}u_0.
  \end{equation*}
We check that since $u_0\in \cH^2\subset H^2(\R^d)$,
$|u_0|^{2\si}u_0\in L^2(\R^d)$ by Sobolev embedding $H^2\subset
H^{d\si/(2\si+1)}\subset 
L^{4\si+2}$. We also note that in view of Lemma~\ref{lem:sobolev-H},
\begin{equation*}
\cH^2 = \{\phi\in L^2(\R^d),\quad H\phi\in L^2(\R^d)\},  
\end{equation*}
and, in the sense of the equivalence of norms,
\begin{equation*}
  \|u\|_{X^2(I)}\approx \sup_{(q,r)\text{
      admissible}}\|Hu \|_{L^q(I;L^r)}.
\end{equation*}
Writing Duhamel's formula for $\partial_t u$ and resuming the estimates from
the proof of Proposition~\ref{prop:cauchy1}, we see that $\partial_t u \in
L^q_TL^r$ for all admissible pairs $(q,r)$. In particular, $\partial_t u \in
C([0,T];L^2)$ (see e.g. \cite{CazCourant} for continuity). At this
stage, we know:
\begin{equation*}
  \underbrace{i\partial_tu}_{\in C([0,T];L^2)} = Hu + \underbrace{\vp
    u}_{\in  C([0,T];L^2)} + \eps|u|^{2\si}u.
\end{equation*}
We show that the linear term $Hu$ controls the nonlinear
term, in $L^2$:
\begin{equation*}
 \|\phi\|_{L^{4\si+2}}^{2\si+1} \lesssim 
\|\phi\|_{H^{d\si/(2\si+1)}}^{2\si+1} 
\lesssim \|\phi\|^{2\si}_{H^1}\|\phi\|_{H^s}
  ,
\end{equation*}
with $s= 1$ if $d\si\le 2\si+1$, and $s=(d-2)\si<2$ if $d\si>
2\si+1$, so there is always $\eta\in (0,1)$ such that
\begin{equation*}
 \|\phi\|_{L^{4\si+2}}^{2\si+1} \lesssim
 \|\phi\|^{2\si+1-\eta}_{H^1}\|\phi\|^{\eta}_{H^2}  \lesssim
 \|\sqrt H\phi\|^{2\si+1-\eta}_{L^2}\|H\phi\|^{\eta}_{L^2} .
\end{equation*}
In view of Young inequality, since $u\in X_T^1$, there exists $C$ such that for all $t\in [0,T]$,
\begin{equation*}
  \|u(t)\|_{L^{4\si+2}}^{2\si+1}\le C + \frac{1}{4} \| H u(t)\|_{L^2}.
\end{equation*}
We infer that $Hu\in C([0,T];L^2)$, hence $u\in C([0,T];\cH^2)$. If in
addition $\si\ge 1/2$, we can differentiate the nonlinearity twice, 
and so
\begin{equation*}
  \(i\partial_t -H\)H u =H(\vp u)+ \eps H\(|u|^{2\si}u\)= \Delta(\vp u) +
  V\vp u+\eps\Delta
  \(|u|^{2\si}u\)+\eps V |u|^{2\si}u. 
\end{equation*}
We can then prove that $u\in X^2_T$. We emphasize that unlike what is claimed in
\cite{ChoiKoh2021,Ignat2011}, even in the case 
$V=0$, the proof that $\Delta u\in L^q_TL^r$ for admissible pairs with
$r\not =2$ is not stated in \cite[Section~5.3]{CazCourant}. To
simplify the presentation, we assume $\vp=0$, as the corresponding
term bears no extra difficulty. We
distinguish two cases:\\
$\bullet $ If $0<\si<2/d$, we have $1/q_0<1/\theta$, and thus
\begin{equation*}
  \|u\|_{L^\theta_TL^{r_0}}\le T^{1/\theta-1/q_0} \|u\|_{L^{q_0}_TL^{r_0}}.
\end{equation*}
Strichartz
estimates like in \eqref{eq:holder}
Lemma~\ref{lem:sobolev-H}  yield, on $I=[t_j,t_{j+1}]\subset [0,T]$,
\begin{align*}
  \|H u\|_{L^q(I;L^r)}& \lesssim \|H u(t_j)\|_{L^2} +\|u\|_{L^1(I;\cH^2)}+
\|u\|_{L^\theta(I;L^{r_0})}^{2\si} \|V u\|_{L^{q_0}(I;L^{r_0})}\\
 +\|u\|_{L^\theta(I;L^{r_0})}^{2\si-1} &\(\|\nabla
  u\|_{L^\theta(I;L^{r_0})} \|\nabla  u\|_{L^{q_0}(I;L^{r_0})}+\|
u\|_{L^\theta(I;L^{r_0})} \|\Delta  u\|_{L^{q_0}(I;L^{r_0})}\)\\
  & \lesssim \|H u(t_j)\|_{L^2} +|I|
    \|u\|_{L^\infty(I;\cH^2)}+\|u\|_{L^{q_0}(I;L^{r_0})}^{2\si} 
    \|H u\|_{L^{q_0}(I;L^{r_0})}\\
&\quad  +\|u\|_{L^{q_0}(I;L^{r_0})}^{2\si-1} \|\sqrt H  u\|_{L^{q_0}(I;L^{r_0})}^2.
\end{align*}
If $u\in X_T^1$, then in particular $u \in
L^{q_0}([0,T];L^{r_0})$, so we can write $[0,T]$ as the union of
finitely many intervals on which $\|u\|_{L^{q_0}(I;L^{r_0})}$ is
sufficiently small so the nonlinear  terms are absorbed by the left hand side
when choosing $(q,r)=(q_0,r_0)$,
up to doubling the constants on each intervals
$[t_j,t_{j+1}]$. We infer $ Hu \in L^{q_0}_TL^{r_0}$, and resuming the
above estimate with $(q,r)$ an arbitrary admissible pair, we conclude
that $u\in X_T^2$. \\
\smallbreak

\noindent $\bullet$ If $2/d\le \si<2/(d-2)_+$, we resume the argument from
\cite[Section~2.2]{CaSu24}, and change the above estimates to 
\begin{align*}
  \|H u\|_{L^q(I;L^r)}& \lesssim \|H u(t_j)\|_{L^2} 
 +\|Hu\|_{L^\infty_TL^2}+
\|u\|_{L^\theta(I;L^{r_0})}^{2\si} \|V u\|_{L^{q_0}(I;L^{r_0})} \\
 +\|u\|_{L^\theta(I;L^{r_0})}^{2\si-1} &\(\|\nabla
  u\|_{L^\theta(I;L^{r_0})} \|\nabla  u\|_{L^{q_0}(I;L^{r_0})}+\|
u\|_{L^\theta(I;L^{r_0})} \|\Delta  u\|_{L^{q_0}(I;L^{r_0})}\)\\
  & \lesssim \|H u(t_j)\|_{L^2} +\|Hu\|_{L^\infty_TL^2}+
 \|u\|_{L^\theta(I;L^{r_0})}^{2\si} \|V u\|_{L^{q_0}(I;L^{r_0})}  \\
 +\|u\|_{L^\theta(I;L^{r_0})}^{2\si-1} &\(\|
  u\|_{L^\theta(I;W^{2,b_1})} \|\nabla  u\|_{L^{q_0}(I;L^{r_0})}+\|
u\|_{L^\theta(I;L^{r_0})} \|\Delta  u\|_{L^{q_0}(I;L^{r_0})}\),
\end{align*}
where $b_1\ge 2$ is such that $(\theta,b_1)$ is admissible: indeed,
$\theta>2$ when $\si\ge 2/d$, and moreover
$W^{s,b_1}(\R^d)\hookrightarrow L^{r_0}(\R^d)$ with
\begin{equation*}
  s = d\(\frac{1}{b_1}-\frac{1}{r_0}\) =
  \frac{2}{q_0}-\frac{2}{\theta}=  \frac{d\sigma-2}{2\si}\in
  [0,1). 
\end{equation*}
If $u\in X_T^1$, then in particular $u,\nabla u \in
L^{q_0}([0,T];L^{r_0})$, and we can follow essentially the same lines as in the
case $\si<2/d$. 
\end{proof}

The following corollary explains why, in the statements of
Theorems~\ref{theo:main} and \ref{theo:main2}, we do not assume that
$u\in X_T^1$ or $u\in X_T^2$, even though the proof of these results
will rely on such properties:
\begin{corollary}\label{cor:reg}
  Let $u_0\in \cH^1$. Either the solution provided by
  Proposition~\ref{prop:cauchy1} is global, in the sense that $T>0$ is
  arbitrary, or there exists $T^*>0$ such that 
  \begin{equation*}
    \|\nabla u(t)\|_{L^2(\R^d)}\Tend t {T^*}+\infty. 
  \end{equation*}
In particular, if $u\in C([0,T];\cH^1)$, then $u\in X_T^1$. \\
If in addition $u_0\in \cH^2$ and $\si\ge 1/2$, a similar statement
holds: if $u \in C([0,T];\cH^1)$, then $u\in X_T^2$. 
\end{corollary}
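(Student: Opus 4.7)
My plan is to prove Corollary~\ref{cor:reg} in three steps: establish a general $\cH^1$ blow-up alternative, refine it to say that blow-up can only manifest through $\|\nabla u\|_{L^2}$, and finally use the $\cH^1$ (resp. $\cH^2$) bound to upgrade $C([0,T];\cH^1)$ regularity to $X^1_T$ (resp. $X^2_T$) by iterating the local-in-time Strichartz estimates on subintervals.

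For the first step, I would revisit the contraction argument in Proposition~\ref{prop:cauchy1}: the local existence time $T$ depends only on $\|u_0\|_{\cH^1}$ (through $\|\sqrt{H}u_0\|_{L^2}$, which is equivalent to $\|u_0\|_{\cH^1}$ by Lemma~\ref{lem:sobolev-H}). Standard continuation then yields a maximal existence time $T^*\in (0,+\infty]$, with the alternative that either $T^*=+\infty$ or $\|u(t)\|_{\cH^1}\to +\infty$ as $t\to T^*$. Since $\|u(t)\|_{L^2}$ is conserved, this is equivalent to the blow-up of $\|\nabla u(t)\|_{L^2}^2+\int V|u(t)|^2$.

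For the second step, I would rule out blow-up of the potential term alone by using conservation of energy. If $\|\nabla u(t)\|_{L^2}$ stays bounded on $[0,T^*)$, then the Gagliardo-Nirenberg inequality
\[
\|u\|_{L^{2\si+2}}^{2\si+2}\lesssim \|\nabla u\|_{L^2}^{d\si}\|u\|_{L^2}^{2+(2-d)\si}
\]
is valid (since $\si<2/(d-2)_+$) and yields a bound on $\|u(t)\|_{L^{2\si+2}}^{2\si+2}$. Combined with $|\int \vp|u|^2|\le \|\vp\|_{L^\infty}\|u_0\|_{L^2}^2$ and energy conservation, this controls $\int V(x)|u(t,x)|^2dx$ uniformly on $[0,T^*)$, hence $\|u(t)\|_{\cH^1}$ stays bounded, contradicting the alternative. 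The conclusion is that $\|\nabla u(t)\|_{L^2}\to+\infty$ at $T^*$.

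For the third step, assume $u\in C([0,T];\cH^1)$. Then $M:=\sup_{[0,T]}\|u(t)\|_{\cH^1}<\infty$. I would partition $[0,T]$ into finitely many subintervals $[t_j,t_{j+1}]$ of length $\eta>0$ to be chosen depending on $M$, and on each one run the Strichartz estimates from the proof of Proposition~\ref{prop:cauchy1} starting from $u(t_j)\in\cH^1$; picking $\eta$ small enough (depending on $M$) makes the nonlinear contributions absorbable, so $u\in X^1([t_j,t_{j+1}])$, and summing over $j$ gives $u\in X^1_T$. The $\cH^2$ case is analogous: assuming $u\in C([0,T];\cH^2)$ with $\si\ge 1/2$, the bound $\sup_{[0,T]}\|u(t)\|_{\cH^2}<\infty$ plus the estimates in the proof of Proposition~\ref{prop:cauchy2} — split according to whether $\si<2/d$ or $\si\ge 2/d$ — deliver $u\in X^2_T$ by the same subdivision argument.

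The main obstacle is the second step: ruling out blow-up of the potential term without losing control of $\|\nabla u\|_{L^2}$. The issue is that a priori $\|u\sqrt{V}\|_{L^2}$ could blow up while $\|\nabla u\|_{L^2}$ stays bounded, because under Assumption~\ref{hyp:V} the potential $V$ is unbounded. Energy conservation alone relates the two, so the argument genuinely relies on the fact that the subcritical Gagliardo-Nirenberg bound involves only $\nabla u$ and $u$ (not $V$), which is what makes the implication one-directional and closes the argument.
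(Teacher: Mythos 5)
Your first two steps reproduce the paper's argument (blow-up alternative in $\cH^1$ from the fixed point, then energy conservation plus the bound $\|u\|_{L^{2\si+2}}\lesssim \|u\|_{H^1}$ to show that boundedness of $\|\nabla u\|_{L^2}$ forces boundedness of $\int V|u|^2$), so that part is fine. The genuine gap is in the last statement. The corollary asserts: if $u_0\in\cH^2$, $\si\ge 1/2$, and $u\in C([0,T];\cH^1)$ \emph{only}, then $u\in X^2_T$. You instead assume $u\in C([0,T];\cH^2)$, i.e.\ a uniform $\cH^2$ bound on all of $[0,T]$, and use $M=\sup_{[0,T]}\|u(t)\|_{\cH^2}$ to calibrate the subdivision. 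But that uniform $\cH^2$ bound is precisely what has to be proven: the whole point is persistence of $\cH^2$ regularity up to the $\cH^1$ existence time, i.e.\ that the $\cH^2$ norm cannot blow up before $\|\nabla u\|_{L^2}$ does. The paper closes this by the argument inside the proof of Proposition~\ref{prop:cauchy2}: once $u\in X^1_T$ (from the first part of the corollary), one subdivides $[0,T]$ according to smallness of $\|u\|_{L^{q_0}(I;L^{r_0})}$ — an $X^1$-level quantity — and the estimates on $Hu$ are then absorbable on each subinterval because the nonlinear terms are linear or quadratic in $Hu$, $\sqrt H u$ with small prefactors; no a priori $\cH^2$ bound is needed. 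Your scheme, with interval length chosen in terms of the $\cH^2$ sup, is circular as stated.

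A smaller point, in the same spirit, concerns your $X^1_T$ step: applying Strichartz to Duhamel's formula for $u$ and ``absorbing'' the term $\|u\|_{L^\theta(I;L^{r_0})}^{2\si}\|u\|_{L^{q_0}(I;L^{r_0})}$ requires knowing beforehand that $\|u\|_{L^{q_0}(I;L^{r_0})}$ is finite, which is exactly the conclusion sought. The clean route — and the one the paper takes — is to note that the fixed-point solution with data $u(t_j)$ lies in $X^1$ on an interval of length depending only on $\|u(t_j)\|_{\cH^1}\le M$, and to identify it with $u$ by uniqueness of solutions in $C([0,T];\cH^1)$ (the paper cites \cite[Proposition~4.2.1]{CazCourant} for this); you never invoke uniqueness, so the identification of the hypothesized $C([0,T];\cH^1)$ solution with the constructed one is missing. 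This is fixable, but as written both the $X^1$ and the $X^2$ steps lean on information that is not available at that stage of the argument.
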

\begin{proof}
  The construction of a solution in the proof of
  Proposition~\ref{prop:cauchy1} relies on a fixed point argument,
  which provides a local existence time
  $T=T(\|u_0\|_{\cH^1})$. Standard ODE arguments imply that either the
  solution is global, $u\in X_T^1$ for all $T>0$, or its $\cH^1$-norm
  becomes infinite in finite time,
  \begin{equation*}
    \exists T^*>0,\quad \|u(t)\|_{\cH^1}\Tend t {T^*}+\infty. 
  \end{equation*}
Recall that the $L^2$-norm of $u$ is independent of time. We now examine
the conservation of the energy. The conservation of mass implies  $\vp
u\in L^\infty([0,T^*];L^2)$. If $\nabla 
u\in L^\infty([0,T^*];L^2)$, then the Sobolev embedding
$H^1(\R^d)\hookrightarrow L^{2\si+2}(\R^d)$  implies that $u\in
L^\infty([0,T^*];L^{2\si+2})$. In the conserved energy,
three terms out of four are bounded, so we infer $u\sqrt V \in
L^\infty([0,T^*];L^2)$, and thus $u\in 
L^\infty([0,T^*];\cH^1)$: therefore, either the solution is global, or
\begin{equation*}
   \exists T^*>0,\quad \|\nabla u(t)\|_{L^2}\Tend t {T^*}+\infty. 
 \end{equation*}
 In view of \cite[Proposition~4.2.1]{CazCourant}, uniqueness holds for
 solutions of \eqref{eq:NLSP} which belong to
 $C([0,T];\cH^1)$, hence the first part of the corollary.
In the proof of Proposition~\ref{prop:cauchy2}, we have seen  that if $u\in
X_T^1$, then $u\in X_T^2$, hence the corollary.   
\end{proof}

\section{Stability implies convergence}
\label{sec:cv}

The goal of this section is to establish the convergence
results of Theorems~\ref{theo:main} and \ref{theo:main2}, when stability is assumed:
\begin{theorem}\label{theo:stab-cv}
   Let Assumption~\ref{hyp:V} be verified. 
Suppose that for some 
   $T>0$, 
   \eqref{eq:NLSP} has a unique solution  $u\in 
   X^1_T$. Denote by $u^n$ the sequence 
defined by the scheme \eqref{eq:modified-splitting}, where we set
$\lambda=1/\tau$. Suppose that there exist $\gamma,M>0$ such
that for all interval $I\subset [0,T]$, the
numerical solution satisfies
\begin{equation}\label{eq:stab-num}
  \|u^n\|_{\ell^\theta(I;L^{r_0})}
\le |I|^\gamma M,\quad \forall
  \tau\in (0,1),\quad \theta=\frac{2\si(2\si+2)}{2-(d-2)\si},\ r_0=2\si+2,
\end{equation}
where the above $\ell^\theta$ norm is defined in
\eqref{eq:ellqLr}. Let $(q,r)$ be an admissible pair.
\\
$\bullet$ Assume that either $0<\si<2/d$, or $d\le 5$ and $2/d\le
\si<2/(d-2)_+$, with in addition $\si\ge 1/2$ when $d=5$. 
There exists $C$ such that
\begin{equation*}
  \|u^n-u(n\tau)\|_{\ell^q([0,T];L^r)} \le C\tau^{1/2}.
\end{equation*}
$\bullet$ If $1/2\le \si<2/(d-2)_+$, and $u\in  X^2_T$, 
then there exists $C$ such that
\begin{equation*}
  \|u^n-u(n\tau)\|_{\ell^q([0,T];L^r)} \le C\tau.
\end{equation*}
$\bullet$ If $1/2\le \si<2/(d-2)_+$, $u\in  X^2_T$, and, up to
increasing $M$, 
\begin{equation}\label{eq:stab-num2}
  \|u^n\|_{\ell^\infty(I;L^{r_0})}
\le M,\quad \forall
  \tau\in (0,1),
\end{equation}
then there exists $C$ such that
\begin{equation*}
 \left\|H^{1/2}\(u^n-u(n\tau)\)\right\|_{\ell^q([0,T];L^r)}\le C\tau^{1/2}. 
\end{equation*}
\end{theorem}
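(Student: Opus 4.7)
The plan is to implement the Lady Windermere's fan telescoping of \cite{ChoiKoh2021,Ignat2011}, using the discrete Strichartz machinery of Section~\ref{sec:strichartz} in place of the Fourier-multiplier estimates of those papers. Set $e^n=u^n-u(n\tau)$ with $\lambda=1/\tau$, and define the local truncation error
\begin{equation*}
\mathcal{L}^n := S_\lambda(\tau) N(\tau) u(n\tau) - u((n+1)\tau),
\end{equation*}
so that $e^0=(\Pi_\lambda-\Id)u_0$ and $e^{n+1}=S_\lambda(\tau)[N(\tau)u^n-N(\tau)u(n\tau)]+\mathcal{L}^n$. Iterating yields a discrete Duhamel identity expressing $e^n$ as $S_\lambda(n\tau)(\Pi_\lambda-\Id)u_0$, plus a discrete sum of $S_\lambda$-propagated $\mathcal{L}^k$'s, plus $S_\lambda$-propagated nonlinear differences in $e^k$. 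Since $N(\tau)$ is a pointwise phase rotation, one has $|N(\tau)\phi-N(\tau)\psi|\le|\phi-\psi|+C\tau(|\phi|^{2\si}+|\psi|^{2\si})|\phi-\psi|$, which in Strichartz norms gives the discrete counterpart of \eqref{eq:holder}; the initial term is controlled directly by \eqref{eq:Pi-lambda-1}--\eqref{eq:Pi-lambda-1bis}.

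I would then split $\mathcal{L}^n=\mathcal{L}_1^n+\mathcal{L}_2^n$ with
\begin{equation*}
\mathcal{L}_1^n = (\Pi_\lambda-\Id)S(\tau)u(n\tau),\quad \mathcal{L}_2^n = \Pi_\lambda\bigl[S(\tau)N(\tau)u(n\tau)-u((n+1)\tau)\bigr].
\end{equation*}
By Lemma~\ref{lem:PiPDO}, $\|\mathcal{L}_1^n\|_{L^2}\lesssim\tau^{1/2}\|u(n\tau)\|_{\cH^1}$ in the first regime and $\lesssim\tau\|u(n\tau)\|_{\cH^2}$ in the second. For $\mathcal{L}_2^n$, inserting the Duhamel representation of both flows yields
\begin{equation*}
S(\tau)N(\tau)u(n\tau)-u((n+1)\tau) = i\int_0^\tau S(\tau-s)\bigl[G(u(n\tau+s))-S(s)G(N(s)u(n\tau))\bigr]ds,
\end{equation*}
with $G(\phi)=\vp\phi+\eps|\phi|^{2\si}\phi$. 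The bracket vanishes at $s=0$; expanding via $[S(s)-\Id]\psi=-i\int_0^s S(\theta)H\psi\,d\theta$ and $u(n\tau+s)-u(n\tau)=\int_0^s\partial_t u\,d\theta$, using $\partial_t u = -i(H+\vp+\eps|u|^{2\si})u$ together with the pointwise bound $||a|^{2\si}a-|b|^{2\si}b|\lesssim(|a|^{2\si}+|b|^{2\si})|a-b|$, shows that the integrand is of order $s$ in the norms dual to admissible pairs, giving an overall size of $\tau^{3/2}$ in the $X_T^1$ regime and $\tau^2$ in the $X_T^2$ regime. The leading $\Pi_\lambda$ is absorbed by \eqref{eq:Pi-lambda-Lp}.

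The propagated sum of local errors is estimated by the inhomogeneous discrete Strichartz \eqref{eq:disc-stri-inhom} and Corollary~\ref{cor:Stri-cont-disc}: $\mathcal{L}_1^n$ is placed in $\ell^1 L^2$, and $\mathcal{L}_2^n$ in $\ell^{q_0'} L^{r_0'}$, mimicking the H\"older-in-time structure \eqref{eq:holder}. The propagated nonlinear differences produce factors $\|u^n\|_{\ell^\theta L^{r_0}}^{2\si}$ and $\|u(n\tau)\|_{\ell^\theta L^{r_0}}^{2\si}$, controlled respectively by \eqref{eq:stab-num} and by $u\in X_T^1$ via Corollary~\ref{cor:reg}. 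A discrete Gronwall argument applied on sufficiently short subintervals (on which the $\ell^\theta L^{r_0}$-content of $u$ is made small) then closes the $L^2$ estimate, and a bootstrap via \eqref{eq:discr-stri-homo}--\eqref{eq:disc-stri-inhom} yields the full $\ell^q L^r$ bound for every admissible $(q,r)$. The $\cH^1$-error is obtained by applying $H^{1/2}$ throughout: $[H^{1/2},\Pi_\lambda]=[H^{1/2},S(t)]=0$ by Proposition~\ref{prop:funct-calc}, the product rule \eqref{eq:H12der} distributes $H^{1/2}$ across $G$, and Lemma~\ref{lem:perte} absorbs a single loss of $\sqrt\lambda=\tau^{-1/2}$ from the terms where $H^{1/2}$ meets the cutoff, converting the $\tau$-rate in $L^2$ into the announced $\tau^{1/2}$-rate in $\cH^1$.

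The main obstacle is the simultaneous handling of the low-regularity nonlinearity (when $\si<1/2$, $z\mapsto|z|^{2\si}z$ is only H\"older continuous, so no derivative may fall on it) together with the non-commutation of $\Pi_\lambda$ with pointwise multiplication by $\vp$ or by $|\phi|^{2\si}$. Every occurrence of $\Pi_\lambda$ must be routed so that the $L^p$-bound of Lemma~\ref{lem:PiPDO} applies, and the nonlinearity must be kept in multiplicative form; the stability hypothesis \eqref{eq:stab-num} on the admissible pair $(\theta,r_0)$ is exactly what permits the discrete analogue of \eqref{eq:holder} to be implemented, and \eqref{eq:stab-num2} plays the same role at the $\cH^1$ level.
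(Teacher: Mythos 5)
Your overall scaffolding (discrete Duhamel, discrete Strichartz, absorption on short subintervals, induction) matches the paper's, but there is a fatal gap in how you handle the spectral-projection error. You compare $u^n$ directly with $u(n\tau)$ and put the fresh projection error $\mathcal L_1^n=(\Pi_\lambda-\Id)S(\tau)u(n\tau)$ into the per-step local truncation error, bounding it by $\tau^{1/2}\|u(n\tau)\|_{\cH^1}$ (resp.\ $\tau\|u(n\tau)\|_{\cH^2}$). A per-step error of size $\tau^{\beta/2}$ is \emph{not} summable over the $O(T/\tau)$ steps of the fan: writing the propagated sum as $\tau\sum_k S_\lambda((n-k)\tau)\,\mathcal L_1^k/\tau$ and placing $\mathcal L_1^k/\tau$ in $\ell^1([0,T];L^2)$ gives $\sum_k\|\mathcal L_1^k\|_{L^2}\sim (T/\tau)\,\tau^{\beta/2}$, i.e.\ $O(\tau^{-1/2})$ for $\cH^1$ data and $O(1)$ for $\cH^2$ data — no convergence. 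The point (already visible in the purely linear case, where $u^n-u(n\tau)=(\Pi_\lambda^{\,n+1}-\Id)S(n\tau)u_0$ is $O(\tau^{\beta/2})$ uniformly in $n$, while your bookkeeping would count $n$ copies of a $\tau^{\beta/2}$ error) is that the projection error must enter the argument only \emph{once}, globally. This is why the paper estimates $u^n-\Pi_\lambda u(n\tau)$, converts to $u^n-u(n\tau)$ at the very end via \eqref{eq:Pi-lambda-1}--\eqref{eq:Pi-lambda-1bis} applied to $u$ itself, and handles consistency not by per-step Taylor expansion of the two flows but by the quadrature estimate of Lemma~\ref{lem-5-2} (Riemann sum versus integral, paying $\tau^{\beta/2}\|H^{\beta/2}f\|+\tau\|\partial_t f\|$ in dual Strichartz norms) combined with Lemma~\ref{lem:A4}. (Incidentally, your split $\mathcal L^n=\mathcal L_1^n+\mathcal L_2^n$ is not even algebraically exact, since $u((n+1)\tau)\neq S(\tau)u(n\tau)$, but that is minor compared with the accumulation issue. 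Your expansion of $\mathcal L_2^n$ also uses $Hu$ pointwise in time, which is unavailable for $u\in X^1_T$; at that regularity one may only pay $H^{1/2}$ and gain $s^{1/2}$, which is exactly what Lemma~\ref{lem-5-2} encodes.)

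The third bullet is also not obtained the way you suggest. Lemma~\ref{lem:perte} bounds $\Pi_\lambda H^{1/2}\phi$ by $\sqrt\lambda\,\|\phi\|_{L^p}$, so it cannot ``convert'' the $L^2$ rate of $u^n-u(n\tau)$ into an $\cH^1$ rate: the error is not of the form $\Pi_\lambda(\text{something})$, because the exact solution is not spectrally localized. The paper instead reruns the whole decomposition after applying $H^{1/2}$, which requires a genuinely new difference estimate for $\sqrt H\bigl(\frac{N_0(\tau)-\Id}{\tau}w_1-\frac{N_0(\tau)-\Id}{\tau}w_2\bigr)$ (Lemma~\ref{lem:5.2}), Bernstein-type inequalities through $Z_\tau=\Pi_{4\lambda}Z_\tau$ to handle the $L^{\frac{4\si-1}{2\si-1}r_0}$ norms, and the strengthened stability hypothesis \eqref{eq:stab-num2}; none of this is reachable from your sketch. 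So the proposal, as written, does not prove any of the three stated estimates without being reorganized along the lines above.
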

Setting $(q,r)=(\infty,2)$, we get the conclusions of
Theorems~\ref{theo:main} and \ref{theo:main2}.

We note that the condition \eqref{eq:stab-num2} is stronger than
\eqref{eq:stab-num}, since \eqref{eq:stab-num2} and H\"older
inequality imply
\begin{equation*}
  \|u^n\|_{\ell^\theta(I;L^{r_0})}
\le |I|^{1/\theta}\|u^n\|_{\ell^\infty(I;L^{r_0})}
\le |I|^{1/\theta} M .
\end{equation*}
The proof of this result is very similar to the proof of
\cite[Theorem~1.2]{Ignat2011} and
\cite[Theorem~1.4]{ChoiKoh2021}. We note however that the assumption
\eqref{eq:stab-num} is weaker than the one made in
\cite[Theorem~1.4]{ChoiKoh2021}: it turns out that in the proof of
\cite[Theorem~1.4]{ChoiKoh2021}, it is precisely \eqref{eq:stab-num}
which is used. 
We recall the main steps of the arguments, with enough details so it
should be clear that only the condition \eqref{eq:stab-num} is required
on the numerical solution, and emphasize the modifications due to the
presence of the potential $V$ here.

\subsection{Preliminary lemmas}

We first note that 
\eqref{eq:Pi-lambda-Lp}, \eqref{eq:Pi-lambda-1}, 
Lemmas~\ref{lem:perte} and \ref{lem:bernstein} provide the
analogue of \cite[Lemma~2.6]{ChoiKoh2021}. Also, contrary to what
happens in \cite{ChoiKoh2021,Ignat2011}, the operators $\Pi_\lambda$
and $\nabla$ do not commute: this is why we consider the operator
$H^{1/2}$ instead of $\nabla$, using \eqref{eq:H12der}. 
 The next result corresponds essentially to
\cite[Lemma~2.5]{ChoiKoh2021}, except for the last statement, which
appears in the proof of \cite[Lemma~4.3]{Ignat2011}, and is easily
deduced from explicit computations and \eqref{eq:NLpuissance}: 

\begin{lemma}\label{lem:propNL}
Denote by $N_0$ the map $N$ when $\vp=0$,
$  N_0(t)\phi = \phi^{-i\eps t|\phi|^{2\si}}$.
There exists  $c>0$ such that
    \begin{equation}\label{eq:NLlip}
    \left| \frac{N_0(\tau) - \Id}{\tau} v - \frac{N_0(\tau) - \Id}{\tau}w \right|
    \le c\(|v|^{2\si} + |w|^{2\si}\)|v-w|
    \end{equation}
and
    \begin{equation}\label{eq:NLpuissance}
    \left| \frac{N_0(\tau)-\Id}{\tau} v\right|
    = \left| \frac{\exp(-i \tau \eps|v|^{2\si}) -1}{\tau}
      v\right| \le |v|^{2\si+1} 
    \end{equation}
hold for all $v, w \in \mathbb{C}$.
Furthermore, for $f$ smooth enough, we have the pointwise estimates
    \begin{equation}\label{eq:gradientNL}
    \left| \nabla \left( \frac{N_0(\tau) - \Id}{\tau} f \right) \right|
    \lesssim  |f|^{2\si} |\nabla f|,
  \end{equation}
and, if $\si\ge 1/2$ and $0<\tau\le 1$,
   \begin{equation}\label{eq:laplaceNL}
    \left| \Delta \left( \frac{N_0(\tau) - \Id}{\tau} f \right) \right|
    \lesssim  |f|^{2\si} |\Delta f|+ |f|^{2\si-1}|\nabla f|^2 + \tau
    |f|^{4\si-1} |\nabla f|^2. 
  \end{equation}
\end{lemma}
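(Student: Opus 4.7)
\emph{Plan.} All four bounds are pointwise inequalities in $\C$ (or pointwise in $x$ for \eqref{eq:gradientNL}--\eqref{eq:laplaceNL}), which I would establish by direct computation using only the elementary estimates
\[
|e^{i\theta}-1|\le |\theta|,\qquad |e^{i\theta_1}-e^{i\theta_2}|\le |\theta_1-\theta_2|\qquad(\theta,\theta_1,\theta_2\in\R),
\]
together with chain rule manipulations of $|f|^{2\si}$.

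Inequality \eqref{eq:NLpuissance} is immediate, since $|e^{-i\tau\eps|v|^{2\si}}-1|\le \tau|v|^{2\si}$. For \eqref{eq:NLlip}, I would split
\[
\frac{N_0(\tau)v-v}{\tau}-\frac{N_0(\tau)w-w}{\tau} \;=\; \frac{e^{-i\tau\eps|v|^{2\si}}-1}{\tau}(v-w) \;+\; \frac{e^{-i\tau\eps|v|^{2\si}}-e^{-i\tau\eps|w|^{2\si}}}{\tau}\,w.
\]
The first piece is $\le |v|^{2\si}|v-w|$. The second is bounded by $\bigl||v|^{2\si}-|w|^{2\si}\bigr|\,|w|$, which, combined with the classical power-nonlinearity Lipschitz bound
$
\bigl||v|^{2\si}v-|w|^{2\si}w\bigr|\le c(|v|^{2\si}+|w|^{2\si})|v-w|$
(valid for every $\si>0$), yields \eqref{eq:NLlip} after rearrangement.

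For \eqref{eq:gradientNL} I would apply the chain rule directly:
\[
\nabla\!\left(\frac{N_0(\tau)-\Id}{\tau}f\right) \;=\; -i\eps\, e^{-i\tau\eps|f|^{2\si}}\,f\,\nabla|f|^{2\si} \;+\; \frac{e^{-i\tau\eps|f|^{2\si}}-1}{\tau}\,\nabla f.
\]
Since $|\nabla |f|^{2\si}|\le 2\si\,|f|^{2\si-1}|\nabla f|$, both summands are of size $|f|^{2\si}|\nabla f|$. Differentiating once more and expanding via Leibniz produces four kinds of contributions: a term $|f|^{2\si}|\Delta f|$ from $\Delta f$, cross terms of size $|f|^{2\si-1}|\nabla f|^2$ arising from $\Delta|f|^{2\si}$, and finally a term involving $\bigl(\nabla|f|^{2\si}\bigr)^2$ weighted by an extra factor of $\tau$ (from differentiating the exponential twice), of size $\tau|f|^{4\si-1}|\nabla f|^2$. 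Regrouping gives exactly \eqref{eq:laplaceNL}.

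The main subtlety, and the only real obstacle, is the limited regularity of the map $z\mapsto |z|^{2\si}$: it is $C^1$ only for $\si\ge 1/2$ and $C^2$ only for $\si\ge 1$. This is precisely why \eqref{eq:laplaceNL} requires the hypothesis $\si\ge 1/2$ (under which the pointwise identity $\Delta|f|^{2\si}\lesssim |f|^{2\si-1}|\Delta f|+|f|^{2\si-2}|\nabla f|^2$ is meaningful for smooth $f$), and why \eqref{eq:NLlip} must use the symmetric combination $|v|^{2\si}+|w|^{2\si}$ rather than a derivative evaluated at a single point, so that the bound degenerates gracefully at $v=w=0$ when $\si<1/2$.
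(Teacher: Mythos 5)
Your proposal is correct and follows exactly the elementary route the paper has in mind: the paper gives no detailed proof, simply deferring to \cite[Lemma~2.5]{ChoiKoh2021} and \cite[Lemma~4.3]{Ignat2011} together with ``explicit computations and \eqref{eq:NLpuissance}'', and your computations (the bound $|e^{i\theta}-1|\le|\theta|$, the splitting for \eqref{eq:NLlip} with the classical power-nonlinearity Lipschitz estimate, and the chain/Leibniz rule expansion producing the $|f|^{2\si}|\Delta f|$, $|f|^{2\si-1}|\nabla f|^2$ and $\tau|f|^{4\si-1}|\nabla f|^2$ terms) are precisely those computations. The only cosmetic point is that your displayed relation for $\Delta|f|^{2\si}$ is an inequality rather than an identity, which does not affect the argument.
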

Together with Lemma~\ref{lem:sobolev-H} and H\"older inequality, the
above lemma entails: 
\begin{lemma}\label{lem:H12N0}
  Let $1< p<\infty$. The exist constants such that for all $f\in
  \Sch(\R^d)$,
  \begin{equation}
    \label{eq:gradientNLLp}
    \left\| \sqrt H \(\frac{N_0(\tau) - \Id}{\tau} f \) \right\|_{L^p}
    \lesssim \|f\|^{2\si}_{L^{a_1}}\|\sqrt H f\|_{L^{a_2}},\quad
    \frac{1}{p}= \frac{2\si}{a_1}+\frac{1}{a_2},
  \end{equation}
  and, if $\si\ge 1/2$ and $0<\tau\le 1$,
  \begin{equation}
    \label{eq:laplaceNLLp}
    \begin{aligned}
   \left\| H\( \frac{N_0(\tau) - \Id}{\tau} f \) \right\|_{L^p}
  &  \lesssim \|f\|^{2\si}_{L^{a_1}}\| H f\|_{L^{a_2}} +
  \|f\|^{2\si-1}_{L^{a_3}}\| \sqrt H f\|_{L^{a_3}}^2 \\
  &\quad+ \tau
    \|f\|_{L^{a_5}}^{4\si-1} \|\sqrt H f\|_{L^{a_6}}^2,\\
    \frac{1}{p} = \frac{2\si}{a_1}+\frac{1}{a_2} &=
    \frac{2\si-1}{a_3} + \frac{2}{a_4} =
    \frac{4\si-1}{a_5}+\frac{2}{a_6},
    \end{aligned}
  \end{equation}
with, for all $j$'s, $a_j\in [1,\infty]$.
\end{lemma}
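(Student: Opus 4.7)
The plan is to combine the equivalence of norms in Lemma~\ref{lem:sobolev-H} with the pointwise bounds from Lemma~\ref{lem:propNL}, then apply H\"older's inequality. Throughout, set $g_\tau := \frac{N_0(\tau)-\Id}{\tau}f$, so that $g_\tau(x) = \frac{e^{-i\tau\eps|f(x)|^{2\si}}-1}{\tau}f(x)$, which inherits smoothness from $f$.

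For \eqref{eq:gradientNLLp}, I would first invoke the second inequality in Lemma~\ref{lem:sobolev-H} to get
\begin{equation*}
\|\sqrt H g_\tau\|_{L^p}\lesssim \|\nabla g_\tau\|_{L^p}+\|g_\tau \sqrt V\|_{L^p}.
\end{equation*}
The first term is controlled by \eqref{eq:gradientNL}: $|\nabla g_\tau|\lesssim |f|^{2\si}|\nabla f|$, so H\"older's inequality with $\tfrac{1}{p}=\tfrac{2\si}{a_1}+\tfrac{1}{a_2}$ yields $\|\nabla g_\tau\|_{L^p}\lesssim \|f\|_{L^{a_1}}^{2\si}\|\nabla f\|_{L^{a_2}}$. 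For the potential term, \eqref{eq:NLpuissance} gives the pointwise bound $|g_\tau|\le |f|^{2\si+1}$, hence $|g_\tau \sqrt V|\le |f|^{2\si}\,|f\sqrt V|$, and the same H\"older exponents give $\|g_\tau\sqrt V\|_{L^p}\lesssim \|f\|_{L^{a_1}}^{2\si}\|f\sqrt V\|_{L^{a_2}}$. Using the first inequality of Lemma~\ref{lem:sobolev-H} backwards, $\|\nabla f\|_{L^{a_2}}+\|f\sqrt V\|_{L^{a_2}}\lesssim \|\sqrt H f\|_{L^{a_2}}$, which is \eqref{eq:gradientNLLp}.

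For \eqref{eq:laplaceNLLp}, assuming $\si\ge 1/2$ and $0<\tau\le 1$, the same scheme applies: the second inequality in Lemma~\ref{lem:sobolev-H} gives
\begin{equation*}
\|H g_\tau\|_{L^p}\lesssim \|\Delta g_\tau\|_{L^p}+\|V g_\tau\|_{L^p}.
\end{equation*}
The Laplace term is bounded via \eqref{eq:laplaceNL} by the sum of three pointwise products $|f|^{2\si}|\Delta f|$, $|f|^{2\si-1}|\nabla f|^2$ and $\tau|f|^{4\si-1}|\nabla f|^2$, to each of which I apply H\"older with the exponents announced in the statement. For the potential term, \eqref{eq:NLpuissance} yields $|Vg_\tau|\le |f|^{2\si}|Vf|$, and H\"older with the first pair of exponents gives $\|Vg_\tau\|_{L^p}\lesssim \|f\|_{L^{a_1}}^{2\si}\|Vf\|_{L^{a_2}}$. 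Grouping the $|\Delta f|$ and $|Vf|$ contributions and applying the first inequality of Lemma~\ref{lem:sobolev-H} (second line) produces $\|Hf\|_{L^{a_2}}$, while the remaining two terms are converted from $\|\nabla f\|$ to $\|\sqrt H f\|$ in the same way, yielding \eqref{eq:laplaceNLLp}.

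There is no substantive obstacle here; the proof is a bookkeeping exercise, and the only point deserving attention is that Lemma~\ref{lem:sobolev-H} is stated for $1<p<\infty$, so the exponents $a_j$ appearing in the intermediate H\"older splittings must lie in $(1,\infty)$ when one wants to replace $\nabla,\sqrt V,\Delta,V$ by $\sqrt H,H$. This is harmless in practice since in all subsequent applications the exponents are chosen from the admissible Strichartz range, but it is worth flagging the restriction in the statement.
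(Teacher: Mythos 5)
Your argument is exactly the route the paper takes: it states Lemma~\ref{lem:H12N0} as an immediate consequence of the pointwise bounds in Lemma~\ref{lem:propNL}, the norm equivalences of Lemma~\ref{lem:sobolev-H}, and H\"older's inequality, which is precisely what you carry out (and your remark that the conversions to $\sqrt H$, $H$ require the relevant $a_j$ to lie in $(1,\infty)$ is a fair observation about the stated range $[1,\infty]$). Correct, same approach.
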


The analogue of \cite[Lemma~4.6]{Ignat2011} and
\cite[Lemma~2.7]{ChoiKoh2021} 
is rather straightforward: 
\begin{lemma}\label{lem-5-2}
For any admissible pairs $(a_1,b_1)$ and $(a_2,b_2)$, any interval
$I=[0,T]$ of length $T\le 1$, and $\beta\in\{1,2\}$, there is a
constant $C=C_{d,a_1,a_2}>0$ such that if $\lambda=1/\tau$,  for
any test function $f \in \mathcal{S}(\R^{d+1})$, 
   \begin{equation}\label{eq-5-5bis}
    \begin{aligned}
    &\left\| \int_0^{n\tau}S_{\lambda}(n \tau- s) f(s) ds
        - \tau \sum_{k=0}^{n-1} S_{\lambda}(n \tau - k \tau) f(k
        \tau) \right\|_{\ell^{a_1} (I; L^{b_1} )} \\
    &\qquad\le C \tau^{\beta/2} \|H^{\beta/2} f\|_{L^{a_2'}(I;
      L^{b_2'})}
    + C~ \tau \| \partial_t f \|_{L^{a_2'}(I; L^{b_2'})}.
    \end{aligned}
    \end{equation}
\end{lemma}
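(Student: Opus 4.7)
The plan is to decompose the error telescopically on each slab $[k\tau,(k+1)\tau]$ and control the two resulting contributions using the tools from Sections~\ref{sec:tools} and \ref{sec:strichartz}. For $s\in[k\tau,(k+1)\tau]$, the semigroup identity $S_\lambda(n\tau-s) = S_\lambda((n-k)\tau)S(k\tau-s)$ together with adding and subtracting $S_\lambda((n-k)\tau)f(k\tau)$ yields $E = E_1 + E_2$, where
\[
E_1(n\tau) = \sum_{k=0}^{n-1}S_\lambda((n-k)\tau)\int_{k\tau}^{(k+1)\tau}[S(k\tau-s)-I]f(k\tau)\,ds
\]
collects the time regularity of the propagator, and
\[
E_2(n\tau) = \sum_{k=0}^{n-1}\int_{k\tau}^{(k+1)\tau}S_\lambda(n\tau-s)[f(s)-f(k\tau)]\,ds
\]
that of the forcing.

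For $E_2$, writing $f(s)-f(k\tau) = \int_{k\tau}^s \partial_t f(\sigma)\,d\sigma$ recasts $E_2$ as a continuous Duhamel integral $\int_0^{n\tau}S_\lambda(n\tau-s)\tilde f(s)\,ds$, with $\|\tilde f(s)\|_{L^{b_2'}}\le \int_{k(s)\tau}^{(k(s)+1)\tau}\|\partial_t f\|_{L^{b_2'}}\,d\sigma$ for $k(s)=\lfloor s/\tau\rfloor$. A brief H\"older/Fubini computation gives $\|\tilde f\|_{L^{a_2'}(I;L^{b_2'})}\le C\tau\|\partial_t f\|_{L^{a_2'}(I;L^{b_2'})}$, and Corollary~\ref{cor:Stri-cont-disc} then produces the $C\tau\|\partial_t f\|$ contribution on the right-hand side.

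For $E_1$, I would introduce a slightly wider cutoff $\tilde\Pi_\lambda = \tilde\chi^2(H/\lambda)$ with $\tilde\chi\equiv 1$ on $\mathrm{supp}\,\chi$, which by functional calculus satisfies $\Pi_\lambda\tilde\Pi_\lambda = \Pi_\lambda$ and hence $S_\lambda(t) = S_\lambda(t)\tilde\Pi_\lambda$. Commuting $\tilde\Pi_\lambda$ through the slab integrand rewrites
\[
E_1(n\tau) = \sum_{k=0}^{n-1}S_\lambda((n-k)\tau)\tilde h_k,\qquad \tilde h_k = \int_{k\tau}^{(k+1)\tau}[S(k\tau-s)-I]\tilde\Pi_\lambda f(k\tau)\,ds.
\]
Applying the discrete Strichartz estimate \eqref{eq:disc-stri-inhom} with $\lambda\tau=1$ reduces the task to bounding $\|\tilde h_k/\tau\|_{\ell^{a_2'}(I;L^{b_2'})}$. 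The crucial ingredient is the pointwise operator bound
\[
\|(S(t)-I)\tilde\Pi_\lambda\phi\|_{L^p}\le C|t|^{\beta/2}\|H^{\beta/2}\phi\|_{L^p},\qquad |t|\le\tau,\ \lambda=1/\tau,\ 1<p<\infty,
\]
for $\beta\in\{1,2\}$; granting it, $\|\tilde h_k/\tau\|_{L^{b_2'}}\le C\tau^{\beta/2}\|H^{\beta/2}f(k\tau)\|_{L^{b_2'}}$, and a standard transfer from the discrete $\ell^{a_2'}$-norm to the continuous $L^{a_2'}$-norm (using $f\in\Sch(\R^{d+1})$ and integration by parts in time) recovers $\|H^{\beta/2}f\|_{L^{a_2'}L^{b_2'}}$ up to terms absorbed into the $\tau\|\partial_t f\|$ contribution already present from $E_2$.

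The main obstacle is the $L^p$ operator estimate above. The $p=2$ version is immediate from functional calculus and $|e^{-itx}-1|\le C|tx|^{\beta/2}$ valid for $\beta\in[0,2]$. For general $p$, it relies squarely on the pseudodifferential machinery of Section~\ref{sec:tools}: setting $\alpha = t\lambda\in[-1,1]$ and using $H\ge 1$ to smoothly extend the multiplier across zero, the operator $(S(t)-I)\tilde\Pi_\lambda H^{-\beta/2}$ equals $\lambda^{-\beta/2}g_\alpha(H/\lambda)$, with $g_\alpha(u)=(e^{-i\alpha u}-1)\tilde\chi^2(u)u^{-\beta/2}$ a compactly supported $C^\infty$ function whose $C^k$-seminorms are bounded by $C_k|\alpha|^{\beta/2}$ uniformly in $\alpha\in[-1,1]$ (for $\beta\le 2$, since $|\alpha|\le|\alpha|^{\beta/2}$ on $[-1,1]$). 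Proposition~\ref{prop:HelfferRobert} realizes $g_\alpha(H/\lambda)$ as a semiclassical pseudodifferential operator whose symbol lies in $\cap_k S(m^{-k},g_0)\subset S(1,g_1)$ with matching seminorm bounds, and Proposition~\ref{prop:Beals} supplies the uniform $L^p$-bound; multiplying by the prefactor $\lambda^{-\beta/2}=\tau^{\beta/2}$ yields the announced estimate.
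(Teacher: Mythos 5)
Your decomposition into $E_1+E_2$ is valid, and the treatment of $E_2$ via Corollary~\ref{cor:Stri-cont-disc} is fine. The genuine gap is in the proof of your key operator bound in the case $\beta=1$. You claim that $g_\alpha(u)=(e^{-i\alpha u}-1)\tilde\chi^2(u)u^{-\beta/2}$ has $C^k$-seminorms bounded by $C_k|\alpha|^{\beta/2}$ uniformly in $\alpha$. For $\beta=2$ this is correct, since $(e^{-i\alpha u}-1)/u=-i\alpha\int_0^1e^{-i\alpha\theta u}\,d\theta$ is entire with all derivatives $O(|\alpha|)$. For $\beta=1$, however, $g_\alpha(u)\sim -i\alpha\,u^{1/2}$ near $u=0$: the sup-norm is indeed $O(|\alpha|^{1/2})$, but $g_\alpha'$ already contains $|\alpha|u^{-1/2}$-type terms, so $g_\alpha\notin C^1$ uniformly. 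Nor can you cut the singularity away by invoking $H\ge 1$: the spectrum of $H/\lambda$ reaches down to $1/\lambda=\tau$, so any smooth modification of $u^{-1/2}$ below $u=\tau$ has $k$-th derivatives of size $\lambda^{1/2+k}$ on $[\tau,2\tau]$, and the seminorms fed into Propositions~\ref{prop:HelfferRobert} and \ref{prop:Beals} are no longer uniform in $\lambda$. As written, the $\beta=1$ half of the lemma is therefore not proven. The repair is to prove only the $\beta=2$ operator bound by your functional-calculus argument and then deduce $\beta=1$ from it, writing $(S(t)-I)\tilde\Pi_\lambda H^{-1/2}=\bigl[(S(t)-I)\tilde\Pi_\lambda H^{-1}\bigr]\bigl[H^{1/2}\tilde\Pi'_\lambda\bigr]$ with a further fattened cutoff and using Lemma~\ref{lem:perte} to get $\|H^{1/2}\tilde\Pi'_\lambda\|_{L^p\to L^p}\lesssim\sqrt\lambda$, whence $|t|\cdot\sqrt\lambda\le|t|^{1/2}$ for $|t|\le 1/\lambda$. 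This is exactly how the paper passes from $\beta=2$ to $\beta=1$.

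Two further remarks. First, your ``transfer'' from $\bigl(\tau\sum_k\|H^{\beta/2}f(k\tau)\|^{a_2'}\bigr)^{1/a_2'}$ to the continuous norm must be done in the right order: if you first apply the operator bound at time $k\tau$ and then compare $f(k\tau)$ with its slab average, the error involves $H^{\beta/2}\partial_t f$, which is not on the right-hand side of the lemma. Instead, replace $f(k\tau)$ by $f(s)$ \emph{inside} $\tilde h_k$ before applying the operator bound; the commutation error is controlled by the uniform $L^{b_2'}$-boundedness of $(S(t)-I)\tilde\Pi_\lambda$ for $|t|\le\tau$ (the $\beta=0$ version of your bound, which does follow from your functional-calculus argument) and lands in the $\tau\|\partial_t f\|$ term. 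Second, note that the paper's proof is shorter and avoids the operator bound altogether: it writes the whole difference as $\sum_k\int_{k\tau}^{(k+1)\tau}S_\lambda(n\tau-t)\,((k+1)\tau-t)\,(iHf(t)+\partial_tf(t))\,dt$ by the fundamental theorem of calculus applied to $t\mapsto S_\lambda(n\tau-t)f(t)$, bounds $((k+1)\tau-t)$ by $\tau$, and concludes with Corollary~\ref{cor:Stri-cont-disc} for $\beta=2$ and Lemma~\ref{lem:perte} for $\beta=1$. Your route is viable and arguably more modular once the $\beta=1$ issue is repaired, but it requires strictly more machinery.
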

\begin{proof}[Main steps of the proof]
  In the case $V=0$, this result appears in
  \cite[Lemma~4.6]{Ignat2011} when $\beta=2$, and in
  \cite[Lemma~2.7]{ChoiKoh2021} when $\beta=1$.  
 Following the proof of \cite[Lemma~4.6]{Ignat2011}, the quantity to
 estimate is rewritten as
 \begin{align*}
   \sum_{k=0}^{n-1} & \int_{k\tau}^{(k+1)\tau} \(S_\lambda
   (n\tau-s)f(s)-S_\lambda(n\tau-k\tau)f(k\tau)\)ds\\
   &= \sum_{k=0}^{n-1}\int_{k\tau}^{(k+1)\tau}
     \int_{k\tau}^s\frac{d}{dt}\( S_\lambda(n\tau-t)f(t)\)dtds\\
   & =\sum_{k=0}^{n-1} \iint_{k\tau< t<s<(k+1)\tau}
     \(iS_\lambda(n\tau-t)Hf(t) + S_\lambda(n\tau-t)\partial_t f(t)\)dtds\\
   & =\sum_{k=0}^{n-1} \int_{k\tau}^{(k+1)\tau}
   \((k+1)\tau-t\)  \(iS_\lambda(n\tau-t)Hf(t) +
     S_\lambda(n\tau-t)\partial_t f
     (t)\)dt \\
    & =\sum_{k=0}^{n-1} \int_{k\tau}^{(k+1)\tau}
  S_\lambda(n\tau-t) \((k+1)\tau-t\)  \(iHf(t) + \partial_t f(t)\)dt.
 \end{align*}
 Then \eqref{eq-5-5bis} follows from the inhomogeneous discrete
 Strichartz estimate from Corollary~\ref{cor:Stri-cont-disc}, applied to
 \begin{equation*}
  g(t)=  \sum_0^{n-1}\( (k+1)\tau-t\) \(iHf(t)+\partial_tf(t)\)
  \Id_{(k\tau,(k+1)\tau)}(t), 
\end{equation*}
and from triangle inequality. Passing from $\beta=2$ to
$\beta=1$ is a direct application of Lemma~\ref{lem:perte}.
\end{proof}

\subsection{Stability implies convergence in $L^2$}
\label{sec:cv-L2}

In this section, we prove the first two points of
Theorem~\ref{theo:stab-cv}. The argument is similar to the one
introduced in \cite{Ignat2011} and extended in \cite{ChoiKoh2021}, so
we describe the main steps,  and emphasize the main differences due to
the present framework.

Denote $I=[0,T]$. 
We estimate $\cZ_{\lambda}(n \tau)u_0 - \Pi_{\lambda} u(n \tau)$ instead of
$\cZ_{\lambda}(n \tau)u_0 - u(n \tau)$. Indeed,
Lemma~\ref{lem:PiPDO}  yields, for any admissible pair $(q,r)$,
since $\lambda=1/\tau$,
\begin{equation*}
\left\| u(n \tau) - \Pi_{\lambda} u(n \tau) \right\|_{\ell^q (I; L^r)}
\lesssim\frac{1}{\lambda^{\beta/2}}\| H^{\beta/2} u(n \tau)\|_{\ell^q (I; L^r)}
\lesssim\tau^{\beta/2},
\end{equation*}
where we assume $u\in X_T^\beta$, with $\beta\in \{1,2\}$. Indeed,
Corollary~\ref{cor:Stri-cont-disc} implies
\begin{equation*}
  \| H^{\beta/2} u(n \tau)\|_{\ell^q (I; L^r)} \lesssim \|
  H^{\beta/2}u_0\|+ \|H^{\beta/2}\vp u\|_{L^1_T L^2} + \left\|
 H^{\beta/2}  \( |u|^{2\si}u\)\right\|_{L^{q_0'}_TL^{r_0'}} ,
\end{equation*}
and the right hand side is bounded if $u\in X_T^\beta$, as we have
seen in the proofs of Proposition~\ref{prop:cauchy1} and
Proposition~\ref{prop:cauchy2}. 
To lighten notations, we now write $Z_{\tau}(n \tau)$ for
$\cZ_{\lambda}(n \tau)u_0=u^n$ with $\lambda=1/\tau$. 
In view of \eqref{eq:ellqLr} and Proposition~\ref{prop:cauchy1},
\begin{equation*}
  \|u(k\tau)\|_{\ell^{\theta}( I;L^{r_0})}
  \lesssim |I|^{1/\theta}\|u\|_{L^\infty_TH^1}\lesssim |I|^{1/\theta}\|u\|_{X_T^1}.
\end{equation*}
Keeping 
\eqref{eq:stab-num} into account, we infer that 
for any $\eta>0$, we can find a finite number
$K=K(\eta)$, and $\rho=\rho(\eta)>0$ with $\rho/\tau\in\N$ such that, if we set
\begin{equation*}
  I_j = [j\rho,(j+1)\rho]=:[m_j\tau,m_{j+1}\tau],\quad 0\le j\le K-1,
\end{equation*}
we have
\begin{equation*}
  [0,T]= \bigcup_{j=0}^{K-1}I_j \cup [K\rho,T]= \bigcup_{j=0}^{K}I_j,
\end{equation*}
and 
for $0\le j\le K$, 
\begin{equation*}
   \|u(k\tau)\|_{\ell^{\theta}( I_j;L^{r_0})}+
   \|Z_\tau(k\tau)\|_{\ell^{\theta}(
    I_j;L^{r_0})}\le \eta.
\end{equation*}
On each interval $I_j$, the discrete Duhamel's formula can be written as
    \begin{equation}\label{eq-a-1'}
    Z_\tau(m_j \tau +n\tau)
    =S_{\lambda}(n\tau) Z_\tau(m_j\tau) +
    \tau \sum_{k=0}^{n-1} S_{\lambda}(n \tau -k \tau) \frac{N (\tau) -\Id}{\tau} Z_\tau(m_j\tau + k \tau),
  \end{equation}
for $0\le n\le \rho/\tau$. Combining this with \eqref{eq:duhamel}, we
obtain the following 
decomposition: 
 \begin{equation}\label{eq:decomp}
    Z_\tau(m_j \tau +n \tau) - \Pi_{\lambda} u(m_j \tau +n \tau)
    = \mathcal{A}_1(j) + \mathcal{A}_2(j) + \mathcal{A}_3(j) + \mathcal{A}_4(j) ,
 \end{equation}
where
    \begin{align*}
    \mathcal{A}_1(j) &:= S_{\lambda} (n\tau)\( Z_\tau(m_j \tau) - \Pi_{\lambda} u(m_j \tau)\),    \\
    \mathcal{A}_2(j) &:= S_{\lambda}(n\tau) \( \Pi_{\lambda} u (m_j \tau)-u(m_j \tau)\),
    \\
    \mathcal{A}_3(j)& := \tau \sum_{k=0}^{n-1} S_{\lambda}(n \tau -k \tau) \Big( \frac{N(\tau) -\Id}{\tau} Z_\tau(m_j\tau+k\tau)
        - \frac{N(\tau) -\Id}{\tau} \Pi_{\lambda} u(m_j\tau+k\tau) \Big),
    \\
    \mathcal{A}_4(j)&:= \tau \sum_{k=0}^{n-1} S_{\lambda}(n \tau -k
      \tau) \frac{N(\tau) -\Id}{\tau} \Pi_{\lambda} u(m_j\tau+k\tau)\\
      &\quad
+i\int_0^{n\tau} S_{\lambda}(n\tau -s) \(\vp u\) (m_j\tau+s) ds\\
&\quad
        +i\eps\int_0^{n\tau} S_{\lambda}(n\tau -s) \(|u|^{2\si} u\) (m_j\tau+s) ds,
    \end{align*}
    and we omit the dependence of the $\mathcal A_k$'s upon $n$ to
    ease notations. The goal is to
    show that in the estimates, the term $\mathcal A_3$ can be absorbed by
    the left hand side of \eqref{eq:decomp}, $\mathcal A_2$ and
    $\mathcal A_4$ are
    $\O(\tau^{1/2})$ or $\O(\tau)$, according to the case considered
    in the theorem, and $\mathcal A_1$ is then estimated by
    induction on $j$.

 Let $(q,r) \in \{(q_0, r_0), (\infty,2)\}$.
 The homogeneous Strichartz estimate \eqref{eq:discr-stri-homo} yields
    \begin{equation*}
    \| \mathcal{A}_1(j)\|_{\ell^{q}( I_j; L^r)}
    \le C_{d,q} \left\| Z_\tau(m_j \tau) - \Pi_{\lambda} u (m_j \tau) \right\|_{L^2}.
    \end{equation*}
The  term $\mathcal A_2$ is controlled again via the homogeneous discrete
Strichartz estimate  \eqref{eq:discr-stri-homo}, and
\eqref{eq:Pi-lambda-1} or \eqref{eq:Pi-lambda-1bis}: it is
$\O(\tau^{1/2})$ in the first case of Theorem~\ref{theo:stab-cv},
$\O(\tau)$ in the second case. 
 To estimate $\mathcal{A}_3$, we use \eqref{eq:NLlip}, the inhomogeneous Strichartz estimate \eqref{eq:disc-stri-inhom}, and
H\"older inequality like in \eqref{eq:holder}, to obtain
    \begin{equation*}
   \| \mathcal{A}_3(j)\|_{\ell^{q}( I_j; L^r)}
    \le C|I_j|  \|Z_\tau
    - \Pi_{\lambda} u\|_{\ell^\infty( I_j;L^2)} + C\eta^{2\si}   \|Z_\tau
    - \Pi_{\lambda} u\|_{\ell^{q_0}( I_j;L^{r_0})} ,
  \end{equation*}
 where we have used the definition of the intervals $I_j$ in terms of
  $\eta$. We now choose $\eta>0$ and $|I_j|$ sufficiently small so
  that, for all $0\le j\le K$,
  \begin{equation*}
   \| \mathcal{A}_3(j)\|_{\ell^{q_0}( I_j; L^{r_0})}+
\| \mathcal{A}_3(j)\|_{\ell^\infty( I_j; L^2)}\le \frac{1}{2}\( \|Z_\tau
    - \Pi_{\lambda} u\|_{\ell^{q_0}( I_j;L^{r_0})}+
\|Z_\tau
    - \Pi_{\lambda} u\|_{\ell^\infty( I_j;L^2)}\).
  \end{equation*}
The estimate of $\mathcal A_4$ is postponed to Lemma~\ref{lem:A4}
below,
\begin{equation}\label{eq:A4}
  \max_{0\le j\le K}\( \|\mathcal A_4(j)\|_{\ell^\infty ( I_j;L^2)}+
  \|\mathcal A_4(j)\|_{\ell^{q_0} ( I_j;L^{r_0})} \)\lesssim \tau^\alpha,
\end{equation}
where $\alpha\in \{1/2,1\}$.
Thus, we get
\begin{align*}
  \|Z_\tau
    - \Pi_{\lambda} u\|_{\ell^{q_0} (I_j;L^{r_0})}&+
  \|Z_\tau
    - \Pi_{\lambda} u\|_{\ell^\infty ( I_j;L^2)} \le  C\left\|
      Z_\tau(m_j \tau) - \Pi_{\lambda} u (m_j \tau) \right\|_{L^2}\\
      +
    C\tau^\alpha &+ \frac{1}{2} \(\|Z_\tau
    - \Pi_{\lambda} u\|_{\ell^{q_0}( I_j;L^{r_0})}+
  \|Z_\tau
    - \Pi_{\lambda} u\|_{\ell^\infty ( I_j;L^2)} \),
\end{align*}
hence, for all $0\le j\le K$, 
\begin{equation*}
  \|Z_\tau
    - \Pi_{\lambda} u\|_{\ell^{q_0}( I_j;L^{r_0})} +
  \|Z_\tau
    - \Pi_{\lambda} u\|_{\ell^\infty ( I_j;L^2)} \le 2 C\left\|
      Z_\tau(m_j \tau) - \Pi_{\lambda} u (m_j \tau) \right\|_{L^2}
    +2C\tau^\alpha .
  \end{equation*}
   Now by construction $m_0=0$, $ Z_\tau(m_0\tau) - \Pi_{\lambda} u
  (m_0 \tau) = 0$, and for $1\le j\le K$,
  \begin{equation*}
    \left\|
      Z_\tau(m_j \tau) - \Pi_{\lambda} u (m_j \tau)
    \right\|_{L^2} \le \left\| 
      Z_\tau - \Pi_{\lambda} u  \right\|_{\ell^\infty( I_{j-1};L^2)} ,
  \end{equation*}
hence, by induction,
\begin{equation*}
  \|Z_\tau
    - \Pi_{\lambda} u\|_{\ell^{q_0}([0,T];L^{r_0})} +
  \|Z_\tau
    - \Pi_{\lambda} u\|_{\ell^\infty ([0,T];L^2)} \lesssim \tau^\alpha .
  \end{equation*}
Using Strichartz estimates again, for any admissible pair $(q,r)$,
\begin{equation*}
  \|Z_\tau
    - \Pi_{\lambda} u\|_{\ell^{q}([0,T];L^{r})} \lesssim \tau^\alpha ,
  \end{equation*}
hence the first two estimates in Theorem~\ref{theo:stab-cv}. 

\subsection{Proof of  \eqref{eq:A4}}

We start with the following
lemma, which is an adaptation of \cite[Lemma~2.8]{ChoiKoh2021}. 
\begin{lemma}\label{lem:source}
Set $\lambda=1/\tau$. There exists $C$ independent of $\tau\in
(0,1)$ and the 
time interval $I$ of length at most one, 
such that
    \begin{equation*}
    \left\| |\Pi_{\lambda} u|^{4\si+1} \right\|_{L^{q_0'} (I; L^{r_0'})}
    + \left\| |\Pi_{\lambda} u|^{2\si} \Pi_{\lambda} (|u|^{2\si} u) \right\|_{L^{q_0'} (I; L^{r_0'})}
    \le C \tau^{-1/2} \|u\|_{X^1(I)}^{4\si+1},
  \end{equation*}
  and,   
   \begin{equation*}
    \left\| |\Pi_{\lambda} u|^{4\si+1} \right\|_{L^{q_0'} (I; L^{r_0'})}
    + \left\| |\Pi_{\lambda} u|^{2\si} \Pi_{\lambda} (|u|^{2\si} u)
    \right\|_{L^{q_0'} (I; L^{r_0'})} 
    \le C \|u\|_{X^2(I)}^{4\si+1}.
  \end{equation*}\end{lemma}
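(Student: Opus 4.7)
The plan is to reduce both terms to Hölder-type products of mixed Lebesgue norms of $\Pi_\lambda u$ (and, for the second term, of $\Pi_\lambda(|u|^{2\si}u)$), and to control each factor using four main ingredients: (i) direct Strichartz admissibility, $\|u\|_{L^q L^r}\lesssim \|u\|_{X^1}$ for admissible $(q,r)$; (ii) the Sobolev embedding
\[
\|\phi\|_{L^s}\lesssim \|\sqrt H\phi\|_{L^r},\qquad \frac{1}{s}=\frac{1}{r}-\frac{1}{d},\quad 1<r<d,
\]
which follows from Lemma~\ref{lem:sobolev-H} and the classical Sobolev inequality and yields $\|u\|_{L^q L^s}\lesssim\|u\|_{X^1}$ for admissible $(q,r)$ (as well as $\|u\|_{L^\infty L^{r_0}}\lesssim \|u\|_{X^1}$ via $H^1\hookrightarrow L^{r_0}$); (iii) the $L^p$-boundedness of $\Pi_\lambda$ (Lemma~\ref{lem:PiPDO}); and (iv) the Bernstein inequality (Lemma~\ref{lem:bernstein}), which pays a factor $\lambda^{d(1/r_1-1/r_2)/2}$ when boosting the spatial exponent from $r_1$ to $r_2\ge r_1$.

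For the first quantity $\||\Pi_\lambda u|^{4\si+1}\|_{L^{q_0'}L^{r_0'}}$, I would distribute the $4\si+1$ factors among two or three mixed norms with the Hölder constraints $\sum m_i/A_i=1/q_0'$ and $\sum m_i/B_i=1/r_0'$. A natural choice is to place $2\si+1$ copies in the admissible pair $(q_0,r_0)$ (bounded directly by $\|u\|_{X^1}$) and the remaining $2\si$ copies in an $L^\infty_tL^{s}_x$ norm reached by Sobolev embedding from $L^\infty L^{r_0}$ or $L^\infty L^{2^*}$ together with a Bernstein boost up to exponent $s$. Careful balancing of the Hölder constraints and of the Bernstein cost shows that the accumulated $\lambda$-factor does not exceed $\lambda^{1/2}$ throughout the admissible range $0<\si<2/(d-2)_+$, producing the prefactor $\tau^{-1/2}$.

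For the second quantity $\||\Pi_\lambda u|^{2\si}\Pi_\lambda(|u|^{2\si}u)\|_{L^{q_0'}L^{r_0'}}$, I would apply Hölder using the exponent relations
\[
\frac{2\si}{r_0}+\frac{1}{r_0}=\frac{1}{r_0'},\qquad \frac{2\si}{\theta}+\frac{1}{q_0}=\frac{1}{q_0'},
\]
already exploited in the proof of Proposition~\ref{prop:cauchy1}, to bound the quantity by
\[
\|\Pi_\lambda u\|_{L^\theta L^{r_0}}^{2\si}\,\bigl\|\Pi_\lambda(|u|^{2\si}u)\bigr\|_{L^{q_0}L^{r_0}}.
\]
The first factor is controlled by $\|u\|_{X^1}^{2\si}$ via Lemma~\ref{lem:PiPDO} and interpolation between the admissible pair $(q_0,r_0)$ and the Sobolev-reached $(\infty,r_0)$; the second is controlled via Lemma~\ref{lem:PiPDO} by $\||u|^{2\si}u\|_{L^{q_0}L^{r_0}}$, which is then handled by a further Hölder decomposition treated as in the first quantity.

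For the $X^2$ bound, the same Hölder decompositions apply, now exploiting $\|Hu\|_{L^q L^r}\lesssim \|u\|_{X^2}$ for admissible $(q,r)$ and the corresponding two-derivative Sobolev gain $1/s=1/r-2/d$ (again via Lemma~\ref{lem:sobolev-H}); this extra regularity absorbs the Bernstein $\lambda$-cost entirely, and no $\tau^{-1/2}$ factor is needed. The main technical obstacle is the exponent bookkeeping: one must check that in every admissible pair $(d,\si)$ under consideration, the Hölder exponents can be chosen so that the accumulated Bernstein factor stays below $\lambda^{1/2}$, parallelling the verification in \cite[Lemma~2.8]{ChoiKoh2021} for $V=0$, the spectral localization $\Pi_\lambda$ here playing the role of the frequency cutoff.
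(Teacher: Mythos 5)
Your toolbox (H\"older, Sobolev embedding via Lemma~\ref{lem:sobolev-H}, the $L^p$-boundedness of $\Pi_\lambda$, and Bernstein) is the right one, and it is essentially what the paper uses (the first inequality is delegated to \cite[Lemma~2.8]{ChoiKoh2021}, while for the $X^2$ bound the paper simply writes the whole power as a single mixed norm $\|\Pi_\lambda u\|_{L^{(4\si+1)q_0'}L^{(4\si+1)r_0'}}^{4\si+1}$ and reaches it by Sobolev embedding from $L^\infty(I;H^1)$ or from $L^{a_2}(I;W^{2,b_2})$ with $(a_2,b_2)$ admissible --- no Bernstein and no distribution of factors is needed there). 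However, your treatment of the second term contains a genuine gap in the $X^1$ case. After the H\"older split $\|\Pi_\lambda u\|_{L^\theta L^{r_0}}^{2\si}\|\Pi_\lambda(|u|^{2\si}u)\|_{L^{q_0}L^{r_0}}$, you propose to bound the last factor by $\||u|^{2\si}u\|_{L^{q_0}L^{r_0}}$ using Lemma~\ref{lem:PiPDO} and then to ``treat it as the first quantity''. But $\||u|^{2\si}u\|_{L^{r_0}}=\|u\|_{L^{(2\si+1)r_0}}^{2\si+1}$ with $(2\si+1)r_0=(2\si+1)(2\si+2)$, which is in general beyond the Sobolev range of $H^1$ (for $d=3$, $\si=1$ it is $L^{12}$ while $H^1(\R^3)\hookrightarrow L^6$ only), and once the cutoff has been discarded the Bernstein mechanism --- the only source of the allowed loss $\tau^{-1/2}$ --- is no longer available: the remaining expression contains no $\tau$ at all, so it is either controlled by $\|u\|_{X^1}^{2\si+1}$ without loss (false) or not controlled at all. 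The correct move is to keep the cutoff, apply Lemma~\ref{lem:bernstein} to $\Pi_\lambda(|u|^{2\si}u)$ from a lower exponent $L^p$ with $(2\si+1)p$ inside the $H^1$-Sobolev range (note $(2\si+1)r_0'=r_0$ is the natural starting point), and check that the resulting power of $\lambda$ does not exceed $1/2$.

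A secondary issue: your ``natural choice'' for the first quantity, placing $2\si+1$ factors in $(q_0,r_0)$ and $2\si$ factors in $L^\infty_tL^s_x$, is infeasible in the time variable when $\si>2/d$, since then $\frac{2\si+1}{q_0}>\frac{1}{q_0'}$ and H\"older on a bounded interval only allows the sum of reciprocals to be at most $1/q_0'$. You acknowledge that the exponent bookkeeping must be checked, but as stated the proposed split does not close in the whole range $0<\si<2/(d-2)_+$; the single-mixed-norm formulation used in the paper avoids having to balance several time exponents simultaneously.
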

\begin{proof}
  The first case is proven in \cite[Lemma~2.8]{ChoiKoh2021}, by
  combining Bernstein inequality, H\"older inequality, and Sobolev
  embedding. We can thus mimic the proof, up to the modifications
provided by \eqref{eq:Pi-lambda-Lp}, \eqref{eq:Pi-lambda-1},
Lemmas~\ref{lem:sobolev-H}, \ref{lem:perte} and \ref{lem:bernstein},
which provide the 
analogue of \cite[Lemma~2.6]{ChoiKoh2021}.

For the second case, like in the proof of
  \cite[Lemma~2.8]{ChoiKoh2021}, we write
  \begin{equation*}
    \left\||\Pi_\tau
     u|^{4\si+1}\right\|_{L^{q_0'}L^{r_0'}}= \|\Pi_\tau
   u\|_{L^{(4\si+1)q_0'}L^{(4\si+1)r_0'}}^{4\si+1},
  \end{equation*}
and  we distinguish two cases:\\
$\bullet$ If $0<\si\le 1/d$, we use the same embedding as in
  \cite{ChoiKoh2021},
  \begin{equation*}
    H^s(\R^d) \hookrightarrow L^{(4\si+1)r_0'}(\R^d),\quad s
    =\frac{d}{2}-\frac{d(2\si+1)}{(2\si+2)(4\si+1)} \in [0,1],
  \end{equation*}
hence $X^2(I)\subset L^\infty(I;H^1)\subset
  L^{(4\si+1)q_0'}(I;L^{(4\si+1)r_0'})$. \\
$\bullet$ Now if $1/d\le  \si<2/(d-2)_+$, we define the pair $(a_2,b_2)$ by
\begin{equation*}
  a_2 = (4\si+1)q_0',\quad \frac{1}{b_2}= \frac{1}{2}-\frac{2}{da_2},
\end{equation*}
that is, we consider the Lebesgue exponent in time in the last estimate,
and pick $b_2$ so that $(a_2,b_2)$ is admissible. We check that for
such value of $\si$,
\begin{equation*}
  W^{2,b_2}(\R^d) \hookrightarrow L^{(4\si+1)r_0'}(\R^d), 
\end{equation*}
as 
\begin{equation*}
  s:= d\(\frac{1}{b_2}-\frac{1}{b_1}\) =
  \frac{d}{2}-\frac{d}{4\si+1}\(\frac{2}{d}+\frac{1}{2}\)\in [0,2]. 
\end{equation*}
The second inequality follows, using   \eqref{eq:Pi-lambda-Lp}. 
\end{proof}
The main result of this subsection is the following, in which the last
case will be used for convergence in $\cH^1$ instead of merely $L^2$:

\begin{lemma}\label{lem:A4}
Let $\lambda=1/\tau$, and $\beta=1$ or $2$.  For $u\in
X^\beta_T$, $\tau \in (0,1)$, denote 
\begin{align*}
  A(u)(n\tau)&=\tau \sum_{k=0}^{n-1} S_{\lambda}(n \tau -k \tau)
               \frac{N(\tau)-\Id}{\tau} \Pi_{\lambda} u (k \tau)+
               i\int_0^{n\tau} S_{\lambda}(n\tau -s) \vp u
               (s) ds\\
&\quad        + i\eps\int_0^{n\tau} S_{\lambda}(n\tau -s) |u|^{2\si} u (s) ds.
\end{align*}
In the case $\beta=2$, we assume in addition  $\si\ge
1/2$ (hence $d\le 5$). Then for all admissible pairs $(q,r)$ and all $T\in [0,1]$,  
\begin{equation*}
\left\|  A(u)\right\|_{\ell^q ([0,T]; L^r)} \lesssim
  \tau^{\beta/2}\(\|u\|_{X^{\beta}_T}+\|u\|_{X^{\beta}_T}^{2\si+1}+\|u\|_{X^\beta_T}^{4\si+1}\)
  . 
\end{equation*}
We also have the higher order estimate:
if $\si\ge 1/2$, 
\begin{equation*}
  \left\|  H^{1/2} A(u)\right\|_{\ell^q ([0,T]; L^r)} \lesssim
  \tau^{1/2}\(\|u\|_{X^2_T}+\|u\|_{X^2_T}^{2\si+1}+\|u\|_{X^2_T}^{4\si+1}\) .
\end{equation*}
\end{lemma}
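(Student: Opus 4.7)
The strategy is to rewrite $A(u)$ as a sum of three natural error pieces and control each via the discrete Strichartz estimates (Proposition~\ref{prop:strichartz-discret} and Corollary~\ref{cor:Stri-cont-disc}). Introducing $G(\phi):=-i(\vp+\eps|\phi|^{2\si})\phi = \partial_t|_{t=0}N(t)\phi$, one decomposes
\begin{equation*}
A(u)(n\tau)=B_1+B_2+B_3,
\end{equation*}
where
\begin{align*}
B_1&=\tau\sum_{k=0}^{n-1}S_\lambda((n-k)\tau)\Bigl[\tfrac{N(\tau)-\Id}{\tau}\Pi_\lambda u(k\tau)-G(\Pi_\lambda u)(k\tau)\Bigr],\\
B_2&=\tau\sum_{k=0}^{n-1}S_\lambda((n-k)\tau)\bigl[G(\Pi_\lambda u)-G(u)\bigr](k\tau),\\
B_3&=\tau\sum_{k=0}^{n-1}S_\lambda((n-k)\tau)G(u)(k\tau)-\int_0^{n\tau}S_\lambda(n\tau-s)G(u)(s)\,ds,
\end{align*}
isolating the Taylor error of $N(\tau)$, the spectral cutoff error in the source, and the Riemann-sum quadrature error, respectively.

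The pointwise Taylor expansion of $e^{-i\tau z}-1$ gives $|(N(\tau)-\Id)\tau^{-1}\phi-G(\phi)|\lesssim\tau(|\phi|+|\phi|^{4\si+1})$. Feeding this into the inhomogeneous discrete Strichartz estimate \eqref{eq:disc-stri-inhom} with pair $(\infty,2)$ for the linear contribution and $(q_0,r_0)$ for the nonlinear one, and invoking Lemma~\ref{lem:source} together with \eqref{eq:Pi-lambda-Lp}, yields $\|B_1\|_{\ell^q L^r}\lesssim\tau^{\beta/2}(\|u\|_{X^\beta_T}+\|u\|_{X^\beta_T}^{4\si+1})$; in the $\beta=1$ case, the $\tau^{-1/2}$ loss coming from Lemma~\ref{lem:source} is absorbed by the Taylor prefactor $\tau$. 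For $B_2$, the pointwise bound $|G(\Pi_\lambda u)-G(u)|\lesssim(|\vp|+|\Pi_\lambda u|^{2\si}+|u|^{2\si})|\Pi_\lambda u-u|$, combined with H\"older's inequality, \eqref{eq:Pi-lambda-Lp}, and the bound $\|\Pi_\lambda u-u\|_{L^p}\lesssim\tau^{\beta/2}\|H^{\beta/2}u\|_{L^p}$ from Lemma~\ref{lem:PiPDO}, yields the same $\tau^{\beta/2}$ rate. For $B_3$, I would apply Lemma~\ref{lem-5-2} with $f=G(u)$: the first resulting term $\tau^{\beta/2}\|H^{\beta/2}G(u)\|_{L^{a_2'}L^{b_2'}}$ is controlled through \eqref{eq:H12der} (and its $H$-analogue when $\beta=2$) in terms of the $X^\beta_T$-norm of $u$.

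For the $H^{1/2}$ estimate, since $H^{1/2}$ commutes with $S_\lambda$ and with the sums and integrals, I perform the same decomposition on $H^{1/2}A(u)$. The $B_1$ piece then requires \eqref{eq:gradientNLLp} from Lemma~\ref{lem:H12N0} and its $\Delta$-variant \eqref{eq:laplaceNLLp}: the hypothesis $\si\ge 1/2$ enters here to ensure pointwise differentiability, and the explicit $\tau$-prefactor in the last summand of \eqref{eq:laplaceNLLp} is crucial to preserve the $\tau^{1/2}$ rate. The pieces $B_2$ and $B_3$ are handled via \eqref{eq:H12der} and by Lemma~\ref{lem-5-2} at $\beta=1$ applied to $H^{1/2}G(u)$, with all right-hand side norms bounded in terms of $\|u\|_{X^2_T}$.

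The main obstacle is the residual $\tau\|\partial_t f\|_{L^{a_2'}L^{b_2'}}$ contribution that Lemma~\ref{lem-5-2} produces on $B_3$, particularly in the $\beta=1$ case: the equation writes $\partial_t u=-i(Hu+\vp u+\eps|u|^{2\si}u)$, so $\partial_t G(u)$ inherits an $Hu$ contribution that is not controlled when $u\in X^1_T$ only. Overcoming this requires revisiting the proof of Lemma~\ref{lem-5-2} to exploit the cutoff $\Pi_\lambda$ implicit in $S_\lambda=\Pi_\lambda S$: Lemma~\ref{lem:perte} trades one half derivative for a factor $\sqrt\lambda=\tau^{-1/2}$, converting the apparently $\tau$-bad term into a $\tau^{1/2}\|H^{1/2}G(u)\|$-type bound, controllable from $X^1_T$-norms of $u$ alone through \eqref{eq:H12der}.
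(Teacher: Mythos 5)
Your decomposition $A(u)=B_1+B_2+B_3$ is algebraically correct, and the treatment of $B_1$ (Taylor error, via Lemma~\ref{lem:source} with the prefactor $\tau$ absorbing the $\tau^{-1/2}$ loss) and of $B_2$ (cutoff error in the source, via \eqref{eq:Pi-lambda-Lp} and \eqref{eq:Pi-lambda-1}--\eqref{eq:Pi-lambda-1bis}) parallels the paper's terms $\mathcal C_1$--$\mathcal C_5$ and goes through, up to routine discrete-versus-continuous norm bookkeeping. The genuine gap is in $B_3$, i.e.\ exactly at the point you flag: applying Lemma~\ref{lem-5-2} with $f=G(u)$ produces the term $\tau\|\partial_t G(u)\|_{L^{q_0'}L^{r_0'}}$, which contains $\tau\,\||u|^{2\si}Hu\|$, and this is not controlled by $X^1_T$ norms. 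Your proposed rescue does not work as stated: Lemma~\ref{lem:perte} only removes a factor $H^{1/2}$ that sits \emph{directly} under $\Pi_\lambda$, i.e.\ $\|\Pi_\lambda H^{1/2}\psi\|_{L^p}\lesssim\sqrt\lambda\,\|\psi\|_{L^p}$; it does not convert $\tau\|\Pi_\lambda\partial_t G(u)\|$ into $\tau^{1/2}\|H^{1/2}G(u)\|$ (that would be the reverse inequality, which is false for spectrally low-lying functions). In $\Pi_\lambda\(|u|^{2\si}Hu\)$ the operator $H$ is buried inside the product, separated from $\Pi_\lambda$ by the multiplication operator $|u|^{2\si}$, with which $\Pi_\lambda$ does not commute; writing $|u|^{2\si}Hu=H^{1/2}\(|u|^{2\si}H^{1/2}u\)-[H^{1/2},|u|^{2\si}]H^{1/2}u$ would require a commutator estimate that is neither in the paper's toolbox nor controllable by $u\in X^1_T$ alone. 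The same obstruction reappears in your $\cH^1$ estimate: with $f=H^{1/2}G(u)$ the $\partial_t$-term produces $H^{1/2}\(|u|^{2\si}Hu\)$, whose worst contribution is morally $H^{3/2}u$, not controlled by $X^2_T$, so the claim that ``all right-hand side norms are bounded in terms of $\|u\|_{X^2_T}$'' is not justified.

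This is precisely why the paper organizes the quadrature error differently: Lemma~\ref{lem-5-2} is applied not to $G(u)$ but to $\mathcal B_2(u)=e^{-i\tau \vp}\frac{N_0(\tau)-\Id}{\tau}\Pi_\lambda u$ (and $\mathcal B_3$), in which the spectral cutoff is attached to $u$ \emph{before} the time derivative acts. Since $\partial_t$ and $\Pi_\lambda$ commute, $\partial_t\mathcal B_2(u)$ involves $|\Pi_\lambda u|^{2\si}\,\Pi_\lambda(Hu)$ (plus a term handled by Lemma~\ref{lem:source}), and here $\Pi_\lambda Hu=\Pi_\lambda H^{1/2}(H^{1/2}u)$ so Lemma~\ref{lem:perte} applies directly, yielding the factor $\tau^{-1/2}\|u\|_{X^1_T}$ that, combined with the prefactor $\tau$, gives the rate $\tau^{1/2}$; the analogous mechanism (term $I$ in the paper's proof, $\|\sqrt H\,\Pi_\lambda Hu\|\lesssim\tau^{-1/2}\|Hu\|$) saves the $\cH^1$ estimate. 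To repair your argument you would have to either keep the cutoff inside the quadrature integrand as the paper does, or supply a substitute for the missing commutator/time-regularity estimate; as written, the $\beta=1$ case and the $H^{1/2}$ estimate do not close.
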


\begin{proof}
  In the case $V=0$ and $\beta=1$, this result is exactly
  \cite[Lemma~3.1]{ChoiKoh2021}. We emphasize how this result is
  adapted to the case where $V$ satisfies Assumption~\ref{hyp:V}, and
  to the case $\beta=2$. The last case of the lemma corresponds to
  (5.19) in \cite{ChoiKoh2021}, when $V=0$. 
  
  The
  assumptions of Lemma~\ref{lem:A4} imply, by H\"older inequality and
  Sobolev embedding, 
  \begin{equation*}
    u\in L^\infty([0,T];H^1)\subset L^\theta([0,T];L^{r_0}),
  \end{equation*}
  where $\theta$ is given by \eqref{eq:theta}. Decompose
  $A(u)(n\tau)$ as 
  \[A(u)(n\tau)=A_1(u)(n\tau)+ A_2(u)(n\tau),\]
  where
\begin{equation*}
 \begin{aligned}
    A_1(u)(n\tau)&= \tau \sum_{k=0}^{n-1} S_{\lambda}(n \tau -k \tau) \mathcal{B}_1(u)(k\tau)
        - \int_0^{n\tau} S_{\lambda}(n\tau -s) \mathcal{B}_1(u)(s) ds, \\
  A_2(u)(n\tau)&=\int_0^{n\tau} S_{\lambda}(n\tau -s)
  \mathcal{B}_1(u)(s)ds+ i\int_0^{n\tau} S_{\lambda}(n\tau -s) \vp u (s)ds\\
&\quad + i\eps\int_0^{n\tau} S_{\lambda}(n\tau -s) |u|^{2\si} u (s)ds,
 \end{aligned}
\end{equation*}
with
 \[\mathcal{B}_1(u)(s) := \frac{N(\tau) -\Id}{\tau} \Pi_{\lambda}
   u(s).\]
Recall that $N_0$ denotes the nonlinear flow in the case without $\vp$,
$  N_0(t) \phi = \phi e^{-i\eps t|\phi|^{2\si}}$, 
and decompose $\mathcal{B}_1(u) =
\mathcal{B}_{2}(u)+\mathcal{B}_3(u)$, with
\begin{equation*}
  \mathcal{B}_2(u) (s)= e^{-i\tau W}  \frac{N_0(\tau)
    -\Id}{\tau} \Pi_{\lambda}u(s),\quad \mathcal{B}_3(u) (s)=  \frac{e^{-i\tau W} 
    -\Id}{\tau} \Pi_{\lambda}  u(s).
\end{equation*}
By Lemma \ref{lem-5-2}, we can estimate
\begin{align*}
  \left\| A_1(u) \right\|_{\ell^q ([0,T]; L^r)}& \lesssim
\tau^{\beta /2} \left\| H^{\beta/2}\mathcal{B}_2(u)
  \right\|_{L^{{q_0}'}_TL^{r_0'}} 
  +   \tau \left\| \partial_t \mathcal{B}_2(u) \right\|_{L^{{q_0}'}_TL^{r_0'}} \\
 &\quad+ \tau^{\beta /2} \left\| H^{\beta/2}\mathcal{B}_3(u)
  \right\|_{L^{1}_TL^{2}} 
  +   \tau \left\|\partial_t \mathcal{B}_3(u) \right\|_{L^{1}_TL^{2}} ,
\end{align*}
for all admissible pair $(q,r)$. 
In view of \eqref{eq:gradientNLLp},
H\"older inequality \eqref{eq:holder},
 and \eqref{eq:Pi-lambda-Lp}, 
   \begin{align*}
 \left\| \sqrt H \mathcal{B}_2(u) \right\|_{L^{{q_0}'}_T
  L^{r_0'}}  & \lesssim \| \Pi_{\lambda} u\|^{2\si}_{L^\theta_TL^{r_0}}
                  \|\sqrt H\Pi_{\lambda} u\|_{L^{q_0}_TL^{r_0}}
\\
    & \lesssim \| u\|^{2\si}_{L^\theta_TL^{r_0}}
  \|\Pi_\lambda \sqrt H u\|_{L^{q_0}_TL^{r_0}} \lesssim \|u\|_{X^1_T}^{2\si+1}.
   \end{align*}
In the case $\beta=2$, we invoke \eqref{eq:laplaceNLLp}: like above,
\begin{align*}
  \left\| \Pi_\lambda u\right\|^{2\si}_{L^\theta_TL^{r_0}}
  \| H \Pi_\lambda u \|_{L^{{q_0}}_T L^{{r_0}}}&
  + \| \Pi_\lambda u\|_{L^\theta_TL^{r_0}}^{2\si-1}
      \|\sqrt H \Pi_\lambda u\|_{L^{\theta}_T  L^{r_0}}
      \|\sqrt H \Pi_\lambda u\|_{L^{q_0}_T  L^{r_0}}\\
&      \lesssim \|u\|_{X^2_T}^{2\si+1}.
\end{align*}
For the last term estimating $H\mathcal{B}_2(u)$ in
\eqref{eq:laplaceNLLp}, Lemma~\ref{lem:bernstein} yields 
\begin{equation*}
 \left\| \Pi_\lambda
    u\right\|^{4\si-1}_{L^{(4\si+1)r_0'}}\left\|\sqrt H
    \Pi_\lambda u\right\|^2_{L^{(4\si+1)r_0'}}
  \lesssim \tau^{-\frac{d}{2}\(\frac{1}{r_1}-\frac{1}{r_0'}\)}
   \left\| \Pi_\lambda
    u\right\|^{4\si-1}_{L^{(4\si+1)r_1}}\left\|\sqrt H
    \Pi_\lambda u\right\|^2_{L^{(4\si+1)r_1}},
\end{equation*}
where $r_1$ is chosen like in the proof of the first
inequality in Lemma~\ref{lem:source} (see
\cite[Lemma~2.8]{ChoiKoh2021}), so we get eventually
\begin{equation*}
  \left\| H \mathcal{B}_2(u) \right\|_{L^{{q_0}'}_T
  L^{r_0'}}   \lesssim  \|u\|_{X^2_T}^{2\si+1}+ \sqrt \tau
\|u\|_{X^2_T}^{4\si+1}. 
\end{equation*}
For the other term, Lemma~\ref{lem:propNL} yields
   \begin{equation*}
     \left\| \partial_t \mathcal{B}_2(u) \right\|_{L^{q_0'}_TL^{{r_0}'}}
   = \left\| \partial_t \( \frac{N_0(\tau) -\Id}{\tau} \Pi_{\lambda} u \)
    \right\|_{L^{q_0'}_TL^{r_0'}} 
  \lesssim \left\| |\Pi_{\lambda}u|^{2\si} \partial_t \Pi_{\lambda} u
  \right\|_{L^{q_0'}_T L^{r_0'}}.
    \end{equation*}
    Observing that $\partial_t$ and $\Pi_\tau$ commute, \eqref{eq:NLSP} implies
    \begin{align*}
    \left\| \partial_t \mathcal{B}_2(u) \right\|_{L^{q_0'}_TL^{{r_0}'}}   
\lesssim  \left\|  |\Pi_{\lambda}u|^{2\si}  \Pi_{\lambda} \(H u\)
     \right\|_{L^{q_0'}_TL^{r_0'}} 
   + \left\| |\Pi_{\lambda}u|^{2\si}  \Pi_{\lambda} \(|u|^{2\si}u\)
     \right\|_{L^{q_0'}_T L^{r_0'}} .
    \end{align*}
For the first term of the right side, H\"older inequality
\eqref{eq:holder}  implies
  \begin{equation*}
   \left\|  |\Pi_{\lambda}u|^{2\si} \Pi_{\lambda}H u
     \right\|_{L^{q_0'}_TL^{r_0'}} \le  \left\|
    \Pi_{\lambda}u\right\|_{L^{\theta}_T L^{r_0}}^{2\si}
    \left\|  \Pi_{\lambda} H u \right\|_{L^{q_0}_TL^{r_0}}
  \lesssim \|u\|_{X^1_T}^{2\si} \|\Pi_{\lambda} H u
    \|_{L^{q_0}_TL^{r_0}}. 
  \end{equation*}
In the case $\beta=1$, we also invoke Lemma~\ref{lem:perte}, which
introduces an extra factor $\tau^{-1/2}$.
The second
  term of the right hand side is controlled thanks to
  Lemma~\ref{lem:source}.
For $\mathcal B_3(u)$, we easily have
\begin{equation*}
    \left\| H^{\beta/2}\mathcal{B}_3(u)
  \right\|_{L^{1}_TL^{2}} \lesssim  \| u\|_{X_T^\beta},\quad
   \left\|\partial_t \mathcal{B}_3(u) \right\|_{L^{1}_TL^{2}} \lesssim \|\Pi_{\lambda} H u
    \|_{L^\infty_TL^{2}}+ \left\| \Pi_\lambda\(|u|^{2\si}u\)\right\|_{L^1_TL^2}.
\end{equation*}
Only the last term requires some extra care: when $\beta=2$, we invoke
\eqref{eq:Pi-lambda-Lp} and the embeddings $H^2\subset
H^{d\si/(2\si+1)}\subset L^{4\si+2}$ used in Section~\ref{sec:cauchyH2}, 
\begin{equation*}
  \left\| \Pi_\lambda\(|u|^{2\si}u\)\right\|_{L^1_TL^2}\lesssim
  \|u\|^{2\si+1}_{L^\infty_TL^{4\si+2}}\lesssim
  \|u\|_{X_T^2}^{2\si+1}. 
\end{equation*}
In the case $\beta=1$, we invoke Lemma~\ref{lem:bernstein}: for $1\le
p\le 2$,
\begin{equation*}
   \left\| \Pi_\lambda\(|u|^{2\si}u\)\right\|_{L^1_TL^2}\lesssim
   \tau^{-\frac{d}{2}\(\frac{1}{p}-\frac{1}{2}\)}\|u\|^{2\si+1}_{L^\infty_TL^{(2\si+1)p}}. 
\end{equation*}
If $\si\le 1/d$, we pick $(2\si+1)p=2$, and
\begin{equation*}
   \left\| \Pi_\lambda\(|u|^{2\si}u\)\right\|_{L^1_TL^2}\lesssim
   \tau^{-\frac{d\si}{2}}\|u\|^{2\si+1}_{L^\infty_TL^{2}}\lesssim
   \tau^{-\frac{1}{2}}\|u_0\|^{2\si+1}_{L^{2}}. 
\end{equation*}
If $\si>1/d$, we use the Sobolev embedding $H^s(\R^d)\hookrightarrow
L^{\frac{2d}{d+2}(2\si+1)} (\R^d)$ where
\begin{equation*}
  s= \frac{d\si-1}{2\si+1}<1,
\end{equation*}
and this Lebesgue index is chosen so that the power of $\tau$ is
exactly $-1/2$,
\begin{equation*}
  \left\| \Pi_\lambda\(|u|^{2\si}u\)\right\|_{L^1_TL^2}\lesssim
  \tau^{-1/2}  \|u\|^{2\si+1}_{L^\infty_TH^s}\lesssim
  \tau^{-1/2}  \|u\|^{2\si+1}_{X^1_T}.
\end{equation*}
 We come up with
  \begin{equation*}
   \left\| A_1(u) \right\|_{\ell^q ([0,T]; L^r)}\lesssim
    \tau^{\beta/2}\(\|u\|_{X^\beta_T}+
    \|u\|_{X^\beta_T}^{2\si+1}+\|u\|_{X^\beta_T}^{4\si+1}\).  
  \end{equation*}
  To complete the proof, we perform another decomposition, for the
   term $A_2$,
    \[
     \mathcal{B}_1(u) +i\vp u+i \eps|u|^{2\si} u= \mathcal C_1(u)+ \mathcal{C}_2(u) + \mathcal{C}_3(u) +i\mathcal{C}_4(u)+i\eps\mathcal{C}_5(u),\]
   where
  \begin{align*}
&\mathcal C_1(u) = \( e^{-i\tau \vp}-1\) \frac{N_0(\tau)-\Id}{\tau}\Pi_\lambda u,\\
&\mathcal{C}_2(u) = \frac{N_0(\tau)-\Id}{\tau}\Pi_\lambda u
    +i\eps|\Pi_{\lambda}u|^{2\si} \Pi_{\lambda}u,    \\
&\mathcal C_3(u) = \frac{ e^{-i\tau \vp}-\Id}{\tau}\Pi_{\lambda}u
+i\vp  \Pi_\lambda    u,\\
& \mathcal{C}_4(u) =\vp (u-\Pi_\lambda u),\\
&\mathcal C_5(u)=|u|^{2\si} u-|\Pi_{\lambda}u|^{2\si} \Pi_{\lambda}u.
  \end{align*}
The discrete inhomogeneous Strichartz estimates \eqref{eq:disc-stri-inhom} yields,
for  $(q,r)$ admissible,
\begin{align*}
      \left\|\mathcal{A}_2(u) \right\|_{\ell^q ([0,T]; L^r)}
      &\lesssim \left\| \mathcal{C}_1(u) \right\|_{L^{q_0'}_T L^{r_0'}}
        + \left\| \mathcal{C}_2(u) \right\|_{L^{q_0'}_T L^{r_0'}}
+\left\| \mathcal{C}_3(u) \right\|_{L^1_T L^2}+\left\|
  \mathcal{C}_4(u) \right\|_{L^1_T L^2}\\
&\quad+ \left\| \mathcal{C}_5(u) \right\|_{L^{q_0'}_T L^{r_0'}}. 
\end{align*}
In view of \eqref{eq:NLpuissance}, we have the pointwise estimate
\begin{equation*}
 | \mathcal C_1(u)|\lesssim \tau\|\vp\|_{L^\infty} |\Pi_\lambda u|^{2\si+1}, 
\end{equation*}
hence 
\begin{equation*}
  \left\| \mathcal{C}_1(u) \right\|_{L^{q_0'}_T L^{r_0'}}\lesssim \tau
  \|u\|_{X_T^1}^{2\si+1}. 
\end{equation*}
Recall that $N_0(\tau)z= ze^{-i\eps\tau|z|^{2\si}}$, Taylor formula yields the pointwise estimate
\[
    \left| \mathcal{C}_2(u) \right|
    = \Big|\frac{N_0(\tau) -\Id}{\tau} \Pi_{\lambda} u +i \eps|\Pi_{\lambda}u|^{2\si} \Pi_{\lambda}u \Big|
      \lesssim \tau |\Pi_{\lambda}u|^{4\si+1}.
\]
Thus, we have
    \begin{equation*}
    \left\| \mathcal{C}_2(u) \right\|_{L^{q_0'}_T L^{r_0'}}
    \lesssim \tau \left\| |\Pi_{\lambda} u|^{4\si+1} \right\|_{L^{q_0'}_T
    L^{r_0'}},
  \end{equation*}
and this last term is estimated by Lemma~\ref{lem:source}. We readily
have
\begin{equation*}
  \left\| \mathcal{C}_3(u) \right\|_{L^1_T L^2}\lesssim \tau
  \|\Pi_\lambda u\|_{L^\infty_TL^2}\lesssim \tau\|u_0\|_{L^2},\quad
  \left\| \mathcal{C}_4(u) \right\|_{L^1_T L^2}\lesssim
  \tau^{\beta/2}\|u\|_{X^\beta_T}. 
\end{equation*}
Finally, H\"older inequality \eqref{eq:holder} yields
    \begin{align*}
    \left\| \mathcal{C}_5(u) \right\|_{L^{q_0'}_TL^{r_0'}}
    &=\left\| |\Pi_{\lambda}u|^{2\si} \Pi_{\lambda}u - |u|^{2\si} u
      \right\|_{L^{q_0'}_TL^{r_0'}} \\
      &\lesssim \(\| \Pi_{\lambda}u \|_{L^\theta_TL^{r_0}}^{2\si} +
        \|u \|_{L^\theta_TL^{r_0}}^{2\si}\)
      \left\| \Pi_{\lambda}u - u \right\|_{L^{q_0}_T L^{r_0}}
\lesssim \tau^{\beta/2} \|u\|_{X^\beta_T}^{2\si+1} ,
    \end{align*}
where we have used Lemma~\ref{lem:PiPDO}. This yields the announced
estimate for $A (u)$ in $\ell^{q}([0,T];L^{r})$. We now
emphasize the modifications needed to estimate $\sqrt H  A (u)$
in the same space, and thus conclude the proof of
Lemma~\ref{lem:A4}.  Invoking discrete
Strichartz estimates like above, we have 
\begin{align*}
    \left\| \sqrt HA_1(u) \right\|_{\ell^q ([0,T]; L^r)} &\lesssim
  \tau^{1/2} \left\| H\mathcal{B}_2(u)
  \right\|_{L^{{q_0}'}_TL^{r_0'}}   +   \tau \left\| \sqrt H \partial_t \mathcal{B}_2(u)
   \right\|_{L^{{q_0}'}_TL^{r_0'}} \\
&\quad +  \tau^{1/2} \left\| H\mathcal{B}_3(u)
  \right\|_{L^1_TL^2}   +   \tau \left\| \sqrt H \partial_t \mathcal{B}_3(u)
   \right\|_{L^1_TL^2}.
\end{align*}
The previous computations, in the case $\beta=2$, can then be resumed
with essentially no modification, except maybe for the  term
  involving $ \partial_t \mathcal{B}_2(u)$. The factor $e^{-i\tau\vp}$ is
  obviously discarded, and $\partial_t \mathcal{B}_2(u)$ appears as a linear combination
of the terms 
\begin{equation*}
  \frac{N_0(\tau)-\Id}{\tau} |\Pi_\lambda u|^{2\si} \partial_t \Pi_\lambda
  u\quad \text{and}\quad    \frac{N_0(\tau)-\Id}{\tau} |\Pi_\lambda
  u|^{2\si-2}\(\Pi_\lambda u\)^2 \partial_t \overline{\Pi_\lambda u}.
\end{equation*}
Like before, we use the commutation between $\partial_t$ and $\Pi_\lambda$,
and \eqref{eq:NLSP}. Invoking Lemma~\ref{lem:H12N0} and
\eqref{eq:H12der}, the most delicate terms to control are
\begin{align*}
 I=& \|\Pi_\lambda u\|^{2\si}_{L^\theta_TL^{r_0}}\left\|
  \sqrt H \Pi_\lambda H u\right\|_{L^{q_0}_T L^{r_0}} ,\\
  II=& \|\Pi_\lambda u\|_{L^\theta_TL^{r_0}}^{2\si-1}
       \|\sqrt H \Pi_\lambda u\|_{L^\theta_TL^{r_0}}
       \|\Pi_\lambda H u\|_{L^{q_0}_T L^{r_0}}, \\
      & \quad \text{when   differentiating the power of } \Pi_\lambda
        u,\\ 
 III=&\|\Pi_\lambda u\|_{L^{2\theta}_TL^{2r_0}}^{4\si-1} \|\sqrt H
  \Pi_\lambda u\|_{L^{2\theta}_TL^{2r_0}} \|\Pi_\lambda H
  u\|_{L^{q_0}_T L^{r_0}},\\
  & \quad      \text{when differentiating }\frac{N_0(\tau)-\Id}{\tau}\text{ again}.
\end{align*}
Proceeding like before, Lemma~\ref{lem:perte} and
\eqref{eq:Pi-lambda-Lp}yield 
\begin{equation*}
  I \lesssim
 \tau^{-1/2} \|u\|_{X_T^2}^{2\si+1} .
\end{equation*}
For the term $II$, when $1/2\le \si<2/d$, we write
\begin{equation*}
  \|\sqrt H \Pi_\lambda u\|_{L^\theta_TL^{r_0}}\le T^{1/\theta-1/q_0}
  \|\sqrt H \Pi_\lambda u\|_{L^{q_0}_TL^{r_0}} 
\lesssim \|u\|_{X_T^1}. 
\end{equation*}
When $\si\ge 2/d$, we resume the estimate from the proof of
Proposition~\ref{prop:cauchy2}, with $(\theta,b_1)$ admissible,
\begin{equation*}
   \|\sqrt H \Pi_\lambda u\|_{L^\theta_TL^{r_0}}\lesssim \|H \Pi_\lambda
  u\|_{L^\theta_TL^{b_1}} \lesssim \|u\|_{X_T^2}. 
\end{equation*}
We obtain, in both cases,
\begin{equation*}
  II\lesssim \|u\|_{X_T^2}^{2\si+1} .
\end{equation*}
The last term is estimated differently from  the proof of
Lemma~\ref{lem:source}: 
\begin{equation*}
  III\lesssim \|\Pi_\lambda u\|_{L^{2\theta}_TL^{2r_0}}^{4\si-1}
  \|\sqrt H 
  \Pi_\lambda u\|_{L^{2\theta}_TL^{2r_0}} \|u\|_{X_T^2}. 
\end{equation*}
We check that since $\si<2/(d-2)_+$ and $d\le 5$,
$H^2(\R^d)\hookrightarrow L^{2r_0}(\R^d)=L^{4\si+4}(\R^d)$, 
  \begin{equation*}
    \|\Pi_\lambda u\|_{L^{2\theta}_TL^{2r_0}}\lesssim
    \|u\|_{L^{2\theta}_TL^{2r_0}}\lesssim \|u\|_{L^\infty_T
      H^2}\lesssim \|u\|_{X_T^2},
  \end{equation*}
and, using Lemma~\ref{lem:perte},
\begin{equation*}
   \|\Pi_\lambda\sqrt H u\|_{L^\infty_T H^2}\lesssim \tau^{-1/2}
   \|u\|_{X_T^2}.
\end{equation*}
We leave out the details of the other estimates. 
\end{proof}

\subsection{Stability implies convergence in $\cH^1$}

We now get to the last estimate in Theorem~\ref{theo:stab-cv}. We
therefore assume $1/2\le \si<2/(d-2)_+$. Again,
in view of Lemma~\ref{lem:PiPDO}, it suffices to consider the 
difference $Z_\tau(n\tau)-\Pi_\lambda u(n\tau)$.
The scheme is essentially the same as in Subsection~\ref{sec:cv-L2}:
we decompose $[0,T]$ into a finite union of intervals $I_j$, on which
some convenient norms will be sufficiently small so the corresponding
term on the right hand side is absorbed by the left hand side, when
using Strichartz estimates. The core of the proof of convergence in
$\cH^1$ then relies on higher order estimates of the terms involved in
Subsection~\ref{sec:cv-L2}. 
\smallbreak

We resume
Duhamel formula \eqref{eq-a-1'} and the decomposition
\eqref{eq:decomp}. The terms $\mathcal A_1$ and $\mathcal A_2$ are
estimated thanks to the homogeneous discrete Strichartz estimate, 
\begin{align*}
   \|\sqrt H
  \mathcal A_1(j)\|_{\ell^q( I_j;L^r)} &= \left\|S_\lambda \sqrt H\(
  Z_\tau(m_j \tau)- \Pi_\lambda u(m_j \tau)\) \right\|_{\ell^q( I_j;L^r)}\\
 &\lesssim  \left\|\sqrt H\(
  Z_\tau(m_j \tau)- \Pi_\lambda u(m_j \tau)\) \right\|_{L^2}\\
&\lesssim \left\|
  Z_\tau(m_j \tau)- \Pi_\lambda u(m_j \tau) \right\|_{\cH^1},
\end{align*}
and the term  $\mathcal A_2$ is estimated similarly, 
\begin{align*}
   \|\sqrt H \mathcal A_2(j)\|_{\ell^q(I_j;L^r)}&\lesssim  \left\|
\sqrt H\(  \Pi_\lambda u(m_j \tau)-  u(m_j \tau)\)
   \right\|_{L^2}\\
  &\lesssim \left\|
\Pi_\lambda \sqrt Hu(m_j \tau)-  \sqrt Hu(m_j \tau) \right\|_{L^2} \\
  &\lesssim \tau^{1/2} \|u(m_j\tau)\|_{\cH^2}\lesssim \tau^{1/2} \|u\|_{X_T^2},
\end{align*}
where we have used \eqref{eq:Pi-lambda-1}. The term $\mathcal A_4$ is
estimated thanks 
to the last case of Lemma~\ref{lem:A4}, 
\begin{equation*}
  \|\sqrt H \mathcal A_4(j)\|_{\ell^q(I_j;L^r)}\lesssim \tau^{1/2} \(
  \|u\|_{X_T^2}+ \|u\|_{X_T^2}^{2\si+1}+  \|u\|_{X_T^2}^{4\si+1}\). 
\end{equation*}
Therefore, the new estimate needed at this stage is the estimate of
$\mathcal A_3$. To simplify the presentation, we treat the case
$\vp=0$ only, as we have seen before how to handle the presence of
this term. 
We prove the analogue of \cite[Lemma~5.2]{ChoiKoh2021}:
\begin{lemma}\label{lem:5.2}
  Let $1/2\le \si<2/(d-2)_+$.
  There exists $C$ such that for all time
interval $I$ and all $w_1,w_2$ sufficiently regular,
\begin{align*}
  & \left\|\sqrt H\(\frac{N_0(\tau)-\Id}{\tau}w_1 -
    \frac{N_0(\tau)-\Id}{\tau}w_2\)\right\|_{\ell^{q_0'}(I;L^{r_0'})} \\
  &\quad\le C 
    \|w_1\|^{2\si}_{\ell^\theta(I;L^{r_0})}
    \|\sqrt H(w_1-w_2)\|_{\ell^{q_0}(I;L^{r_0})}\\
  &\qquad + C\Bigl(
    \tau\|w_1\|^{4\si-1}_{\ell^{\frac{4\si-1}{2\si-1}\theta}(I;L^{\frac{4\si-1}{2\si-1}r_0})}+
    \tau\|w_2\|^{4\si-1}_{\ell^{\frac{4\si-1}{2\si-1}\theta}(I;L^{\frac{4\si-1}{2\si-1}r_0})} 
     \\
   &\qquad \qquad+ 
    \|w_1\|^{2\si-1}_{\ell^\theta(I;L^{r_0})}+
    \|w_2\|^{2\si-1}_{\ell^\theta(I;L^{r_0})}\Bigr)
     \|\sqrt H
    w_2\|_{\ell^\theta(I;L^{r_0})}\|w_1-w_2\|_{\ell^{q_0}(I;L^{r_0})}\\
  &\qquad + C\tau\|w_1\|^{4\si-1}_{\ell^{\frac{4\si-1}{2\si-1}\theta}(I;L^{\frac{4\si-1}{2\si-1}r_0})}\|w_1\|_{\ell^\theta(I;L^{r_0})} \|\sqrt H(w_1-w_2)\|_{\ell^{q_0}(I;L^{r_0})}.
\end{align*}
\end{lemma}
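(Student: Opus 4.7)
The plan is to adapt the proof of \cite[Lemma~5.2]{ChoiKoh2021} (which corresponds to the case $V=0$, with $\nabla$ in place of $\sqrt H$) by systematically replacing $\nabla$ by $\sqrt H$ through the product inequality \eqref{eq:H12der} and the norm equivalence of Lemma~\ref{lem:sobolev-H}. I would first set $F(w):=\tau^{-1}(N_0(\tau)-\Id)w = w\,\phi_\tau(|w|^2)$ with $\phi_\tau(s):=(e^{-i\eps\tau s^\si}-1)/\tau$, and apply the fundamental theorem of calculus along the segment $w_\theta:=w_2+\theta(w_1-w_2)$, $\theta\in[0,1]$, to obtain a pointwise identity of the form
\begin{equation*}
F(w_1)-F(w_2) = (w_1-w_2)\,G_1(w_1,w_2) + \overline{(w_1-w_2)}\,G_2(w_1,w_2),
\end{equation*}
where the $G_j$ are smooth in $(w_1,\bar w_1,w_2,\bar w_2)$, and, from the bounds $|\phi_\tau^{(k)}(s)\,s^k|\lesssim s^\si$ ($k=0,1$) and the Taylor expansion $\phi_\tau(s)=-i\eps s^\si + O(\tau s^{2\si})$, satisfy pointwise
\begin{equation*}
|G_j|\lesssim |w_1|^{2\si}+|w_2|^{2\si}, \quad |DG_j|\lesssim |w_1|^{2\si-1}+|w_2|^{2\si-1}+\tau\bigl(|w_1|^{4\si-1}+|w_2|^{4\si-1}\bigr),
\end{equation*}
where $DG_j$ denotes any first derivative in $w_1,\bar w_1,w_2,\bar w_2$; the assumption $\si\ge 1/2$ is used precisely to give a sense to $|w|^{2\si-1}$.

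Next I would apply $\sqrt H$ to each summand and use \eqref{eq:H12der} with the H\"older split $1/r_0'=1/r_0+2\si/r_0$:
\begin{equation*}
\|\sqrt H[(w_1-w_2)G_j]\|_{L^{r_0'}}\lesssim \|\sqrt H(w_1-w_2)\|_{L^{r_0}}\|G_j\|_{L^{r_0/(2\si)}} + \|w_1-w_2\|_{L^{r_0}}\|\sqrt H G_j\|_{L^{r_0/(2\si)}}.
\end{equation*}
The first contribution is estimated by $\|G_j\|_{L^{r_0/(2\si)}}\lesssim \|w_1\|^{2\si}_{L^{r_0}}+\|w_2\|^{2\si}_{L^{r_0}}$, and, after H\"older in time with $1/q_0'=2\si/\theta+1/q_0$ (which follows from \eqref{eq:theta}), yields the first term of the RHS of Lemma~\ref{lem:5.2}. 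For $\|\sqrt H G_j\|_{L^{r_0/(2\si)}}$ I would invoke Lemma~\ref{lem:sobolev-H} to dominate it by $\|\nabla G_j\|_{L^{r_0/(2\si)}}+\|G_j\sqrt V\|_{L^{r_0/(2\si)}}$, then apply the chain rule and the identity $\nabla w_\theta=\nabla w_2+\theta\nabla(w_1-w_2)$ to separate $\nabla G_j$ into a part proportional to $\nabla w_2$ and a part proportional to $\nabla(w_1-w_2)$. The $\sqrt V$ factor is handled by writing $|G_j\sqrt V|\lesssim |w|^{2\si-1}\cdot|w|\sqrt V + \tau|w|^{4\si-1}\cdot|w|\sqrt V$ and then using Lemma~\ref{lem:sobolev-H} in the reverse direction to replace $\|w\sqrt V\|_{L^{r_0}}$ by $\|\sqrt H w\|_{L^{r_0}}$.

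The piece proportional to $\nabla w_2$ (and analogously to $w_2\sqrt V$) yields, via H\"older in space and time—with the factor $|w|^{2\si-1}$ distributed into $\ell^\theta(L^{r_0})$ for the leading order, and the factor $|w|^{4\si-1}$ into $\ell^{(4\si-1)\theta/(2\si-1)}(L^{(4\si-1)r_0/(2\si-1)})$ for the Taylor remainder—the middle term of the RHS. The piece proportional to $\nabla(w_1-w_2)$ produces a $\sqrt H(w_1-w_2)$ factor: at leading order $|w|^{2\si-1}$ it is absorbed into the first term of the RHS using the elementary bound $\|w_1-w_2\|_{L^{r_0}}\lesssim\|w_1\|_{L^{r_0}}+\|w_2\|_{L^{r_0}}$, while at the Taylor-remainder order $\tau|w|^{4\si-1}$ it produces the last term of the statement. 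I expect the main obstacle to be the careful bookkeeping of H\"older exponents, in particular matching the somewhat ad hoc index $(4\si-1)r_0/(2\si-1)$ for the Taylor remainder, and reducing the pair $\sqrt H w_1,\sqrt H w_2$ that arises from $|w_\theta|\le|w_1|+|w_2|$ to the single factor $\sqrt H w_2$ appearing in the statement. The latter point is handled by writing $\sqrt H w_1=\sqrt H w_2+\sqrt H(w_1-w_2)$ and absorbing the extra $\sqrt H(w_1-w_2)$ factor into the first term of the RHS exactly as above; all exponents $r_0/(2\si),\, r_0$ and $(4\si-1)r_0/(2\si-1)$ lie in $(1,\infty)$ under the standing assumption $1/2\le\si<2/(d-2)_+$, so Lemma~\ref{lem:sobolev-H} and \eqref{eq:H12der} apply throughout.
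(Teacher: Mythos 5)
Your skeleton (Taylor's formula along the segment $w_\theta=w_2+\theta(w_1-w_2)$, then the product estimate \eqref{eq:H12der} and the norm equivalence of Lemma~\ref{lem:sobolev-H} to handle $\sqrt H$, with the Hölder bookkeeping $1/r_0'=2\si/r_0+1/r_0$, $1/q_0'=2\si/\theta+1/q_0$ and the exponents $\frac{4\si-1}{2\si-1}\theta$, $\frac{4\si-1}{2\si-1}r_0$ for the Taylor remainder) is sound and close in spirit to the paper's proof, which instead reduces $\sqrt H$ to $J\in\{\Id,\sqrt V,\nabla\}$ at the outset (via Lemma~\ref{lem:sobolev-H} and \eqref{eq:NLlip} for $J\in\{\Id,\sqrt V\}$) and then differentiates the explicit formula for $N_0(\tau)$. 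The genuine problem is that your \emph{symmetric} fundamental-theorem decomposition cannot deliver the \emph{asymmetric} right-hand side of the lemma: your bounds on $G_j$ and $DG_j$ treat $w_1$ and $w_2$ on an equal footing, so every contribution carrying the factor $\|\sqrt H(w_1-w_2)\|_{\ell^{q_0}(I;L^{r_0})}$ arrives with a coefficient containing norms of $w_2$. Concretely, your first contribution is $\bigl(\|w_1\|^{2\si}_{\ell^\theta(I;L^{r_0})}+\|w_2\|^{2\si}_{\ell^\theta(I;L^{r_0})}\bigr)\|\sqrt H(w_1-w_2)\|_{\ell^{q_0}(I;L^{r_0})}$, and both of your ``absorption'' steps (bounding $\|w_1-w_2\|$ by $\|w_1\|+\|w_2\|$, and writing $\sqrt H w_1=\sqrt H w_2+\sqrt H(w_1-w_2)$) again produce coefficients of type $\|w_2\|^{2\si}_{\ell^\theta}$ and $\tau\|w_2\|^{4\si-1}_{\ell^{\frac{4\si-1}{2\si-1}\theta}}\|w_2\|_{\ell^\theta}$ in front of $\|\sqrt H(w_1-w_2)\|_{\ell^{q_0}}$. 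The statement allows only $\|w_1\|^{2\si}_{\ell^\theta}$ and $\tau\|w_1\|^{4\si-1}_{\ell^{\frac{4\si-1}{2\si-1}\theta}}\|w_1\|_{\ell^\theta}$ in that position, and the extra $w_2$-terms are not dominated by the stated right-hand side; so the claims that these pieces ``yield'' or ``are absorbed into'' the first term of the RHS are exactly where the argument fails to prove the lemma as stated.

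The missing idea is the paper's asymmetric add-and-subtract grouping: one always pairs $\nabla(w_1-w_2)$ with the $w_1$-only coefficients $\tau^{-1}\bigl(e^{-i\eps\tau|w_1|^{2\si}}-1\bigr)$ (and $\tau\eps w_1\nabla|w_1|^{2\si}$ times that factor), while all coefficient \emph{differences}, estimated in Lipschitz form by $(|w_1|^{2\si-1}+|w_2|^{2\si-1})|w_1-w_2|$ or $(|w_1|^{4\si-1}+|w_2|^{4\si-1})|w_1-w_2|$, are paired with $\nabla w_2$ (respectively $w_2\sqrt V$, $w_2$); this is what confines $\|\sqrt H(w_1-w_2)\|_{\ell^{q_0}}$ to $w_1$-only coefficients. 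If you keep your symmetric route, you prove a weaker, symmetrized variant; it would in fact still suffice for the absorption argument where the lemma is applied (there $w_2=\Pi_\lambda u$ and $\|\Pi_\lambda u\|_{\ell^\theta(I_j;L^{r_0})}$ is also small on short intervals, and the $\tau$-weighted $w_2$-terms are handled by the same Bernstein argument), but then you must restate the lemma accordingly rather than claim the estimate as written.
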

\begin{proof}
  When $\sqrt H$ is replaced by $J\in \{\sqrt V,\Id\}$ on the left
  hand side, the result is a direct consequence of
  \eqref{eq:NLlip} and H\"older inequality \eqref{eq:holder},
  recalling Lemma~\ref{lem:sobolev-H}. We thus consider the case
  $J=\nabla$, and compute
\begin{align*}
\nabla &\( \frac{N_0(\tau)-\Id}{\tau}w_1 -
         \frac{N_0(\tau)-\Id}{\tau}w_2\) \\
       & = \frac{e^{-i\eps \tau |w_1|^{2\si}}-1}{\tau} \nabla w_1 -
         \frac{e^{-i\eps \tau |w_2|^{2\si}}-1}{\tau} \nabla w_2
         \pm \frac{e^{-i\eps \tau |w_1|^{2\si}}-1}{\tau} \nabla w_2\\
  &\quad -i\tau\eps w_1\nabla |w_1|^{2\si} \frac{e^{-i\eps \tau
    |w_1|^{2\si}}-1}{\tau}
    +i\tau\eps w_2\nabla |w_2|^{2\si} \frac{e^{-i\eps \tau
    |w_2|^{2\si}}-1}{\tau}\\
  &\quad \pm i\tau\eps w_2\nabla |w_2|^{2\si} \frac{e^{-i\eps \tau
    |w_1|^{2\si}}-1}{\tau},
\end{align*}
where the symbol ``$\pm$'' means that we add and subtract the same
term in order to group suitable terms together. We have, in view of
\eqref{eq:NLpuissance},
\begin{equation*}
  \left| \frac{e^{-i\eps \tau |w_1|^{2\si}}-1}{\tau}
    \nabla(w_1-w_2)\right| \le |w_1|^{2\si}|\nabla (w_1-w_2)|,
\end{equation*}
and \eqref{eq:holder} yields, together with Lemma~\ref{lem:sobolev-H},
\begin{align*}
  \left\| \frac{e^{-i\eps \tau |w_1|^{2\si}}-1}{\tau}
    \nabla(w_1-w_2)\right\|_{\ell^{q_0'}(I;L^{r_0'})} &\le
  \|w_1\|^{2\si}_{\ell^\theta(I;L^{r_0})}
  \|\nabla (w_1-w_2)\|_{\ell^{q_0}(I;L^{r_0})}\\
&  \lesssim \|w_1\|^{2\si}_{\ell^\theta(I;L^{r_0})}
  \|\sqrt H (w_1-w_2)\|_{\ell^{q_0}(I;L^{r_0})}.
\end{align*}
We next write, like for \eqref{eq:NLlip},
\begin{align*}
  \left| \(\frac{e^{-i\eps \tau |w_1|^{2\si}}-1}{\tau}-\frac{e^{-i\eps
  \tau |w_2|^{2\si}}-1}{\tau}\)\nabla w_2\right|&= \left|
  \(\frac{e^{-i\eps \tau |w_1|^{2\si}}-e^{-i\eps 
   \tau |w_2|^{2\si}}}{\tau}\)\nabla w_2\right|\\
 \lesssim &\left||w_1|^{2\si}-   |w_2|^{2\si}\right| |\nabla w_2|\\
 \lesssim &\(|w_1|^{2\si-1}+   |w_2|^{2\si-1}\)|w_1-w_2|\times |\nabla w_2|, 
\end{align*}
and \eqref{eq:holder} yields
\begin{align*}
 & \left\| \(\frac{e^{-i\eps \tau |w_1|^{2\si}}-1}{\tau}-\frac{e^{-i\eps
  \tau |w_2|^{2\si}}-1}{\tau}\)\nabla
  w_2\right\|_{\ell^{q_0'}(I;L^{r_0'})}\\
  &\qquad \lesssim
    \(\|w_1\|^{2\si-1}_{\ell^\theta(I;L^{r_0})}
    +\|w_2\|^{2\si-1}_{\ell^\theta(I;L^{r_0})}\)
    \|\nabla w_2\|_{\ell^\theta(I;L^{r_0})}
    \|w_1-w_2\|_{\ell^{q_0}(I;L^{r_0})}.
\end{align*}
Writing in a rather loose fashion, that is, proceeding as if
$w_1,w_2\ge 0$,
\begin{equation*}
  w_1\nabla |w_1|^{2\si} - w_2\nabla |w_2|^{2\si} \approx 2\si\(
  w_1^{2\si}\nabla w_1 - w_2^{2\si}\nabla w_2\pm w_1^{2\si}\nabla w_2\),
\end{equation*}
we also have, in view of \eqref{eq:NLpuissance},
\begin{align*}
  \left| \(w_1\nabla |w_1|^{2\si} - w_2\nabla |w_2|^{2\si} \)
  \frac{e^{-i\eps \tau |w_1|^{2\si}}-1}{\tau}\right|& \lesssim
 |w_1|^{4\si} |\nabla(w_1-w_2)| \\
  + |w_1|^{2\si} &\( |w_1|^{2\si-1}+
  |w_2|^{2\si-1}\)|w_1-w_2| |\nabla w_2|. 
\end{align*}
Finally, like for \eqref{eq:NLlip},
\begin{align*}
  &\left| \(\frac{e^{-i\eps \tau |w_1|^{2\si}}-1}{\tau}-\frac{e^{-i\eps
  \tau |w_2|^{2\si}}-1}{\tau}\)w_2\nabla |w_2|^{2\si}\right|\\
  &\qquad \lesssim  \left||w_1|^{2\si}-|w_2|^{2\si}\right|  |w_2|^{2\si}|\nabla w_2|,
\end{align*}
so the sum of the last two terms considered is controlled by
\begin{equation*}
  |w_1|^{4\si} |\nabla(w_1-w_2)|
  +\( |w_1|^{4\si-1}+
  |w_2|^{4\si-1}\)|w_1-w_2| |\nabla w_2|.
\end{equation*}
The $\ell^{q_0'}(I;L^{r_0'})$-norm of this quantity is estimated, in
view of \eqref{eq:holder}, by
\begin{align*}
  &\|w_1\|_{\ell^{\frac{4\si-1}{2\si-1}\theta}(I;L^{\frac{4\si-1}{2\si-1}r_0})}^{4\si-1}
  \|w_1\|_{\ell^\theta(I;L^{r_0})}\|\nabla
  (w_1-w_2)\|_{\ell^{q_0}(I;L^{r_0})}\\
&  +\(\sum_{j=1}^2\|w_j\|_{\ell^{\frac{4\si-1}{2\si-1}\theta}(I;L^{\frac{4\si-1}{2\si-1}r_0})}^{4\si-1}
\)
 \|\nabla w_2\|_{\ell^\theta(I;L^{r_0})}\|
  w_1-w_2\|_{\ell^{q_0}(I;L^{r_0})},
\end{align*}
hence the lemma. 
\end{proof}
We apply Lemma~\ref{lem:5.2} to $\mathcal A_3$, with $w_1 = Z_\tau$ and
$w_2=\Pi_\lambda u$. We have, thanks to Strichartz estimates
\eqref{eq:disc-stri-inhom},   \eqref{eq:H12der} and H\"older inequality, 
\begin{align*}
  \|&\sqrt H\mathcal A_3(j) \|_{\ell^q(I_j;L^{r})}\lesssim \|Z_\tau\|^{2\si}_{\ell^\theta(I_j;L^{r_0})}
    \|\sqrt H(Z_\tau-\Pi_\lambda u)\|_{\ell^{q_0}(I_j;L^{r_0})}\\
  &+\Bigl(
    \tau\|Z_\tau\|^{4\si-1}_{\ell^{\frac{4\si-1}{2\si-1}\theta}(I_j;L^{\frac{4\si-1}{2\si-1}r_0})}+
    \tau\|\Pi_\lambda u\|^{4\si-1}_{\ell^{\frac{4\si-1}{2\si-1}\theta}(I_j;L^{\frac{4\si-1}{2\si-1}r_0})}
     \\
   & \qquad+ 
    \|Z_\tau\|^{2\si-1}_{\ell^\theta(I_j;L^{r_0})}+
    \|\Pi_\lambda u\|^{2\si-1}_{\ell^\theta(I_j;L^{r_0})}\Bigr)
     \|\sqrt H
    \Pi_\lambda u\|_{\ell^\theta(I_j;L^{r_0})}\|Z_\tau-\Pi_\lambda u\|_{\ell^{q_0}(I_j;L^{r_0})}\\
  &+\tau\|Z_\tau\|^{4\si-1}_{\ell^{\frac{4\si-1}{2\si-1}\theta}(I_j;L^{\frac{4\si-1}{2\si-1}r_0})}\|Z_\tau\|_{\ell^\theta(I_j;L^{r_0})}
    \|\sqrt H(Z_\tau-\Pi_\lambda u)\|_{\ell^{q_0}(I_j;L^{r_0})}\\
    &\phantom{\sqrt H\mathcal A_3 \|_{\ell^q(I_j;L^{r})}}
      \lesssim |I_j|^{2\si\gamma}
    \|\sqrt H(Z_\tau-\Pi_\lambda u)\|_{\ell^{q_0}(I_j;L^{r_0})}\\
  &+\Bigl(
    \tau\|Z_\tau\|^{4\si-1}_{\ell^{\frac{4\si-1}{2\si-1}\theta}(I_j;L^{\frac{4\si-1}{2\si-1}r_0})}+
    \tau\|\Pi_\lambda u\|^{4\si-1}_{\ell^{\frac{4\si-1}{2\si-1}\theta}(I_j;L^{\frac{4\si-1}{2\si-1}r_0})}+1\Bigr)
     \\
   & \qquad\qquad \times |I_j|^{1/\theta}
     \|\sqrt H
     u\|_{\ell^\infty(I_j;H^1)}\|Z_\tau-\Pi_\lambda u\|_{\ell^{q_0}(I_j;L^{r_0})}\\
  &+\tau\|Z_\tau\|^{4\si-1}_{\ell^{\frac{4\si-1}{2\si-1}\theta}(I_j;L^{\frac{4\si-1}{2\si-1}r_0})}|I_j|^\gamma M
    \|\sqrt H(Z_\tau-\Pi_\lambda u)\|_{\ell^{q_0}(I_j;L^{r_0})} ,
\end{align*}
where we have used \eqref{eq:stab-num}. Noticing that the assumptions
on  $\chi$ imply
\begin{equation*}
\chi(z)= \chi\(\frac{z}{4}\)\chi(z),\quad \text{hence}\quad  \Pi_\lambda =
\Pi_{4\lambda}\Pi_\lambda , 
\end{equation*}
we have
\begin{equation*}
  Z_\tau = \Pi_{4\lambda}Z_\tau.
\end{equation*}
To control the $L^{\frac{4\si-1}{2\si-1}r_0}$-norms in space, we now invoke
Lemma~\ref{lem:bernstein}, with
\begin{equation*}
  p=r_0<q=\frac{4\si-1}{2\si-1}r_0,
\end{equation*}
applied to  $\phi=Z_\tau$ or $\Pi_\lambda u$, so we have $\phi=
\Pi_{4\lambda}\phi$ (leaving out the time dependence at this stage),
and thus 
\begin{equation*}
  \|f\|_{L^{\frac{4\si-1}{2\si-1}r_0}}^{4\si-1}\lesssim \tau^{-\frac{d}{2}\(
 \frac{1}{r_0}-\frac{2\si-1}{(4\si-1)r_0}   \)(4\si-1)}\|f\|_{L^{r_0}}^{4\si-1}. 
\end{equation*}
We compute
\begin{equation*}
  \frac{d}{2}\(
 \frac{1}{r_0}-\frac{2\si-1}{(4\si-1)r_0}   \)(4\si-1) =
 \frac{d\si}{2\si+2}\le 1, \quad\text{since}\quad\si<\frac{2}{(d-2)_+},
\end{equation*}
so the loss induced by this Bernstein-type inequality is compensated
by the factor $\tau$ in front of the corresponding norm,
\begin{equation*}
  \tau\|Z_\tau\|^{4\si-1}_{\ell^{\frac{4\si-1}{2\si-1}\theta}(I_j;L^{\frac{4\si-1}{2\si-1}r_0})}+
    \tau\|\Pi_\lambda
    u\|^{4\si-1}_{\ell^{\frac{4\si-1}{2\si-1}\theta}(I_j;L^{\frac{4\si-1}{2\si-1}r_0})}\lesssim
    M^{4\si-1} + \|u\|_{X_T^1}^{4\si-1},
\end{equation*}
where we have used \eqref{eq:stab-num2} and the Sobolev embedding $H^1(\R^d)\hookrightarrow
L^{r_0}(\R^d)$.   
We can then
simplify the above estimate to
\begin{align*}
  \|\sqrt H&\mathcal A_3 (j)\|_{\ell^q(I_j;L^{r})}
      \lesssim |I_j|^{2\si\gamma}
    \|\sqrt H(Z_\tau-\Pi_\lambda u)\|_{\ell^{q_0}(I_j;L^{r_0})}\\
  & +|I_j|^{1/\theta}
   \(M^{4\si-1} + \|u\|_{X_T^1}^{4\si-1}  +1\)\|
     u\|_{X_T^2}\|Z_\tau-\Pi_\lambda u\|_{\ell^{q_0}(I_j;L^{r_0})}\\
  &+|I_j|^\gamma M^{4\si}
    \|\sqrt H(Z_\tau-\Pi_\lambda u)\|_{\ell^{q_0}(I_j;L^{r_0})} .
\end{align*}
We can then
proceed like in Subsection~\ref{sec:cv-L2}, and leave out
the details.

\section{Stability}
\label{sec:stability}

To complete the proof of Theorems~\ref{theo:main} and
\ref{theo:main2}, we now have to prove that the stability conditions
\eqref{eq:stab-num}, and \eqref{eq:stab-num2} in the corresponding
case, are
verified.  Throughout this section, $T>0$ is fixed, such that $u\in
C([0,T];\cH^1)$. We distinguish two cases, for which the roadmap is the
same, but the technical details are different. Indeed, we emphasize
the property
\begin{equation*}
  \theta<q_0\Longleftrightarrow \si<\frac{2}{d}.
\end{equation*}
In the $L^2$-subcritical case $0<\si<2/d$, we use the H\"older inequality
\begin{equation*}
  \|u^n\|_{\ell^\theta(I;L^{r_0})}\le |I|^{1/\theta-1/q_0}
  \|u^n\|_{\ell^{q_0}(I;L^{r_0})}, 
\end{equation*}
and the goal is to prove that $\|u^n\|_{\ell^{q_0}([0,T];L^{r_0})}\le
M$ for some $M$ independent of $\tau\in (0,1)$.
\smallbreak

If $\si\ge 2/d$, we use Sobolev embedding and H\"older inequality,
\begin{align*}
  &\|u^n\|_{\ell^\theta(I;L^{r_0})}\lesssim \|u^n\|_{\ell^\theta(I;H^1)}
    \lesssim |I|^{1/\theta}\|u^n\|_{\ell^\infty(I;H^1)} ,\\
 &\|u^n\|_{\ell^\infty(I;L^{r_0})}\lesssim \|u^n\|_{\ell^\infty(I;H^1)} , 
\end{align*}
and the goal is to prove that $\|u^n\|_{\ell^\infty([0,T];H^1)}\le
M$ for some $M$ independent of $\tau\in (0,1)$.

\subsection{$L^2$-subcritical case: $0<\si<2/d$}

In the absence of potential, $V=0$, \eqref{eq:stab-num} was proven
in
\cite[Theorem~1.1]{Ignat2011} and resumed in
\cite[Proposition~4.1]{ChoiKoh2021}.  We slightly modify the argument
here, in a lemma where we do not try to make the constants sharp:
\begin{lemma}\label{lem:stab-sub} 
  Let $0<\si<2/d$, $u_0\in \cH^1$ and $T>0$ such that $u\in X_T^1$. 
There exists $\tau_0>0$ such that for all $\tau\in (0,\tau_0]$,
$Z_\tau(n\tau)u_0\in \ell^{q_0}([0,T];L^{r_0})$, and there exists
$C>0$ such that 
\begin{equation*}
  \|Z_\tau(n\tau)u_0\|_{\ell^{q_0}([0,T];L^{r_0})}\le C\|u\|_{L^\infty_T\cH^1}.
\end{equation*}
\end{lemma}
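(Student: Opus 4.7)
The plan is to combine a bootstrap argument on short intervals with the discrete Strichartz estimates of Proposition~\ref{prop:strichartz-discret}. First I record the $L^2$-contraction of the scheme: since $N(\tau)\phi$ is multiplication by a unit-modulus phase, $S(\tau)$ is unitary, and $\|\Pi_\lambda\|_{L^2\to L^2}\le 1$ (because $\chi^2\le 1$), we have $\|Z_\tau(n\tau)\|_{L^2}\le \|u_0\|_{L^2}$ for every $n$. I then partition $[0,T]$ into subintervals $I_j = [m_j\tau,m_{j+1}\tau]$ of length at most $\delta>0$, to be fixed, and I work on each interval separately.

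On $I_j$, I apply the discrete Duhamel formula \eqref{eq-a-1'} together with the discrete Strichartz estimates, using the pair $(q_0,r_0)$ on the left, the pair $(\infty,2)$ for the $\vp$-contribution, and $(q_0,r_0)$ for the nonlinear contribution on the right. The pointwise bound
\[
\left|\tfrac{N(\tau)-\Id}{\tau}\phi\right|\le \|\vp\|_\infty|\phi| + |\phi|^{2\si+1},
\]
H\"older's inequality as in \eqref{eq:holder}, and the $L^2$-contraction give
\[
A_j := \|Z_\tau\|_{\ell^{q_0}(I_j;L^{r_0})} \le C_1\|u_0\|_{L^2}(1+T\|\vp\|_\infty) + C\, \|Z_\tau\|_{\ell^\theta(I_j;L^{r_0})}^{2\si}A_j.
\]
Since the $L^2$-subcritical condition $\si<2/d$ is equivalent to $\theta<q_0$, H\"older in time lets me absorb the $\ell^\theta$ norm into an $\ell^{q_0}$ norm at the cost of a small power of $\delta$: setting $\gamma_0 := 2\si(1/\theta-1/q_0)>0$, the above becomes $A_j \le M_*/2 + C_2\delta^{\gamma_0}A_j^{2\si+1}$, where $M_* := 2C_1\|u_0\|_{L^2}(1+T\|\vp\|_\infty)$.

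To close this implicit inequality in a discrete setting, I perform a bootstrap, choosing $\delta$ so small that $C_2\delta^{\gamma_0}(3M_*)^{2\si}\le 1/4$. The crucial ``continuity'' ingredient is that since $\chi(z)=\chi(z/4)\chi(z)$, every iterate satisfies $Z_\tau(k\tau)=\Pi_{4\lambda}Z_\tau(k\tau)$; with $\lambda=1/\tau$, Lemma~\ref{lem:bernstein} then yields
\[
\tau^{1/q_0}\|Z_\tau(k\tau)\|_{L^{r_0}} \lesssim \tau^{1/q_0 - d\si/(4\si+4)}\|Z_\tau(k\tau)\|_{L^2} = C_3\|u_0\|_{L^2},
\]
where the exponent of $\tau$ vanishes by direct computation from $1/q_0 = d\si/(4\si+4)$. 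Hence the non-decreasing quantity $F(n) := \|Z_\tau\|_{\ell^{q_0}([m_j\tau,m_j\tau+n\tau];L^{r_0})}$ jumps by at most $C_3\|u_0\|_{L^2}$ per step, so enlarging $M_*$ if necessary I may assume $C_3\|u_0\|_{L^2}\le M_*$. A standard contradiction at the smallest index $n^*$ with $F(n^*)>2M_*$ (which would have to satisfy simultaneously $F(n^*)\le 3M_*$ by the one-step control and $F(n^*)\le M_*/2 + C_2\delta^{\gamma_0}(3M_*)^{2\si+1}\le 5M_*/4$ by the a priori estimate) then forces $A_j\le 2M_*$ on each of the $\lceil T/\delta\rceil$ intervals. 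Summing yields the claim, the constant $C$ depending on $T$, $\|\vp\|_\infty$ and $\|u_0\|_{L^2}\le \|u\|_{L^\infty_T\cH^1}$. The main obstacle is precisely this discrete ``continuity'' step of the bootstrap, which is exactly where the spectral localization $\Pi_\lambda$ combined with the CFL-type calibration $\lambda = 1/\tau$ is essential, via Lemma~\ref{lem:bernstein}.
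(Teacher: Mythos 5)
Your proof is correct, but it closes the bootstrap by a genuinely different mechanism from the paper's. The paper runs the bootstrap on all of $[0,T]$ at once: the set $\Lambda$ of admissible indices has a threshold involving $\|u(n\tau)\|_{\ell^{q_0}([0,T];L^{r_0})}$, and the nonlinear term $C(T)\|Z_\tau\|_{\ell^{q_0}}^{2\si+1}$ in the a priori estimate is tamed not by smallness of the time interval but by invoking Theorem~\ref{theo:stab-cv} on $[0,N_*\tau]$ (where stability holds by the definition of $N_*$) to replace $\|Z_\tau\|_{\ell^{q_0}}$ by $\|u\|_{\ell^{q_0}}+C_2\tau^{1/2}$; this is what forces $\tau\le \tau_0$ small. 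You instead partition $[0,T]$ into subintervals of length $\delta$, control the homogeneous term on each subinterval uniformly through the $L^2$-contraction $\|Z_\tau(m_j\tau)\|_{L^2}\le \|u_0\|_{L^2}$ (valid because $N(\tau)$ is an $L^2$-isometry, $S(\tau)$ is unitary, and $\|\Pi_\lambda\|_{L^2\to L^2}\le 1$), and absorb the nonlinear term via the factor $\delta^{\gamma_0}$ with $\gamma_0=2\si(1/\theta-1/q_0)>0$ --- exactly the $L^2$-subcriticality. The discrete ``continuity'' needed to run the induction is supplied by the Bernstein bound $\tau^{1/q_0}\|\Pi_{4\lambda}Z_\tau(k\tau)\|_{L^{r_0}}\lesssim \|u_0\|_{L^2}$, whose exponent cancellation indeed uses $\lambda=1/\tau$ together with the admissibility of $(q_0,r_0)$. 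Your route is self-contained (it does not feed the convergence theorem back into the stability proof), works for every $\tau\le 1$, and mirrors the classical continuous-time globalization argument in the $L^2$-subcritical case; what it does not buy is the extension to the $\cH^1$-level stability of Lemma~\ref{lem:local-stab-sup}, where no contraction is available and the paper's detour through Theorem~\ref{theo:stab-cv} becomes essential. One small presentational point: to apply the inhomogeneous discrete Strichartz estimate with the dual pair $(1,2)$ on the potential part and $(q_0',r_0')$ on the power part, you must first split $\frac{N(\tau)-\Id}{\tau}=e^{-i\tau \vp}\frac{N_0(\tau)-\Id}{\tau}+\frac{e^{-i\tau \vp}-\Id}{\tau}$ as the paper does; the pointwise bound on the sum alone does not allow two different dual exponents.
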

\begin{proof}
The idea of the proof is a bootstrap argument: as long as
$Z_\tau(k\tau)u_0$ is bounded in $\ell^{q_0}L^{r_0}$,
Theorem~\ref{theo:stab-cv} implies that its size is the same as that
of $u$ modulo $\O(\tau^{1/2})$, hence a uniform bound for $\tau>0$
sufficiently small.
Let
$\Lambda=\Lambda(\tau)$ defined by
\begin{align*}
  \Lambda &=\Big\{N\in \N,
   \|Z_\tau(k\tau)u_0\|_{\ell^{q_0}(0\le n\tau\le N\tau;L^{r_0})}\le
    (\uC+C_1)\|u_0\|_{L^2} \\
&\phantom{=\{N\in \N,
   \|Z_\tau(k\tau)u_0\|_{\ell^{q_0}(0\le n\tau\le N\tau;L^{r_0})}\le }
+  C_1 \|u(n\tau)\|_{\ell^{q_0}([0,T];L^{r_0})}\Big\},
\end{align*}
where $\uC=\uC(d,\si,T)$ is the best constant provided by the homogeneous
discrete Strichartz estimate \eqref{eq:discr-stri-homo} in the case
$\lambda=1/\tau$, on the time interval $[0,T]$,
\begin{equation*}
  \uC = \sup_{\tau\in (0,1)}\sup_{\phi\in
    L^2(\R^d)}\frac{\|S_{1/\tau}\phi\|_{\ell^{q_0}([0,T];L^{r_0})}}{\|\phi\|_{L^2}}, 
\end{equation*} 
and $C_1>0$ is determined below. Note that Sobolev embedding and the
finiteness of $T$ yield (even though this estimate is far from being sharp)
\begin{equation*}
  \|u(n\tau)\|_{\ell^{q_0}([0,T];L^{r_0})}\lesssim \|u\|_{L^\infty_T\cH^1}.
\end{equation*}
First, regardless of the value of $C_1$, $\Lambda$ is not empty, as
$0\in \Lambda$: 
\begin{equation*}
  \tau^{1/q_0}\|Z_\tau (0)u_0\|_{L^{r_0}} =
  \tau^{1/q_0}\|S_{1/\tau}(0)u_0\|_{L^{r_0}} \le \uC
  \|u_0\|_{L^2}. 
\end{equation*}
If $\Lambda$ is infinite, then \eqref{eq:stab-num} holds, in view of
the above presentation. If $\Lambda$
is bounded, let $N_*=N_*(\tau)$ be its largest element: we suppose
that 
 $N_*+1< T/\tau$.
Like in the proof of Lemma~\ref{lem:A4}, we decompose
\begin{equation*}
  \frac{N(\tau)-\Id}{\tau}Z_\tau
    (n\tau)u_0 = e^{-i\tau\vp} \frac{N_0(\tau)-\Id}{\tau}Z_\tau
    (n\tau)u_0  + \frac{ e^{-i\tau\vp} -\Id}{\tau}Z_\tau
    (n\tau)u_0  .
\end{equation*}
 In view of the
discrete Duhamel's formula (see 
\eqref{eq-a-1'}), discrete Strichartz estimates yield
\begin{align*}
  \(\tau\sum_{k=0}^{N_*+1}\|Z_\tau(k\tau)u_0\|_{L^{r_0}}^{q_0}\)^{1/q_0}
    &  \le  \uC \|u_0\|_{L^2} 
   + C\left\| \frac{N_0(\tau)-\Id}{\tau}Z_\tau
    (n\tau)u_0\right\|_{\ell^{q_0'}(0\le n\tau\le
      N_*\tau;L^{r_0'})}\\
&\quad + C \left\| \frac{e^{-i\tau\vp}-\Id}{\tau}Z_\tau
    (n\tau)u_0\right\|_{\ell^1(0\le n\tau\le
      N_*\tau;L^2)},
\end{align*}
for some $C=C(d,\si,T)$. 
Using \eqref{eq:NLpuissance}, we infer
\begin{align*}
   \(\tau\sum_{k=0}^{N_*+1}\|Z_\tau(k\tau)u_0\|_{L^{r_0}}^{q_0}\)^{1/q_0}
      & \le  \uC \|u_0\|_{L^2} 
  + C \|Z_\tau (n\tau)u_0\|_{\ell^{(2\si+1)q_0'}(0\le n\tau\le
  N_*\tau;L^{(2\si+1)r_0'})}^{2\si+1}\\
&\quad + C\|Z_\tau
    (n\tau)u_0\|_{\ell^1(0\le n\tau\le      N_*\tau;L^2) }.
\end{align*}
We note that $(2\si+1)r_0'=r_0$, and, since
\begin{equation*}
  (2\si+1)\(\frac{1}{(2\si+1)q_0'}-\frac{1}{q_0}\)=1-\frac{2\si+2}{q_0}=
  1-\frac{d\si}{2},
\end{equation*}
H\"older inequality yields
\begin{equation*}
   \|Z_\tau (n\tau)u_0\|_{\ell^{(2\si+1)q_0'}(I;L^{(2\si+1)r_0'})}^{2\si+1}\le
|I|^{1-\frac{d\si}{2}} \|Z_\tau
(n\tau)u_0\|_{\ell^{q_0}(I;L^{r_0})}^{2\si+1}.
\end{equation*}
We infer
\begin{align*}
 \|Z_\tau(n\tau)u_0\|_{\ell^{q_0}(0\le n\tau\le (N_*+1)\tau;L^{r_0})}
    &   \le  \uC \|u_0\|_{L^2} + C(T)\|Z_\tau
(n\tau)u_0\|_{\ell^{q_0}(0\le n\tau\le N_*\tau;L^{r_0})}^{2\si+1}\\
&\quad +C(T)\|u_0\|_{L^2}.
\end{align*}
The definition of $N_*$ and Theorem~\ref{theo:stab-cv} imply, for
$\tau\le 1$,
\begin{align*}
 \|Z_\tau(n\tau)u_0\|_{\ell^{q_0}(0\le n\tau\le (N_*+1)\tau;L^{r_0})}
    &   \le  \uC \|u_0\|_{L^2} + C(T)\|u(n\tau)\|_{\ell^{q_0}(0\le
      n\tau\le N_*\tau;L^{r_0})}^{2\si+1}\\ 
&\quad +C(T)\|u_0\|_{L^2}+ C_2 \tau^{1/2}.
\end{align*}
We can then set 
\begin{equation*}
  C_1 = 2 C(T) + 2C(T) \|u(n\tau)\|_{\ell^{q_0}(0\le
      n\tau\le T;L^{r_0})}^{2\si},
\end{equation*}
so 
\begin{align*}
 \|Z_\tau(n\tau)u_0\|_{\ell^{q_0}(0\le n\tau\le (N_*+1)\tau;L^{r_0})}
    &   \le  \uC \|u_0\|_{L^2} +\frac{C_1}{2}\|u_0\|_{L^2} \\
&\quad+
      \frac{C_1}{2}\|u(n\tau)\|_{\ell^{q_0}(0\le 
      n\tau\le N_*\tau;L^{r_0})}+ C_2 \tau^{1/2}.
\end{align*}
Taking $\tau>0$ sufficiently small then contradicts 
the maximality of $N_*$, hence the lemma.
\end{proof}

\subsection{Case $\si\ge 2/d$}

 When $V=0$, the case $u\in X_T^1$
is considered in \cite{ChoiKoh2021}: a central role is played by
Proposition~5.1 there, where the authors prove a local stability
result at the $H^1$ level, based on continuity arguments and the
density of $H^2(\R^d)$ in $H^1(\R^d)$. Typically, (5.5) in
\cite{ChoiKoh2021} is exactly the convergence stated in the last
point of Theorem~\ref{theo:stab-cv} here. In the present context,  we
shall therefore prove that there exists $M$ 
such that
\begin{equation}\label{eq:goal-stab-H}
  \|H^{1/2}u^n\|_{\ell^\infty([0,T];L^2)} =
  \|H^{1/2}Z_\tau(n\tau)\|_{\ell^\infty([0,T];L^2)}  \le M. 
\end{equation}
We first consider the case $u\in X_T^2$: 
\begin{lemma}[Local stability, $L^2$-(super)critical
  case]\label{lem:local-stab-sup} 
  Let $2/d\le\si<2/(d-2)_+$, with $\si\ge 1/2$, $u_0\in \cH^2$, and $T>0$ such
  that $u\in X_T^2$. There exists $\tau_0>0$ such that for all
  $\tau\in (0,\tau_0]$, $Z_\tau (n\tau)u_0\in
  \ell^\infty([0,T];\cH^1)$, and there exists $C>0$ such that
  \begin{equation*}
    \|H^{1/2}Z_\tau (n\tau)u_0\|_{\ell^\infty([0,T];L^2)}\le C
    \|u\|_{X_T^1},.
  \end{equation*}
 \end{lemma}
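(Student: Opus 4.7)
The plan is to adapt the bootstrap strategy of Lemma~\ref{lem:stab-sub} to the $\cH^1$ level, invoking the third conclusion of Theorem~\ref{theo:stab-cv}. The main difficulty — and the key new ingredient — is that a naive bootstrap would encounter the self-referential bound $M\lesssim \|u_0\|_{\cH^1}+CM^{2\sigma+1}$, which has no solution for large initial data when $\sigma\ge 1/2$; this is circumvented by using the convergence from Theorem~\ref{theo:stab-cv} to replace norms of $u^n$ by norms of the exact solution $u\in X_T^1$, independently of the bootstrap constant.

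Set $\lambda=1/\tau$ and partition $[0,T]$ into $K$ intervals $I_j=[m_j\tau,m_{j+1}\tau]$ of common length $\rho$ (with $\rho/\tau,\ K\in\N$), where $\rho$ depends only on $\|\vp\|_{W^{1,\infty}}$ (not on $\tau$ or the bootstrap constant) so that the $\vp$-contribution can be absorbed below. Proceed by induction on $j$. On $I_j$, introduce
\[
\Phi_\tau^{(j)}(m):=\|H^{1/2}u^n\|_{\ell^\infty([m_j\tau,m\tau];L^2)}+\|H^{1/2}u^n\|_{\ell^{q_0}([m_j\tau,m\tau];L^{r_0})},
\]
and bootstrap $\Phi_\tau^{(j)}(m)\le M_j:=2\uC\|H^{1/2}u^{m_j}\|_{L^2}+C_0\|u\|_{X_T^1}^{2\sigma+1}$, for $\uC$ a Strichartz constant (Proposition~\ref{prop:strichartz-discret}) and $C_0$ to be chosen. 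Since $u^{m_j}=\Pi_{2\lambda}u^{m_j}$, Lemma~\ref{lem:bernstein} with $p=2,\ q=r_0$ gives $\tau^{1/q_0}\|H^{1/2}u^{m_j}\|_{L^{r_0}}\lesssim \|H^{1/2}u^{m_j}\|_{L^2}$, so $\Phi_\tau^{(j)}(m_j)<M_j$ and the bootstrap set is nonempty.

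Let $m_*\in[m_j,m_{j+1})$ be its maximal element, and assume for contradiction $m_*<m_{j+1}$. Applying $H^{1/2}$ to the discrete Duhamel formula \eqref{eq-a-1'} on $[m_j\tau,(m_*+1)\tau]$, using $[H^{1/2},S_\lambda]=0$, Proposition~\ref{prop:strichartz-discret}, and the splitting $\frac{N(\tau)-\Id}{\tau}=e^{-i\tau\vp}\frac{N_0(\tau)-\Id}{\tau}+\frac{e^{-i\tau\vp}-\Id}{\tau}$, we obtain
\[
\Phi_\tau^{(j)}(m_*+1)\le \uC\|H^{1/2}u^{m_j}\|_{L^2}+C\Big\|H^{1/2}\tfrac{N_0(\tau)-\Id}{\tau}u^n\Big\|_{\ell^{q_0'}(I_j^*;L^{r_0'})}+C\rho\|\vp\|_{W^{1,\infty}}\Phi_\tau^{(j)}(m_*),
\]
with $I_j^*=[m_j\tau,m_*\tau]$; the last term comes from the product rule and Lemma~\ref{lem:sobolev-H}, and is absorbed by the choice of $\rho$. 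For the $N_0$-piece, \eqref{eq:gradientNLLp} with $a_1=a_2=r_0$, together with H\"older in time at exponents $p_1=q_0/(q_0-2),\ p_2=q_0$ (a direct computation verifies $1/p_1+1/p_2=1/q_0'$ and $2\sigma p_1=\theta$), yield
\[
\Big\|H^{1/2}\tfrac{N_0(\tau)-\Id}{\tau}u^n\Big\|_{\ell^{q_0'}(I_j^*;L^{r_0'})}\lesssim \|u^n\|_{\ell^\theta(I_j^*;L^{r_0})}^{2\sigma}\|H^{1/2}u^n\|_{\ell^{q_0}(I_j^*;L^{r_0})}.
\]

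The crux is to bound the right-hand side independently of $M_j$. By the inductive bootstrap bounds on $I_0,\dots,I_{j-1}$, the $\ell^\infty\cH^1$-norm of $u^n$ on $[0,m_*\tau]$ is controlled by $\sum_{i\le j}M_i$; Sobolev embedding $H^1\hookrightarrow L^{r_0}$ (valid since $\sigma<2/(d-2)_+$) combined with Lemma~\ref{lem:sobolev-H} then verifies \eqref{eq:stab-num2} on $[0,m_*\tau]$ with a constant $M_*$ depending only on $\|u\|_{X_T^1}$ and $\sum_iM_i$. Theorem~\ref{theo:stab-cv}'s third bullet (whose proof proceeds interval-by-interval and adapts verbatim to any subinterval $[0,S]\subset[0,T]$) then gives $\|H^{1/2}(u^n-u(n\tau))\|_{\ell^q([0,m_*\tau];L^r)}\le C(M_*)\tau^{1/2}$ for every admissible $(q,r)$. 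Consequently, using Sobolev once more and $u\in L^\infty_T\cH^1\hookrightarrow L^\infty_TL^{r_0}$, we get $\|u^n\|_{\ell^\theta(I_j^*;L^{r_0})}\le C\|u\|_{X_T^1}+o(1)$ and $\|H^{1/2}u^n\|_{\ell^{q_0}(I_j^*;L^{r_0})}\le C\|u\|_{X_T^1}+o(1)$, so the $N_0$-piece is $\le C\|u\|_{X_T^1}^{2\sigma+1}+o(1)$, with $o(1)\to 0$ as $\tau\to 0$ for fixed $M_*$. Choosing $C_0$ to dominate this universal constant and $\tau_0$ so small that all $o(1)$ terms are $\le 1$, we contradict the maximality of $m_*$. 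Iterating the resulting recurrence $M_{j+1}\le 2\uC M_j+C_0\|u\|_{X_T^1}^{2\sigma+1}$ over $j=0,\dots,K-1$ yields $\max_j M_j\le(2\uC)^K\|u_0\|_{\cH^1}+C'\|u\|_{X_T^1}^{2\sigma+1}$, independent of $\tau$, which (via Lemma~\ref{lem:sobolev-H}) is the claimed bound. The main obstacle, preventing the $M^{2\sigma+1}$ self-reference, is resolved exactly by invoking Theorem~\ref{theo:stab-cv} inside the bootstrap.
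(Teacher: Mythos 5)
Your proof is correct and follows essentially the same route as the paper: a bootstrap (continuity) argument on the discrete Strichartz norms of $H^{1/2}u^n$, where the superlinear self-reference is broken exactly as in the paper by applying Theorem~\ref{theo:stab-cv} on the time range where the bootstrap bound (hence the stability hypotheses \eqref{eq:stab-num}--\eqref{eq:stab-num2}, via Sobolev embedding) already holds, so that numerical norms are replaced by norms of the exact solution up to $O(\tau^{1/2})$. The differences are only organizational — the paper runs a single bootstrap on all of $[0,T]$, keeping the $\vp$-term instead of absorbing it by shrinking subintervals, whereas you partition $[0,T]$ and induct over subintervals — and one cosmetic point: your bound on $\|H^{1/2}u(n\tau)\|_{\ell^{q_0}(I;L^{r_0})}$ really uses the $X_T^2$ regularity of $u$ (available by hypothesis), not just $\|u\|_{X_T^1}$, which only affects the form of the $\tau$-independent constant.
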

\begin{proof}
 The spirit of the proof is the same as for Lemma~\ref{lem:stab-sub}. Set
\begin{align*}
  \Lambda =\Big\{N\in \N,\quad
  &   \|H^{1/2}Z_\tau(n\tau)u_0\|_{\ell^{q_0}(0,N\tau;L^{r_0})} 
   +
    \|H^{1/2}Z_\tau(n\tau)u_0\|_{\ell^\infty(0,N\tau;L^2)}\\
&\le 
     \uK \|H^{1/2}u_0\|_{L^2}+C_1 \|u\|_{X_T^1}
  \Big\},
\end{align*}
where $\uK=\uK(d,\si,T)$ is defined by
\begin{equation*}
  \uK= \sup_{\tau\in (0,1)}\sup_{\phi\in \cH^1}\frac{
    \|H^{1/2}S_{1/\tau}\phi \|_{\ell^{q_0}([0,T];L^{r_0})} +
    \|H^{1/2}S_{1/\tau}
    \phi\|_{\ell^\infty([0,T];L^2)}}{\|H^{1/2}\phi\|_{L^2}},
\end{equation*}  
and $C_1$ will be given in the course of the proof. 
This set is not empty, as 
\begin{equation*}
    H^{1/2}Z_\tau (0)u_0=
    H^{1/2}S_{1/\tau}u_0,
\end{equation*}
and so $0\in \Lambda$. If $\Lambda$ is infinite, then
\eqref{eq:stab-num} holds, in view of 
the above presentation. If $\Lambda$
is bounded, let $N_*=N_*(\tau)$ be its largest element and we suppose that
 $N_*+1< T/\tau$.

Strichartz estimates yield, for
$(q,r)\in\{(q_0,r_0),(\infty,2)\}$, 
\begin{align*}
  \|H^{1/2}Z_\tau(n\tau)u_0 & \|_{\ell^{q}([0,(N_*+1)\tau];L^{r})}
  \le \uK  \|H^{1/2}u_0\|_{L^2} \\
  &+ C\left\|H^{1/2}e^{-i\tau\vp}\frac{N_0(\tau)-\Id}{\tau}
    Z_\tau(n\tau)u_0\right\|_{\ell^{q_0'}([0,N_*\tau];L^{r_0'})}\\
&+ C\left\|H^{1/2}\frac{e^{-i\tau\vp}-\Id}{\tau}
    Z_\tau(n\tau)u_0\right\|_{\ell^1([0,N_*\tau];L^2)}. 
\end{align*}
Invoking Lemmas~\ref{lem:sobolev-H} and \ref{lem:propNL}, 
\begin{align*}
  &\left\|H^{1/2}\frac{N_0(\tau)-\Id}{\tau}
    Z_\tau(n\tau)u_0\right\|_{\ell^{q_0'}([0,N_*\tau];L^{r_0'})}\\
  &\qquad\lesssim \sum_{J\in \{\nabla,\sqrt V,\Id\}}\left\||Z_\tau(n\tau)u_0|^{2\si} J
    Z_\tau(n\tau)u_0\right\|_{\ell^{q_0'}([0,N_*\tau];L^{r_0'})}\\
  &\qquad\lesssim \|Z_\tau(n\tau)
    u_0\|_{\ell^{\theta}([0,N_*\tau];L^{r_0})}^{2\si} 
    \sum_{J\in \{\nabla,\sqrt V,\Id\}}\|J
    Z_\tau(n\tau)u_0\|_{\ell^{q_0}([0,N_*\tau];L^{r_0})}\\
    &\qquad\lesssim \|Z_\tau(n\tau)
    u_0\|_{\ell^{\theta}([0,N_*\tau];L^{r_0})}^{2\si} 
   \|H^{1/2}
    Z_\tau(n\tau)u_0\|_{\ell^{q_0}([0,N_*\tau];L^{r_0})}
\end{align*}
On the other hand,
\begin{align*}
 \|Z_\tau(n\tau)
    u_0\|_{\ell^{\theta}([0,N_*\tau];L^{r_0})}&\lesssim  \|Z_\tau(n\tau)
 u_0\|_{\ell^{\theta}([0,N_*\tau];H^1)}\\
  &\le C(T) \|Z_\tau(n\tau)
 u_0\|_{\ell^\infty([0,N_*\tau];H^1)}\\
&\le C(T)\|H^{1/2}Z_\tau(n\tau)
  u_0\|_{\ell^\infty([0,N_*\tau];L^2)}.
\end{align*}
The factor $e^{-i\tau\vp}$ does not change the nature of this
estimate, and we find, setting $T_*=(N_*+1)\tau$,  for some constants
depending on $T$, 
\begin{align*}
  \|&H^{1/2}Z_\tau(n\tau)u_0\|_{\ell^{q}([0,T_*];L^{r})} \le
   \uK   \|H^{1/2}u_0\|_{L^2}\\
  &\qquad+
 C  \|H^{1/2}Z_\tau(n\tau)u_0\|^{2\si}_{\ell^\infty(
   [0,N_*\tau];L^2)}
    \|H^{1/2}Z_\tau(n\tau)u_0\|_{\ell^{q_0}([0,N_*\tau];L^{r_0})}  \\
&\qquad+
 C \|H^{1/2}Z_\tau(n\tau)u_0\|_{\ell^\infty([0,N_*\tau];L^2)}  .
\end{align*}
Summing the cases
$(q,r)=(q_0,r_0)$ and $(q,r)=(\infty,2)$, we infer
\begin{align*}
  \sum_{(q,r)\in \{(q_0,r_0),(\infty,2)\}}\|&H^{1/2}Z_\tau(n\tau)u_0\|_{\ell^{q}([0,T_*];L^{r})} \le
   2\uK   \|H^{1/2}u_0\|_{L^2}\\
  &+
 C  \|H^{1/2}Z_\tau(n\tau)u_0\|^{2\si}_{\ell^\infty(
   [0,N_*\tau];L^2)}
    \|H^{1/2}Z_\tau(n\tau)u_0\|_{\ell^{q_0}([0,N_*\tau];L^{r_0})}  \\
&+
 C \|H^{1/2}Z_\tau(n\tau)u_0\|_{\ell^\infty([0,N_*\tau];L^2)}  .
\end{align*}
We then set
\begin{equation*}
  C_1 = 2C +2C \|H^{1/2}u(n\tau),\|^{2\si}_{\ell^\infty(
   [0,T];L^2)},
\end{equation*}
so the definition of $N_*$ and Theorem~\ref{theo:stab-cv} yield, for
all $\tau\le 1$,
\begin{align*}
  \sum_{(q,r)\in \{(q_0,r_0),(\infty,2)\}}\|&H^{1/2}Z_\tau(n\tau)u_0\|_{\ell^{q}([0,T_*];L^{r})} \le
   2\uK   \|H^{1/2}u_0\|_{L^2}\\
  &+
\frac{C_1}{2}   \|H^{1/2}u(n\tau)\|_{\ell^{q_0}([0,N_*\tau];L^{r_0})}  \\
&+
 \frac{C_1}{2} \|H^{1/2}u(n\tau)\|_{\ell^\infty([0,N_*\tau];L^2)}  +C\tau^{1/2}.
\end{align*}
For $\tau>0$ sufficiently small, the maximality of
$N_*$ leads to a contradiction, hence the lemma.
\end{proof}

We have assumed so far $u\in X_T^2$. In Theorem~\ref{theo:main}, we
assume only $u\in X_T^1$, and the nonlinearity is supposed to be $C^2$
(or almost) when $\si\ge 2/d$: like in \cite{ChoiKoh2021}, passing
from the $X_T^2$ case to the $X_T^1$ case relies on a density
argument:
\begin{proposition}\label{prop:density1}
   Let Assumption~\ref{hyp:V} be verified and $\frac{1}{2}\le
   \si<\frac{2}{(d-2)_+}$. For any $M\ge 1$, there exists $T>0$ and
   $C=C(M,d,\si)$ such that if $u_0,v_0\in \cH^1$ with
   $\|u_0\|_{\cH^1},\|v_0\|_{\cH^1}\le M$, then the solutions $u$ and
   $v$ to \eqref{eq:NLSP} with respective initial data $u_0$ and $v_0$
   are such that $u,v\in X_T^1$, and
   \begin{equation*}
     \|u-v\|_{X_T^1}\le C\|u_0-v_0\|_{\cH^1}. 
   \end{equation*}
\end{proposition}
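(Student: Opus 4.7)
The plan is to combine the local existence statement of Proposition~\ref{prop:cauchy1} with a Lipschitz dependence estimate derived from Duhamel's formula, in the spirit of the uniqueness part of that proof. Since $\|u_0\|_{\cH^1}, \|v_0\|_{\cH^1} \le M$, Proposition~\ref{prop:cauchy1} yields a common existence time $T_0 = T_0(M) > 0$ on which $u, v \in X_{T_0}^1$, with $\|u\|_{X_{T_0}^1}, \|v\|_{X_{T_0}^1} \le C(M)$. The task is then to control $\|w\|_{X_T^1}$, where $w := u - v$, linearly in $\|u_0 - v_0\|_{\cH^1}$, on a possibly smaller time interval $[0,T]$ with $T = T(M) \le T_0$.

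Subtracting Duhamel's formula \eqref{eq:duhamel} for $u$ and $v$ gives
\begin{equation*}
w(t) = S(t)(u_0 - v_0) - i\int_0^t S(t-s)(\vp w)(s)\,ds - i\eps \int_0^t S(t-s)\bigl(|u|^{2\si}u - |v|^{2\si}v\bigr)(s)\,ds.
\end{equation*}
Applying Strichartz estimates (Proposition~\ref{prop:strichartz}) at the $L^2$ level, combined with the pointwise bound $\bigl||u|^{2\si}u - |v|^{2\si}v\bigr| \lesssim (|u|^{2\si} + |v|^{2\si})|w|$ and H\"older inequality as in \eqref{eq:holder}, yields for any admissible pair $(q,r)$,
\begin{equation*}
\|w\|_{L^q_T L^r} \lesssim \|u_0 - v_0\|_{L^2} + T \|\vp\|_{L^\infty} \|w\|_{L^\infty_T L^2} + T^{2\si/\theta}\bigl(\|u\|_{X_T^1}^{2\si} + \|v\|_{X_T^1}^{2\si}\bigr) \|w\|_{L^{q_0}_T L^{r_0}},
\end{equation*}
after invoking the Sobolev embedding $\cH^1 \hookrightarrow L^{r_0}$ to convert $L^\theta_T L^{r_0}$-norms of $u, v$ into $L^\infty_T \cH^1$-norms.

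To control $\|\sqrt H w\|_{L^q_T L^r}$, apply $\sqrt H$ to Duhamel's formula and exploit $[\sqrt H, S(t)] = 0$. The potential contribution $\sqrt H(\vp w)$ is handled via \eqref{eq:H12der} and Lemma~\ref{lem:sobolev-H}, giving a bound by $T \|w\|_{L^\infty_T \cH^1}$. For the nonlinear term $g(u) - g(v)$ with $g(z) = |z|^{2\si}z$, the assumption $\si \ge 1/2$ ensures the pointwise estimate
\begin{equation*}
|\nabla(g(u) - g(v))| \lesssim (|u|^{2\si} + |v|^{2\si})|\nabla w| + (|u|^{2\si-1} + |v|^{2\si-1})|\nabla v|\,|w|,
\end{equation*}
and its analogue with $\nabla$ replaced by multiplication by $\sqrt V$. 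Together with Lemma~\ref{lem:sobolev-H}, H\"older inequality, and $\cH^1 \hookrightarrow L^{r_0}$, this produces a bound for $\|\sqrt H w\|_{L^q_T L^r}$ by $\|\sqrt H(u_0 - v_0)\|_{L^2}$ plus nonlinear contributions proportional to $\|w\|_{L^{q_0}_T L^{r_0}}$ and $\|\sqrt H w\|_{L^{q_0}_T L^{r_0}}$, each carrying a small factor $T^{1/\theta}$ or $T$.

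Summing the inequalities at $(q,r) = (q_0, r_0)$ and $(q,r) = (\infty, 2)$, at both the $L^2$ and the $\sqrt H$ levels, gives a closed linear system for $\|w\|_{X_T^1}$. Shrinking $T$ to a value depending only on $M$ absorbs all nonlinear contributions into the left-hand side, yielding the desired estimate $\|w\|_{X_T^1} \le C(M) \|u_0 - v_0\|_{\cH^1}$. The main obstacle is that $\sqrt H$ does not commute with $\nabla$ nor with multiplication by $\vp$, so the natural splitting $\cH^1 = H^1 \cap L^2(V\,dx)$ is not preserved when differentiating the Duhamel equation; working systematically with $\sqrt H$ and invoking the equivalence of norms from Lemma~\ref{lem:sobolev-H} circumvents this, while the assumption $\si \ge 1/2$ is precisely what renders $g$ of class $C^1$ with the pointwise bound on $g'$ needed for the nonlinear Lipschitz estimate at the level of one derivative.
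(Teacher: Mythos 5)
Your argument is correct in substance, but it is worth noting that the paper does not actually write out a proof of this proposition: it simply refers to \cite[Theorem~2.3]{CaSu24} and states that the proof is ``readily adapted''. What you supply is precisely that adaptation -- the standard Lipschitz-dependence scheme via Duhamel's formula, Strichartz estimates, the equivalence of norms of Lemma~\ref{lem:sobolev-H} to work with $\sqrt H$ rather than $\nabla$ and $\sqrt V$ separately, and the observation that $\si\ge 1/2$ is exactly what makes $z\mapsto |z|^{2\si}z$ differentiable with a Lipschitz-type bound on its derivative, which is what allows a closed estimate for $w=u-v$ at the level of one derivative. So your route is the natural direct one, and it is consistent with the framework the paper relies on (it parallels the estimates in the proofs of Propositions~\ref{prop:cauchy1} and \ref{prop:cauchy2}).

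One bookkeeping point deserves care, since the proposition covers the full range $\frac12\le\si<\frac{2}{(d-2)_+}$, hence in particular $\si\ge 2/d$, where $\theta>q_0$. In the cross term coming from $\bigl(|u|^{2\si-1}+|v|^{2\si-1}\bigr)|\nabla v|\,|w|$, you must place $\nabla v$ in the admissible norm $L^{q_0}_TL^{r_0}$ (controlled by $\|v\|_{X^1_T}$) and $w$ in $L^{\theta}_TL^{r_0}$, which you then convert by $\cH^1\hookrightarrow L^{r_0}$ and H\"older in time into $T^{1/\theta}\|w\|_{L^\infty_T\cH^1}$; the time exponents match since $\frac{1}{q_0'}=\frac{2\si-1}{\theta}+\frac{1}{q_0}+\frac{1}{\theta}$. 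The opposite distribution, which your summary sentence suggests (a bound proportional to $\|w\|_{L^{q_0}_TL^{r_0}}$), would force $\nabla v$ into $L^{\theta}_TL^{r_0}$; for $\si>2/d$ this norm is not controlled by $\|v\|_{X^1_T}$ (H\"older in time goes the wrong way, and Sobolev embedding would cost an extra derivative, i.e.\ $X^2$ regularity, as in the proof of Proposition~\ref{prop:cauchy2}). With the correct placement the linear system closes exactly as you describe, all nonlinear contributions carrying a positive power of $T$, and shrinking $T=T(M)$ gives the stated bound, so the gap is only in the phrasing, not in the method.
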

We refer to \cite[Theorem~2.3]{CaSu24} for a proof which is readily
  adapted to the present framework. We also invoke the following
  result, whose proof is 
  essentially the same as for \cite[Proposition~5.1, (5.4)]{ChoiKoh2021} or
  \cite[Proposition~6.2, (6.4)]{CaSu24}:
  \begin{proposition}\label{prop:density2}
      Let Assumption~\ref{hyp:V} be verified and $\frac{1}{2}\le
   \si<\frac{2}{(d-2)_+}$. For any $M\ge 1$, there exists $T>0$ and
   $C=C(M,d,\si)$ such that if $u_0,v_0\in \cH^1$ with
   $\|u_0\|_{\cH^1},\|v_0\|_{\cH^1}\le M$, then for all $\tau\in
   (0,1]$, and all admissible pair $(q,r)$,
   \begin{equation*}
     \left\|H^{1/2}\(Z_\tau(n\tau)  u_0-Z_\tau(n\tau)
       v_0\)\right\|_{\ell^q([0,T];L^r)}\le C\|u_0-v_0\|_{\cH^1}.  
   \end{equation*}
  \end{proposition}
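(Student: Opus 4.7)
The plan is to adapt to the discrete setting the proof of Proposition~\ref{prop:density1} (or \cite[Proposition~6.2]{CaSu24}), using discrete Strichartz estimates in place of their continuous counterparts and the $H^{1/2}$-based framework already developed in Sections~\ref{sec:cv-L2} and~\ref{sec:stability}. Let $w^n := Z_\tau(n\tau)u_0 - Z_\tau(n\tau)v_0$. Unwinding the recursion $Z_\tau((n+1)\tau) = S_\lambda(\tau) N(\tau) Z_\tau(n\tau)$ yields the discrete Duhamel formula
\begin{equation*}
w^n = S_\lambda(n\tau)\Pi_\lambda(u_0-v_0) + \tau\sum_{k=0}^{n-1}S_\lambda((n-k)\tau)\,\mathcal{D}^k,
\end{equation*}
where $\mathcal{D}^k = \frac{N(\tau)-\Id}{\tau}Z_\tau(k\tau)u_0 - \frac{N(\tau)-\Id}{\tau}Z_\tau(k\tau)v_0$. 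Applying $H^{1/2}$, which commutes with $S_\lambda$ and $\Pi_\lambda$ by Proposition~\ref{prop:funct-calc}, and using the discrete homogeneous and inhomogeneous Strichartz estimates \eqref{eq:discr-stri-homo}--\eqref{eq:disc-stri-inhom} plus Corollary~\ref{cor:Stri-cont-disc}, one obtains, for any admissible pair $(q,r)$ and interval $I\subset[0,T]$ of length $\le 1$,
\begin{equation*}
\|H^{1/2}w\|_{\ell^q(I;L^r)} \lesssim \|H^{1/2}(u_0-v_0)\|_{L^2} + \|H^{1/2}\mathcal{D}\|_{\ell^{q_0'}(I;L^{r_0'})} + \|H^{1/2}(\vp w)\|_{\ell^1(I;L^2)}.
\end{equation*}
The $\vp$-contribution is harmless, being controlled by $\|\vp\|_{W^{2,\infty}}|I|\,\|H^{1/2}w\|_{\ell^\infty(I;L^2)}$ via Lemma~\ref{lem:sobolev-H} and Lemma~\ref{lem:PiPDO}.

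The first main step is to show that, on an interval $[0,T_0]$ whose length depends only on $M$ (and $d,\si$), the iterates satisfy
\begin{equation*}
\max_{\phi_0\in\{u_0,v_0\}}\ \sup_{(q,r)\text{ adm.}}\|H^{1/2}Z_\tau(n\tau)\phi_0\|_{\ell^q([0,T_0];L^r)} \le K(M),
\end{equation*}
uniformly in $\tau\in(0,1]$. This is proved by a bootstrap argument in the spirit of Lemma~\ref{lem:local-stab-sup}, but working directly at the $\cH^1$ level: invoking Lemma~\ref{lem:H12N0} (with $w_2=0$) to bound $H^{1/2}\bigl(\frac{N_0(\tau)-\Id}{\tau}\Pi_\lambda Z_\tau(k\tau)\phi_0\bigr)$ in $\ell^{q_0'}L^{r_0'}$ by $\|Z_\tau \phi_0\|^{2\si}_{\ell^\theta L^{r_0}}\|H^{1/2}Z_\tau\phi_0\|_{\ell^{q_0}L^{r_0}}$, and using H\"older in time together with Sobolev embedding $H^1\hookrightarrow L^{r_0}$, the nonlinear contribution has coefficient $\lesssim T_0^{2\si/\theta} K(M)^{2\si}$ which is absorbed for $T_0=T_0(M)$ small.

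The second step applies Lemma~\ref{lem:5.2} with $w_1=Z_\tau u_0$ and $w_2=Z_\tau v_0$, using the uniform bounds from step one, to estimate $\|H^{1/2}\mathcal{D}\|_{\ell^{q_0'}(I;L^{r_0'})}$ by a small constant (times $T_0$-powers) multiplied by $\|H^{1/2}w\|_{\ell^{q_0}(I;L^{r_0})} + \|w\|_{\ell^{q_0}(I;L^{r_0})}$ plus a term involving $\|H^{1/2}w\|_{\ell^\infty(I;L^2)}$. The second $L^2$ estimate follows analogously (and is in fact contained in the $L^2$-stability from Theorem~\ref{theo:stab-cv} applied to the difference), so summing the two choices $(q,r)=(q_0,r_0)$ and $(\infty,2)$ and choosing $T_0$ small enough allows absorption of the right-hand-side by the left, yielding
\begin{equation*}
\sum_{(q,r)\in\{(q_0,r_0),(\infty,2)\}}\|H^{1/2}w\|_{\ell^q([0,T_0];L^r)} \le 2C\|H^{1/2}(u_0-v_0)\|_{L^2}.
\end{equation*}
Partitioning $[0,T]$ into $\lceil T/T_0\rceil$ sub-intervals and iterating by taking $Z_\tau(m_j\tau)\phi_0$ as new initial data (whose $\cH^1$-norm is controlled by step one) yields the claim, with the constant $C=C(M,d,\si,T)$ depending exponentially on $T/T_0$.

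The main obstacle is step one: establishing uniform $\cH^1$-stability of the scheme without assuming $u_0,v_0\in\cH^2$. The difficulty is that $\Pi_\lambda$ does not commute with $\nabla$ or $\sqrt{V}$ separately, so the closed system must be set up in terms of $H^{1/2}$; this forces us to use Lemma~\ref{lem:H12N0} (relying on Lemma~\ref{lem:sobolev-H} to convert back into a fractional Leibniz estimate) rather than the naive pointwise gradient bounds, and to work throughout with the spectral-cutoff compatible quantities $H^{1/2}\Pi_\lambda=\Pi_\lambda H^{1/2}$.
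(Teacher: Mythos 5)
Your overall strategy -- discrete Duhamel formula for the difference, discrete Strichartz estimates, a local-in-time $\cH^1$ bootstrap for the iterates, then absorption on an interval whose length depends only on $M$ -- is the strategy the paper intends (it does not write the proof out, but refers to \cite{ChoiKoh2021} and \cite{CaSu24}), and your step one is sound since only some $T=T(M)$ is required. The gap is in step two, where you feed $w_1=Z_\tau u_0$, $w_2=Z_\tau v_0$ into Lemma~\ref{lem:5.2} as a black box. The middle group of terms in that lemma carries the factor $\|\sqrt H w_2\|_{\ell^\theta(I;L^{r_0})}$. When $\si>2/d$ (a substantial part of the admissible range, e.g.\ the cubic case in $d=3$) one has $\theta>q_0$, and this quantity lies strictly above the $\cH^1$ level: it is controlled neither by your step-one bound $\sup_{(q,r)}\|H^{1/2}Z_\tau v_0\|_{\ell^q L^r}\le K(M)$ nor by Sobolev embedding from $\ell^\infty(I;\cH^1)$. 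In the paper's own applications of Lemma~\ref{lem:5.2} this factor is harmless only because there $w_2=\Pi_\lambda u$ with $u\in X_T^2$, so that $\|\sqrt H\Pi_\lambda u\|_{\ell^\theta(I;L^{r_0})}\lesssim |I|^{1/\theta}\|u\|_{X_T^2}$; here $v_0\in\cH^1$ only, and no such bound is available. Paying for it with Lemma~\ref{lem:perte} costs $\tau^{-1/2}$ on a term that is only multiplied by $\|w_1-w_2\|_{\ell^{q_0}(I;L^{r_0})}\lesssim \|u_0-v_0\|_{L^2}$, so the resulting estimate is not uniform in $\tau$ and the claimed conclusion is lost.

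The repair is not to quote Lemma~\ref{lem:5.2} verbatim but to redo the H\"older grouping for the terms where the derivative falls on $w_2$: bound $\bigl\| \(|w_1|^{2\si-1}+|w_2|^{2\si-1}\)|w_1-w_2|\,|\nabla w_2| \bigr\|_{\ell^{q_0'}(I;L^{r_0'})}$ by $\(\|w_1\|^{2\si-1}_{\ell^\theta(I; L^{r_0})}+\|w_2\|^{2\si-1}_{\ell^\theta (I;L^{r_0})}\)\|w_1-w_2\|_{\ell^\theta (I;L^{r_0})}\|\sqrt H w_2\|_{\ell^{q_0}(I;L^{r_0})}$, whose time exponents still sum to $1/q_0'$ since $\frac{2\si}{\theta}+\frac{1}{q_0}=\frac{1}{q_0'}$. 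Then the derivative of $w_2$ sits in an admissible Strichartz norm provided by step one, while the difference is placed in $\ell^\theta(I; L^{r_0})$ and estimated, via $H^1\hookrightarrow L^{r_0}$, Lemma~\ref{lem:sobolev-H} and H\"older in time, by $|I|^{1/\theta}\|H^{1/2}(w_1-w_2)\|_{\ell^\infty(I;L^2)}$, a quantity present on the left-hand side once you sum $(q,r)=(q_0,r_0)$ and $(\infty,2)$ and hence absorbable for $|I|$ small in terms of $M$; the $\tau\,|w_1|^{4\si-1}$-type terms are handled the same way after the Bernstein step $Z_\tau=\Pi_{4\lambda}Z_\tau$. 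This regrouping is exactly what the cited proofs do. Two smaller points: Theorem~\ref{theo:stab-cv} compares a numerical with an exact solution, so it does not literally give the $L^2$-level difference bound you invoke (an analogous direct argument does), and the final iteration over subintervals of $[0,T]$ is unnecessary and not justified as stated, since the statement only asks for some $T$ and iterating would require re-running step one from the later data.
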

In view of Theorem~\ref{theo:stab-cv},
Propositions~\ref{prop:density1} and \ref{prop:density2}, we have
(like in \cite[Proposition~5.1, (5.6)]{ChoiKoh2021}), by approaching
$u_0\in \cH_1$ by $v_0\in \cH^2$ with $\|u_0-v_0\|_{\cH^1}\le \eps$
and eventually letting $\eps$ go to zero:
\begin{equation*}
  \lim_{\tau \to 0} \|Z_\tau(n\tau)u_0-u(n\tau)\|_{\ell^\infty([0,T];\cH^1)}=0.
\end{equation*}
In particular, for $\tau_0>0$ sufficiently small and $\tau\in
(0,\tau_0]$, we know that
\begin{equation*}
 \|Z_\tau(n\tau)u_0\|_{\ell^\infty([0,T];\cH^1)}\le M 
\end{equation*}
 for some $M$ independent of $\tau\in (0,\tau_0]$, which is (slightly
 more than) what we wanted to prove.

\subsection*{Acknowledgments}
The author wishes to thank Valeria Banica, Georg Maierhofer and
Katharina Schratz for interactions which motivated this paper, as well
as 
Julien Sabin and San V\~u Ng\d oc for instructive discussions on the
material of Section~\ref{sec:tools}.

\bibliographystyle{abbrv}
\bibliography{biblio}

\end{document}